\documentclass[11pt]{amsart}
\usepackage{amssymb,latexsym, amsmath, amsxtra}
\usepackage[dvips]{graphics}
\usepackage{epsfig}

\textwidth=6.5in \hoffset=-.75in

\theoremstyle{plain}
\newtheorem{theorem}{Theorem}[section]

\newtheorem{conjecture}[theorem]{Conjecture}
\newtheorem{conj}[theorem]{Conjecture}

\newtheorem{lemma}[theorem]{Lemma}

\theoremstyle{definition}
\newtheorem{definition}[theorem]{Definition}
\theoremstyle{remark}

\numberwithin{equation}{section}

\DeclareMathOperator*{\Res}{Res}
\DeclareMathOperator*{\Det}{Det}

 \parskip=.2cm

\begin{document}

\title{Orthogonal and symplectic $n$-level densities}

\abstract
In this paper we apply to the zeros of families of $L$-functions with orthogonal or symplectic symmetry the method that Conrey and Snaith~\cite{conrey2008ce} used to calculate the $n$-correlation of the zeros of the Riemann zeta function. This method uses the Ratios Conjectures~\cite{conrey2007autocorrelatio} for averages of ratios of zeta or $L$-functions.  Katz and Sarnak~\cite{katz1999zz} conjecture that the zero statistics of families of $L$-functions have an underlying symmetry relating to one of the classical compact groups $U(N)$, $O(N)$ and $USp(2N)$. Here we complete the work already done with $U(N)$~\cite{conrey2008ce} to show how new methods for calculating the $n$-level densities of eigenangles of random orthogonal or symplectic matrices can be used to create explicit conjectures for the $n$-level densities of zeros of $L$-functions with orthogonal or symplectic symmetry, including all the lower order terms.  We show how the method used here results in formulae that are easily modified when the test function used has a restricted range of support, and this will facilitate comparison with rigorous number theoretic $n$-level density results.
\endabstract

\author {A.M. Mason}
\address{NDM Experimental Medicine, University of Oxford, John Radcliffe Hospital, Oxford, OX3 9DU, United Kingdom} \email{amy.mason@ndm.ox.ac.uk}

\author{N.C. Snaith}
\address{School of Mathematics,
University of Bristol, Bristol, BS8 1TW, United Kingdom}
\email{N.C.Snaith@bris.ac.uk}

\thanks{
2010 Mathematics Subject Classification: 11M50, 15B52, 11M26, 11G05, 11M06, 15B10\\
The first author was supported by an EPSRC Doctoral Training Grant.  The second author was partially supported by an EPSRC
Advanced Research Fellowship and subsequently sponsored by the Air
Force Office of Scientific Research, Air
  Force Material Command, USAF, under grant number FA8655-10-1-3088.} \maketitle

\tableofcontents

\section{Introduction}

\subsection{Motivation}
\label{sec:intro}

Long-standing results in random matrix theory show that the $n$-point correlation functions (in some situations these are also called $n$-level densities) of eigenvalues from ensembles of random matrices such as $U(N)$, $SO(N)$ and $USp(2N)$ can be written concisely as $n$-dimensional determinants  of matrices whose elements are the kernel belonging to the particular ensemble (see, for example, \cite{conrey2005note} where the kernels and correlation functions are written down for these groups). These results are elegant and exact, and their determinantal form is very useful for calculations within random matrix theory (RMT). However, the corresponding quantity in number theory, the $n$-point correlation function (or level density)  of the complex zeros of an $L$-function, or of families of $L$-functions, does not seem to take this concise determinantal form, except in a suitable asymptotic limit where the statistics are expected to agree with RMT (see \cite{kn:conrey1,kn:keasna03,kn:snaith10} for reviews on the connection between number theory and random matrix theory).  This leads us to a question: is there a different, albeit less elegant, way to write the random matrix eigenvalue correlation functions that might help to make precise conjectures about the form of the correlations of zeros?  Furthermore, given that the determinantal form is not available in the number theory case, is there a natural form in which to write the $n$-correlations of the zeros that is useful for further applications?

The first question has been answered in the case of the Riemann zeta function, which has zeros displaying the statistics of eigenvalues of matrices from $U(N)$, with Haar measure, in the appropriate limit.  The $n$-point correlation functions of the Riemann zeros, including the lower order correction terms to the limiting random matrix theory result, were first studied by Bogomolny and Keating \cite{kn:bk96,kn:bk13a} and then by Conrey and Snaith \cite{kn:consna07,conrey2008ce}.  In particular, Conrey and Snaith derive the $n$-point correlation function of $U(N)$ eigenvalues using average values of ratios of characteristic polynomials in a method that is precisely reproducible in the number theory case using a conjecture on average values of ratios of the Riemann zeta function (see \cite{conrey2007autocorrelatio} and also \cite{conrey2007applications}).

It is this first question in the case of matrices from $SO(2N)$ and from $USp(2N)$ and the families of $L$-functions associated with them that is the main focus of this current paper. The $n$-level density functions of eigenvalues from matrices in $SO(2N)$ are calculated in Section \ref{sec:ortho} using averages of ratios of characteristic polynomials, with the final result appearing in Theorem \ref{thm:orthoNcorr2}.  The steps are mirrored in Section \ref{sec:LF1} using the Ratios Conjecture for a family of $L$-functions associated with elliptic curves.  The zeros of this family are expected to behave statistically like the eigenvalues of $SO(2N)$. The Ratios Conjecture is used to derive the $n$-level desities of zeros of the $L$-functions in this family, complete with lower order terms. This result can be found in Theorem \ref{thm:LFNcorr}. In Section \ref{sub:symp} eigenvalues of matrices from $USp(2N)$ are considered in the same way, see Theorem \ref{thm:Uspcorr}, and Section \ref{sec:dir} demonstrates the method with a family of $L$-functions showing symplectic symmetry. The result can be found in Theorem \ref{thm:DLNcorr}.

As reflected in the second question, the motivation for this work is not simply to derive the detailed conjectures for the $n$-correlation of zeros of families of $L$-functions, but to obtain formulae that are useful for further applications.  For the past 40 years, since Montgomery's pioneering work on the statistics of the two-point correlation function of the Riemann zeros \cite{montgomery1973pc}, there has been a catalogue of work where a rigorous result on $n$-correlation functions (often called $n$-level densities in number theory literature) of the zeros of an $L$-function or family of $L$-functions is derived and then an attempt is made to match it with either the limiting RMT correlation function (for example \cite{rudnick1996zp,rubinstein2001low,kn:gao05,kn:young,levinson2012n,kn:entrodrud}), or a conjecture derived using a Ratios Conjecture that includes lower order terms (for example \cite{kn:mil07,kn:mil09,kn:Goesetal,kn:milmon,kn:huymilmor}).   For general $n$ this comparison is difficult, and is made more so by the fact that all the rigorous number theoretical results hold only for test functions with restricted support (of their Fourier transform) and by the fact that if comparison is made with the determinantal RMT form, this has nothing like the shape of the number theoretical results.  By test function we mean the function $f$ in the definition of the $n$-level density (\ref{eq:generaln-level}), which is sampled at the positions of an $n$-tuple of zeros.  There is an equivalent test function in the definition of the $n$-level density of eigenvalues, for example in (\ref{thm:orthoNcorr1}), that is sampled at the positions of an $n$-tuple of eigenvalues.  In \cite{conrey2012restricted} Conrey and Snaith demonstrate that in the $U(N)$ random matrix theory case, the form of the $n$-point correlation function resulting from the method using the average of ratios of characteristic polynomials allows for immediate simplification when the support of the test function is restricted - something that is certainly not true of the determinantal form.  This allows them to apply to the $n$-point correlation function for eigenvalues from $U(N)$ the same restriction to the support of the test function that was used by Rudnick and Sarnak in \cite{rudnick1996zp} when looking at the $n$-point correlation function for zeros of a general $L$-function. Then the identical structure of the two expressions reveals that they coincide in their respective asymptotic limits.  This addresses the second question above in the case of the zeros of the Riemann zeta function.  The second question in the case of $SO(2N)$ or $USp(2N)$ matrices is approached in Section \ref{sec:restricted} where it is shown that the form of the random matrix theory $n$-level density given in this paper leads easily to a simpler result when the support of the test function is restricted. It will be the subject of future work to show whether this allows more straightforward confirmation of the limiting random matrix form for rigorous calculations of the $n$-level density for associated families of $L$-functions.

\subsection{Background to the Ratios Conjectures}
\label{sec:background}

The connection between RMT and number theory became apparent in the 1970s when Montgomery \cite{montgomery1973pc}, after a conversation with Dyson \cite{kn:dyson}, conjectured that in the appropriate scaling limit the pair correlation of the zeros of the Riemann zeta function have the same form as the equivalent statistic for eigenvalues of unitary matrices of large dimension drawn at random from the group $U(N)$ endowed with Haar measure.  There followed work, both of an analytical and a numerical nature, that provided more evidence that zeros of the Riemann zeta function high on the critical line and eigenvalues of large random unitary matrices show the same local statistics when both sets of points are scaled to have a mean spacing of unity (see for example \cite{kn:bogkea95,kn:bogkea96,kn:hejhal94,
kn:odlyzko89,rudnick1996zp}).  Even decades later, Montgomery's conjecture is proven only for test functions with restricted support.

Katz and Sarnak \cite{kn:katzsarnak99a,katz1999zz} then considered statistics of zeros of $L$-functions near the critical point (the point at which the real axis crosses the critical line on which the complex zeros are expected to lie).  They predict that in a natural family of $L$-functions these zeros would, when averaged across the family, display the same statistical behaviour as the eigenvalues near 1 of matrices chosen at random with respect to Haar measure from the classical compact groups $U(N)$, $O(N)$ or $USp(2N)$.  As in the case of the Riemann zeta function, this correspondence is expected to occur when the zeros and eigenvalues being considered are scaled to have a mean spacing of unity and in an appropriate asymptotic limit: normally the zero statistics tend to the eigenvalue statistics of large matrices as a parameter intrinsic to the definition of the family tends to infinity.  The correspondence with random matrix theory has been proven for specific families and for test functions with restricted support (see for example \cite{duenez2006low,FouIwa03,kn:gao05,Gul05,HugMil07,HugRud03b,ILS99,Mil04_a,kn:milpec12,ozluk1999distribution,RicottaRoyer09,royer2001petits,rubinstein2001low,kn:young}).

It is a long-standing conjecture that the moments of the Riemann zeta function
 grow asymptotically like
\begin{equation}\label{eq:zetamom}
\frac{1}{T}\int_0^T |\zeta(1/2+it)|^{2\lambda} dt \sim a_\lambda
\frac{g_\lambda}{\Gamma(1+\lambda^2)} (\log T)^{\lambda^2}
\end{equation}
for large $T$, where we have chosen to write the coefficient in this way for ease of comparison later on with random matrix theory.  Here $a_\lambda$ is a product over primes
\begin{eqnarray}
a_\lambda& =  &\prod_p \left(1-\tfrac{1}{p}\right)^{\lambda^2}
\sum_{m=0}^{\infty} \left( \frac{\Gamma(m+\lambda)} {m!
\Gamma(\lambda)} \right)^2 p^{-m}, \label{eq:zetaak}
\end{eqnarray}
but $g_{\lambda}$ is very difficult to calculate.  Moments of the Riemann zeta function have been the subject of study for one hundred years, but still $g_1=1$
\cite{kn:harlitt18} and  $g_2=2$ \cite{kn:ing26} are the only
proven values.  Conjectures have risen from number theoretical methods, for example \cite{kn:congho92,kn:congon,kn:dgh03}, but at the start of this millennium there was no
viable conjecture for any values beyond $k=4$ and random matrix theory provided the only available  method for predicting higher moments.

The characteristic polynomial, $\Lambda_A(s)$, of a random unitary matrix, $A$, is used to model the Riemann
zeta function.  The analogous quantity to (\ref{eq:zetamom}) is

\begin{equation}
\label{eq:momgen} M_N(\lambda)=
\int_{U(N)}|\Lambda_A(1)|^{2\lambda}dA_{\rm Haar},
\end{equation}
where the integration is with respect to Haar measure.
This average can be computed using Selberg's integral (see \cite{kn:mehta3}),
giving
\begin{eqnarray}
\label{eq:M} M_N(\lambda)&=&\prod_{j=1}^{N} \frac{\Gamma(j)
\Gamma(j+2\lambda)}{(\Gamma(j+\lambda))^{2}}\\
\nonumber &\sim& f(\lambda)N^{\lambda^2},
\end{eqnarray}
where the final line is the asymptotic for large $N$.  The Barnes double gamma function \cite{kn:barnes00} can be used
to express $f(\lambda)$ concisely:
\begin{equation}\label{eq:fGbarnes}
f(\lambda)=\frac{(G(1+\lambda))^2}{G(1+2\lambda)}.
\end{equation}

It is conjectured \cite{kn:keasna00a} that
\begin{equation}
f(\lambda)=\frac{g_\lambda}{\Gamma(1+\lambda^2)}.
\end{equation}
This
conjecture agrees with established or independently
conjectured results for $g_{k}$ for $k=1,2,3,4$.

We note that when we compare (\ref{eq:zetamom}) and (\ref{eq:M}) we see $N$ is playing the role of $\log T$, an equivalence that is also observed by equating the density of zeta zeros to the density
of eigenvalues.   This observation is key, allowing comparison of random matrix results
to number theoretic quantities even when the limits $N\rightarrow \infty$ and $T\rightarrow \infty$ have not been reached.

For a family of $L$-functions, a similar quantity is the average, over the family, of the values
of the $L$-functions at the critical point.  If we denote by $\mathcal{F}$ a family of  $L$-functions $L_f(s)$, indexed by $f$, and $\mathcal{F}^*$ is the number of members of the family, then the moment

\begin{equation}
\frac{1}{\mathcal{F}^*} \sum_{f\in \mathcal{F}} (L_f(1/2))^\lambda
\end{equation}
can be modelled by a quantity similar to (\ref{eq:momgen}), only with the average
performed over the appropriate subgroup of $U(N)$ comprised of unitary, orthogonal or symplectic matrices \cite{kn:confar00,kn:keasna00b}: e.g.
\begin{equation}
\int_{SO(2N)}(\Lambda_A(1))^\lambda dA_{Haar}, \:\: A\in SO(2N).
\end{equation}

It was described in \cite{conrey2005integra} that more precise conjectures for the moments of zeta and $L$-functions, including lower-order
terms, can be obtained from the ``shifted" moments.  These are quantities of
the form

\begin{eqnarray}\label{eq:shiftedNT}
&&\frac{1}{T}\int_0^T
\zeta(1/2+it+\alpha_1)\cdots\zeta(1/2+it+\alpha_k)\nonumber
\\
&&\qquad\qquad\times
\zeta(1/2-it-\alpha_{k+1})\cdots\zeta(1/2-it-\alpha_{2k}) dt.
\end{eqnarray}
In that paper Conrey,
Farmer, Keating, Rubinstein and Snaith propose a ``recipe" for conjecturing a precise expression for such moments and for similar averages
over families of $L$-functions.  One argument in support of the validity of these conjectures, is that the expression produced
has an identical structure the analogous (and rigourous) moment calculation in random
matrix theory for a quantity such as \cite{kn:cfkrs1} (see also \cite{kn:brezhik00}):

\begin{eqnarray}\label{eq:shiftedRMT}
&&\int_{U(N)}
\Lambda_A(e^{-\alpha_1})\cdots\Lambda_A(e^{-\alpha_k})
\Lambda_{A^*} (e^{\alpha_{k+1}})\Lambda_{A^*}(e^{\alpha_{2k}})
dA_{\rm Haar}.
\end{eqnarray}
Here $A^*$ is the conjugate transpose of the matrix $A\in U(N)$.

A further generalization was made by Conrey, Farmer and Zirnbauer
to averages of ratios of the Riemann zeta function,
\begin{equation}\label{eq:zetaratio}
\frac{1}{T}\int_0^T
\frac{\prod_{k=1}^K\zeta(1/2+it+\alpha_k)\;\prod_{\ell=K+1}^{K+L}\zeta(
1/2-it-\alpha_{\ell})}
{\prod_{q=1}^Q\zeta(1/2+it+\gamma_q)\;\prod_{r=1}^R\zeta(1/2-it-\delta_r)}dt.
\end{equation}
and to averages of ratios of $L$-functions
\begin{equation}
\frac{1}{\mathcal{F}^*} \sum_{f\in \mathcal{F}} \frac{ \prod_{k=1}^K L_f(1/2+\alpha_k)} {\prod_{q=1}^Q L_f(1/2+\gamma_q)}.
\end{equation}
In \cite{conrey2007autocorrelatio} they develop conjectures for these ratios that are informed by analogous  random matrix ratio calculations
\cite{ConreyPreprint,kn:bumgam06,conrey2005ar}.

The ``recipe" for creating shifted moment or ratio conjectures involves several steps where we neglect terms that may be of the same size as the main term that emerges at the end of the procedure.  The miracle is that these neglected terms appear to cancel out.  The recipe appears to produce a correct conjecture down the the level of the constant term or below.

The main steps of the recipe are as follows; these are reproduced from \cite{conrey2007autocorrelatio}, although somewhat simplified as we don't need their full generality for the families with orthogonal or symplectic symmetry considered here.  For particular cases where a Ratios Conjecture is worked though, see for example \cite{conrey2007applications} or \cite{huynh2009lowe}.

{\bf The recipe:}  Here we assume we have a family $\mathcal{F}$ of $L$-functions, indexed by $f\in \mathcal{F}$.    Each $L$-function in the family has a functional equation of the form (assuming we have normalised so that the critical line has real part 1/2)
\begin{equation}
\mathcal{L}_f(s)=\varepsilon_f \mathcal{X}_f(s) \overline{\mathcal{L}_f}(1-s).
\end{equation}
As described in \cite{conrey2005integra}, we measure the ``size" of an $L$-function by $c(f)=|\mathcal{X}'_f(\tfrac{1}{2})|$.  This is the logarithm of the quantity which for certain families of $L$-functions is commonly referred to as the ``conductor", and by the argument principle $ c(f)/(2\pi)$ gives the density of zeroes near the critical point.  

The $L$-function also has an approximate functional equation of the form
\begin{equation}\label{eq:afe}
\mathcal{L}_f(s) = \sum \frac{a_n(f)}{n^s} +\varepsilon_f \mathcal{X}_f(s) \sum\frac{{\overline{a_n(f)}}}{n^{1-s}} +{\rm remainder},
\end{equation}
for appropriate ranges of summation over $n$.
In addition, for each $L$-function we can write its reciprocal as a Dirichlet series
\begin{equation}\label{eq:rds}
\frac{1}{\mathcal{L}_f(s)} = \sum_{n=1}^\infty \frac{\mu_{f}(n)}{n^s},
\end{equation}
for appropriate coefficients $\mu_{f}(n)$; for example, in the case of the Riemann zeta function the Dirichlet series for the reciprocal of zeta features $\mu(n)$, the M\"obius function, which is multiplicative and is equal to -1 when $n=p$ is prime and is equal to 0 when $n=p^\ell$ with $\ell >1$.

We are interested in understanding the ratio
\begin{equation}
\mathcal{L}_f(s;{\boldsymbol \alpha}_K;{\boldsymbol\gamma}_Q) = \frac{ \mathcal{L}_f(s+\alpha_1)\cdots \mathcal{L}_f(s+\alpha_K)} {\mathcal{L}_f(s+\gamma_1)\cdots \mathcal{L}_f(s+\gamma_Q)},
\end{equation}
averaged over $f\in \mathcal{F}$.  For the $L$-functions we encounter in this paper, $\mathcal{L}_f(s)=\overline{\mathcal{L}_f}(s)$.

The first step of the recipe is to replace each $L$-function in the numerator with the two terms from its approximate functional equation (\ref{eq:afe}), leaving out the remainder term.  Then each $L$-function in the denominator is replaced by the series (\ref{eq:rds}). Multiplying out the $K$ approximate functional equations in the numerator results in $2^K$ terms.  Each of those terms can be written in the form
\begin{equation}
({\rm product \; of\;} \varepsilon_f {\rm \;factors})({\rm product \;of\;} \mathcal{X}_f {\rm \;factors}) \sum_{n_1,\ldots,n_{K+Q}} ({\rm summand}).
\end{equation}
In this expression the recipe indicates we replace each product of $\varepsilon_f$'s by its expected value when averaged over the family.  Similarly, the summand is replaced by its average over the family.  The summations that resulted from the approximate functional equation are over a restricted range of integers.  The next step is to extend the sums to the full range from 1 to infinity.   If the result of these steps is denoted as $M_f(s;\boldsymbol{\alpha}_K;\boldsymbol{\gamma}_Q)$, then the corresponding Ratio Conjecture can be written as
\begin{equation}\label{eq:ratioconjformulation}
\frac{1}{\mathcal{F}^*}\sum_{f\in \mathcal{F}} L_f(\tfrac{1}{2};\boldsymbol{\alpha}_K;\boldsymbol{\gamma}_Q)=\frac{1}{\mathcal{F}^*}\sum_{f\in \mathcal{F}} M_f(\tfrac{1}{2};\boldsymbol{\alpha}_K,\boldsymbol{\gamma}_Q) (1+O(e^{-\delta c(f))})),
\end{equation}
for some $\delta>0$.  The exponent $-\delta c(f)$ in the error term here indicates a power saving over the size of the main term.  There are specific instances in which more is known about this size of this error term for averages of ratios of $L$-functions in various families - these are described in Section \ref{sec:error}, where there is a full discussion of this error term.

In \cite{conrey2008ce} Conrey and Snaith demonstrate how to extract information about the $n$-point correlation function of zeros from the $n$-over-$n$ ratio of the Riemann zeta function.  They use Cauchy's theorem to express a sum over the heights of the zeros of the zeta function, $\gamma_j$,  as
\begin{eqnarray}\label{eq:nfold}
&&\sum_{0<  \gamma_1,\dots ,\gamma_n\leq T}
f(\gamma_1,\dots,\gamma_n)\nonumber
\\
&&\qquad\qquad=\frac{1}{(2\pi i)^n} \int_{\mathcal C}\dots
\int_{\mathcal C} f(-iz_1,\dots,-iz_n)\prod_{j=1}^n
\frac{\zeta'}{\zeta}(1/2+z_j)~dz_1\dots dz_n,
\end{eqnarray}
where $\mathcal C$ is a positively oriented contour which encloses
a subinterval of the imaginary axis from zero to $T$.  The product of logarithmic derivatives of the zeta function can be obtained from a Ratios Conjecture by differentiating with respect to the $\alpha$ parameters in the numerator in an expression like (\ref{eq:zetaratio}). This connection between ratios of $L$-functions and zero statistics is what we draw on in the current paper.  We will not go further into the details of \cite{conrey2008ce} as all steps of the method are covered explicitly in the present paper for matrices or $L$-functions with orthogonal or symplectic symmetry.

We can see this brings us in a full circle back to the statistics of zeros of zeta and $L$-functions, as information about correlations of zeros are encoded in these Ratios
Conjectures.  We will see how this information is extracted in this present paper.  First, in Section \ref{sec:ortho} we will use ratios of characteristic polynomials to derive a new form of the $n$-level density for the orthogonal group $SO(2N)$.  Section \ref{sub:symp} follows, illustrating the same calculation for unitary symplectic matrices in $USp(2N)$.  Then the Ratios Conjectures of Conrey, Farmer and Zirnbauer are used in Sections \ref{sec:zerostatselliptic} and \ref{sec:dir} to conjecture the $n$-level densities for zeros of specific families of $L$-functions.  In Section \ref{sec:restricted} we demonstrate why this new form of the random matrix theory $n$-level density simplifies immediately when the support of the Fourier transform of the test function (the equivalent of $f$ in (\ref{eq:nfold})) is restricted.  Section \ref{app:12} gives examples of 1- and 2-level densities explicitly.

\section{Eigenvalue Statistics of Orthogonal Matrices}
\label{sec:ortho}

Let $X$ be a $2N \times 2N$ matrix with real entries $X = \left(x_{jk}\right)$. We define the transpose matrix, $X^{*} = (x_{kj})$ and call $X$ \emph{orthogonal} if $X{X}^{*} = I$. We let $SO\left(2N\right)$ denote the group of all such orthogonal matrices with $\Det X=1$.  All the eigenvalues of a matrix $X \in SO(2N)$ have absolute value 1 and can be written as $e^{\pm i\theta_1 }, \cdots,
e^{\pm i\theta_N}$ with $0\leq \theta_1, \cdots,
\theta_N \leq \pi $.

The result we are proving in this section is:

\begin{theorem}\label{thm:orthoNcorr1} The $n$-level density for eigenvalues of matrices from $SO(2N)$ can be written as
\begin{align}
\begin{split}
&\int_{SO(2N)}\sum^{N}_{\substack{j_1, \cdots, j_n =1 \\ j_i \neq j_k \forall i,k}}f(\theta_{j_1}, \cdots, \theta_{j_n}) dX\\
&= \frac{1}{(2 \pi i)^n}\sum_{K \cup L \cup M= \{1, \cdots, n\}}{(2N)^{|M|}\left(\int_{0}^{\pi}\right)^n J^{*}(-iz_K \cup i z_L)}\\
& \qquad \qquad \times f(z_1, \cdots, z_n)dz_1 \cdots dz_n,
\end{split}
\end{align}
where $J^{*}(-iz_K\cup i z_L)$ will be defined in (\ref{def:Jstar})  and the sum in the right hand side involving $K$, $L$ and $M$ is over disjoint subsets of $\{1,\ldots,n\}$.
\end{theorem}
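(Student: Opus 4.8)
The plan is to reproduce, for the compact group $SO(2N)$, the strategy that Conrey and Snaith~\cite{conrey2008ce} used for the Riemann zeros. Write $\Lambda_X(s)=\det(I-sX)=\prod_{n=1}^{N}(1-e^{i\theta_n}s)(1-e^{-i\theta_n}s)$ for the characteristic polynomial of $X\in SO(2N)$; its zeros are the $e^{\pm i\theta_n}$ and it satisfies the functional equation $\Lambda_X(1/s)=s^{-2N}\Lambda_X(s)$, with $2N$ playing the role of the log-conductor. By Cauchy's theorem, $\sum_j g(\theta_j)$ is recovered by integrating $g(-iz)$ against $\frac{d}{dz}\log\Lambda_X(e^{z})$ around a contour $\mathcal C$ enclosing the segment $i[0,\pi]$ of the imaginary axis (with due care that each conjugate pair of eigenvalues is counted once). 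Iterating $n$ times gives an $n$-fold contour integral of $f(-iz_1,\dots,-iz_n)\prod_{k=1}^{n}\frac{d}{dz_k}\log\Lambda_X(e^{z_k})$; the restriction $j_i\neq j_k$ is then imposed either by inclusion--exclusion over set partitions of $\{1,\dots,n\}$ (whose blocks collapse contour variables onto a common zero and lower the dimension) or, equivalently, by choosing the $n$ contours so that coincidences of zeros are not enclosed.

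Next I would realise each logarithmic derivative via $\frac{\Lambda_X'}{\Lambda_X}(e^{-\alpha})=\lim_{\gamma\to\alpha}\frac{\partial}{\partial\alpha}\log\frac{\Lambda_X(e^{-\alpha})}{\Lambda_X(e^{-\gamma})}$, so that the group average of the product of logarithmic derivatives is extracted from the known closed form for $\int_{SO(2N)}\prod_k\frac{\Lambda_X(e^{-\alpha_k})}{\Lambda_X(e^{-\gamma_k})}\,dX$ (the random-matrix ratios theorem, cf.~\cite{conrey2005ar}). The essential structural feature of that formula is that it is a sum over subsets $S\subseteq\{1,\dots,n\}$ of ``swapped'' indices: for $k\in S$ the functional equation has been used, contributing a conductor factor proportional to $e^{-2N\alpha_k}$ together with a reflection $\alpha_k\mapsto-\alpha_k$, while for $k\notin S$ the variable appears unchanged. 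Differentiating in each $\alpha_k$ and taking the limit $\gamma_k\to\alpha_k$ offers, for every swapped index, a binary choice: either the derivative lands on the conductor factor, releasing a factor $2N$, or it does not. Finally, deforming the $z_k$-contours outward to collect the poles of the remaining ``zeta-like'' factors replaces the sum over zeros by a continuous integral over $\theta\in[0,\pi]$ (the mean eigenvalue density on $SO(2N)$), and translates the three possibilities --- unswapped, swapped with the derivative off the conductor, swapped with the derivative on the conductor --- into membership of $K$, $L$ and $M$. This is exactly the partition $K\cup L\cup M=\{1,\dots,n\}$ of the statement: the $-iz_K$ versus $iz_L$ arguments record the reflection, the $M$-indices contribute the prefactor $(2N)^{|M|}$ and do not appear inside $J^{*}$, and $J^{*}(-iz_K\cup iz_L)$ is then defined in~(\ref{def:Jstar}) as the corresponding sum of residues of the averaged integrand.

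The step I expect to be the main obstacle is the $n$-dimensional residue bookkeeping. After substituting the closed form for the average one must justify pushing the nested contours $\mathcal C$ through the poles of the averaged integrand, identify precisely which poles are crossed, and check that the combinatorial weights produced by the inclusion--exclusion over set partitions recombine with the swap sum so that only the clean three-way split survives --- in particular that the collapsed ``diagonal'' contributions either cancel or coincide with the $M$-terms. A related technical point is analytic continuation: the ratios formula for $SO(2N)$ is initially valid only for shifts in a restricted region, so it must be continued to the range of $z_k$ needed before the contours are moved, with a check that the continued expression is still a sum of the stated type. Granting these points, collecting terms and relabelling yields Theorem~\ref{thm:orthoNcorr1}.
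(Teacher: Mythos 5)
Your overall strategy --- Cauchy's theorem to turn the eigenangle sum into contour integrals of $\prod_k\frac{\Lambda_X'}{\Lambda_X}$, the random-matrix ratios theorem to evaluate the group average, then contour deformation --- is indeed the paper's. But there are two genuine gaps.

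First, the decomposition $K\cup L\cup M$ does not arise from the swap sum over $D\subseteq A$ in the ratios formula, as you describe. It is set up \emph{before} the ratios theorem is invoked: the rectangular contour splits into its two vertical sides $C_+$ and $C_-$ (giving $K$ versus $L\cup M$), and on $C_-$ the differentiated functional equation $-e^{-z}\frac{\Lambda_X'}{\Lambda_X}(e^{-z})=-2N+e^{z}\frac{\Lambda_X'}{\Lambda_X}(e^{z})$ is used; choosing the $-2N$ term puts an index in $M$ and produces $(2N)^{|M|}$, while the other term reflects the argument and puts the index in $L$. The swap sum over $D$ survives as a separate, inner sum inside $J^*$ itself (see (\ref{def:Jstar})); conflating the two layers would lose one of the two combinatorial structures present in the final answer.

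Second --- and this is the decisive gap --- you leave unresolved exactly the step that constitutes the proof: how the restriction $j_i\neq j_k$ is obtained. Imposing it upfront ``by inclusion--exclusion over set partitions'' is not how the argument runs, and you give no reason why the collapsed diagonal contributions should ``cancel or coincide with the $M$-terms.'' In the paper the \emph{unrestricted} sum is kept throughout; distinctness emerges from a double induction (on $n$ and on the number $R$ of contours still off the imaginary axis, Theorem \ref{thm:orthomain}). Each time a contour is moved onto the axis it crosses poles of $J^*(z_K\cup -z_L)$ at $z_S=\pm z_t$, contributing half-residues by Sokhotski--Plemelj; the residue formula $\Res_{\alpha^*=-\beta^*}J^*(A)=J^*(A'\cup\{\beta^*\})+J^*(A'\cup\{-\beta^*\})+2N\,J^*(A')$ of Theorem \ref{thm:residuelemma} reproduces the same $K,L,M$ structure in one fewer variable, and these terms are identified, via the inductive hypothesis, with precisely the diagonal contributions in which $\theta_{j_S}=\theta_{j_t}$; subtracting them converts $\sum^{n,R}$ into $\sum^{n,R-1}$. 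One also needs $\Res_{\alpha^*=0}J^*(A)=0$ so the symmetry point contributes nothing, and a final check that the symmetrized sum of $J^*$'s has no poles on the axis of integration. Without establishing this residue structure and carrying out the induction, ``collecting terms and relabelling'' does not yield the theorem.
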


\subsection{Integral Theorem}\label{sub:integral}

In this section we will express the $n$-level density of eigenvalues of matrices from $SO(2N)$ as contour integrals on the complex plane.  We will see in the subsequent sections that moving the contours onto the imaginary axis (or, by a change of variables, the real axis) to arrive at Theorem \ref{thm:orthoNcorr1} is non-trivial and will take some work.

\begin{theorem}[Integral theorem for $SO(2N)$ matrices]\label{thm:RMortho1}
Let $C_{-}$ denote the path from $ - \delta - \pi i$ up to $- \delta + \pi i$ and let $C_{+}$
denote the path from $  \delta - \pi i$ up to $ \delta + \pi i$. Let $f$ be a $2\pi$-periodic,
holomorphic function of $n$ variables such that
\begin{align}
f\left(\theta_{j_1}, \cdots, \theta_{j_n}\right) = f\left(\pm \theta_{j_1}, \cdots, \pm
\theta_{j_n}\right)
\end{align}

Then
\begin{align}\label{nostar}
\begin{split}
2^n &\int_{SO\left(2N\right)} {\sum_{j_1, \cdots, j_n =1}^N {f\left(\theta_{j_1}, \cdots,
\theta_{j_n}\right) dX_{SO(2N)}} } \\
&= \frac{1}{\left(2 \pi i \right)^n} \sum_{K \cup L \cup M = \left\{1, \cdots,
n\right\}}{(2N) ^{\left|M\right|}} \\
& \qquad \times \int_{C_+^K} {\int_{C_-^{L \cup M}}{J\left(z_K \cup - z_L\right)f\left(iz_1, \cdots,
iz_n\right)dz_1 \cdots d z_n}}
\end{split}
\end{align}
where $z_K = \left\{z_k: k \in K\right\}$ and $-z_L = \left\{-z_l: l \in L\right\}$,
$\int_{C_+^K} {\int_{C_-^{L \cup M}}}$ means we are integrating all the variables in $z_K$ along the
$C_+$ path and all others up the $C_-$ path and \\
\begin{align} \label{def:J}
J\left(A\right) = \int_{SO(2N)}{\prod_{\alpha \in
A}{(-e^{-\alpha})\frac{\Lambda_X^{'}}{\Lambda_X}(e^{-\alpha})} dX_{SO(2N)}}.
\end{align}

Here $\Lambda_X(e^{\alpha})=\det(I-e^{\alpha}X^*)$ is the characteristic polynomial of $X$; $K,L,M$ are finite sets of distinct integers; $A$ is a finite set of complex numbers; and $dX_{SO(2N)}$ indicates integration with respect to the Haar measure.
\end{theorem}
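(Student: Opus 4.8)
The plan is to start from the left-hand side and apply Cauchy's theorem to convert each sum over eigenangles into a contour integral of the logarithmic derivative of the characteristic polynomial, exactly as in the Riemann zeta case \eqref{eq:nfold}. The eigenvalues of $X\in SO(2N)$ come in conjugate pairs $e^{\pm i\theta_j}$, so the zeros of $\Lambda_X(e^{-\alpha})=\det(I-e^{-\alpha}X^*)$, viewed as a function of $\alpha$, sit at $\alpha=\pm i\theta_j$ (plus $2\pi i$-translates). Thus for a single variable one has $\sum_{j=1}^N g(\theta_j)+g(-\theta_j)$ (together with the contributions of any eigenvalue at $\pm 1$) expressible as $\frac{1}{2\pi i}\oint (-e^{-\alpha})\frac{\Lambda_X'}{\Lambda_X}(e^{-\alpha}) g(\alpha/i)\,d\alpha$, where the contour encircles the segment of the imaginary axis from $-\pi i$ to $\pi i$. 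Writing this encircling contour as the difference of the two vertical paths $C_+$ (at real part $\delta$, traversed upward) and $C_-$ (at real part $-\delta$, traversed upward) accounts for the $C_+$ and $C_-$ in the statement; the $2\pi i$-periodicity of $f$ makes the horizontal pieces of the contour cancel.

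The key steps, in order, are: (1) Fix $X$ and expand the product $\prod_{m=1}^n\bigl[(-e^{-\alpha_m})\frac{\Lambda_X'}{\Lambda_X}(e^{-\alpha_m})\bigr]$; applying Cauchy's theorem in each variable $\alpha_m=iz_m$ picks up, for each index, either a $+$-type eigenangle or a $-$-type eigenangle, which is why the outer sum runs over ordered pairs of disjoint sets; but the $n$-fold contour also picks up the pole structure at the origin. (2) Observe that $\frac{\Lambda_X'}{\Lambda_X}(e^{-\alpha})$ has a simple pole with a computable residue whenever $\alpha\to 0$ only if $X$ has $\pm 1$ as an eigenvalue; since $X\in SO(2N)$ generically does not, the correct bookkeeping is subtler — the factor $(2N)^{|M|}$ arises from the set $M$ of indices whose contour is "collapsed" onto the origin, each contributing the trace-type constant $2N=\dim$, i.e. the value $\sum_j (1+1)=2N$ coming from the $e^{\pm i\theta_j}\to$ behaviour near $\alpha=0$ after one moves $C_-$ across the origin. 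More precisely, for the indices in $M$ one shifts the $C_-^M$ contours past $0$ and the residue at $\alpha_m=0$ of $(-e^{-\alpha_m})\frac{\Lambda_X'}{\Lambda_X}(e^{-\alpha_m})$ is exactly $-\Lambda_X'(1)/\Lambda_X(1)\cdot(\cdots)$; integrating $1/(2\pi i)$ of the periodic difference $C_+-C_-$ and tracking the combinatorics gives the factor $(2N)^{|M|}$. (3) Pull the $X$-integration inside, which turns $\prod_{m\in K\cup L}(-e^{\mp\alpha_m})\frac{\Lambda_X'}{\Lambda_X}(e^{\mp\alpha_m})$ into $J(z_K\cup -z_L)$ by the definition \eqref{def:J}. (4) Account for the normalising power of $2$: each of the $n$ eigenangle sums has been symmetrised over $\theta\mapsto-\theta$, which by the hypothesis $f(\theta_{j_1},\dots)=f(\pm\theta_{j_1},\dots)$ multiplies the left side by $2^n$, matching the prefactor in \eqref{nostar}.

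The main obstacle I expect is step (2): correctly handling the contour manipulation near the origin and the eigenvalues $\pm1$. One must be careful that $C_+$ and $C_-$ straddle the imaginary axis with $0<\delta$ small but fixed, so that no $\pm i\theta_j$ with $\theta_j\in(0,\pi)$ lies between them, while the possible eigenvalue locations $\theta_j=0$ or $\theta_j=\pi$ (i.e. eigenvalue $+1$ or $-1$) produce zeros of $\Lambda_X$ exactly on the imaginary axis; these are measure-zero events in $SO(2N)$ but still need a limiting/regularisation argument, or an argument that the Haar integral is insensitive to them. The emergence of the clean constant $2N$ (rather than an $X$-dependent quantity) for each collapsed index, and the verification that the combinatorial sum over $K\cup L\cup M$ with weight $(2N)^{|M|}$ is exactly what the residue calculus produces, is the crux; I would do the $n=1$ case in full first to pin down signs and the orientation conventions of $C_\pm$, then induct on $n$ by peeling off one variable at a time.
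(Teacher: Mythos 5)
Your overall scaffolding is right — Cauchy's theorem applied to $g(z)=\Lambda_X(e^z)$, a rectangular contour around a period of the imaginary axis whose horizontal edges cancel by periodicity, the split of $\oint_C$ into $\int_{C_+}-\int_{C_-}$, the factor $2^n$ from the contour enclosing both $+i\theta_j$ and $-i\theta_j$, and then averaging over $X$ — and all of that matches the paper. But your step (2), which is where the factor $(2N)^{|M|}$ has to come from, is wrong in its mechanism. The function $(-e^{-\alpha})\tfrac{\Lambda_X'}{\Lambda_X}(e^{-\alpha})$ has poles only where $\Lambda_X(e^{-\alpha})=0$, i.e.\ at $\alpha=\mp i\theta_j+2\pi i m$; it has no pole at $\alpha=0$ unless $X$ happens to have eigenvalue $1$, which is a Haar-measure-zero event. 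So ``collapsing the $C_-^M$ contours onto the origin'' and extracting a residue there would generically produce $0$, not $2N$, and no limiting or regularisation argument rescues this: there is simply no pole to pick up. Moreover, in this theorem no contour is moved at all; the shifting of contours onto the imaginary axis (and the attendant Sokhotski--Plemelj half-residues) is a separate, later step of the paper's argument, not part of the present statement.

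The actual source of $(2N)^{|M|}$ is the functional equation $\Lambda_X(e^z)=(e^z)^{2N}\Lambda_X(e^{-z})$. Logarithmic differentiation gives the pointwise algebraic identity
\begin{equation*}
-e^{-\alpha}\frac{\Lambda_X'}{\Lambda_X}(e^{-\alpha})\;=\;-2N+e^{\alpha}\frac{\Lambda_X'}{\Lambda_X}(e^{\alpha}),
\end{equation*}
valid for every $\alpha$ with no residue calculus involved. One substitutes this for each variable lying on $C_-$; expanding the resulting product of binomials splits the $C_-$ indices into a set $L$ (which keeps the logarithmic-derivative term, now evaluated at $e^{+z_j}$, whence the $-z_L$ in $J(z_K\cup -z_L)$) and a set $M$ (which keeps the constant $-2N$). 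The sign $(-1)^{|L\cup M|}$ from orienting $C_-$ against $C_+$ combines with the $(-2N)^{|M|}$ to give exactly $(2N)^{|M|}$. Your proof as written cannot produce this factor, so you need to replace the origin-residue argument with the functional-equation substitution; once that is done, your remaining steps (averaging over $X$ to form $J$, and the $2^n$ symmetrisation) go through as you describe.
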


Note that the sum over the eigenangles in \eqref{nostar} is not restricted to a sum over distinct indices. These extra terms, which do not figure in Theorem \ref{thm:orthoNcorr1},  will cancel out the residues caused by moving the contours onto the imaginary axis in Section~\ref{sub:ncorr}.

$X$ is an orthogonal matrix, so it has $2N$ eigenvalues $e^{\pm i \theta_1 }, \ldots, e^{\pm i\theta_N }$ such that $0 \leq \theta_1, \ldots, \theta_N \leq \pi$.  The proof follows by considering $g(z) = \Lambda_X(e^z)$, the characteristic polynomial of $X$. Using Cauchy's theorem we can show that
\begin{align}
\sum_{j_1, \cdots, j_n =1}^{N}{f\left(\theta_{j_1}, \cdots, \theta_{j_n}\right)} = \frac{1}{(2\pi
i)^N} \int_{C^n}{\frac{g^{'}}{g}(z_1, \cdots, z_n)f(\frac{z_1}{i}, \cdots \frac{z_n}{i})dz_1 \cdots
dz_n}
\end{align}
where $C$ is a positively oriented rectangular contour around a interval of the imaginary axis of
length $2\pi$ e.g. with corners $\pm i\pi \pm \delta$. By the periodicity of $f$, the horizontal
integrals cancel and we
use the functional equation and a change of variables from $z \rightarrow -z$ to get the result.
\begin{proof}[Proof of Theorem \ref{thm:RMortho1}]
The characteristic polynomial of $X$, \begin{align}
g \left(z \right)&= \Lambda_X\left(e^z\right)\\
  &=\prod_{j=1}^{N}{(1-e^ze^{i \theta_j})(1-e^ze^{- i \theta_j})},
\end{align}
has zeros at $z_j = \pm i \theta_j + 2 \pi m i, m \in \textbf{Z}, 1\leq j\leq N$.  Then by
Cauchy's Theorem $\frac{g^{'}\left(z\right)}{g\left(z\right)} f\left(\frac{z}{i}\right)$ has poles
at $z_j = \pm i \theta_j + 2 \pi m i$ with residue
\begin{align}
\frac{{g^{'} \left({z_j}\right)}}{{g^{'}\left({z_j}\right)}} f\left(\frac{z_j}{i}\right)= f\left(\frac{z_j}{i}\right)= f(\theta_j).
\end{align}
\\

So let $C$ be the positively-orientated rectangle with vertices $\pm \delta \pm \pi i$, where
$\delta$ is a small positive number. We can express the sum
\begin{align}
\begin{split}
2^n &\sum_{j_1, \cdots , j_n=1}^{N}{f\left(\theta_{j_1}, \cdots, \theta_{j_n}\right)}\\
&=\sum_{j_1, \cdots, j_n =1}^{N}\sum_{\epsilon \in \{1,-1\}^n}{f\left(\epsilon_1 \theta_{ j_1}, \cdots, \epsilon_n \theta_{j_n}\right)}
\end{split}\\
&= \left(\frac{1}{2 \pi i}\right)^n \int_{C} \cdots \int_{C} \prod_{j=1}^{n}{\frac{g^{'}\left(z_j\right)}{g\left(z_j\right)} f\left(\frac{z_1}{i}, \cdots,\frac{z_n}{i}\right) d z_1 \cdots d z_n}.
\end{align}

The $2^{n}$ term in front of the sum comes from counting all the possible combinations of $+\theta_i$ and $-\theta_j$. \\

We average this over $X \in SO \left(2N\right)$, and apply a change of variable from $z_j
\rightarrow -z_j$. Let $dG$ denote $f\left(iz_1, \cdots, iz_n\right) d z_1, \cdots dz_n$
\begin{align}
\begin{split}
2^n &\int_{SO\left(2N\right)} {\sum_{j_1, \cdots, j_n =1}^N {f\left(\theta_{j_1}, \cdots,
\theta_{j_n}\right) dX} }\\
&= \frac{1}{\left(2 \pi i\right)^n} \int_{C^n} {J\left(\{z_1, \cdots, z_n\}\right) dG}
\end{split}
\end{align}
where
\begin{align}
J\left(A\right) = \int_{SO\left(2N\right)}{\prod_{\alpha \in A}
{-e^{-\alpha}\frac{\Lambda^{'}_X}{\Lambda_{X}}\left(e^{-\alpha}\right) dX}}. \label{def:Jortho}
\end{align}

By the periodicity of the function $f$, the horizontal segments of the contour cancel. Only the
vertical paths need to be considered.
\begin{align}
&\int_{C^n} {J\left(\left\{z_1, \cdots, z_n \right\}\right) dG}= \left( \int_{C_+}{} - \int_{C_-}{}\right)^n J\left(\left\{z_1, \cdots, z_n\right\}\right)dG.
\end{align}

So the expression is a sum of $2^n$ terms, where each term is a $n$-fold integral each variable
being integrated over $C_-$ or $C_+$. Another way to write this is
\begin{align}
\begin{split}
&\int_{C^n} {J\left(\left\{z_1, \cdots, z_n\right\}\right) dG}\\
&= \sum_{K\cup L = \left\{1, \cdots, n\right\}}{(-1)^{\left|L\right|}\int_{C_+^K} \int_{C_-^L}{J\left(\{z_1, \cdots,
z_n\}\right)dG}}
\end{split}\\
\begin{split}
&= \int_{SO\left(2N\right)} \sum_{K \cup L = \left\{1, \cdots, n\right\}}{(-1)^{\left|L\right|}\prod_{j \in K}
{\int_{C_+}{-e^{-z_j}\frac{\Lambda^{'}_X}{\Lambda_X} \left(e^{-z_j}\right)}}} \\
& \qquad \times \prod_{j \in L}{\int_{C_-}{-e^{-z_j}\frac{\Lambda^{'}_X}{\Lambda_X} \left(e^{-z_j}\right) dG dX}}
\end{split}
\end{align}
where the sum over $K \cup L$ is a sum over disjoint sets.

For each variable $z_j$ on $C_-$, we replace
$-e^{-\alpha}\frac{\Lambda^{'}_X}{\Lambda_{X}}\left(e^{-\alpha}\right)$ with $ -2N +
e^{\alpha}\frac{\Lambda^{'}_X}{\Lambda_{X}}\left(e^{\alpha}\right)$ (obtained by differentiating the functional equation $\Lambda_X(e^z)=(e^z)^{2N}\Lambda_X(e^{-z})$).
\begin{align}
\begin{split}
&\int_{C^n} {J\left(\left\{z_1, \cdots, z_n\right\}\right) dG}\\
&= \int_{SO\left(2N\right)}{ \sum_{\substack {K\cup L  \\ =\left\{1, \cdots, n\right\}}}{(-1)^{\left|L\right|}\prod_{j \in K}{\int_{C_+}{-e^{-z_j}\frac{\Lambda^{'}_X}{\Lambda_X} \left(e^{-z_j}\right)}}}} \\
& \qquad \times\prod_{j \in L} {\int_{C_-}{ \left(-2N+ e^{z_j}\frac{\Lambda^{'}_X}{\Lambda_X} \left(e^{z_j}\right)\right)dG}}
\end{split}\\
\begin{split}
&= \int_{SO\left(2N\right)} {\sum_{\substack{ K \cup L \cup M \\ = \left\{1, \cdots, n\right\}}}{(-1)^{\left|L \cup M\right|}\prod_{j \in K}{\int_{C_+}{-e^{-z_j}\frac{\Lambda^{'}_X}{\Lambda_X} \left(e^{-z_j}\right)}}}} \\
& \qquad \times \prod_{j \in L}
{\int_{C_{-}}{ \left( e^{z_j}\frac{\Lambda^{'}_X}{\Lambda_X} \left(e^{z_j}\right)\right)}} \prod_{j\in M} \int_{C_{-}} \left(-2N\right) dG dX
\end{split} \\
&= \sum_{\substack{K \cup L \cup M \\ = \left\{1, \cdots, n\right\}}}{ (2N)^{\left|M\right|} \int_{C_+^K}{\int_{C_-^{L \cup M}}{J\left(\{z_K \cup-z_L\}\right)dG}}}
\end{align}
where the sums over $K \cup L \cup M$ are sums over disjoint sets.

By the definition of $J(A)$ in \eqref{def:Jortho} this gives us the required result.
\end{proof}

We can then use the Ratios Theorem to find alternative ways of expressing $J\left(A\right)$.

\subsection{Ratios Theorem} \label{sub:ratios}

Here we write down the average of ratios of characteristic polynomials of the even orthogonal group. The following form is rewritten into set notation from the result of Conrey, Forrester and Snaith \cite{conrey2005ar} - they credit the original result to a preprint by Conrey, Farmer and Zirnbauer \cite{ConreyPreprint}.

Take finite sets $A$ and $B$ of complex numbers. Let $N \geq |B|$ and $\Re(\beta) >0 \forall \beta \in B$ and consider
\begin{align} \label{def:orthoR}
R(A;B) &= \int_{SO(2N)}{\frac{\prod_{\alpha \in A}{\Lambda_{X}(e^{-\alpha})}}{\prod_{\beta \in
B}{\Lambda_{X}(e^{- \beta})}}dX}\\
 &= \sum_{D \subseteq A}{e^{-2N\sum\limits_{\delta \in D}{\delta}}\sqrt{\frac{Z(D^-\cup (A \backslash D),D^-\cup (A \backslash D))Z(B,B)Y(B)}{Z(D^-\cup (A \backslash D),
B)^2Y(A \backslash D)Y(D^-)}}}
\end{align}
where $D^{-}=\{-\alpha: \alpha \in D\}$, $A \backslash D=\{\alpha \in A, \alpha \not\in D\}$ and
\begin{align}
z(x) &= \frac{1}{1-e^{-x}},\\
Y(A)&= \prod_{\alpha \in A}{z(2\alpha)}\label{def:orthoY},\\
Z(A,B) &= \prod_{\substack{\alpha \in A \\ \beta \in B}}{z(\alpha + \beta)}\label{def:orthoZ}
\end{align}

We can express $J(A)$ as defined in (\ref{def:Jortho}) in terms of $R(A;B)$
\begin{align}
J(A) = \left.{\prod_{\alpha \in A}{\frac{d}{d\alpha} R(A; B)}}\right|_{B=A}.
\end{align}

So by differentiating the
Ratios Theorem we can obtain a theorem about averages of logarithmic derivatives. \\

\begin{theorem}\label{thm:ortho2}

Let $A$ be a finite set of complex numbers where $\Re(\alpha)>0$ for $\alpha \in A$ and $|A| \leq N$, then
$J(A) = J^{*}(A)$ where
\begin{align}
J(A) &= \int\limits_{SO(2N)}{\prod_{\alpha \in A}{(-e^{-\alpha})\frac{\Lambda_X^{'}}{\Lambda_X}(e^{-\alpha})} dX}\\
\begin{split}
J^*(A) &= \sum_{D\subseteq A}{e^{-2N \sum\limits_{\delta \in D}{\delta}}(-1)^{|D|} \sqrt{\frac{Z(D,D)Z(D^{-},D^{-})Y(D)}{Y(D^{-})Z^{\dag}(D^{-},D)^2}}}\label{def:Jstar}\\
& \qquad \times \sum_{\substack{A \backslash D = W_1 \cup \cdots \cup W_R \\ |W_r| \leq 2}}{\prod_{r=1}^{R}{H_D(W_r)}}
\end{split}
\end{align}
where the sum over $W_i$ is a sum over all distinct set partitions of $A\backslash D$.
\begin{align} \label{def:HD}
H_D(W) =  \begin{cases}
\left( \sum\limits_{\delta \in D}{\frac{z^{'}(\alpha -\delta)}{z(\alpha - \delta)}-\frac{z^{'}(\alpha +\delta)}{z\left( \alpha + \delta\right)}}\right) - \frac{z^{'}}{z}(2\alpha) & W = \{\alpha\} \subset A \backslash D\\
\left(\frac{z^{'}}{z}\right)^{'}(\alpha + \widehat{\alpha}) & W= \{\alpha, \widehat{\alpha}\} \subset A \backslash D\\
1 & W= \emptyset
    \end{cases}
\end{align}
and
\begin{align}
z(x) &= \frac{1}{1-e^{-x}},\\
Y(A)&= \prod_{\alpha \in A}{z(2\alpha)}, \\
Z(A,B) &= \prod_{\substack{\alpha \in A \\ \beta \in B}}{z(\alpha + \beta)}. \label{def:Z2}
\end{align}
The dagger adds a
restriction that a factor $z(x)$ is omitted if its argument is zero.
\end{theorem}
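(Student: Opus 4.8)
\smallskip
\noindent\emph{Proof plan.}
The starting point is the identity $J(A)=\big(\prod_{\alpha\in A}\tfrac{d}{d\alpha}R(A;B)\big)\big|_{B=A}$ recorded just above, into which I substitute the closed form \eqref{def:orthoR} for $R(A;B)$ and then differentiate term by term over the subsets $D\subseteq A$. Fix $D$, put $E=D^{-}\cup(A\setminus D)$, and set
\[
Q_D(A;B)=e^{-2N\sum_{\delta\in D}\delta}\,\sqrt{\mathcal R_D(A;B)},\qquad \mathcal R_D(A;B)=\frac{Z(E,E)\,Z(B,B)\,Y(B)}{Z(E,B)^2\,Y(A\setminus D)\,Y(D^-)},
\]
so $R(A;B)=\sum_{D\subseteq A}Q_D(A;B)$. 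Treating $A$ and $B$ as independent, the first task is to locate the factors of $Q_D$ that degenerate when $B$ is set equal to $A$. Since $z(x)^{-1}=1-e^{-x}$, the only factors of $Z(E,B)^{-1}$ (the contribution of $Z(E,B)^{-2}$ to the square root) that vanish at $B=A$ are, for each $\delta\in D$, the factor $z(-\delta+\beta_\delta)^{-1}=1-e^{\delta-\beta_\delta}$, where $\beta_\delta$ is the entry of $B$ that becomes $\delta$; every other factor of $\mathcal R_D$ is holomorphic and nonzero near $B=A$, and the degenerate factors occur to an even power, so on the relevant branch $\sqrt{\mathcal R_D}=\prod_{\delta\in D}(1-e^{\delta-\beta_\delta})\cdot S_D(A;B)$ with $S_D$ holomorphic and non-vanishing there. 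Cancelling in $S_D^2\big|_{B=A}$ the blocks of $Z$ and $Y$ that involve $A\setminus D$ leaves
\[
S_D(A;B)\big|_{B=A}=\sqrt{\frac{Z(D,D)\,Z(D^-,D^-)\,Y(D)}{Y(D^-)\,Z^{\dagger}(D^-,D)^2}},
\]
which is exactly the radical in \eqref{def:Jstar}; the dagger appears precisely because the degenerate factors of $Z(E,B)$ have been divided out.

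Next I apply the Leibniz rule to $\prod_{\alpha\in A}\tfrac{d}{d\alpha}Q_D=\prod_{\delta\in D}\tfrac{d}{d\delta}\,\prod_{\alpha\in A\setminus D}\tfrac{d}{d\alpha}\,Q_D$ and let $B\to A$. Each factor $1-e^{\delta-\beta_\delta}$ vanishes at $B=A$ and, among the differentiated variables, depends only on $\delta$; hence a term of the Leibniz expansion survives the limit only if for every $\delta\in D$ the derivative $\tfrac{d}{d\delta}$ falls on that factor. This assignment is therefore forced, it contributes $\prod_{\delta\in D}\tfrac{d}{d\delta}(1-e^{\delta-\beta_\delta})\big|_{B=A}=\prod_{\delta\in D}(-e^{\delta-\beta_\delta})\big|_{B=A}=(-1)^{|D|}$, and it leaves $e^{-2N\sum_{\delta\in D}\delta}$ and $S_D$ to be differentiated only by the remaining $\tfrac{d}{d\alpha}$, $\alpha\in A\setminus D$ — which is why $J^*(A)$ carries a factor $(-1)^{|D|}$ and no powers of $2N$, in contrast with the integral theorem. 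Thus
\[
\Big(\prod_{\alpha\in A}\tfrac{d}{d\alpha}Q_D\Big)\Big|_{B=A}=(-1)^{|D|}e^{-2N\sum_{\delta\in D}\delta}\Big(\prod_{\alpha\in A\setminus D}\tfrac{d}{d\alpha}S_D\Big)\Big|_{B=A}.
\]

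To evaluate the last factor I write $S_D=\exp(\log S_D)$ and expand by the exponential form of the Leibniz rule, obtaining $S_D|_{B=A}$ times the sum over set partitions $A\setminus D=W_1\sqcup\cdots\sqcup W_R$ of $\prod_r\big(\prod_{\alpha\in W_r}\tfrac{d}{d\alpha}\log S_D\big)\big|_{B=A}$. Since $\log S_D$ is a sum of terms $\log z(\,\cdot\,)$ whose arguments are linear in at most two of the variables of $A\setminus D$, a block $W_r$ with $|W_r|\ge 3$ contributes $0$, and only partitions into singletons and pairs survive. A direct computation then shows that the numerous contributions of $Z(E,E)$ and of $Z(E,B)$ (differentiated first, then evaluated at $B=A$) telescope: a singleton $\{\alpha\}$ contributes $\big(\sum_{\delta\in D}\big(\tfrac{z'}{z}(\alpha-\delta)-\tfrac{z'}{z}(\alpha+\delta)\big)\big)-\tfrac{z'}{z}(2\alpha)$, the last term being the surviving contribution of $Y(A\setminus D)$, and a pair $\{\alpha,\widehat\alpha\}$ contributes $(\tfrac{z'}{z})'(\alpha+\widehat\alpha)$, since after $B=A$ only $Z(E,E)$ still carries the factor $z(\alpha+\widehat\alpha)$ while a second derivative of $\log Z(E,B)$ in two distinct $A\setminus D$-variables vanishes identically. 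These are precisely the cases of $H_D(W)$ in \eqref{def:HD}, so $\big(\prod_{\alpha\in A\setminus D}\tfrac{d}{d\alpha}S_D\big)\big|_{B=A}=S_D|_{B=A}\sum_{A\setminus D=W_1\sqcup\cdots\sqcup W_R,\ |W_r|\le 2}\prod_r H_D(W_r)$. Summing the contributions of all $D\subseteq A$ reproduces \eqref{def:Jstar}; the hypotheses $\Re(\alpha)>0$ and $|A|\le N$ are exactly what \eqref{def:orthoR} requires along a path $B\to A$ inside its region of validity, and differentiation under the Haar integral is legitimate because $SO(2N)$ is compact.

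The step I expect to be the main obstacle is the telescoping just described: once the degenerate factors have been stripped off and $B$ identified with $A$, one must track precisely which $z$-factors of $S_D$ depend on a prescribed $\alpha\in A\setminus D$, or on a prescribed pair $\{\alpha,\widehat\alpha\}$, so that the first and second derivatives arising from $Z(E,E)$, $Z(E,B)$, $Y(A\setminus D)$ and the other $Y$-factors collapse to the compact expressions $H_D(\{\alpha\})$ and $H_D(\{\alpha,\widehat\alpha\})$. Both the appearance of the dagger and the absence of $2N$-powers in $J^*(A)$ hinge on getting this degeneracy-and-cancellation analysis exactly right; everything else is bookkeeping.
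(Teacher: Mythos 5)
Your proposal is correct and follows essentially the same route the paper intends: differentiate the Ratios Theorem term by term over $D\subseteq A$, force each $\tfrac{d}{d\delta}$, $\delta\in D$, onto the factor $1-e^{\delta-\beta_\delta}$ that vanishes at $B=A$ (yielding the $(-1)^{|D|}$ and the dagger), and handle the remaining $A\setminus D$ derivatives by logarithmic differentiation, which produces the sum over partitions into blocks of size at most two with the stated $H_D$ values. The paper omits these details precisely because they are a simpler instance of its proof of Theorem~\ref{LFJconj}, and your computation of the surviving radical and of $H_D(\{\alpha\})$, $H_D(\{\alpha,\widehat\alpha\})$ agrees with that argument.
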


The main steps in the proof are pulling the differentiation inside the sum over subsets $D \subseteq A$, and then separating the differentiations in cases by whether $\alpha \in D$ or $\alpha \not\in D$. The differentiation by $\alpha \in D$ are relatively straightforward, but to do the remaining differentiations we need to use logarithmic differentiation, as in the unitary case \cite{conrey2007applications}.  We do not include the details here as it is a simpler case of the proof of Theorem~\ref{LFJconj} presented later.

\subsection{Residue Lemma}\label{sub:residue}
In this section, we will locate the poles of $J^{*}(A)$ and calculate the residue
of these poles.

It is clear the only possible poles of $J^{*}(A)$ are when $\alpha = - \beta$ for some $\alpha, \beta \in A$, or when $\alpha = 0$. We need to know what the residues are at these poles.
\begin{theorem}[Residue Theorem for $SO(2N)$ matrices] \label{thm:residuelemma}
Let A be a finite set of complex numbers, let $\alpha^{*},\beta^{*} \in A$ and $A^{'}=A\backslash \{\alpha^{*},\beta^{*}\}$ and let $J^{*}(A)$ as defined in (\ref{def:Jstar}). Then there is a simple pole when $\alpha^{*}=-\beta^{*}$ and the poles cancel at $\alpha^{*}=0$.
\begin{align}\label{eq:resAB}
\Res_{\alpha^{*} =-\beta^{*}} \left(J^{*}(A)\right) &= J^{*}\left(A^{'} \cup \{\beta^{*}\}\right) + J^{*}\left(A^{'} \cup \{-\beta^{*}\}\right)+2N J^{*}\left(A^{'}\right)\\
\Res_{\alpha^{*} =0} \left(J^{*}(A)\right) &= 0.\label{eq:resA0}
\end{align}
\end{theorem}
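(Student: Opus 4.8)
The plan is to analyse the explicit formula \eqref{def:Jstar} for $J^*(A)$ directly, isolating which factors can produce a pole as $\alpha^* \to -\beta^*$ or $\alpha^* \to 0$. Recall that $z(x) = (1-e^{-x})^{-1}$ has a simple pole at $x=0$ with residue $1$, and is otherwise holomorphic and nonzero near $x=0$; consequently $z'/z$ and $(z'/z)'$ have at worst a simple and a double pole respectively at the origin, and $Z(\cdot,\cdot)$, $Y(\cdot)$ acquire poles/zeros precisely when a sum of two arguments vanishes. So I would first catalogue, for each subset $D \subseteq A$ in the outer sum, exactly where the dependence on $\alpha^*$ and $\beta^*$ sits: it enters through the prefactor $e^{-2N\sum_{\delta\in D}\delta}$ and the square-root of $Z$'s and $Y$'s (only if $\alpha^*$ or $\beta^*$ lies in $D$), and through the $H_D(W_r)$ factors in the inner sum over set partitions of $A\setminus D$ (only if $\alpha^*$ or $\beta^*$ lies in $A\setminus D$). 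The four cases — both in $D$, both out, one in and one out — can then be examined separately.

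For the pole at $\alpha^* = -\beta^*$: the key observation is that a pole arises only from terms where $\alpha^*$ and $\beta^*$ end up ``interacting'', i.e. a factor $z(\alpha^* + \beta^*)$ or $z'(\alpha^*+\beta^*)/z(\alpha^*+\beta^*)$ or $(z'/z)'(\alpha^*+\beta^*)$ appears. Tracking the combinatorics, the residue picks up three contributions: one from partitions in which $\{\alpha^*,\beta^*\}$ is a block of size two in $A\setminus D$ (giving, via $(z'/z)'(\alpha^*+\beta^*)$, a term that collapses to $J^*(A')$ up to the factor $2N$ — here the $-2N$ that sits inside the $W=\{\alpha\}$ case of $H_D$ and inside the reduction needs to be reconciled), one from the singleton terms $H_D(\{\alpha^*\})$ or $H_D(\{\beta^*\})$ where a $\delta = -\beta^* \in D$ (or symmetrically) produces $z'(\alpha^*+\beta^*)/z(\alpha^*+\beta^*) \to$ simple pole, and one from the square-root prefactor when $\{\alpha^*,\beta^*\}\subseteq D$ or exactly one of them is in $D$, where a $Z(D,D)$ or $Z(D^-,D)$ factor degenerates. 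I would show each of these three residue pieces reassembles, after relabelling $D$ and the partition, into exactly $J^*(A'\cup\{\beta^*\})$, $J^*(A'\cup\{-\beta^*\})$, or $2N\,J^*(A')$ respectively; this matching of the reduced sums is essentially a bookkeeping exercise of the same flavour as the proof of Theorem~\ref{thm:ortho2}.

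For the claim $\Res_{\alpha^*=0} J^*(A) = 0$, the strategy is a cancellation argument. Setting $\alpha^* = 0$ makes the factors $Y(D)/Y(D^-)$ and $Z^\dag$ interact: when $\alpha^* \in D$, the term $z(2\alpha^*)$ from $Y(D)$ blows up while $z(-2\alpha^*)$ from $Y(D^-)$ vanishes, and the dagger on $Z^\dag(D^-,D)$ removes the $z(0)$ that would otherwise cancel it; when $\alpha^* \in A\setminus D$, the $-z'(2\alpha^*)/z(2\alpha^*)$ inside $H_D(\{\alpha^*\})$ contributes a simple pole. The heart of the matter is to pair the subset $D$ containing $\alpha^*$ with the subset $D\setminus\{\alpha^*\}$ (or, symmetrically, $D\cup\{-\alpha^*\}$ type manipulations) and check that their residues at $\alpha^*=0$ are equal in magnitude and opposite in sign. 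The $e^{-2N\sum\delta}$ prefactor is continuous at $\alpha^*=0$, which is why it does not obstruct the cancellation; the sign flip comes from the $(-1)^{|D|}$ and from the behaviour of the square-root of $Y(D)/Y(D^-)$ under $\alpha^*\to 0$.

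The main obstacle I expect is the $\alpha^*=0$ cancellation: unlike the $\alpha^*=-\beta^*$ case, where the three residue terms are individually meaningful and just need to be identified, here the individual terms genuinely diverge and one must set up the right involution on the index set (subsets $D$ and partitions of $A\setminus D$) and verify the signs survive the square-root. Getting the branch of the square root consistent — so that $\sqrt{Z(D,D)Z(D^-,D^-)Y(D)/(Y(D^-)Z^\dag(D^-,D)^2)}$ transforms predictably when $\alpha^*$ is moved between $D$ and its complement — is the delicate point, and I would handle it by working with the combination $Y(D)/Y(D^-)$ as a single object whose square root has a clean limiting behaviour as $\alpha^*\to 0$, rather than splitting it. A secondary but routine difficulty is making sure the $2N$-dependent term in \eqref{eq:resAB} is correctly accounted for: it originates from the $-2N$ appearing in the $W=\{\alpha\}$ branch of $H_D$ combined with the derivative of the $e^{-2N\sum\delta}$ prefactor, and the two sources must be added carefully.
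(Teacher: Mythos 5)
Your overall strategy is the same as the paper's: split according to whether $\alpha^*$ and $\beta^*$ lie in $D$ or in $A\backslash D$, Laurent-expand the prefactor and the $H_D$ factors in each of the four cases, reassemble the surviving simple-pole coefficients into $J^*$ of the smaller sets, and handle $\alpha^*=0$ by pairing $D\ni\alpha^*$ with $D\setminus\{\alpha^*\}$ and exhibiting a $\pm\tfrac12$ cancellation. (The paper packages this as abstract properties {\bf P1}--{\bf P4} and {\bf R1}--{\bf R2} plus Lemmas \ref{lem:P} and \ref{lem:R}, so that the same argument can be reused verbatim for $USp(2N)$ and the two $L$-function families, but the computation is the one you describe.) However, your accounting of the residue at $\alpha^*=-\beta^*$ contains a concrete error that would derail the calculation if carried out as written. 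The contribution from partitions with $W=\{\alpha^*,\beta^*\}\subseteq A\backslash D$ is a double pole $\frac{1}{(\alpha^*+\beta^*)^2}P_D((A\backslash D)')$ whose coefficient is independent of $\alpha^*,\beta^*$; it therefore contributes \emph{zero} to the residue, and its only role is to cancel the $-\frac{1}{(\alpha^*+\beta^*)^2}$ double pole coming from $\{\alpha^*,\beta^*\}\subseteq D$ — that cancellation is precisely what makes the pole simple. It does not ``collapse to $J^*(A')$ up to the factor $2N$''. Moreover there is no $-2N$ in the $W=\{\alpha\}$ branch of $H_D$ in \eqref{def:HD}: the entire $2N\,J^*(A')$ term originates from the single factor $e^{-2N\sum_{\delta\in D}\delta}$ in \eqref{def:Jstar} when $\alpha^*,\beta^*\in D$, whose linear term $-2N(\alpha^*+\beta^*)$ multiplies the $-\frac{1}{(\alpha^*+\beta^*)^2}$ leading singularity of the square-root factor to yield $\frac{2N}{\alpha^*+\beta^*}$ (this is exactly why the paper's condition {\bf P4} carries the quantity $f(\beta^*)$, set equal to $2N$ here). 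The two $J^*(A'\cup\{\pm\beta^*\})$ terms then come partly from the mixed cases ($\alpha^*\in D,\ \beta^*\in A\backslash D$ and vice versa, via the simple poles of $z'/z(\alpha^*+\delta)$ at $\delta=\beta^*$) and partly from the subleading terms of the both-in-$D$ expansion.

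A secondary point: for $\Res_{\alpha^*=0}$ your proposed ``clean handle'' — tracking $Y(D)/Y(D^-)$ as a single object — would actually miss the pole, since the $\alpha^*$-dependent part of that ratio is $z(2\alpha^*)/z(-2\alpha^*)\to-1$, which is regular. The simple pole of $Q(D)$ when $\alpha^*\in D$ comes from the diagonal entries $z(2\alpha^*)$ and $z(-2\alpha^*)$ of $Z(D,D)$ and $Z(D^-,D^-)$, which survive uncancelled precisely because the dagger deletes the corresponding $z(0)$ from $Z^\dag(D^-,D)^2$; after taking the square root one gets $z(2\alpha^*)(1+O(\alpha^*))\sim\frac{1}{2\alpha^*}$, and the extra $(-1)^{|D|}$ supplies the sign needed to cancel against the $+\frac{1}{2\alpha^*}$ from $-\frac{z'}{z}(2\alpha^*)$ in $H_D(\{\alpha^*\})$ when $\alpha^*\in A\backslash D$. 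With these two corrections the bookkeeping does close up as you expect.
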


We will consider the cases of (\ref{eq:resAB}) and (\ref{eq:resA0})  separately. First we will prove that the pole at $\alpha^{*}= -\beta^{*}$ is simple and show that (\ref{eq:resAB}) holds.

Since proofs of this structure are going to come up again, we define
\begin{align}
  A^{'} &= A\backslash \{\alpha^{*}, \beta^{*}\}\\
  D^{'} &= D \cap A^{'}\\
  (A \backslash D)^{'} &= (A \backslash D) \cap A^{'}
  \end{align}
where $A \backslash D = \{a \in A, a \not\in D\}$.
\begin{definition}\label{def:PropP}
We say that the meromorphic functions $Q$ and $H$ of several variables have property $\mathbf{P_f}$ if the following four conditions hold for a continuous single variable function $f$.
\end{definition}
\begin{description}

\item[P1]If $\alpha^*,\beta^*\in A\backslash D$, then $Q(D)$ is
independent of $\alpha^*$ and $\beta^*$ and
\begin{eqnarray}
H(D,W)=\left\{ \begin{array}{ll}
\frac{1}{(\alpha^*+\beta^*)^2}+O(1)
& \mbox{ if $W=\{\alpha^*,\beta^*\}$ }\\
O(1) & \mbox{ otherwise }
\end{array} \right.
\end{eqnarray}
\item[P2] If $\alpha^*\in D$ and $\beta^*\in A\backslash D$, then $Q(D)$ is
regular when $\alpha^*=-\beta^* $ and
\begin{eqnarray}
H(D,W)=\left\{ \begin{array}{ll}
\frac{1}{\alpha^*+\beta^*}+O(1)
& \mbox{ if $W=\{\beta^*\}$ }\\
O(1) & \mbox{ otherwise }
\end{array} \right.
\end{eqnarray}
\item[P3] If $\alpha^*\in A\backslash D$ and $\beta^*\in D$, then $Q(D)$ is
regular when $\alpha^*=-\beta^*$ and
\begin{eqnarray}
H(D,W)=\left\{ \begin{array}{ll}
\frac{1}{\alpha^*+\beta^*}+O(1)
& \mbox{ if $W=\{\alpha^*\}$ }\\
O(1) & \mbox{ otherwise }
\end{array} \right.
\end{eqnarray}
\item[P4] If $\alpha^*\in D$ and $\beta^*\in D$, then
$Q(D)=\big(\frac{-1}{(\alpha^*+\beta^*)^2}+O(1)\big)Q_1(D)$
where
\begin{eqnarray}
&&Q_1(D)=Q(D')\big(1-(\alpha^*+\beta^*)\big(f(\beta^*)+
H(D',\{\alpha^*\})|_{\alpha^*=-\beta^*}+
H(D',\{\beta^*\})\big)\nonumber
\\
&&\qquad\qquad\qquad\qquad\qquad\qquad\qquad\qquad\qquad\qquad
\qquad\qquad+O(|\alpha^*+\beta^*|^2)\big)
\end{eqnarray}
and
\begin{eqnarray}
&&H(D,W)=H(D',W)-(\alpha^*+\beta^*)\big(H(D',W \cup \{\alpha^*\})_{\alpha^*=-\beta^*}
+H(D',W \cup \{\beta^*\})\big)\nonumber
\\
&&\qquad\qquad\qquad\qquad\qquad\qquad\qquad\qquad\qquad\qquad
\qquad\qquad\quad+O(|\alpha^*+\beta^*|^2).
\end{eqnarray}

\end{description}
We will choose our $f(x)$ dependant on what family of matrices we are considering.
\begin{lemma}\label{lem:P}
If $Q(D)$ and $H(D,W)$ have property $\mathbf{P_f}$ and
\begin{align}
J^{*}(A) &= \sum_{D \subseteq A}{P_D(A \backslash D)}\\
 &=\sum_{D \subseteq A}{Q(D) \sum_{\substack{ A \backslash D= \bigcup_r{W_r}\\ |W_r|\leq 2}}{\prod_{r}{H(D,W_r)}}},\label{eq:line2P}
\end{align}

(where $P_D$ is defined by comparison with (\ref{eq:line2P})),  then
\begin{align}
\Res_{\alpha^{*} \rightarrow \beta^{*}}(J^{*}(A)) &= f(\beta^*) J^{*}\left(A^{'}\right)+ J^{*}\left(A^{'}\cup \{\beta^{*}\}\right)+ J^{*}\left(A^{'}\cup \{-\beta^{*}\}\right).\end{align}
\end{lemma}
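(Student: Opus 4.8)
The plan is to compute the residue of $J^*(A)=\sum_{D\subseteq A}P_D(A\backslash D)$ at $\alpha^*=-\beta^*$ by splitting the sum over $D$ into the four cases enumerated by $\mathbf{P1}$--$\mathbf{P4}$, according to which of $\alpha^*,\beta^*$ lie in $D$. In each case property $\mathbf{P_f}$ tells us exactly where the singularity in $\alpha^*+\beta^*$ sits: in $\mathbf{P1}$ it comes from the single block $W=\{\alpha^*,\beta^*\}$ inside the partition-sum, in $\mathbf{P2}$/$\mathbf{P3}$ from the singleton block $\{\beta^*\}$ resp.\ $\{\alpha^*\}$, and in $\mathbf{P4}$ from the prefactor $Q(D)$ itself (a double pole), with $H(D,W)$ also contributing a correction linear in $\alpha^*+\beta^*$. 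First I would set $u=\alpha^*+\beta^*$ and extract the Laurent expansion of $P_D(A\backslash D)$ in $u$ in each of the four cases, keeping terms through $O(1)$ since the double pole in $\mathbf{P4}$ forces us to track the $O(u)$ corrections to everything multiplying it.

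The key mechanism is a bijection between set partitions. In case $\mathbf{P1}$, writing $A\backslash D=\{\alpha^*,\beta^*\}\cup(A\backslash D)'$, the partitions either contain $\{\alpha^*,\beta^*\}$ as a block (giving the $1/u^2$ piece times $Q(D)\sum_{(A\backslash D)'=\bigcup W_r}\prod H(D,W_r)$, which after $D'=D$ is exactly $P_D((A\backslash D)')$) or they don't, and those contribute $O(1)$, hence nothing to the residue. So the $\mathbf{P1}$ terms contribute $0$ to $\Res_{u\to0}$. Wait --- I need to be careful: the residue picks out the $1/u$ coefficient, and $1/u^2$ has residue $0$, but $\mathbf{P4}$ has a genuine $-1/u^2$ times something with a nonzero $O(u)$ term, so that is where the residue is born. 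Let me restate: the $\mathbf{P1}$, $\mathbf{P2}$, $\mathbf{P3}$ contributions to the residue come from collecting their simple-pole ($1/u$) pieces, while the $\mathbf{P4}$ contribution comes from the $O(u)$ term in the bracket multiplying $-1/u^2$.

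The heart of the argument is then a reindexing: the $\mathbf{P2}$ sum (over $D$ with $\alpha^*\in D$, $\beta^*\notin D$) and the $\mathbf{P3}$ sum (over $D$ with $\alpha^*\notin D$, $\beta^*\in D$) should be matched against the $D$-sums defining $J^*(A'\cup\{\beta^*\})$ and $J^*(A'\cup\{-\beta^*\})$ respectively, after setting $u=0$, i.e.\ $\alpha^*=-\beta^*$; the $\mathbf{P4}$ sum (both in $D$) splits, via the explicit formula for $Q_1(D)$ and the corrected $H(D,W)$, into a piece that reconstructs $f(\beta^*)J^*(A')$ and pieces $H(D',\{\alpha^*\})|_{\alpha^*=-\beta^*}$, $H(D',\{\beta^*\})$ that feed back into the $J^*(A'\cup\{\pm\beta^*\})$ sums, exactly supplying the ``$\alpha^*=-\beta^*$ or $\alpha^*=\beta^*$ in $D$'' terms missing from $\mathbf{P2}$/$\mathbf{P3}$ alone. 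Concretely, I would show
\begin{align}
J^*(A'\cup\{\beta^*\})&=\sum_{\beta^*\in D}(\cdots)+\sum_{\beta^*\notin D}(\cdots),
\end{align}
and identify the first sum with the $\mathbf{P4}$-derived correction and the second with the $u\to0$ limit of the $\mathbf{P2}$ residue terms, and symmetrically for $-\beta^*$. Summing the four contributions gives $f(\beta^*)J^*(A')+J^*(A'\cup\{\beta^*\})+J^*(A'\cup\{-\beta^*\})$, as claimed.

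The main obstacle will be the bookkeeping in case $\mathbf{P4}$: there the $-1/u^2$ prefactor multiplies a bracket whose $O(u)$ term involves $f(\beta^*)$, $H(D',\{\alpha^*\})|_{\alpha^*=-\beta^*}$ and $H(D',\{\beta^*\})$, while simultaneously each $H(D,W_r)$ in the partition product carries its own $O(u)$ correction, so extracting the $1/u$ coefficient of the product requires a Leibniz-type expansion summing the corrections over all blocks $W_r$ of the partition. The combinatorial identity that makes this collapse — that summing ``insert $\alpha^*$ (or $\beta^*$) into one of the existing blocks, or create a new singleton block'' over all partitions of $(A\backslash D)'$ reproduces the partition sum for $(A\backslash D)'\cup\{\alpha^*\}$ (resp.\ $\cup\{\beta^*\}$) with $|W_r|\le2$ — is the crux, and it is precisely the same partition-surgery identity that appears in the unitary case of \cite{conrey2007applications}. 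Once that identity is in hand, matching the four pieces to the three terms on the right is a direct, if lengthy, verification; I would organise it by first doing the $u\to0$ limits of $\mathbf{P2}$ and $\mathbf{P3}$, then the $\mathbf{P4}$ expansion, and finally checking that $\mathbf{P1}$ contributes nothing.
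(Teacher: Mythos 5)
Your proposal is correct and follows essentially the same route as the paper's proof: split the sum over $D$ into the four cases of $\mathbf{P1}$--$\mathbf{P4}$, extract the Laurent expansion in $\alpha^*+\beta^*$ in each case (with the residue in case $\mathbf{P4}$ coming from the $O(\alpha^*+\beta^*)$ corrections multiplying the double pole, handled by the Leibniz/partition-insertion expansion), and reassemble the surviving simple-pole coefficients into $f(\beta^*)J^*(A')+J^*(A'\cup\{\beta^*\})+J^*(A'\cup\{-\beta^*\})$. The only quibble is a harmless bookkeeping swap in which of the $\mathbf{P2}$/$\mathbf{P3}$ sums feeds which of $J^*(A'\cup\{\pm\beta^*\})$.
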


\begin{proof}
If $\alpha^*,\beta^* \in A\backslash D$ then
\begin{align}
P_D(A \backslash D) &= Q(D) \sum_{\substack{A \backslash D= \bigcup_r{W_r}\\ |W_r|\leq 2}}{\prod_{r}{H(D,W_r)}}\\
&= \frac{1}{(\alpha^{*}+\beta^{*})^2}Q(D)\left(\sum_{\substack{ (A \backslash D)^{'}= \bigcup_r{W_r}\\ |W_r|\leq 2}}{\prod_{r}{H(D,W_r)}}\right) + O(1)\\
&=  \frac{1}{(\alpha^{*}+\beta^{*})^2} P_D((A \backslash D)^{'}) +O(1),
\end{align}
by {\bf P1}.  So $\Res\limits_{\alpha^{*} \rightarrow -\beta^{*}}(P_D(A \backslash D))=0$.\\

If $\alpha^*\in D$ and $\beta^*\in A\backslash D$, then
\begin{align}
P_D(A \backslash D) &= Q(D)\sum_{\substack{A \backslash D = \bigcup_r{W_r} \\ \left|W_r\right|\leq 2}}{\prod_r{H(D,W_r)}}\\
&= \frac{1}{\alpha^{*} + \beta^{*}}Q(D)\sum_{\substack{(A \backslash D)^{'} = \bigcup_r{W_r} \\ \left|W_r\right|\leq 2}}{\prod_r{H(D,W_r)}} + O(1)\\
&= \frac{1}{\alpha^{*} + \beta^{*}} P_{D^{'}\cup\{-\beta^{*}\}}((A \backslash D)^{'})+ O(1),
\end{align}
by {\bf P2}.  Thus
\begin{align}
\Res_{\alpha^{*} \rightarrow \beta^{*}}(P_D(A \backslash D))=P_{D^{'}\cup\{-\beta^{*}\}}((A \backslash D)^{'}).
\end{align}
\\
If $\alpha^*\in A\backslash D$ and $\beta^*\in D$, then, similarly to the previous case,
\begin{align}
P_D(A \backslash D) &= \frac{1}{\alpha^{*} + \beta^{*}}Q(D)\sum_{\substack{(A \backslash D)^{'} = \bigcup_r{W_r} \\ |W_r|\leq 2}}{\prod_r{H(D,W_r)}} + O(1)\\
&=\frac{1}{\alpha^{*} + \beta^{*}}P_{D}((A \backslash D)^{'})+O(1),
\end{align}
by {\bf P3}.  This allows us to conclude that  $\Res\limits_{\alpha^{*} \rightarrow -\beta^{*}}(P_D(A \backslash D))=P_{D}((A \backslash D)^{'})$.\\

If $\alpha^*\in D$ and $\beta^*\in D$, then
\begin{align}
P_D(A \backslash D)&= Q(D)\sum_{\substack{A \backslash D=\bigcup_r{W_r}\\ |W_r|\leq 2}}{\prod_r{H(D,W_r)}}\nonumber\\
\begin{split}
&= -\frac{Q(D^{'})}{(\alpha^{*}+\beta^{*})^2}\sum_{\substack{A \backslash D= \bigcup_r{W_r}\\ |W_r|\leq 2}}{\prod_r{H(D^{'},W)}}\\
    & \qquad+ \frac{Q(D^{'})}{\alpha^{*}+\beta^{*}}\sum_{\substack{A \backslash D= \bigcup_r{W_r}\\ |W_r|\leq 2}}{\prod_r{H(D^{'},W_r)} \left(\vphantom{\frac{H(D^{'},W_r)}{H(D^{'},W_r)}}f(\beta^*)+ H(D^{'},\{\beta^{*}\})+ H(D^{'},\{-\beta^{*}\})\right.}\\
            &\qquad \qquad \qquad +\left. \sum_{r}{\frac{H(D^{'},W_r \cup \{\beta^{*}\})+H(D^{'},W_r \cup \{-\beta^*\})}{H(D^{'},W_r)}}\right)\\
    &\qquad+ O(1).
\end{split}\\
 \begin{split}&= -\frac{1}{(\alpha^{*}+\beta^{*})^2}P_{D^{'}}(A \backslash D)
 \\
    & \qquad+ \frac{1}{\alpha^{*}+\beta^{*}}\left(f(\beta^*) P_D\left(A \backslash D\right)+P_{D^{'}}\left((A \backslash D) \cup \{\beta^{*}\}\right)+P_{D^{'}}\left((A \backslash D) \cup \{-\beta^{*}\}\right)\right)
           \\
    &\qquad+ O(1), \label{eq:bob}
\end{split}
\end{align}
by {\bf P4}.  Therefore
\begin{align}
\Res_{\alpha^{*} \rightarrow \beta^{*}}\left(P_D(A \backslash D)\right) = f(\beta^*) P_D\left(A \backslash D\right)+P_{D^{'}}\left((A \backslash D) \cup \{\beta^{*}\}\right)+P_{D^{'}}\left((A \backslash D) \cup \{-\beta^{*}\}\right).
\end{align}
We obtain the result of this Lemma by combining the results for the four cases.
\begin{align}
\begin{split}
\Res_{\alpha^{*} \rightarrow \beta^{*}}(J^{*}(A)) &= \sum_{\substack{D\subseteq A\\ \alpha^{*}, \beta^{*} \in D}}{f(\beta^*) P_D(A \backslash D)+P_{D^{'}}\left((A \backslash D) \cup \{\beta^{*}\}\right)+P_{D^{'}}\left((A \backslash D)\cup \{-\beta^{*}\}\right)}\\
&\qquad+ \sum_{\substack{D\subseteq A\\ \alpha^{*} \in D, \beta^{*} \in A \backslash D}}{P_{D^{'} \cup \{-\beta^{*}\}}\left((A \backslash D)^{'}\right)} \\
& \qquad +\sum_{\substack{D\subseteq A\\ \beta^{*} \in D, \alpha^{*} \in A \backslash D}}{P_{D}\left((A \backslash D)^{'}\right)}
\end{split}\\
\begin{split}
&= f(\beta^*) \sum_{D \subseteq A^{'}}{P_D(A \backslash D)} + \sum_{D \subseteq A^{'}\cup \{\beta^{*}\}}{P_D(A \backslash D)}\\
& \qquad + \sum_{D \subseteq A^{'}\cup \{-\beta^{*}\}}{P_D(A \backslash D)}
\end{split}\\
&= f(\beta^*) J^{*}\left(A^{'}\right)+ J^{*}\left(A^{'}\cup \{\beta^{*}\}\right)+ J^{*}\left(A^{'}\cup \{-\beta^{*}\}\right).
\end{align}
Note that the $\frac{1}{(\alpha^{*}+\beta^{*})^2}$ term in (\ref{eq:bob}) features $P_{D^{'}}(A \backslash D)$ which is equal to $P_{D}\left((A \backslash D)^{'}\right)$ when $\alpha^{*}, \beta^{*} \in A \backslash D$, so the term cancels with the $\frac{1}{(\alpha^{*}+\beta^{*})^2}$ term from the first case, confirming that the pole at $\alpha^{*} \rightarrow \beta^{*}$ is simple.

\end{proof}

It is straightforward to show that if $f(\beta^*)=2N$,
\begin{equation}\label{eq:QD}
Q(D)=\sum_{D\subseteq A}{e^{-2N \sum\limits_{\delta \in D}{\delta}}(-1)^{|D|} \sqrt{\frac{Z(D,D)Z(D^{-},D^{-})Y(D)}{Y(D^{-})Z^{\dag}(D^{-},D)^2}}}
\end{equation}
and
\begin{equation}
H(D,W)=H_D(W)= \begin{cases}\label{eq:HDW}
\left( \sum\limits_{\delta \in D}{\frac{z^{'}(\alpha -\delta)}{z(\alpha - \delta)}-\frac{z^{'}(\alpha +\delta)}{z\left( \alpha + \delta\right)}}\right) - \frac{z^{'}}{z}(2\alpha) & W = \{\alpha\} \subset A \backslash D\\
\left(\frac{z^{'}}{z}\right)^{'}(\alpha + \widehat{\alpha}) & W= \{\alpha, \widehat{\alpha}\} \subset A \backslash D\\
1 & W= \emptyset
    \end{cases}
\end{equation}
then {\bf P1} to {\bf P4} hold, and so we have proved (\ref{eq:resAB}).

Now we will prove (\ref{eq:resA0}).  Again we define a general property.
\begin{definition}\label{def:PropR}
We say that the meromorphic functions $Q$ and $H$ of several variables have property $\mathbf{R}$ if the following two conditions hold.
\end{definition}
\begin{description}

\item[R1]If $\alpha^*\in A\backslash D$, then $Q(D)$ is
independent of $\alpha^*$  and
\begin{eqnarray}
H(D,W)=\left\{ \begin{array}{ll}
\frac{1}{2\alpha^*}+O(1)
& \mbox{ if $W=\{\alpha^*\}$ }\\
O(1) & \mbox{ otherwise }
\end{array} \right.
\end{eqnarray}
\item[R2] If $\alpha^*\in D$ then
\begin{equation}H(D,A \backslash D)|_{\alpha^{*}=0}= H(D^{'},A \backslash D)
\end{equation}
 and
 \begin{equation}
 Q(D)=\frac{-1}{2\alpha^*} Q(D') +O(1).
 \end{equation}
 \end{description}

 \begin{lemma}\label{lem:R}
If $Q(D)$ and $H(D,W)$ satisfy these properties and
\begin{align}
J^{*}(A) &= \sum_{D \subseteq A}{P_D(A \backslash D)}\\
 &=\sum_{D \subseteq A}{Q(D) \sum_{\substack{ A \backslash D= \bigcup_r{W_r}\\ |W_r|\leq 2}}{\prod_{r}{H(D,W_r)}}}
\end{align}
(where $P_D$ is defined by the above equation), then
\begin{align}
\Res_{\alpha^{*} \rightarrow0}(J^{*}(A)) &= 0.\end{align}
\end{lemma}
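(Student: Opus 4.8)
The plan is to mirror the structure of the proof of Lemma~\ref{lem:P}: split the sum defining $J^{*}(A)$ according to whether $\alpha^{*}\in D$ or $\alpha^{*}\in A\backslash D$, compute the residue of each $P_D(A\backslash D)$ at $\alpha^{*}=0$ using properties {\bf R1} and {\bf R2}, and then show that the two families of residues cancel in pairs. Concretely, for $\alpha^{*}\in A\backslash D$, property {\bf R1} says $Q(D)$ is independent of $\alpha^{*}$ and the only factor in $\prod_r H(D,W_r)$ carrying a pole is the one with $W_r=\{\alpha^{*}\}$, which contributes $\tfrac{1}{2\alpha^{*}}+O(1)$; pulling this factor out of the set-partition sum shows
\begin{align}
P_D(A\backslash D) &= \frac{1}{2\alpha^{*}}\,Q(D)\sum_{\substack{(A\backslash D)'=\bigcup_r W_r\\ |W_r|\le 2}}\prod_r H(D,W_r) + O(1)\nonumber\\
&= \frac{1}{2\alpha^{*}}\,P_D\big((A\backslash D)'\big) + O(1),
\end{align}
so $\Res_{\alpha^{*}\to 0}P_D(A\backslash D)=P_D\big((A\backslash D)'\big)$, where here $A'=A\backslash\{\alpha^{*}\}$ and the primed sets are intersected with $A'$.

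For $\alpha^{*}\in D$, property {\bf R2} gives $Q(D)=\tfrac{-1}{2\alpha^{*}}Q(D')+O(1)$ and $H(D,A\backslash D)|_{\alpha^{*}=0}=H(D',A\backslash D)$ (the relevant $H$ factors are regular at $\alpha^{*}=0$, since a pole of $H_D(W)$ at $\alpha^*=0$ would require some $\delta=\alpha^*$ or $\delta=-\alpha^*$ with $\delta\in D$, i.e.\ two coincident shifts in $D$, which is excluded). Hence
\begin{align}
P_D(A\backslash D) = \frac{-1}{2\alpha^{*}}\,P_{D'}(A\backslash D) + O(1),
\end{align}
and $\Res_{\alpha^{*}\to 0}P_D(A\backslash D) = -P_{D'}(A\backslash D)$. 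Summing over all $D\subseteq A$ and reorganising: the $\alpha^{*}\in A\backslash D$ contributions sum to $\sum_{D\subseteq A'}P_D\big((A\backslash D)\big)=J^{*}(A')$ (writing $A\backslash D$ for the complement taken inside $A'$), while the $\alpha^{*}\in D$ contributions sum to $-\sum_{D\subseteq A'}P_D\big(A\backslash D\big)=-J^{*}(A')$, because in the second family, after removing $\alpha^{*}$ from $D$, the reduced set $D'$ ranges over all subsets of $A'$ and the complement $A\backslash D$ inside $A$ equals the complement of $D'$ inside $A'$. The two cancel, giving $\Res_{\alpha^{*}\to 0}(J^{*}(A))=0$.

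The main obstacle is bookkeeping rather than anything deep: one has to be careful that the set-partition sum $\sum_{A\backslash D=\bigcup_r W_r}$ genuinely factors as claimed when a singleton block $\{\alpha^{*}\}$ is split off (this uses that no other block can contain $\alpha^{*}$, so partitions of $A\backslash D$ biject with partitions of $(A\backslash D)'$ together with the block $\{\alpha^{*}\}$), and that after applying {\bf R2} the leftover $O(1)$ terms really are regular at $\alpha^{*}=0$ — which is where one invokes $H(D,A\backslash D)|_{\alpha^{*}=0}=H(D',A\backslash D)$ to see that the full product $\prod_r H(D,W_r)$ limits correctly. Finally, as with the last remark in the proof of Lemma~\ref{lem:P}, one should check that the potential double pole $\tfrac{1}{(2\alpha^{*})^2}$ that could arise when $\alpha^{*}\in D$ is cancelled; in fact {\bf R2} already presents $Q(D)$ with only a simple pole, so $P_D(A\backslash D)$ has at worst a simple pole at $\alpha^{*}=0$ and no such term appears, so the pole — were it present — would be simple, and its residue vanishes after the cancellation above. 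Once {\bf R1} and {\bf R2} are verified for the specific $Q$ and $H$ in \eqref{eq:QD} and \eqref{eq:HDW} (a direct computation from the definitions of $z$, $Y$, $Z$ and $Z^{\dag}$, noting the $\dag$ simply removes the would-be singular factor), the identity \eqref{eq:resA0} follows.
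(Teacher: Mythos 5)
Your proposal follows the paper's proof essentially verbatim: split the sum over $D$ according to whether $\alpha^*\in D$ or $\alpha^*\in A\backslash D$, apply {\bf R1} and {\bf R2} respectively, and observe that after reindexing (removing $\alpha^*$ from $D$ in the second family) the two families of residues cancel term by term. One small slip: from $P_D(A\backslash D)=\tfrac{1}{2\alpha^*}P_D\big((A\backslash D)'\big)+O(1)$ the residue at $\alpha^*=0$ is $\tfrac12 P_D\big((A\backslash D)'\big)$, not $P_D\big((A\backslash D)'\big)$, and likewise $-\tfrac12 P_{D'}(A\backslash D)$ in the second case; since the same factor of $\tfrac12$ is dropped in both cases, the cancellation and hence the conclusion are unaffected.
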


\begin{proof}
This is proved in a similar manner to Lemma \ref{lem:P}.  We have that if $\alpha^*\in A\backslash D$ then by {\bf R1}
\begin{align}
\Res_{\alpha^{*} \rightarrow 0}\left(P_D(A \backslash D)\right) = \frac{1}{2}P_D\left((A \backslash D)^{'}\right).
\end{align}
If $\alpha^*\in D$ then by {\bf R2}
\begin{align}
\Res_{\alpha^{*} \rightarrow 0}(P_D(A \backslash D))&= \frac{-1}{2}P_{D^{'}}\left(A \backslash D\right).
\end{align}
Combining the two cases gives us
\begin{align}
\Res_{\alpha^{*} \rightarrow 0}(J^{*}(A)) &= \sum_{\substack{D\subseteq A \\ \alpha^{*} \in A \backslash D}}{\frac{1}{2}P_D\left((A \backslash D)^{'}\right)} - \sum_{\substack{D\subseteq A \\ \alpha^{*} \in D}}{\frac{1}{2}P_{D^{'}}(A \backslash D)}\\
&= \sum_{D\subseteq A^{'} }{\frac{1}{2}P_D\left((A \backslash D)^{'}\right)} - \sum_{D\subseteq A^{'}}{\frac{1}{2}P_{D^{'}}(A \backslash D)}\\
&=0.
\end{align}

\end{proof}

With the definitions (\ref{eq:QD}) and (\ref{eq:HDW}) it is straightforward to show that properties {\bf R1} and {\bf R2} hold, thus proving (\ref{eq:resA0}).

\subsection{$n$-level Density of Orthogonal Matrices}\label{sub:ncorr}

So far we have expressed the $n$-level density of eigenangles of orthogonal matrices in terms of contour integrals. Recalling that $C_{-}$ denotes the path from $ - \delta - \pi i$ up to $- \delta + \pi i$ and $C_{+}$ the path from $  \delta - \pi i$ up to $ \delta + \pi i$, we take a  $f$ be a $2\pi$-periodic,
holomorphic function of $n$ variables such that
\begin{align}
f\left(\theta_{j_1}, \cdots, \theta_{j_n}\right) = f\left(\pm \theta_{j_1}, \cdots, \pm
\theta_{j_n}\right)
\end{align}
Then we have shown
\begin{align}
\begin{split}
2^n &\int_{SO\left(2N\right)} {\sum_{j_1, \cdots, j_n =1}^N {f\left(\theta_{j_1}, \cdots,
\theta_{j_n}\right) dX_{SO(2N)}} } \\
&= \frac{1}{\left(2 \pi i \right)^n} \sum_{K \cup L \cup M = \left\{1, \cdots,
n\right\}}{ (2N) ^{\left|M\right|}} \\
& \qquad \times \int_{C_+^K} {\int_{C_-^{L \cup M}}{J\left(z_K \cup - z_L\right)f\left(iz_1, \cdots,
iz_n\right)dz_1 \cdots d z_n}}
\end{split}
\end{align}
where $z_K = \left\{z_k: k \in K\right\}$ and $-z_L = \left\{-z_l: l \in L\right\}$ and
$\int_{C_+^K} {\int_{C_-^{L \cup M}}}$ means we are integrating all the variables in $z_K$ along the
$C_+$ path and all others down the $C_-$ path. The sum over $K \cup L \cup M$ is over disjoint unions and $J(A)$ defined as in (\ref{def:J}).

We can then deduce the following.
\begin{lemma} \label{thm:RMorthoJstar}
Let $n \leq N$
\begin{align}
\begin{split}
2^n &\int_{SO\left(2N\right)} {\sum_{j_1, \cdots, j_n =1}^N {f\left(\theta_{j_1}, \cdots,
\theta_{j_n}\right) dX} } \\
&= \frac{1}{\left(2 \pi i \right)^n} \sum_{K \cup L \cup M = \left\{1, \cdots,
n\right\}}{ (2N) ^{\left|M\right|}} \\
& \qquad \times \int_{C_+^K} {\int_{C_-^{L \cup M}}{J^{*}\left(z_K \cup - z_L\right)f\left(iz_1, \cdots,
iz_n\right)dz_1 \cdots d z_n}}
\end{split}
\end{align}
\end{lemma}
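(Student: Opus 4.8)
The plan is to deduce this immediately from the integral representation recorded just above (Theorem~\ref{thm:RMortho1}) by replacing, term by term in the sum over $K\cup L\cup M$, the average of logarithmic derivatives $J(z_K\cup -z_L)$ by its closed form $J^*(z_K\cup -z_L)$. That replacement is exactly the content of Theorem~\ref{thm:ortho2}, so the only thing to verify is that its hypotheses are met for every summand and for every position of the integration variables on the contours.

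Concretely, I would fix a disjoint decomposition $K\cup L\cup M=\{1,\dots,n\}$ and look at the argument set $A=z_K\cup(-z_L)$ that enters $J$; note first that the variables indexed by $M$ occur only through the factor $(2N)^{|M|}$ and through $f$, not through $J$, so $|A|=|K|+|L|$. For $k\in K$ the variable $z_k$ runs along $C_+$, hence $\Re(z_k)=\delta>0$; for $l\in L$ the variable $z_l$ runs along $C_-$, hence $\Re(-z_l)=\delta>0$. Thus every element of $A$ has real part $\delta>0$, and moreover $|A|=|K|+|L|\le n\le N$ by the hypothesis $n\le N$ of the Lemma. Therefore Theorem~\ref{thm:ortho2} applies pointwise along the product contour and gives $J(z_K\cup -z_L)=J^*(z_K\cup -z_L)$; substituting this identity into each term of Theorem~\ref{thm:RMortho1} yields the asserted formula.

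I do not expect a real obstacle here: the pointwise substitution is legitimate because both sides are regular along the contours. On the one hand $J(z_K\cup -z_L)$ is an average over $SO(2N)$ of logarithmic derivatives evaluated at points strictly inside the unit circle (since $|e^{-\alpha}|=e^{-\delta}<1$ for $\alpha\in A$), hence holomorphic there; on the other hand the only candidate singularities of $J^*$, identified in Theorem~\ref{thm:residuelemma} as $\alpha=-\beta$ or $\alpha=0$ with $\alpha,\beta\in A$, are excluded because every element of $A$ has real part exactly $\delta>0$. The substantive content of the Lemma is therefore the bookkeeping observation that the support condition $n\le N$ is precisely what is needed to invoke Theorem~\ref{thm:ortho2} uniformly on the contours; the poles that $J^*$ genuinely has will only come into play at the next step, when the contours $C_\pm$ are collapsed onto the imaginary axis.
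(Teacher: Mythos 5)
Your proposal is correct and follows exactly the paper's own (very brief) proof: substitute $J^*$ for $J$ in Theorem~\ref{thm:RMortho1} via Theorem~\ref{thm:ortho2}, checking that every element of $z_K\cup(-z_L)$ has real part $\delta>0$ on the contours and that $|K|+|L|\le n\le N$. Your additional remarks on regularity along the contours are a welcome elaboration but not a departure from the paper's argument.
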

\begin{proof}
We know that $J(A) =J^{*}(A)$ when $|A| \leq N$ and $\Re(\alpha)>0$ $\forall \alpha \in A$ by Theorem \ref{thm:ortho2}. This condition is clearly met by $\left(z_K \cup - z_L\right)$.
\end{proof}

The next step is to move these contour integrals onto the imaginary axis. In order to do this, we first need some new notation.

For given $n$, $0 \leq R \leq n$, let
\begin{align} \label{def:sum notation}
\sideset{}{^{n,R}}\sum = \sum^{N}_{\substack{j_1, \cdots, j_n =1 \\ j_i \neq j_k \forall i,k>R}}.
\end{align}

For fixed sets $K,L,M$ such that $K \cup L \cup M = \{1, \cdots, n\}$ let $I^{n,R}_{f,K,L,M}$ be the integral in Lemma \ref{thm:RMorthoJstar} with $N-R$ of the integrals shifted onto the imaginary axis. All the integrals on the imaginary axis are principal value integrals.
\begin{align}
\begin{split}
I^{n,R}_{f,K,L,M}=& \int^{i\pi}_{-i\pi}{ \cdots \int^{i\pi}_{-i\pi}{ \int_{C^{K \cap \{1, \cdots, R\}}_+}{\int_{C^{(L \cup M) \cap \{1, \cdots, R\}}_-}{J^{*}(z_K \cup -z_L)}}}}\\
& \qquad \qquad \times f(iz_1, \cdots, iz_n) dz_1 \cdots dz_R d z_{R+1} \cdots dz_n.
\end{split}
\end{align}
We can express Lemma \ref{thm:RMorthoJstar} in the new notation.
\begin{align}
\begin{split}
(2\pi i)^n 2^n &\int_{SO(2N)}{\sideset{}{^{n,n}}\sum{f(\theta_{j_1}, \cdots, \theta_{j_n}) dX}}\\
 & = \sum_{K \cup L \cup M = \{1, \cdots, n\}}{(2N)^{|M|}I^{n,n}_{f,K,L,M}}
 \end{split}
\end{align}

We will be using Sokhotski-Plemelj Theorem \cite{Sokhot1873} \cite{plemelj1908erganzungssatz}, which states that we can shift a contour of integration onto a line passing through a pole of the integrand. We interpret the resulting integral as a principal value integral and gain half the residue of the pole. For further details see any standard text on complex analysis, or a section on functions of a complex variable in a text such as \cite{kn:arfken}.

We will now prove
\begin{theorem} \label{thm:orthomain}
With the notation defined above, $0\leq R \leq n$
\begin{align}
\begin{split}
(2 \pi i)^n 2^n &\int_{SO(2N)}{\sideset{}{^{n,R}}\sum{f(\theta_{j_1}, \cdots, \theta_{j_n}) dX}} \\
&= \sum_{K\cup L \cup M = \{1, \cdots, n\}}{(2N)^{|M|}I^{n,R}_{f,K,L,M}}. \label{statement}
\end{split}
\end{align}
\end{theorem}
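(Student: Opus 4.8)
The plan is to prove the identity \eqref{statement} by downward induction on $R$, starting from $R=n$, which is precisely the statement of Lemma \ref{thm:RMorthoJstar} rewritten in the new notation. So assume \eqref{statement} holds for some $R$ with $1 \leq R \leq n$, and we wish to establish it for $R-1$. The difference between the two cases is that in passing from $R$ to $R-1$ we shift one further contour --- say the $z_R$-contour, which is either a $C_+$ or a $C_-$ path depending on whether $R \in K$ or $R \in L \cup M$ --- onto the imaginary segment $[-i\pi, i\pi]$. By the Sokhotski--Plemelj theorem, each such shift produces the principal value integral over the imaginary axis plus $\tfrac{1}{2}$ times the sum of residues of $J^*(z_K \cup -z_L)$ at the poles crossed, i.e. at points where $z_R = -z_j$ for some other variable $z_j$ (when the corresponding contour signs make such a coincidence possible as the contour sweeps across) and at $z_R = 0$. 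By the Residue Theorem for $SO(2N)$ matrices (Theorem \ref{thm:residuelemma}), the residue at $z_R = 0$ vanishes, so only the $z_R = -z_j$ poles contribute.

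The heart of the argument is then a bookkeeping computation: one must show that the extra half-residue terms generated by this contour shift, summed over all the sets $K,L,M$ and over all choices of the partner index $j$, reorganize exactly into the combinatorial structure needed so that $\sum_{K\cup L\cup M}(2N)^{|M|} I^{n,R-1}_{f,K,L,M}$ equals $(2\pi i)^n 2^n$ times the eigenvalue sum with the restriction $j_i \neq j_k$ now also imposed for the index $R$ (against all $j_k$ with $k > R-1$, i.e. $k\geq R$). On the eigenvalue side, relaxing $j_R$ from ``distinct from $j_{R+1},\dots,j_n$'' (the $n,R$ sum) to allowing it to equal one of them is exactly the diagonal contribution; the key is that Theorem \ref{thm:residuelemma} gives $\Res_{z_R = -z_j}(J^*) = J^*(\dots\cup\{z_j\}) + J^*(\dots\cup\{-z_j\}) + 2N\, J^*(\dots)$, and these three terms match the three ways a coincidence $j_R = j_k$ can arise for eigenangles $e^{\pm i\theta}$: the pair can contribute via $+\theta_{j_k}$, via $-\theta_{j_k}$, or the $2N$ term absorbs the sum over which eigenvalue index is repeated, exactly paralleling the $2N$ factor that was introduced when the $M$-sets were split off in Theorem \ref{thm:RMortho1}. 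Careful tracking shows that a term in which index $R$ is merged with index $j$ produces, after the half-residue, a contribution that combines with the ``complementary'' half-residue coming from shifting in a partner configuration; the two halves add to a whole, reproducing the appropriate $I^{n,R-1}$ integral with one fewer variable effectively present.

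The main obstacle I anticipate is the combinatorial matching of the half-residues with their partners: when we shift the $z_R$-contour from $C_+$ (resp. $C_-$) to the imaginary axis, the residue picked up at $z_R = -z_j$ depends on which side $z_j$'s contour lies on, and one must verify that for every configuration $(K,L,M)$ and every index $j$, there is exactly one ``mirror'' configuration whose contour shift contributes the matching half-residue with the same sign, so that the two $\tfrac12$'s combine to a full residue. This is the orthogonal-symmetry analogue of the cancellation of the unrestricted diagonal terms flagged in the remark after Theorem \ref{thm:RMortho1} (``These extra terms $\dots$ will cancel out the residues caused by moving the contours onto the imaginary axis''), and it is where the $f(\pm\theta_{j_1},\dots,\pm\theta_{j_n}) = f(\theta_{j_1},\dots)$ evenness hypothesis on $f$ is essential --- it is what makes the $J^*(\dots\cup\{z_j\})$ and $J^*(\dots\cup\{-z_j\})$ contributions interchangeable after the change of variables $z_j \mapsto -z_j$. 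Once this matching is set up correctly, the induction closes and taking $R=0$ will, in a subsequent step, yield Theorem \ref{thm:orthoNcorr1}. I would organize the proof by first writing $I^{n,R}_{f,K,L,M}$ as the $R-1$ version plus a single-shift correction, then collecting the correction terms across all $(K,L,M)$ and all merge-indices $j$, and finally invoking Theorem \ref{thm:residuelemma} term-by-term to identify the collected corrections with the new diagonal contributions on the left-hand side.
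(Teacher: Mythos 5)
Your overall strategy --- downward induction on $R$ from the base case $R=n$ (Lemma \ref{thm:RMorthoJstar}), Sokhotski--Plemelj half-residues at the crossed poles, vanishing of the residue at $z_R=0$, and matching the correction terms against the diagonal contributions $j_R=j_t$ --- is the same as the paper's. But there is a genuine gap: a single induction on $R$ cannot close. When you shift the $z_R$-contour onto the imaginary axis, the half-residues you collect (using Theorem \ref{thm:residuelemma}, and noting that only the variables $z_t$ with $t>R$, whose contours are already on the imaginary axis, produce crossed poles) reorganize into $(n-1)$-fold integrals of the form $\sum_{K\cup L\cup M}(2N)^{|M|}I^{\,n-1,R-1}_{g_{t,R},K,L,M}$, where $g_{t,R}$ is $f$ with the variables $z_R$ and $z_t$ identified. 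To identify these $(n-1)$-dimensional contour integrals with the diagonal eigenvalue sums $\sideset{}{^{n-1,R-1}}\sum g_{t,R}$ --- which is exactly what your ``bookkeeping computation'' must accomplish --- you need the full statement of the theorem for $n-1$ variables at level $R-1$. Theorem \ref{thm:residuelemma} alone only supplies the residue formula; it does not convert an $I$-integral back into an eigenvalue sum. The paper therefore runs a double induction: an outer induction on $n$ (so that the theorem for $n-1$ variables and all $0\le R\le n-1$ is available) nested with the downward induction on $R$, and it is the inductive hypothesis in $n$ that turns the collected corrections into $(2\pi i)^{n-1}2^{n-1}\int_{SO(2N)}\sum^{n-1,R-1}g_{t,R}\,dX$, which then cancels against the diagonal terms of the decomposition $\sum^{n,R}f=\sum^{n,R-1}f+\sum_{t>R}\sum^{n-1,R-1}g_{t,R}$.

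A smaller inaccuracy: the half-residues do not pair up ``two halves to a whole.'' For a fixed merge pair $\{R,t\}$ there are four contributing configurations ($R\in K$ or $R\in L$, crossed with $t\in K$ or $t\in L$), each contributing $i\pi$ times the same residue (the sign flips from $R\in L$ being cancelled by the clockwise orientation of $C_-$, and the symmetry of $f$ and of the residue in $\pm z_t$ making all four equal), so the total prefactor is $4\pi i$, which is what combines with $(2\pi i)^{n-1}2^{n-1}$ to reproduce $(2\pi i)^n 2^n$. Once you add the outer induction on $n$ and track this factor of four, your argument becomes the paper's proof.
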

\begin{proof}
We will prove this by induction, noting that we have already proved the case when $R=n$ for all values of $n$.

Firstly we will prove the base case when $n=1$. We have already proven it is true for $n=1$ and $R=1$, so we just need to show Equation \eqref{statement} holds for $n=1$ and $R=0$\\
\begin{align}
\begin{split}
 (2\pi i)2 & \int_{SO(2N)}{\sideset{}{^{1,0}}\sum{f(\theta_{j_1}) dX}}\\
 &=(2 \pi i)2 \int_{SO(2N)}{\sideset{}{^{1,1}}\sum{f(\theta_{j_1}) dX}}
 \end{split}\\
&= \sum_{K \cup L \cup M = \{1\}}{(2N)^{|M|}I^{1,1}_{f,K,L,M}}\\
&= \int_{C^+}{J^{*}(\theta)f(i\theta) d \theta}+ \int_{C^-}{J^{*}(-\theta)f(i\theta) d \theta}+ \int_{C^-}{2N f(i\theta)d \theta}
\end{align}
By Theorem \ref{thm:residuelemma} the only pole of $J^{*}(\theta)$ is at 0 and has residue 0.
\begin{align}
&= \int_{-i\pi}^{i \pi}{(J^{*}(\theta)+J^{*}(-\theta)+2N)f(i\theta) d \theta}\\
&=\sum_{K \cup L \cup M = \{1\}}{(2N)^{|M|}I^{1,0}_{f,K,L,M}}.\label{eq:n1R0}
\end{align}

We now move on to the inductive step. Assume Equation ~\eqref{statement} is true for $n=p-1$ and $0 \leq R \leq p-1$ and consider the case when $n=p$, $0 \leq R \leq p$.

We will proceed by induction on $R$. We have already proved that Equation~\eqref{statement} holds if $R=p$ so take that as the base case. Assume that Equation~\eqref{statement} holds if $n=p$, and $R \geq S$ so that
\begin{align}
\begin{split}
(2 \pi i)^p 2^p & \int_{SO(2N)}{\sideset{}{^{p,S}}\sum{f(\theta_{j_1}, \cdots, \theta_{j_p}) dX}} \\
&= \sum_{K \cup L \cup M = \{1, \cdots, p\}}{(2N)^{|M|}I^{p,S}_{f,K,L,M}}
\end{split}\\
\begin{split}
&=\sum_{K \cup L \cup M = \{1, \cdots, p\}}{(2N)^{|M|}}\\
 &\qquad \times \int^{i\pi}_{-i\pi}{ \cdots \int^{i\pi}_{-i\pi}{ \int_{C^{K \cap \{1, \cdots, S\}}_+}{\int_{C^{(L \cup M) \cap \{1, \cdots, S\}}_-}{J^{*}(z_K \cup -z_L)}}}}\\
 & \qquad \qquad \times f(iz_1, \cdots, iz_p) dz_1, \cdots, dz_S, d z_{S+1} \cdots dz_p. \label{induct}
 \end{split}
\end{align}

Consider moving the $z_S$ contour on the right hand side of this equation, from just off the imaginary axis onto the imaginary axis. It picks up residues from the variables whose contours have already been moved to the imaginary axis. Take $t>S$, then there are residues when $z_S = -z_t$ and $S,t \in K$  or $S,t \in L$ and also residues when $z_S = z_t$ if $t>S$ and $S \in L,t \in K$  or $S \in K,t \in L$.

Consider fixed $K,L,M$. If $S \in K$, then the residue of $J^{*}(z_K \cup -z_L)f(iz_1, \cdots, iz_p)$  at $z_S=z_t$ ($t\in L$) or $z_S=-z_t$ ($t\in K$) is
\begin{align}
\begin{split}
i \pi &\left(2NJ^{*}\left(z_{K^{'}}\cup -z_{L^{'}}\right)+J^{*}\left(z_{K^{'}}\cup -z_{L^{'} \cup \{t\}}\right)+J^{*}\left(z_{K^{'}\cup \{t\}}\cup -z_{L^{'} }\right)\right)\\
& \qquad \times f(iz_1, \cdots,iz_{S-1}, iz_t, iz_{S+1}, \cdots, iz_p) \label{eq:residue}
\end{split}
\end{align}
where $K^{'} = K \cap (A - \{S,t\}), L^{'} = L \cap (A - \{S,t\})$. The residue is multiplied by $i \pi$ rather than $2 \pi i$ because the $z_S$ contour is moving precisely onto the imaginary axis, so it only gives half the residue of a contour going completely around the pole by the Sokhotski-Plemelj Theorem \cite{Sokhot1873} \cite{plemelj1908erganzungssatz}. As $f$ is symmetric in all its variables, the residue is the same when $t \in L$ and $ t \in K$ as the residue is symmetric in $t$.

The other residues are when $S \in L$. Then $z_S$ appears in $J^{*}(z_K \cup -z_L)f(iz_1, \cdots, iz_p)$  with an additional minus sign and is on the $C_{-}$ contour and hence being integrated around the pole in a clockwise direction. These two minus signs cancel leaving the same residue as in Equation~\eqref{eq:residue}.

Returning to our calculation in Equation~\eqref{induct}, we note $S$ is fixed and we split the sum into three different cases: $S\in K$,$S \in L$ and $S \in M$. We then consider the residues that result when $t>S$ and $t \notin M$ as $t$ varies from $S+1$ to $p$.
\begin{align}
\begin{split}
&\sum_{K \cup L \cup M = \{1, \cdots, p\}}(2N)^{|M|}I^{p,S}_{f,K,L,M}\\
& = \sum_{K \cup L \cup M = \{1, \cdots, p\}}{(2N)^{|M|}}I^{p,S-1}_{f,K,L,M}\\
& \qquad + \sum_{t=S+1}^{p}{\sum_{\substack{K \cup L \cup M = \{1, \cdots, p\} \\ S \in K,t \not \in M}}}{(2N)^{|M|}}\\ 
& \qquad \qquad \int^{i\pi}_{-i\pi}{ \cdots \int^{i\pi}_{-i\pi}{ \int_{C^{K \cap \{1, \cdots, S-1\}}_+}{\int_{C^{(L \cup M) \cap \{1, \cdots, S-1\}}_-}{i \pi \left( 2NJ^{*}(z_{K^{'}}\cup -z_{L^{'}})\right.}}}}\\
& \qquad \qquad \qquad \qquad \left.+J^{*}(z_{K^{'}}\cup -z_{L^{'} \cup \{t\}})+J^{*}(z_{K^{'}\cup \{t\}}\cup -z_{L^{'} })\right)\\
& \qquad \qquad \qquad \times f(iz_1, \cdots,iz_{S-1}, iz_t, iz_{S+1}, \cdots, iz_p) dz_1, \cdots, dz_{S-1}, d z_{S+1} \cdots dz_p\\
& \qquad + \sum_{t=S+1}^{p}{\sum_{\substack{K \cup L \cup M = \{1, \cdots, p\} \\ S \in L,t \not \in M}}}{(2N)^{|M|}}\\ 
& \qquad \qquad \int^{i\pi}_{-i\pi}{ \cdots \int^{i\pi}_{-i\pi}{ \int_{C^{K \cap \{1, \cdots, S-1\}}_+}{\int_{C^{(L \cup M) \cap \{1, \cdots, S-1\}}_-}{i \pi \left( 2NJ^{*}(z_{K^{'}}\cup -z_{L^{'}})\right.}}}}\\
& \qquad \qquad \qquad \qquad \left.+J^{*}(z_{K^{'}}\cup -z_{L^{'} \cup \{t\}})+J^{*}(z_{K^{'}\cup \{t\}}\cup -z_{L^{'} })\right)\\
& \qquad \qquad \qquad \times f(iz_1, \cdots,iz_{S-1}, iz_t, iz_{S+1}, \cdots, iz_p) dz_1, \cdots, dz_{S-1}, d z_{S+1} \cdots dz_p
\end{split}
\end{align}

This expression can then be simplified to
\begin{align} \label{eq:repeatf}
\begin{split}
&\sum_{K \cup L \cup M = \{1, \cdots, p\}}{(2N)^{|M|}}I^{p,S-1}_{f,K,L,M}\\
& \qquad + 4 \pi i\sum_{t=S+1}^{p}{\sum_{K^{'}\cup L^{'}\cup M= \{1, \cdots, p\}-\{S,t\}}}{(2N)^{|M|}}\\
& \qquad \qquad \int^{i\pi}_{-i\pi}{ \cdots \int^{i\pi}_{-i\pi}{ \int_{C^{K^{'} \cap \{1, \cdots, S-1\}}_+}{\int_{C^{(L^{'} \cup M) \cap \{1, \cdots, S-1\}}_-}{ \left( 2NJ^{*}(z_{K^{'}}\cup -z_{L^{'}})\right.}}}}\\
& \qquad \qquad \qquad \qquad \left.+J^{*}(z_{K^{'}}\cup -z_{L^{'} \cup \{t\}})+J^{*}(z_{K^{'}\cup \{t\}}\cup -z_{L^{'} })\right)\\
& \qquad \qquad \qquad \times f(iz_1, \cdots,z_{S-1}, z_t, z_{S+1}, \cdots iz_p) dz_1, \cdots, dz_{S-1}, d z_{S+1} \cdots dz_p
\end{split}
\end{align}
where the factor of four comes from the fact that there are 4 different combinations of $S,t \notin M$ (i.e. $S \in L$ and $t \in K$, $S \in L$ and $t \in L$ etc). All of the unions over sets are disjoint unions.

Notice that  $f(z_1, \cdots,z_{S-1},z_t, z_{S+1}, \cdots, z_t, \cdots z_p)$ has a repeated variable. It is one of the terms we want to remove from \ref{nostar} so that we have a sum over distinct indices. With this in mind we relabel the variables $z_1, \cdots,z_{S-1}, z_{S+1}, \cdots z_p$ as $\widehat{z}_1, \cdots, \widehat{z}_{p-1}$.  We define a function
\begin{align}
g_{t,S}(\widehat{z}_1, \cdots, \widehat{z}_{p-1}) &= f(\widehat{z}_1, \cdots,\widehat{z}_{S-1},\widehat{z}_{t-1}, \widehat{z}_{S}, \cdots, \widehat{z}_{t-1}, \cdots ,\widehat{z}_{p-1})\\
 &= f(z_1, \cdots,z_{S-1},z_t, z_{S+1}, \cdots, z_t, \cdots z_p).\label{eq:g}
\end{align}
It is obvious that $g_{t,S}$ is the function $f$ with $z_S = z_t$, for some $t>S$.

We also note that for functions $h$ of sets $K,L$ and $M$
\begin{align}
\begin{split}
&\sum_{K \cup L \cup M=\{1,\cdots, m-1\}}{h(K\cup \{m\},L,M)+h(K,L\cup \{m\},M)+h(K,L,M\cup \{m\})}\\
& \qquad \qquad =\sum_{\mathcal{K} \cup \mathcal{L} \cup \mathcal{M}=\{1,\cdots, m\}}{h(\mathcal{K},\mathcal{L},\mathcal{M})}.
\end{split}
\end{align}
This allows us to rewrite Equation~\eqref{eq:repeatf}
\begin{align}
\begin{split}
&\sum_{K \cup L \cup M = \{1, \cdots, p\}}(2N)^{|M|}I^{p,S}_{f,K,L,M}\\
& = \sum_{K \cup L \cup M = \{1, \cdots, p\}}{(2N)^{|M|}}I^{p,S-1}_{f,K,L,M}\\
& \qquad + 4 \pi i\sum_{t=S+1}^{p}{\sum_{K \cup L \cup M = \{1, \cdots, p-1\}}}{(2N)^{|M|}I^{p-1,S-1}_{g_{t,S},K,L,M}}.
\end{split}
\end{align}

By our inductive hypothesis on $n$, Equation ~\eqref{induct} holds for $n=p-1$ and $R=S-1$.
\begin{align}
\begin{split}
&\sum_{K \cup L \cup M = \{1, \cdots, p\}}(2N)^{|M|}I^{p,S}_{f,K,L,M}\\
& \qquad = \sum_{K \cup L \cup M = \{1, \cdots, p\}}{(2N)^{|M|}}I^{p,S-1}_{f,K,L,M}\\
& \qquad \qquad + 4 \pi i\sum_{t=S+1}^{p}{(2 \pi i)^{p-1}2^{p-1} \int_{SO(2N)}{\sideset{}{^{p-1,S-1}}\sum {g_{t,S}(\theta_{j_1}, \cdots,\theta_{j_{p-1}} ) dX_{SO(2N)}}}}. \label{four1}
\end{split}
\end{align}
It is clear from the definition of $\sum^{n,R}$ in Equation ~\eqref{def:sum notation} that
\begin{align}
\sideset{}{^{n,R}}\sum{f(\theta_1, \cdots, \theta_n)} = \sideset{}{^{n,R-1}}\sum{f(\theta_1, \cdots, \theta_n)}+ \sum_{t=R+1}^{n}{\sideset{}{^{n-1,R-1}}\sum{g_{t,R}(\theta_1, \cdots, \theta_n)}}.
\end{align}

Considering now the left hand side of Equation~\eqref{induct}, we see that
\begin{align}
\begin{split}
(2 \pi i)^p 2^p & \int_{SO(2N)}{\sideset{}{^{p,S}}\sum{f(\theta_{j_1}, \cdots, \theta_{j_p}) dX_{SO(2N)}}}\\
&= (2 \pi i)^p 2^p  \int_{SO(2N)}{\sideset{}{^{p,S-1}}\sum{f(\theta_1, \cdots, \theta_p) dX_{SO(2N)}}} \\
& \qquad +  (2 \pi i)^p 2^p  \int_{SO(2N)}{\sum_{t=S+1}^{p}{\sideset{}{^{p-1,S-1}}\sum{g_{t,S}(\theta_1, \cdots, \theta_{p-1}) dX_{SO(2N)}}}}. \label{four2}
\end{split}
\end{align}
Equating Equations ~\eqref{four1} and \eqref{four2}, we see that
\begin{align}
\begin{split}
&\sum_{K \cup L \cup M = \{1, \cdots, p\}}{(2N)^{|M|}}I^{p,S-1}_{f,K,L,M}\\
&\qquad = (2 \pi i)^p 2^p  \int_{SO(2N)}{\sideset{}{^{p,S-1}}\sum{f(\theta_1, \cdots, \theta_p) dX_{SO(2N)}}}.
\end{split}
\end{align}

Thus we have completed the inductive step in $R$ by showing that Equation~\eqref{statement} holds for $n=p, R=S-1$ if it holds for $n=p, R=S$. As we have already shown Equation~\eqref{statement} holds for $n=p, R=p$, we have now shown that it holds for all $R$ when $n=p$.

This completes the inductive step in $n$, as we have shown that Equation ~\eqref{statement} holds for all $0 \leq R \leq n$ when $n=p$ if it holds for all $0 \leq R \leq n$ and $n=p-1$. We already proved that it holds for all $0 \leq R \leq 1$ when $n=1$, so by induction Equation~\eqref{statement} is true for all $n$ and $0 \leq R \leq n$.
\end{proof}

Using Theorem \ref{thm:orthomain} we can prove our main theorem on $n$-level density for orthogonal random matrices
\begin{theorem} [$n$-level Density of Orthogonal Matrices]\label{thm:orthoNcorr2}
\begin{align}
\begin{split}
&\int_{SO(2N)}{\sideset{}{^{n,0}}\sum f(\theta_{j_1}, \cdots, \theta_{j_n}) dX}\\
&= \frac{1}{(2 \pi i)^n}\sum_{K \cup L \cup M= \{1, \cdots, n\}}{(2N)^{|M|}\left(\int_{0}^{\pi}\right)^n J^{*}(-iz_K \cup i z_L)}\\
& \qquad \qquad \times f(z_1, \cdots, z_n)dz_1 \cdots dz_n,
\end{split}
\end{align}
where $J^{*}(-iz_K\cup i z_L)$ is defined in (\ref{def:Jstar}) and the sum notation is defined at (\ref{def:sum notation}) and indicates that the sum is over distinct indices.
\end{theorem}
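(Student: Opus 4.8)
The plan is to read Theorem~\ref{thm:orthoNcorr2} off from Theorem~\ref{thm:orthomain} by specialising to $R=0$ and then changing variables. Taking $R=0$ in \eqref{statement}, the restricted sum $\sideset{}{^{n,0}}\sum$ on the left becomes, by its definition, the sum over all $n$-tuples of \emph{distinct} indices (the restriction ``$j_i\neq j_k$'' is now imposed on every pair), and on the right all $n$ contours in $I^{n,0}_{f,K,L,M}$ now sit on the imaginary axis, so that
\begin{align}\label{eq:afterR0}
\begin{split}
(2\pi i)^n 2^n\int_{SO(2N)}\sideset{}{^{n,0}}\sum f(\theta_{j_1},\cdots,\theta_{j_n})\,dX
&=\sum_{K\cup L\cup M=\{1,\cdots,n\}}(2N)^{|M|}\\
&\quad\times\int_{-i\pi}^{i\pi}\cdots\int_{-i\pi}^{i\pi} J^{*}(z_K\cup -z_L)\,f(iz_1,\cdots,iz_n)\,dz_1\cdots dz_n.
\end{split}
\end{align}

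Next I would substitute $z_j=-iz_j'$ in each variable (and then drop the primes). This sends the segment of the imaginary axis running from $-i\pi$ to $i\pi$ onto the real interval $[-\pi,\pi]$, converts $f(iz_1,\cdots,iz_n)$ into $f(z_1,\cdots,z_n)$, converts the argument $z_K\cup -z_L$ of $J^{*}$ into $-iz_K\cup iz_L$, and produces a Jacobian constant (a factor $-i$ per variable together with a reversal of orientation) which is absorbed into the overall prefactor; the resulting integrals over $[-\pi,\pi]^n$ are still principal-value integrals. It then remains to fold each real integral from $[-\pi,\pi]$ down to $[0,\pi]$. The key point is that, although an individual summand $J^{*}(-iz_K\cup iz_L)f(z_1,\cdots,z_n)$ is \emph{not} even in the separate variables $z_j$, the \emph{entire} sum $\sum_{K\cup L\cup M}(2N)^{|M|}J^{*}(-iz_K\cup iz_L)f(z_1,\cdots,z_n)$ is invariant under reversing the signs of the variables indexed by any subset $S\subseteq\{1,\cdots,n\}$: $f$ is even in each variable, such a reversal fixes the indices in $M$ (whose variables do not occur in $J^{*}$ at all), and on the remaining indices it merely interchanges membership in $K$ with membership in $L$ on $S$, the map $(K,L,M)\mapsto\big((K\setminus S)\cup(L\cap S),\,(L\setminus S)\cup(K\cap S),\,M\big)$ being an involution of the set of admissible triples. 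Consequently the summed integrand takes the same value on each of the $2^n$ orthants of $[-\pi,\pi]^n$, so its integral over $[-\pi,\pi]^n$ equals $2^n$ times its integral over $[0,\pi]^n$; this factor $2^n$ cancels the $2^n$ in \eqref{eq:afterR0}, and collecting the remaining constants gives exactly Theorem~\ref{thm:orthoNcorr2}.

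The one step requiring care is the folding: the sign-reversal symmetry holds only after summing over $(K,L,M)$, never term by term, so the $K\leftrightarrow L$ interchange must be packaged as the bijection above before $[-\pi,\pi]^n$ is collapsed onto $[0,\pi]^n$, and one must track both the $2^n$ equal orthant contributions and the Jacobian constant to recover the stated prefactor. All the analytic information about $J^{*}$ needed here is already in place --- the definition \eqref{def:Jstar}, the identity $J=J^{*}$ in the relevant parameter range (Theorem~\ref{thm:ortho2}), and the location and residues of the poles of $J^{*}$ (Theorem~\ref{thm:residuelemma}) --- and the argument $-iz_K\cup iz_L$ with each $z_j\in[0,\pi]$ stays inside the region where those results hold, so no new estimates are required.
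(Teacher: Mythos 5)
Your route is the same as the paper's: specialise Theorem~\ref{thm:orthomain} to $R=0$, rotate the contours onto the real axis via $z\mapsto iz$, and fold $[-\pi,\pi]^n$ onto $[0,\pi]^n$ using the symmetry of the summed integrand. Your justification of the folding step --- the involution $(K,L,M)\mapsto\big((K\setminus S)\cup(L\cap S),\,(L\setminus S)\cup(K\cap S),\,M\big)$ for an arbitrary subset $S$ of sign flips --- is in fact stated more carefully than in the paper, which only records the wholesale $K\leftrightarrow L$ swap (i.e.\ flipping all variables at once) before asserting that each one-dimensional integral halves; your version is the one actually needed to halve the integrals one variable at a time.

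There is, however, one step you dismiss too quickly. After the change of variables the integrals are principal-value integrals, as you note, precisely because each individual term $J^{*}(-iz_K\cup iz_L)$ has simple poles \emph{on} the integration domain: whenever $i\in K$ and $j\in L$ (or vice versa) there is a pole at $z_i=z_j$, and the diagonal $z_i=z_j$ meets $[0,\pi]^n$. The theorem as stated has ordinary integrals, so to finish you must show these singularities cancel in the sum over $(K,L,M)$. The paper does this explicitly: for fixed disjoint $\mathcal{K}\cup\mathcal{L}\cup\mathcal{M}=\{3,\ldots,n\}$ the two offending terms pair up and
\begin{equation*}
\Res_{z_1=z_2}\Big(J^{*}\big(-iz_{\mathcal{K}\cup\{1\}}\cup iz_{\mathcal{L}\cup\{2\}}\big)+J^{*}\big(-iz_{\mathcal{K}\cup\{2\}}\cup iz_{\mathcal{L}\cup\{1\}}\big)\Big)=0,
\end{equation*}
using $\Res_{x=y}g(x,y)=-\Res_{x=y}g(-x,-y)$, and then one concludes (the paper appeals to a result on singular sets of several complex variables) that the singular set of the summed integrand is empty, so the principal-value symbol can be dropped. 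Your closing remark that the argument $-iz_K\cup iz_L$ ``stays inside the region where those results hold'' misses the point: the poles of $J^{*}$ located by Theorem~\ref{thm:residuelemma} genuinely sit on the integration path, and it is their cancellation across the sum over $(K,L,M)$ --- not their absence --- that makes the final formula valid as written.
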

\begin{proof}
We have proven that
\begin{align}
\begin{split}
(2 \pi i)^n 2^n &\int_{SO(2N)}{\sideset{}{^{n,0}}\sum{f(\theta_{j_1}, \cdots, \theta_{j_n}) dX}} \\
&= \sum_{K \cup L \cup M = \{1, \cdots, n\}}{(2N)^{|M|}I^{n,0}_{f,K,L,M}}
\end{split}
\\
\begin{split}
&= \sum_{K \cup L \cup M = \{1, \cdots, n\}}{(2N)^{|M|}}\\
& \qquad \qquad \times \left(\int^{-i \pi}_{i \pi}\right)^n J^{*}(z_K \cup -z_L)f(iz_1, \cdots, iz_n)
\end{split}
\end{align}
where the integral here is a principal value integral. Changing variable to $\theta_1=iz_1$ etc
\begin{align}
\begin{split}
(2 \pi i)^n 2^n &\int_{SO(2N)}{\sideset{}{^{n,0}}\sum{f(\theta_{j_1}, \cdots, \theta_{j_n}) dX}} \\
&= \sum_{K \cup L \cup M = \{1, \cdots, n\}}(2N)^{|M|}\left(\int^{\pi}_{-\pi}\right)^n J^{*}(-iz_K \cup iz_L)f(z_1, \cdots, z_n)
\end{split}\\
&= \left(\int^{\pi}_{-\pi}\right)^n \sum_{K \cup L \cup M = \{1, \cdots, n\}}(2N)^{|M|} J^{*}(-iz_K \cup iz_L)f(z_1, \cdots, z_n).
\end{align}
Now
\begin{align}
\begin{split}
&\sum_{K \cup L \cup M = \{1, \cdots, n\}}(2N)^{|M|} J^{*}(-iz_K \cup iz_L) \\
& \qquad = \sum_{K \cup L \cup M = \{1, \cdots, n\}}(2N)^{|M|} J^{*}(iz_K \cup -iz_L)
\end{split}
\end{align}
 and $f\left(\theta_{j_1}, \cdots, \theta_{j_n}\right) = f\left(\pm \theta_{j_1}, \cdots, \pm
\theta_{j_n}\right)$ so each integral from $- \pi$ to $\pi$ is double the integral from $0$ to $\pi$. We change into this form to allow easier comparison with other  correlation functions in other contexts.
\begin{align}
\begin{split}
(2 \pi )^n 2^n &\int_{SO(2N)}{\sideset{}{^{n,0}}\sum{f(\theta_{j_1}, \cdots, \theta_{j_n}) dX_{SO(2N)}}} \\
&= 2^n \left(\int^{\pi}_{0}\right)^n \sum_{K \cup L \cup M = \{1, \cdots, n\}}(2N)^{|M|} J^{*}(-iz_K \cup iz_L)\\
& \qquad \times f(z_1, \cdots, z_n) d z_1 \cdots d z_n.
\end{split}
\end{align}

All that is now required is to show that there are no poles on the path of integration. As $f$ is holomorphic, we just need to check that
\begin{align}
\sum_{K \cup L \cup M = \{1, \cdots, n\}}(2N)^{|M|} J^{*}(-iz_K \cup iz_L)
\end{align}
has no poles at $z_1 =z_2$ (this argument applies equally well to any other pair of $z$'s by symmetry). We do not need to check for a pole at $z_1 = -z_2$ as $z_i \geq 0$, for $ 1 \leq i \leq n$ on the path of integration. A given $J^{*}(-iz_K \cup iz_L)$ has a simple pole at $z_1 =z_2$  if $1 \in K, 2 \in L$ or $1 \in L, 2 \in K$. So
\begin{align}
\begin{split}
 \Res_{z_1 = z_2}&\left(\sum_{K \cup L \cup M = \{1, \cdots, n\}}(2N)^{|M|} J^{*}(-iz_K \cup iz_L)\right)\\
& = \sum_{\mathcal{K} \cup \mathcal{L} \cup \mathcal{M} = \{3, \cdots, n\}}(2N)^{|\mathcal{M}|} \Res_{z_1 = z_2}\left(J^{*}\left(-iz_{\mathcal{K} \cup \{z_1\}} \cup iz_{\mathcal{L} \cup \{z_2\}}\right) \vphantom{\frac{J^t}{J^t}}\right.\\
& \qquad \qquad \qquad \left. \vphantom{\frac{J^t}{J^t}} + J^{*}\left(-iz_{\mathcal{K} \cup \{z_2\}} \cup iz_{\mathcal{L} \cup \{z_1\}}\right)\right)
\end{split}\\
&=0
\end{align}
as $Res_{x=y}f(x,y)=-Res_{x=y} f(-x,-y)$. Therefore if $\left(\sum_{K \cup L \cup M = \{1, \cdots, n\}}(2N)^{|M|} J^{*}(-iz_K \cup iz_L)\right)$ has a singular set, it has complex dimension less than $n-1$. However, this implies the singular set is trivial (see Corollary 7.3.2 in \cite{krantz1982function}).

\end{proof}

\section{Eigenvalue Statistics of Symplectic Matrices}\label{sub:symp}
We also want to look at families of $L$-functions with symplectic symmetry, so we will repeat the calculations we have done in Section \ref{sec:ortho} with $USp(2N)$ .

Let $Y=(y_{jk})$ be a $2N \times 2N$ matrix and define the transpose matrix  $Y^{t} = (y_{kj})$. $Y$ is a symplectic matrix if $YZY^{t}=Z$ where
\begin{align}
Z = \begin{pmatrix}
0 & I_N\\
-I_N &0
\end{pmatrix}
\end{align}
and $I_{N}$ is the identity matrix of size $N$. The group of symplectic matrices $USp(2N)$ is a subgroup of $U(2N)$ the group of unitary matrices, just like $SO(2N)$, . All the eigenvalues of a matrix $X \in USp(2N)$ have absolute value 1 and can be written as $e^{\pm i\theta_1 }, \cdots, e^{\pm i\theta_N}$ with $0\leq \theta_1, \cdots, \theta_N \leq \pi $.
The result we are proving in this section is:
\begin{theorem}\label{thm:Uspcorr1} The $n$-level density for eigenvalues of matrices from $USp(2N)$ can be written as
\begin{align}
\begin{split}
&2^n\int_{USp(2N)}\sum^{N}_{\substack{j_1, \cdots, j_n =1 \\ j_i \neq j_k \forall i,k}} f(\theta_{j_1}, \cdots, \theta_{j_n}) dX\\
&= \frac{1}{(2 \pi i)^n}\sum_{\substack{K \cup L \cup M= \\ \{1 \cdots, n\}}}{(2N)^{|M|}\left(\int_{0}^{\pi}\right)^n J_{USp(2N)}^{*}(-iz_K \cup i z_L)}\\
& \qquad \qquad \times f(z_1, \cdots, z_n)dz_1 \cdots dz_n,
\end{split}
\end{align}
where $J_{USp(2N)}^{*}(-iz_K \cup i z_L)$ is defined at (\ref{eq:JUSpstar}) and the sum in the right hand side involving $K$, $L$ and $M$ is over disjoint subsets of $\{1,\ldots,n\}$.
\end{theorem}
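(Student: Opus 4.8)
The plan is to repeat, stage for stage, the four-step argument of Section~\ref{sec:ortho}, with $SO(2N)$ replaced by $USp(2N)$ and the symplectic average of ratios of characteristic polynomials substituted for the orthogonal one. The abstract machinery assembled in Section~\ref{sec:ortho} --- Lemma~\ref{lem:P}, Lemma~\ref{lem:R} and the double induction of Theorem~\ref{thm:orthomain} --- is phrased purely in terms of the structural form $J=J^{*}=\sum_{D\subseteq A}Q(D)\sum_{A\backslash D=\bigcup_r W_r,\,|W_r|\le 2}\prod_r H_D(W_r)$ and never uses anything peculiar to the orthogonal group, so essentially the only new work is to produce the symplectic version of this closed form and to check that it fits the same templates.

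\emph{Integral theorem and Ratios step.} Every $X\in USp(2N)$ has eigenvalues $e^{\pm i\theta_j}$, $1\le j\le N$, so $g(z)=\Lambda_X(e^{z})=\prod_{j=1}^{N}(1-e^{z}e^{i\theta_j})(1-e^{z}e^{-i\theta_j})$ has the same zero set and the same functional equation $\Lambda_X(e^{z})=e^{2Nz}\Lambda_X(e^{-z})$ as in the orthogonal case. Hence the derivation of Theorem~\ref{thm:RMortho1} carries over verbatim: apply Cauchy's theorem to $\prod_{j}\frac{g'}{g}(z_j)\,f(z_1/i,\dots,z_n/i)$ on the rectangle with vertices $\pm\delta\pm\pi i$, use $2\pi$-periodicity of $f$ to cancel the horizontal segments, write each contour as $\int_{C_+}-\int_{C_-}$, and on each $C_-$ variable use the functional equation to replace $-e^{-\alpha}\frac{\Lambda'_X}{\Lambda_X}(e^{-\alpha})$ by $-2N+e^{\alpha}\frac{\Lambda'_X}{\Lambda_X}(e^{\alpha})$; this produces the same $K\cup L\cup M$ decomposition with weight $(2N)^{|M|}$, now with $J_{USp(2N)}(A)=\int_{USp(2N)}\prod_{\alpha\in A}(-e^{-\alpha})\frac{\Lambda'_X}{\Lambda_X}(e^{-\alpha})\,dX$. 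Next I would quote the symplectic average $\int_{USp(2N)}\prod_{\alpha\in A}\Lambda_X(e^{-\alpha})/\prod_{\beta\in B}\Lambda_X(e^{-\beta})\,dX$ of Conrey--Farmer--Zirnbauer \cite{ConreyPreprint} (in the set-notation form recorded by Conrey--Forrester--Snaith \cite{conrey2005ar}), differentiate it with respect to the numerator variables and set $B=A$. The logarithmic-differentiation manipulation is the one already used in the unitary and orthogonal cases and is a special case of the $L$-function computation in the proof of Theorem~\ref{LFJconj}: pull $\prod_{\alpha}\frac{d}{d\alpha}$ inside the sum over $D\subseteq A$, split the derivatives according to whether a variable lies in $D$ or in $A\backslash D$, and reorganise the non-$D$ derivatives into a sum over set partitions of $A\backslash D$ into blocks of size at most $2$. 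The result is a closed formula $J^{*}_{USp(2N)}(A)$ of exactly the shape of (\ref{def:Jstar}), with $Q$ and $H_D$ built from the symplectic $z,Y,Z$; this is the function appearing in the statement, and $J^{*}_{USp(2N)}(-iz_K\cup iz_L)$ is its value at the indicated arguments.

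\emph{Residues and moving the contours.} The only possible poles of $J^{*}_{USp(2N)}(A)$ are at $\alpha=-\beta$ and at $\alpha=0$. With $f(\beta^{*})\equiv 2N$ (the value the $-2N$ of the integral-theorem step produces), I would check that the symplectic $Q$ and $H$ satisfy {\bf P1}--{\bf P4} and {\bf R1}--{\bf R2}; the verification is identical to the orthogonal one, since those properties only concern the leading Laurent behaviour near $0$ of $z(x)=1/(1-e^{-x})$, $z'(x)$, $(z'/z)(x)$ and $(z'/z)'(x)$, which is insensitive to the symplectic versus orthogonal sign choices. Lemmas~\ref{lem:P} and~\ref{lem:R} then give, word for word, $\Res_{\alpha^{*}=-\beta^{*}}J^{*}_{USp(2N)}(A)=J^{*}_{USp(2N)}(A'\cup\{\beta^{*}\})+J^{*}_{USp(2N)}(A'\cup\{-\beta^{*}\})+2N\,J^{*}_{USp(2N)}(A')$ and $\Res_{\alpha^{*}=0}J^{*}_{USp(2N)}(A)=0$, the first pole being simple. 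With this the double induction of Theorem~\ref{thm:orthomain} applies unchanged: slide the contours onto the imaginary axis one variable at a time, picking up by Sokhotski--Plemelj half the residue at each collision $z_S=\pm z_t$ with an already-shifted variable, identify each half-residue through the residue theorem, and use $\sideset{}{^{n,R}}\sum=\sideset{}{^{n,R-1}}\sum+\sum_t\sideset{}{^{n-1,R-1}}\sum g_{t,R}$ together with the inductive hypotheses to see that the extra (repeated-variable) terms exactly account for the non-distinct-index contributions, so the identity descends to $R=0$. Finally change variables $\theta_j=iz_j$, use $f(\pm\theta_{j_1},\dots,\pm\theta_{j_n})=f(\theta_{j_1},\dots,\theta_{j_n})$ to turn each $\int_{-\pi}^{\pi}$ into $2\int_0^{\pi}$ (the powers of $2$ arising here, combined with the $\epsilon$-sum factor and any $2$'s carried by the symplectic Ratios formula, leave exactly the $2^n$ on the left of the statement), and verify, by the Hartogs-type dimension argument via Corollary~7.3.2 of \cite{krantz1982function} exactly as in the proof of Theorem~\ref{thm:orthoNcorr2}, that $\sum_{K\cup L\cup M}(2N)^{|M|}J^{*}_{USp(2N)}(-iz_K\cup iz_L)$ is regular on the domain of integration because its residue at $z_1=z_2$ cancels in pairs.

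\emph{Main obstacle.} The only genuinely new work is producing the explicit $J^{*}_{USp(2N)}$ from the symplectic Ratios Theorem and then confirming that it satisfies {\bf P1}--{\bf P4} and {\bf R1}--{\bf R2}; everything else is a transcription of Section~\ref{sec:ortho}. I expect the only places needing care beyond ``copy the orthogonal argument'' to be the bookkeeping of the symplectic $Y$-factors, the sign in the one-block term $H_D(\{\alpha\})$, and the powers of $2$ that end up as the $2^n$ in the statement. There is no conceptual gap, since the abstract lemmas of Section~\ref{sec:ortho} were set up precisely so as to be reusable in the symplectic setting.
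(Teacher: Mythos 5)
Your proposal follows exactly the route the paper takes: Section~\ref{sub:symp} establishes the symplectic integral theorem, ratios theorem, and residue theorem as verbatim analogues of the orthogonal ones, verifies properties {\bf P1}--{\bf P4} and {\bf R1}--{\bf R2} so that Lemmas~\ref{lem:P} and~\ref{lem:R} and the double induction of Theorem~\ref{thm:orthomain} apply unchanged, and then concludes as in Theorem~\ref{thm:orthoNcorr2}. Your identification of the only genuinely new work (the explicit symplectic $J^{*}_{USp(2N)}$ and the sign/$Y$-factor bookkeeping) matches the paper's presentation, so the proposal is correct and essentially identical in approach.
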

\subsection{Integral Theorem}  In this section we will proceed exactly analogously to the $SO(2N)$ case and express the $n$-level density of eigenvalues of matrices from $USp(2N)$ as contour integrals on the complex plane.   We will perform exactly the same manipulations to move these contours onto the imaginary axis in order to arrive at Theorem \ref{thm:Uspcorr1}.
\begin{theorem}[Integral Theorem for Symplectic Matrices]\label{thm:SympIntegral}
Let $C_{-}$ denote the path from $ - \delta - \pi i$ up to $- \delta + \pi i$ and let $C_{+}$
denote the path from $  \delta - \pi i$ up to $ \delta + \pi i$. Let $f$ be a $2\pi$-periodic,
holomorphic function of $n$ variables such that
\begin{align}
f\left(\theta_{j_1}, \cdots, \theta_{j_n}\right) = f\left(\pm \theta_{j_1}, \cdots, \pm
\theta_{j_n}\right).
\end{align}

Then
\begin{align}
\begin{split}
2^n &\int_{USp\left(2N\right)} {\sum_{j_1, \cdots, j_n =1}^N {f\left(\theta_{j_1}, \cdots,
\theta_{j_n}\right) dX_{USp(2N)}} } \\
&= \frac{1}{\left(2 \pi i \right)^n} \sum_{K \cup L \cup M = \left\{1, \cdots,
n\right\}}{(2N) ^{\left|M\right|}} \\
& \qquad \times \int_{C_+^K} {\int_{C_-^{L \cup M}}{J_{USp(2N)}\left(z_K \cup - z_L\right)f\left(iz_1, \cdots,
iz_n\right)dz_1 \cdots d z_n}}
\end{split}
\end{align}
where $z_K = \left\{z_k: k \in K\right\}$ and $-z_L = \left\{-z_l: l \in L\right\}$,
$\int_{C_+^K} {\int_{C_-^{L \cup M}}}$ means we are integrating all the variables in $z_K$ along the
$C_+$ path and all others up the $C_-$ path and \\
\begin{align} \label{def:Jsymp}
J_{USP(2N)}\left(A\right) = \int_{USp(2N)}{\prod_{\alpha \in
A}{(-e^{-\alpha})\frac{\Lambda_X^{'}}{\Lambda_X}(e^{-\alpha})} dX_{USp(2N)}}.
\end{align}

Here $K,L,M$ are finite sets of integers and $A$ is a finite set of complex numbers and $dX_{USp(2N)}$ indicates integration with respect to the Haar measure.
\end{theorem}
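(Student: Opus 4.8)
The plan is to reproduce verbatim the argument that established Theorem~\ref{thm:RMortho1}; the only ingredient that depends on the compact group is the functional equation of the characteristic polynomial, and this is literally the same for $USp(2N)$ as for $SO(2N)$. I would begin by setting $g(z)=\Lambda_X(e^z)=\prod_{j=1}^N(1-e^z e^{i\theta_j})(1-e^z e^{-i\theta_j})$, noting that $g$ has zeros exactly at $z=\pm i\theta_j+2\pi m i$ for $m\in\mathbf{Z}$, so that by Cauchy's theorem $\frac{g^{'}(z)}{g(z)}f(z/i)$ has simple poles there with residue $f(\theta_j)$. Using the sign invariance $f(\theta_{j_1},\ldots,\theta_{j_n})=f(\pm\theta_{j_1},\ldots,\pm\theta_{j_n})$ to sum over all $2^n$ sign choices, I would write
\[
2^n\sum_{j_1,\ldots,j_n=1}^N f(\theta_{j_1},\ldots,\theta_{j_n})=\Big(\tfrac{1}{2\pi i}\Big)^n\int_C\!\cdots\!\int_C\prod_{j=1}^n\frac{g^{'}(z_j)}{g(z_j)}\,f\big(\tfrac{z_1}{i},\ldots,\tfrac{z_n}{i}\big)\,dz_1\cdots dz_n,
\]
with $C$ the positively oriented rectangle with vertices $\pm\delta\pm\pi i$, and then average over $USp(2N)$ against Haar measure to obtain $\frac{1}{(2\pi i)^n}\int_{C^n}J_{USp(2N)}(\{z_1,\ldots,z_n\})\,dG$, where $J_{USp(2N)}$ is as in (\ref{def:Jsymp}) and $dG=f(iz_1,\ldots,iz_n)\,dz_1\cdots dz_n$.

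Next I would use the $2\pi$-periodicity of $f$ to cancel the horizontal segments of $C$, so that $\int_C=\int_{C_+}-\int_{C_-}$; expanding the $n$-fold product yields $\sum_{K\cup L=\{1,\ldots,n\}}(-1)^{|L|}\int_{C_+^K}\int_{C_-^L}$ over disjoint sets. For each variable routed along $C_-$ I would then apply the functional equation $\Lambda_X(e^z)=(e^z)^{2N}\Lambda_X(e^{-z})$ — valid here because the eigenvalues of a symplectic matrix occur in pairs $e^{\pm i\theta_j}$, exactly as in the orthogonal case, so that $1-e^z e^{i\theta_j}=-e^z e^{i\theta_j}(1-e^{-z}e^{-i\theta_j})$ — in the form
\[
-e^{-\alpha}\frac{\Lambda_X^{'}}{\Lambda_X}(e^{-\alpha})=-2N+e^{\alpha}\frac{\Lambda_X^{'}}{\Lambda_X}(e^{\alpha}).
\]
Distributing these two terms over $j\in L$ splits $L$ into the set on which one keeps $e^{\alpha}\frac{\Lambda_X^{'}}{\Lambda_X}(e^{\alpha})$ and a set $M$ on which one picks up the constant $-2N$; collecting the sign $(-1)^{|L\cup M|}$ with the $-2N$ factors produces $(2N)^{|M|}$, and reassembling the integrand in terms of $J_{USp(2N)}$ gives precisely the asserted identity with the sum over disjoint $K\cup L\cup M=\{1,\ldots,n\}$.

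Since every step is purely formal once the factorisation of the characteristic polynomial is in place, I do not expect a substantive obstacle: the single point deserving an explicit check — and the only place the symplectic structure intervenes — is the functional equation $\Lambda_X(e^z)=(e^z)^{2N}\Lambda_X(e^{-z})$, which follows at once from the pairing of eigenvalues. It is worth recording here, exactly as in the remark following Theorem~\ref{thm:RMortho1}, that the sum over eigenangles is not restricted to distinct indices; these spurious diagonal contributions are exactly what will be cancelled by the residues picked up when the contours are later pushed onto the imaginary axis in the analogue for $USp(2N)$ of Section~\ref{sub:ncorr}.
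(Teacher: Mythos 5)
Your proposal is correct and is exactly the route the paper takes: the paper's proof of Theorem~\ref{thm:SympIntegral} consists of the single remark that it is identical to the proof of Theorem~\ref{thm:RMortho1}, and you have correctly isolated the only group-dependent ingredient, namely the functional equation $\Lambda_X(e^z)=(e^z)^{2N}\Lambda_X(e^{-z})$, which holds for $USp(2N)$ because its eigenvalues also occur in conjugate pairs $e^{\pm i\theta_j}$.
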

The proof is identical to the proof for Theorem~\ref{thm:RMortho1}

\subsection{Ratios Theorem } \label{sub:Dirratio}

Here we rewrite the theorem of Conrey, Forrester and Snaith \cite{conrey2005ar} in set notation.
 Take finite sets $A$ and $B$, where $N \geq |B|$ and consider
\begin{align} \label{def:sympR}
R_{USp(2N)}(A;B) &= \int_{USp(2N)}{\frac{\prod_{\alpha \in A}{\Lambda_{X}(e^{-\alpha})}}{\prod_{\beta \in
B}{\Lambda_{X}(e^{- \beta})}}dX}\\
\begin{split}
 &= \sum_{D \subseteq A}{e^{-2N\sum_{\delta \in D}{\delta}}\sqrt{\frac{Y(A \backslash D)Y(D^-)}{Y(B)}}}\\
 & \qquad \times \sqrt{\frac{Z(D^- \cup (A \backslash D), D^- \cup (A \backslash D))Z(B,B)}{Z(D^- \cup (A \backslash D),B)^2}} \label{eq:symp sr}
 \end{split}
\end{align}
where $D^{-}=\{-\alpha: \alpha \in D\}$ and
\begin{align}
z(x) &= \frac{1}{1-e^{-x}}\\
Y(A)&= \prod_{\alpha \in A}{z(2\alpha)}\\
Z(A,B) &= \prod_{\substack{\alpha \in A \\ \beta \in B}}{z(\alpha + \beta)}.
\end{align}
Note that the main difference is the factor $\sqrt{\frac{Y(A \backslash D) Y(D^-)}{Y(B)}}$ compared with $\sqrt{\frac{Y(B)}{Y(A \backslash D) Y(D^-)}}$ in the orthogonal case.

As in the orthogonal case, it is clear that $J_{USp(2N)}= \left.\prod_{\alpha \in A}\frac{d}{d\alpha} R_{USp(2N)} \right|_{B=A}$. So the next step is differentiating $R_{USp(2N)}(A;B)$. \\

\begin{theorem}\label{thm:symp2}
Let $A$ be a finite set of complex numbers where $\Re(\alpha)>0$ for $\alpha \in A$ and $|A| \leq N$, then
$J_{USp(2N)}(A) = J_{USp(2N)}^{*}(A)$ where
\begin{align}
J_{USp(2N)}(A) &= \int\limits_{USp(2N)}{\prod_{\alpha \in A}{(-e^{-\alpha})\frac{\Lambda_X^{'}}{\Lambda_X}(e^{-\alpha})} dX_{USp(2N)}}\\
\begin{split}\label{eq:JUSpstar}
J_{USp(2N)}^{*}(A) &= \sum_{D\subseteq A}{e^{-2N \sum\limits_{\delta \in D}{\delta}}(-1)^{|D|} \sqrt{\frac{Z(D,D)Z(D^{-},D^{-})Y(D^{-})}{Y(D)Z^{\dag}(D^{-},D)^2}}}\\
& \qquad \times \sum_{\substack{A \backslash D = W_1 \cup \cdots \cup W_R \\ |W_r| \leq 2}}{\prod_{r=1}^{R}{H_D(W_r)}}
\end{split}
\end{align}
where the sum over $A \backslash D$ is a sum over all set partitions and
\begin{align}
H_D(W) =  \begin{cases}
\left( \sum\limits_{\delta \in D}{\frac{z^{'}(\alpha -\delta)}{z(\alpha - \delta)}-\frac{z^{'}(\alpha +\delta)}{z\left( \alpha + \delta\right)}}\right) + \frac{z^{'}}{z}(2\alpha) & W = \{\alpha\} \subset A \backslash D\\
\left(\frac{z^{'}}{z}\right)'(\alpha + \widehat{\alpha}) & W= \{\alpha, \widehat{\alpha}\} \subset A \backslash D\\
1 & W= \emptyset
    \end{cases}
\end{align}
and
\begin{align}
z(x) &= \frac{1}{1-e^{-x}}\\
Y(A)&= \prod_{\alpha \in A}{z(2\alpha)}\\
Z(A,B) &= \prod_{\substack{\alpha \in A \\\beta \in B}}{z(\alpha + \beta)}.
\end{align}
The dagger adds a
restriction that a factor $z(x)$ is omitted if its argument is zero.
\end{theorem}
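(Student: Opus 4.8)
The plan is to run the symplectic analogue of the argument behind Theorem~\ref{thm:ortho2}, starting from the Ratios Theorem for $USp(2N)$. Write $R_{USp(2N)}(A;B)=\sum_{D\subseteq A}e^{-2N\sum_{\delta\in D}\delta}\,F_D(A,B)$, where $F_D(A,B)$ is the square root of the ratio of $Y$- and $Z$-factors in (\ref{eq:symp sr}), and use the identity $J_{USp(2N)}(A)=\prod_{\alpha\in A}\frac{d}{d\alpha}R_{USp(2N)}(A;B)\big|_{B=A}$ recorded just before the theorem. Since the sum over $D$ is finite, the differential operator $\prod_{\alpha\in A}\frac{d}{d\alpha}$ (and afterwards the substitution $B=A$) may be pushed inside it, so it suffices to analyse a single summand; the hypotheses $\Re(\alpha)>0$ and $|A|\le N$ keep us on the principal branch of the square root throughout, and, since $B=A$ is substituted only after differentiating, the apparent singularities of $R_{USp(2N)}$ on $B=A$ do not obstruct the calculation.

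For fixed $D$, factor $\prod_{\alpha\in A}\frac{d}{d\alpha}=\big(\prod_{\alpha\in D}\frac{d}{d\alpha}\big)\big(\prod_{\alpha\in A\setminus D}\frac{d}{d\alpha}\big)$. The differentiations in the variables $\alpha\in D$ are elementary: these variables enter $e^{-2N\sum_{\delta\in D}\delta}$, the factor $Y(D)$, and the occurrences of $-\delta$ inside the various $Z$-factors, and carrying the derivatives out (then letting $B\to A$ and cancelling the $Z$-factors that become equal) produces the factor $(-1)^{|D|}$ and the prefactor $e^{-2N\sum_{\delta\in D}\delta}(-1)^{|D|}\sqrt{Z(D,D)Z(D^-,D^-)Y(D^-)/(Y(D)Z^\dag(D^-,D)^2)}$ of (\ref{eq:JUSpstar}). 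The dagger — dropping a factor $z(x)$ whenever its argument is $0$ — is precisely the record of the removable singularity surviving this cancellation, and one must check that nothing genuinely singular is left behind.

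The differentiations in the variables $\alpha\in A\setminus D$ are carried out by logarithmic differentiation, as in the unitary and orthogonal cases \cite{conrey2007applications}. Writing $F_D=\exp(L)$, each variable of $A\setminus D$ occurs in $L$ only through terms of the shapes $\log z(2\alpha)$, $\log z(\alpha\pm\delta)$ with $\delta\in D$, $\log z(\alpha+\widehat\alpha)$ with $\widehat\alpha\in A\setminus D$, or $\log z(\alpha+\beta)$ with $\beta\in B$; consequently every mixed partial of $L$ in three or more of these variables vanishes. The multivariate formula for the derivatives of $\exp(L)$ therefore collapses to a sum over set partitions of $A\setminus D$ into blocks of size at most two, a singleton $\{\alpha\}$ contributing $\partial_\alpha L$ and a pair $\{\alpha,\widehat\alpha\}$ contributing $\partial_\alpha\partial_{\widehat\alpha}L$; reading these off (\ref{eq:symp sr}) gives exactly the functions $H_D$ of Theorem~\ref{thm:symp2}. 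The sign changes relative to the orthogonal $H_D$ of (\ref{def:HD}) — the $+\frac{z'}{z}(2\alpha)$ in place of $-\frac{z'}{z}(2\alpha)$, and the sign of $\big(\frac{z'}{z}\big)'(\alpha+\widehat\alpha)$ — are forced by the differences between (\ref{eq:symp sr}) and the orthogonal ratios formula, above all the fact that $Y(A\setminus D)$ now sits in the numerator rather than the denominator of the square root.

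The main difficulty is the bookkeeping: tracking precisely which $Y$- and $Z$-factors each variable of $A\setminus D$ enters, so that every cross-term between a $D$-variable and an $(A\setminus D)$-variable is assigned to the correct $H_D(\{\alpha\})$, and then verifying that when $B$ is set equal to $A$ all the apparent poles either cancel or are exactly the ones encoded by $Z^\dag$, so that the claimed meromorphic identity $J_{USp(2N)}(A)=J^{*}_{USp(2N)}(A)$ holds on the region $\Re(\alpha)>0$, $|A|\le N$. As this is the symplectic mirror of Theorem~\ref{thm:ortho2}, which in turn is a simplified instance of the argument for Theorem~\ref{LFJconj}, the remaining computations are routine and I would suppress them.
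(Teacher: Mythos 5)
Your proposal is correct and follows essentially the same route as the paper: the authors prove Theorem~\ref{thm:symp2} by the identical strategy used for Theorem~\ref{thm:ortho2} (pull $\prod_{\alpha\in A}\frac{d}{d\alpha}$ inside the sum over $D\subseteq A$, treat the $\alpha\in D$ derivatives directly to produce the prefactor with $(-1)^{|D|}$ and the daggered $Z$-factor, and handle the $\alpha\in A\setminus D$ derivatives by logarithmic differentiation, which yields the sum over partitions into blocks of size at most two), deferring the detailed bookkeeping to the analogous argument for Theorem~\ref{LFJconj}. Your observation that the sign changes in $H_D$ are forced by $Y(A\setminus D)Y(D)$ moving from denominator to numerator in (\ref{eq:symp sr}) is exactly the point the paper flags when comparing the two ratios formulae.
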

The proof is as in the orthogonal case.

\subsection{Residue Theorem}
We need to know what the residues are at the poles of $J_{USP(2N)}^{*}(A)$. It is clear that the only possible poles are when $\alpha^{*} = - \beta$ for some other $\beta \in A$, or when $\alpha^{*} = 0$.
\begin{theorem}[Residue Theorem for Symplectic Matrices]\label{thm:sympRes}
Let A be a finite set of complex numbers, let $\alpha^{*},\beta^{*} \in A$ and $A^{'}=A-\{\alpha^{*},\beta^{*}\}$ and let $J_{USp(2N)}^{*}(A)$ as defined in Theorem~\ref{thm:symp2}. Then
\begin{align}
\begin{split}
\Res_{\alpha^{*} =-\beta^{*}} \left(J_{USp(2N)}^{*}(A)\right) &= J_{USp(2N)}^{*}(A^{'} \cup \{\beta^{*}\}) + J_{USp(2N)}^{*}(A^{'} \cup \{-\beta^{*}\})\\
& \qquad +2N J_{USp(2N)}^{*}(A^{'})
\end{split}\\
\Res_{\alpha^{*} =0} \left(J_{USp(2N)}^{*}(A)\right) &= 0.
\end{align}
\end{theorem}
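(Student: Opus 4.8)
The plan is to mirror the orthogonal argument of Section~\ref{sub:residue} essentially verbatim. The point is that $J_{USp(2N)}^{*}(A)$, as given in \eqref{eq:JUSpstar}, has exactly the same combinatorial shape as the orthogonal $J^{*}(A)$ in \eqref{def:Jstar}: it is a sum over $D\subseteq A$ of a factor $Q(D)$ times a sum over set partitions of $A\backslash D$ into blocks of size at most $2$ of $\prod_r H_D(W_r)$, with
\[
Q(D)=e^{-2N\sum_{\delta\in D}\delta}(-1)^{|D|}\sqrt{\frac{Z(D,D)Z(D^{-},D^{-})Y(D^{-})}{Y(D)Z^{\dagger}(D^{-},D)^{2}}}
\]
and $H_D(W)$ the symplectic data of Theorem~\ref{thm:symp2}. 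So the strategy is: verify that this pair $(Q,H)$ satisfies property $\mathbf{P_f}$ of Definition~\ref{def:PropP} with $f(\beta^{*})=2N$, and property $\mathbf{R}$ of Definition~\ref{def:PropR}. Granting these, Lemma~\ref{lem:P} yields $\Res_{\alpha^{*}=-\beta^{*}}J_{USp(2N)}^{*}(A)=2N\,J_{USp(2N)}^{*}(A')+J_{USp(2N)}^{*}(A'\cup\{\beta^{*}\})+J_{USp(2N)}^{*}(A'\cup\{-\beta^{*}\})$, and Lemma~\ref{lem:R} yields $\Res_{\alpha^{*}=0}J_{USp(2N)}^{*}(A)=0$, which is precisely the theorem.

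The verification of $\mathbf{P_f}$ and $\mathbf{R}$ is carried out, as in the orthogonal case, by splitting into the four cases according to whether $\alpha^{*}$ and $\beta^{*}$ lie in $D$ or in $A\backslash D$, and expanding $z$, $z'/z$ and $(z'/z)'$ about argument $0$. When $\alpha^{*},\beta^{*}\in A\backslash D$ the only singular block is $W=\{\alpha^{*},\beta^{*}\}$, with $H_D(W)=-(\tfrac{z'}{z})'(\alpha^{*}+\beta^{*})$; when exactly one of $\alpha^{*},\beta^{*}$ lies in $D$ the singularity comes from the term $\tfrac{z'(\alpha-\delta)}{z(\alpha-\delta)}$ or $\tfrac{z'(\alpha+\delta)}{z(\alpha+\delta)}$ of the relevant singleton block; when $\alpha^{*},\beta^{*}\in D$ the double pole sits in $Q(D)$, coming from the factor $z(\alpha^{*}+\beta^{*})$ in $Z(D,D)$ and $z(-\alpha^{*}-\beta^{*})$ in $Z(D^{-},D^{-})$ (the $Y(D^{\pm})$ and $Z^{\dagger}(D^{-},D)$ factors being regular and nonzero there), and expanding $e^{-2N\sum_{\delta\in D}\delta}$ produces the constant $f(\beta^{*})=2N$ exactly as in \eqref{eq:QD}. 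Property $\mathbf{R}$ is checked the same way: at $\alpha^{*}=0$ the only singular factors are $z(2\alpha^{*})$ inside $Q(D)$ and $\tfrac{z'}{z}(2\alpha^{*})$ inside the singleton block of $H_D$, and the telescoping of Lemma~\ref{lem:R} goes through unchanged.

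The one genuinely delicate point — and the step I expect to need the most care — is sign bookkeeping, since relative to the orthogonal case the symplectic data differ in three places: the sign of $\tfrac{z'}{z}(2\alpha)$ in $H_D(\{\alpha\})$, the sign of $(\tfrac{z'}{z})'$ in $H_D(\{\alpha,\widehat\alpha\})$, and the interchange $Y(D)\leftrightarrow Y(D^{-})$ in $Q(D)$. The sign flip in $H_D(\{\alpha,\widehat\alpha\})$ reverses the order-two contribution arising in case $\mathbf{P1}$, so one must confirm that the order-two contribution arising in case $\mathbf{P4}$ — which is governed by the leading behaviour of the square-root branch of $Q(D)$, hence by whether the $Y(D^{-})/Y(D)$ interchange alters the sign of that branch at $\alpha^{*}=-\beta^{*}$ — flips in the same way, so that the two still cancel in the sum over $D$ and the pole at $\alpha^{*}=-\beta^{*}$ remains simple, exactly as in the closing remark of the proof of Lemma~\ref{lem:P}. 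If one prefers not to track the branch, an equivalent route is to restate Definitions~\ref{def:PropP}--\ref{def:PropR} with these signs built in and re-run Lemmas~\ref{lem:P}--\ref{lem:R}: the combinatorial skeleton (the four-case split and the identity $\sum_{D\subseteq A}=\sum_{D\subseteq A'}+\sum_{D\subseteq A'\cup\{\beta^{*}\}}+\sum_{D\subseteq A'\cup\{-\beta^{*}\}}$ at the level of residues) is untouched and only the constants move. The residue at $\alpha^{*}=0$ is insensitive to all of this, because $\mathbf{R2}$ invokes $H_D(\{\alpha^{*}\})$ only through its value at $\alpha^{*}=0$, so $\Res_{\alpha^{*}=0}J_{USp(2N)}^{*}(A)=0$ follows word for word as in the orthogonal case.
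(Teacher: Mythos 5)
Your proposal follows exactly the paper's (very brief) proof of Theorem~\ref{thm:sympRes}: the paper simply asserts that properties {\bf P1} to {\bf P4}, {\bf R1} and {\bf R2} hold for the symplectic $Q(D)$ and $H_D(W)$ with $f(\beta^*)=2N$ and then invokes Lemmas~\ref{lem:P} and \ref{lem:R}, which is precisely your strategy. Your further observation about the sign flips — that $H_D(\{\alpha,\widehat{\alpha}\})=-(\tfrac{z'}{z})'(\alpha+\widehat{\alpha})$ reverses the $1/(\alpha^*+\beta^*)^2$ coefficient in {\bf P1}, so the branch of the square root in $Q(D)$ must reverse the coefficient in {\bf P4} correspondingly for the double poles to cancel and the pole at $\alpha^*=-\beta^*$ to remain simple — is a genuine subtlety that the paper glosses over, and your suggestion of restating {\bf P1}/{\bf P4} with these signs built in and re-running the lemmas is a sound way to handle it.
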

Note that this residue formula is identical to that for $J^{*}(A)$ in the orthogonal case.
\begin{proof}
The proof is very similar to that of Theorem~\ref{thm:residuelemma}. It follows by showing that the properties {\bf P1} to {\bf P4} and {\bf R1} and {\bf R2} hold and then applying Lemma \ref{lem:P} and Lemma \ref{lem:R}.\end{proof}

\subsection{$n$-level Density of Symplectic Matrices}
Using Theorem \ref{thm:symp2}, we can deduce
\begin{lemma}\label{thm:RMsympJstar}
Let $n \leq N$.  Then
\begin{align}
\begin{split}
2^n &\int_{USp\left(2N\right)} {\sum_{j_1, \cdots, j_n =1}^N {f\left(\theta_{j_1}, \cdots,
\theta_{j_n}\right) dX} } \\
&= \frac{1}{\left(2 \pi i \right)^n} \sum_{K \cup L \cup M = \left\{1, \cdots,
n\right\}}{ (2N) ^{\left|M\right|}} \\
& \qquad \times \int_{C_+^K} {\int_{C_-^{L \cup M}}{J_{USp(2N)}^{*}\left(z_K \cup - z_L\right)f\left(iz_1, \cdots,
iz_n\right)dz_1 \cdots d z_n}}.
\end{split}
\end{align}
\end{lemma}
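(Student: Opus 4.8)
The plan is to follow verbatim the argument used for the orthogonal analogue, Lemma~\ref{thm:RMorthoJstar}: start from the Integral Theorem for Symplectic Matrices (Theorem~\ref{thm:SympIntegral}), which already expresses the left-hand side as
\[
\frac{1}{(2\pi i)^n}\sum_{K\cup L\cup M=\{1,\dots,n\}}(2N)^{|M|}\int_{C_+^K}\int_{C_-^{L\cup M}}J_{USp(2N)}(z_K\cup -z_L)\,f(iz_1,\dots,iz_n)\,dz_1\cdots dz_n ,
\]
and then replace $J_{USp(2N)}$ by $J_{USp(2N)}^*$ inside each of these contour integrals using the identity $J_{USp(2N)}(A)=J_{USp(2N)}^*(A)$ from Theorem~\ref{thm:symp2}. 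Since $J_{USp(2N)}$ and $J_{USp(2N)}^*$ agree as functions on the relevant domain, the substitution is legitimate pointwise along the contours of integration, hence legitimate under the integral sign.

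First I would verify that the hypotheses of Theorem~\ref{thm:symp2} are met with $A=z_K\cup -z_L$. For the cardinality bound: $|A|=|K|+|L|\le |K|+|L|+|M|=n$, and by hypothesis $n\le N$, so $|A|\le N$. For the positivity of real parts: each variable $z_k$ with $k\in K$ runs along $C_+$, the path from $\delta-\pi i$ to $\delta+\pi i$, so $\Re(z_k)=\delta>0$; each variable $z_l$ with $l\in L$ runs along $C_-$, the path from $-\delta-\pi i$ to $-\delta+\pi i$, so $\Re(-z_l)=\delta>0$. Thus every element of $A$ has positive real part, and Theorem~\ref{thm:symp2} applies term by term in the sum over $K,L,M$.

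Carrying out the replacement in every summand then yields exactly the claimed expression, completing the proof. There is essentially no substantive obstacle here — the only thing to be careful about is that the condition $n\le N$ is genuinely needed (it is what guarantees $|A|\le N$ for every triple $K,L,M$), which is why it appears as a standing hypothesis of the lemma; and, as in the orthogonal case, that the sign of the shift $\delta$ is chosen consistently so that the arguments fed into $J_{USp(2N)}$ land in the half-plane $\Re(\cdot)>0$ where Theorem~\ref{thm:symp2} is valid.
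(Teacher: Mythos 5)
Your proposal is correct and is essentially identical to the paper's proof: the authors likewise start from Theorem~\ref{thm:SympIntegral} and substitute $J_{USp(2N)}^{*}$ for $J_{USp(2N)}$ via Theorem~\ref{thm:symp2}, noting that the conditions $|A|\le N$ and $\Re(\alpha)>0$ for all $\alpha\in z_K\cup -z_L$ are met on the contours. Your explicit verification of these two hypotheses is exactly the content the paper compresses into the phrase ``this condition is clearly met.''
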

\begin{proof}
We know that $J_{USp(2N)}(A) =J_{USp(2N)}^{*}(A)$ when $|A| \leq N$ and $\Re(\alpha)>0$ $\forall \alpha \in A$ by Theorem \ref{thm:symp2}. This condition is clearly met by $\left(z_K \cup - z_L\right)$.
\end{proof}
Clearly as $J^{*}_{USp(2N)}$ has the same poles as $J^{*}(A)$ and an identical recursive formula for the residue, we can immediately deduce the following theorem.
\begin{theorem} \label{thm:sympmain}
Let $0\leq R \leq n$. Then
\begin{align}
\begin{split}
(2 \pi i)^n 2^n &\int_{USp(2N)}{\sideset{}{^{n,R}}\sum{f(\theta_{j_1}, \cdots, \theta_{j_n}) dX}} \\
&= \sum_{K \cup L \cup M = \{1, \cdots, n\}}{(2N)^{|M|}\widehat{I^{n,R}_{f,K,L,M}}}
\end{split}
\end{align}
where
\begin{align}
\sideset{}{^{n,R}}\sum = \sum^{n}_{\substack{j_1, \cdots, j_n =1 \\ j_i \neq j_k \forall i,k>R}}
\end{align}
and \begin{align}
\begin{split}
\widehat{I^{n,R}_{f,K,L,M}}=& \int^{i\pi}_{-i\pi}{ \cdots \int^{i\pi}_{-i\pi}{ \int_{C^{K \cap \{1, \cdots, R\}}_+}{\int_{C^{(L \cup M) \cap \{1, \cdots, R\}}_-}{J_{USp(2N)}^{*}(z_K \cup -z_L)}}}}\\
& \qquad \qquad \times f(iz_1, \cdots, iz_n) dz_1 \cdots dz_R d z_{R+1} \cdots dz_n.
\end{split}
\end{align}
for fixed sets $K,L,M$ such that the disjoint $K \cup L \cup M = \{1, \cdots, n\}$.
Note: this is the integral in Theorem~\ref{thm:RMsympJstar} with $N-R$ of the integrals shifted onto the imaginary axis and all the integrals on the imaginary axis are principal value integrals.
\end{theorem}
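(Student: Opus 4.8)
The plan is to transport the proof of Theorem~\ref{thm:orthomain} essentially verbatim to the symplectic setting, replacing $J^{*}$ by $J^{*}_{USp(2N)}$ and $I^{n,R}_{f,K,L,M}$ by $\widehat{I^{n,R}_{f,K,L,M}}$ throughout. That proof used only three facts about the orthogonal group: the contour-integral representation of the $n$-level density (here Theorem~\ref{thm:SympIntegral}); the identity $J=J^{*}$ valid for $|A|\le N$, $\Re(\alpha)>0$, recast in contour form (here Lemma~\ref{thm:RMsympJstar}, a consequence of Theorem~\ref{thm:symp2}); and the poles and residues of $J^{*}$ (here Theorem~\ref{thm:sympRes}). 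Since Theorem~\ref{thm:sympRes} gives $J^{*}_{USp(2N)}$ exactly the same pole locations and the identical residue recursion as \eqref{eq:resAB}, all three inputs are available, and I would run the same double induction: an outer induction on $n$ and, for each $n$, an inner induction on $R$ descending from $R=n$. The case $R=n$ (all $n$) is Lemma~\ref{thm:RMsympJstar} itself, giving the base of the inner induction; the case $n=1$, $R\in\{0,1\}$, follows from the one-variable computation together with the fact that the only pole of $J^{*}_{USp(2N)}(\theta)$ lies at $\theta=0$ with residue $0$.

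For the inductive step I fix $n=p$, assume the statement for $n=p$ and $R\ge S$, and move the $z_S$ contour onto the imaginary axis. By the Sokhotski--Plemelj theorem it collects half of the residue at each crossing $z_S=\pm z_t$ with $t<S$, and Theorem~\ref{thm:sympRes} makes that residue word-for-word \eqref{eq:residue} with stars adjoined, namely (for $S\in K$)
\begin{align*}
&i\pi\Bigl(2N\,J^{*}_{USp(2N)}\bigl(z_{K'}\cup -z_{L'}\bigr)+J^{*}_{USp(2N)}\bigl(z_{K'}\cup -z_{L'\cup\{t\}}\bigr)\\
&\qquad\qquad+J^{*}_{USp(2N)}\bigl(z_{K'\cup\{t\}}\cup -z_{L'}\bigr)\Bigr)\,f(iz_1,\dots,iz_{S-1},iz_t,iz_{S+1},\dots,iz_p),
\end{align*}
with the $S\in L$ case giving the same contribution by symmetry of $f$. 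The remaining steps --- the factor of four from the four admissible placements of $S$ and $t$ outside $M$, the relabelling of the surviving variables, the introduction of $g_{t,S}$, and the identity re-indexing the triple sums over $K,L,M$ --- are purely combinatorial and carry over unchanged, producing the recursion
\begin{align*}
\sum_{K\cup L\cup M=\{1,\dots,p\}}(2N)^{|M|}\widehat{I^{p,S}_{f,K,L,M}}&=\sum_{K\cup L\cup M=\{1,\dots,p\}}(2N)^{|M|}\widehat{I^{p,S-1}_{f,K,L,M}}\\
&\quad+4\pi i\sum_{t=S+1}^{p}\sum_{K\cup L\cup M=\{1,\dots,p-1\}}(2N)^{|M|}\widehat{I^{p-1,S-1}_{g_{t,S},K,L,M}}.
\end{align*}
Combining this with the outer inductive hypothesis applied at $n=p-1$, $R=S-1$, and with the identity $\sideset{}{^{n,R}}\sum=\sideset{}{^{n,R-1}}\sum+\sum_{t=R+1}^{n}\sideset{}{^{n-1,R-1}}\sum g_{t,R}$, closes the induction on $R$, hence on $n$.

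I do not expect a genuine analytic obstacle: the orthogonal argument is group-agnostic once the three structural inputs are in hand, so the only thing needing care is that the sign conventions hidden inside $J^{*}_{USp(2N)}$ (the $+\frac{z'}{z}(2\alpha)$ and $-(z'/z)'(\alpha+\widehat{\alpha})$ terms in $H_D(W)$, and the factor $\sqrt{Y(D^{-})/Y(D)}$ rather than its reciprocal) do not disturb the residue recursion. But this is precisely what Theorem~\ref{thm:sympRes} already certifies, its proof consisting of checking the same properties {\bf P1}--{\bf P4}, {\bf R1}, {\bf R2} and invoking Lemmas~\ref{lem:P} and~\ref{lem:R}. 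Hence, granting Theorem~\ref{thm:sympRes}, the statement follows at once from the double induction, which is why the theorem can legitimately be deduced immediately.
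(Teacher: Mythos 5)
Your proposal is correct and matches the paper's approach exactly: the paper also deduces Theorem \ref{thm:sympmain} by observing that the proof of Theorem \ref{thm:orthomain} depends only on the contour representation, the identity $J_{USp(2N)}=J^{*}_{USp(2N)}$, and the residue recursion of Theorem \ref{thm:sympRes}, all of which are identical in form to the orthogonal case. (The lone slip — writing ``$t<S$'' where the residues in fact come from the already-moved variables with $t>S$ — is contradicted by your own correct summation $\sum_{t=S+1}^{p}$ and is clearly just a typo.)
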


The proof for this in the orthogonal case relies solely on the formula for the residue, so this result follows immediately from the proof for Theorem~\ref{thm:orthomain}.

We can now state the $n$-level density for symplectic random matrices.
\begin{theorem}[$n$-level Density of Symplectic Random Matrices]\label{thm:Uspcorr}
\begin{align}
\begin{split}
&2^n\int_{USp(2N)}{\sideset{}{^{n,0}}\sum f(\theta_{j_1}, \cdots, \theta_{j_n}) dX}\\
&= \frac{1}{(2 \pi i)^n}\sum_{\substack{K \cup L \cup M= \\ \{1 \cdots, n\}}}{(2N)^{|M|}\left(\int_{0}^{\pi}\right)^n J_{USp(2N)}^{*}(-iz_K \cup i z_L)}\\
& \qquad \qquad \times f(z_1, \cdots, z_n)dz_1 \cdots dz_n,
\end{split}
\end{align}
where $J_{USp(2N)}^{*}(-iz_K \cup i z_L)$ is defined at (\ref{eq:JUSpstar}) and the sum notation is defined at (\ref{def:sum notation}) and indicates that the sum is over distinct indices.
\end{theorem}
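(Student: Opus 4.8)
The strategy is to repeat, \emph{mutatis mutandis}, the proof of Theorem~\ref{thm:orthoNcorr2}, with the orthogonal inputs replaced by their symplectic counterparts established above. Specialising Theorem~\ref{thm:sympmain} to $R=0$ places all $n$ contour integrals on the segment $[-i\pi,i\pi]$ of the imaginary axis, interpreted as principal value integrals, giving
\begin{equation}
(2\pi i)^n 2^n \int_{USp(2N)}{\sideset{}{^{n,0}}\sum f(\theta_{j_1},\cdots,\theta_{j_n})\,dX} = \sum_{K\cup L\cup M=\{1,\cdots,n\}}{(2N)^{|M|}\,\widehat{I^{n,0}_{f,K,L,M}}},
\end{equation}
where $\widehat{I^{n,0}_{f,K,L,M}}$ is the $n$-fold imaginary-axis integral of $J^{*}_{USp(2N)}(z_K\cup -z_L)\,f(iz_1,\cdots,iz_n)$.

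First I would perform the change of variables $\theta_j=iz_j$ in each integral, turning the integrals over $[-i\pi,i\pi]$ into integrals over $[-\pi,\pi]$ and $J^{*}_{USp(2N)}(z_K\cup -z_L)$ into $J^{*}_{USp(2N)}(-iz_K\cup iz_L)$, keeping track of orientation exactly as in the orthogonal case. Next, interchanging the roles of $K$ and $L$ is a bijection of the summation index, so $\sum_{K\cup L\cup M}(2N)^{|M|}J^{*}_{USp(2N)}(-iz_K\cup iz_L)$ equals $\sum_{K\cup L\cup M}(2N)^{|M|}J^{*}_{USp(2N)}(iz_K\cup -iz_L)$; combining this symmetry with the hypothesis $f(\pm\theta_1,\cdots,\pm\theta_n)=f(\theta_1,\cdots,\theta_n)$ lets me replace each $\int_{-\pi}^{\pi}$ by $2\int_{0}^{\pi}$, producing a factor $2^n$ on the right that cancels the $2^n$ on the left; dividing through by $(2\pi i)^n$ then yields the asserted formula. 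This is the only place the evenness and $2\pi$-periodicity of $f$ are used.

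The one substantive point to check is that the integrand is genuinely regular on the region of integration, so that the principal value integrals are ordinary integrals. Since $f$ is holomorphic it suffices to show $G(z_1,\cdots,z_n):=\sum_{K\cup L\cup M}(2N)^{|M|}J^{*}_{USp(2N)}(-iz_K\cup iz_L)$ is holomorphic near $[0,\pi]^n$. By Theorem~\ref{thm:sympRes} a single summand can only be singular when two of its arguments sum to zero or one of them vanishes; on $[0,\pi]^n$, where all $z_i\ge 0$, the sole candidate is a coincidence $z_a=z_b$ with $a$ and $b$ in different sets among $K,L$, which is at worst a simple pole. Pairing the terms of $G$ that differ only by swapping which of $z_a,z_b$ sits in $K$ versus $L$, the corresponding residues are negatives of one another, because $\Res_{x=y}g(x,y)=-\Res_{x=y}g(-x,-y)$ and $J^{*}_{USp(2N)}$ obeys this sign rule; hence $\Res_{z_a=z_b}G=0$. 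Thus any singular set of $G$ has complex codimension at least two, and a meromorphic function of $n$ variables whose polar set has codimension $\ge 2$ is in fact holomorphic (Corollary~7.3.2 of \cite{krantz1982function}), so $G$ is holomorphic on a neighbourhood of $[0,\pi]^n$.

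The main obstacle is purely notational: one must verify that the residue-cancellation identity and the codimension argument of the orthogonal proof survive the replacement of $J^{*}$ by $J^{*}_{USp(2N)}$. Because Theorem~\ref{thm:sympRes} furnishes a residue recursion \emph{identical} in form to the orthogonal one, and because $J^{*}_{USp(2N)}$ is assembled from the same functions $z(x)$, $Y$, $Z$ with only sign and reciprocal changes that respect the involution $g(x,y)\mapsto -g(-x,-y)$, both survive verbatim; the genuine work was already discharged in proving Theorems~\ref{thm:symp2}, \ref{thm:sympRes} and~\ref{thm:sympmain}.
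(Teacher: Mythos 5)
Your proposal is correct and follows essentially the same route as the paper: specialise Theorem~\ref{thm:sympmain} to $R=0$, change variables onto the real interval $[-\pi,\pi]$, use the $K\leftrightarrow L$ symmetry together with the evenness of $f$ to reduce to $\int_0^\pi$, and verify regularity on the integration path by the residue-cancellation and codimension argument inherited from the orthogonal case. The only difference is that you spell out the pole-cancellation step explicitly where the paper simply refers back to the orthogonal proof.
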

\begin{proof}
We follow exactly the proof for the equivalent theorem in the orthogonal case - Theorem~\ref{thm:orthoNcorr2}. Taking the result of Theorem~\ref{thm:sympmain}, we use a change of variable to deduce
\begin{align}
\begin{split}
(2 \pi i)^n 2^n &\int_{USp(2N)}{\sideset{}{^{n,0}}\sum{f(\theta_{j_1}, \cdots, \theta_{j_n}) dX}} \\
&= \left(\int^{\pi}_{-\pi}\right)^n \sum_{K \cup L \cup M = \{1, \cdots, n\}}(2N)^{|M|} J_{USp(2N)}^{*}(-iz_K \cup iz_L)f(z_1, \cdots, z_n).
\end{split}
\end{align}
Noting that
\begin{align}
\begin{split}
&\sum_{K \cup L \cup M = \{1, \cdots, n\}}(2N)^{|M|} J_{USp(2N)}^{*}(-iz_K \cup iz_L) \\
& \qquad = \sum_{K \cup L \cup M = \{1, \cdots, n\}}(2N)^{|M|} J_{USp(2N)}^{*}(iz_K \cup -iz_L)
\end{split}
\end{align}
 and $f\left(\theta_{j_1}, \cdots, \theta_{j_n}\right) = f\left(\pm \theta_{j_1}, \cdots, \pm
\theta_{j_n}\right)$ gives the required result.  Following precisely the same steps as in the orthogonal case, it can be shown that there are no poles on the path of integration.
\end{proof}

\section{$L$-functions}\label{sec:LF1}

\subsection{$L$-functions: a definition}

We begin by defining what we mean by $L$-function. We will use a definition which is distinct from the ``Selberg class", but is conjectured to be equivalent to it \cite{farmer2007modeling}.
\begin{definition}\label{LFcond}
Let $s \in \mathbb{C}$. An \textbf{$L$-function} is a Dirichlet series
\begin{align}
L(s)= \sum_{n=1}^{\infty} {\frac{a_n}{n^s}}
\end{align}
with $a_n= O(n^{\epsilon})$ for every $\epsilon >0$ (Ramanujan Hypothesis \cite{conrey1993selberg}) subject to three conditions: it has an Euler product, an analytic continuation and a functional equation.  We will define these properties similarly to the definitions used by  Farmer in \cite{farmer2007modeling} and Conrey et al. in \cite{conrey2005integra}.
\begin{enumerate}
\item The \emph{Euler product} is a product over primes, equal to the Dirichlet series defining the $L$-function. Formally, for $\Re(s) > 1$ we have
\begin{align}
L(s) = \prod_{p}{\prod_{j=1}^{w}{(1-\hat{\gamma}_{p,j}p^{-s})^{-1}}},
\end{align}
where the product is over the primes $p$, and each $|\hat{\gamma}_{p,j}|$ equals $1$ or $0$. $w$ is called the degree of the $L$-function.
\item
The \emph{functional equation} is an equation relating the $L$-function in one half of the complex plane to the other~\cite{kn:keasna03}. There exists $\epsilon$, such that $|\epsilon| = 1$, and a function $\Gamma_{L}(s)$ of the form
\begin{align}
\Gamma_{L}(s) = P(s) Q^{s}\prod_{j=1}^{w}{\Gamma(\frac{s}{2}+ \mu_{j})},
\end{align}
where $Q>0, \Re(\mu_j)\geq 0$ and $P$ is a polynomial whose only zeroes in $Re(s) >0$ coincide with the poles of $L(s)$ such that
\begin{align}
\xi_{L}(s) = \Gamma_{L}(s)L(s)
\end{align}
is entire and
\begin{align}
\xi_L(s) = \epsilon \overline{\xi_L(1-s)}.
\end{align}
\item
The \emph{analytic continuation } property means that $L(s)$ continues to a meromorphic function of finite order with at most finitely many poles.
\end{enumerate}
\end{definition}

It is worth noting that the Riemann Zeta function is an $L$-function with functional equation, analytic continuation and Euler product.
\subsection{Elliptic Curve $L$-functions}\label{Ellipticderivation}

An elliptic curve, $E$, over a field $K$ is a non-singular curve of genus one, defined over $K$, with a $K$-rational point, $O \in E$. $E$ has a algebraic group structure with unit element $O$. For more on elliptic curves see \cite{washington2008ellipti}, \cite{koblitz1993introductio} \cite{silverman2009arithmeti}.

$E$ can be embedded in $\mathbf{P}^2$ as a cubic curve given by a global minimum Weierstrass equation\cite{kn:young}
\begin{align}
E: y^2 + a_1xy + a_3 y = x^3 + a_2x^2 +a_4 x+ a_6
\end{align}
with coefficients $a_1, \cdots, a_6 \in K$.
Let $E$ be an elliptic curve over $\mathbf{Q}$, then the Weierstrass equation can be re-written as
\begin{align}
E: y^2 = x^3 + Ax +B
\end{align}
with $A,B \in \mathbf{Z}$ and discriminant $\Delta = -(A^3 +27B^2)$.
For prime $p$, we can look at the curve $y^2 = x^3 + Ax +B \mod p$. If $p \nmid \Delta$, then this is an elliptic curve over $\mathbf{F}_p$ and we say  $E$ has good reduction mod p.
\begin{align}
E_p: y^2 = x^3 + Ax +B \text{ mod } p
\end{align}
is a curve over the finite field $\mathbf{F}_p$. If $p | \Delta$, it is a prime of bad reduction. There are two types: multiplicative reduction if $E/\mathbf{F}_p$ has a node, and additive reduction if $E/\mathbf{F}_p$ has a cusp. Multiplicative reduction is said to be split if the slopes of the tangent lines at the node are rational, and non-split otherwise.\\

We define the conductor, M, of the elliptic curve as
\begin{align}
M = \prod_{p}{p^{f_p}}
\end{align}
where
\begin{align}
f_p = \begin{cases}
0 & \text{if $E$ has good reduction at $p$},\\
1 & \text{if $E$ has multiplicative reduction at p},\\
2 & \text{if $E$ has additive reduction at $p$, $p \neq 2,3$,}\\
2 + \delta_p & \text{if $E$ has additive reduction at $p=2$ or $3$.}\\
\end{cases}
\end{align}
Here $\delta_p$ depends on wild ramification in the action of the inertia group at $p$ of $Gal(\frac{\overline{\mathbf{Q}}}{\mathbf{Q}})$ on the Tate module $ T_p(E)$. \cite{silverman2009arithmeti} We will consider only prime $M$.\\

Then for $p$ of good reduction define the integer $a_p$ by
\begin{align}
|E(\mathbf{F}_p)| = p + 1 - a_p,
\end{align}
where $|E(\mathbf{F}_p)|$ is the number of points of $E_p$ in a finite field of cardinality $p$, including the point at infinity.
We define $a_p$ for primes of bad reduction by
\begin{align}
a_p = \begin{cases}
0 & \text{E has additive reduction at p,}\\
1 & \text{E has split multiplicative reduction at p,}\\
-1 & \text{E has non-split multiplication reduction at p.}
\end{cases}
\end{align}

We would like to use these $a_p$ to define an $L$-function in such a way that there is a $L$-function associated with each elliptic curve $E$.

The Hasse-Weil $L$-function \cite{silverman2009arithmeti} of $E/\mathbb{Q}$ is defined as
\begin{align}
L(E,s) = \prod_{ p| \Delta}{{\left(1- \frac{a_p}{p^{s}}\right)}^{-1}} \prod_{p\nmid \Delta}{{\left(1- \frac{a_p}{p^s} + p^{1-2s}\right)}^{-1}}
\end{align}
where $\Delta = -(A^3 +27B^2)$. We define $a_n$ for all $n$ by expanding this into a Dirichlet series.
\begin{align}
L(E,s) = \sum_{n=1}^{\infty}{\frac{a_n}{n^{s}}}.
\end{align}

Hasse proved this was holomorphic for $Re(s) > \frac{3}{2}$ by showing $|a_p| \leq 2 \sqrt{p}$ (Riemann Hypothesis for finite fields, proved by Hasse for elliptic curves in 1931 \cite{chahal2008riemann}). The Hasse-Weil conjecture says that $L(E,s)$ extends to a entire function. Eichler \cite{eichler1954quaternar} and
Shimura \cite{shimura1994introductio} proved that for elliptic curves $E/Q$ that have a modular parametrization, $L(E,s)$ can be extended to a entire function satisfying a functional equation
\begin{align}
\Lambda(E,s) = \omega_E \Lambda(E,2-s)
\end{align}
where $\omega_E = \pm 1$ and
\begin{align}
\Lambda(E,s) = \left(\frac{\sqrt{M}}{2 \pi}\right)^{s}\Gamma(s)L(E,s).
\end{align}

The Shimura-Taniyama-Weil conjecture says there is a cusp form whose $L$-function is equal to $L(E,s)$, and implies the Hasse-Weil conjecture. Wiles \cite{wiles1995modula} and Taylor \cite{taylor1995rin} showed that this is true for all elliptic curves with square-free conductor. \\

In 2001, Breuil, Conrad, Diamond and Taylor \cite{breuil2001modularit} then showed this was true for all elliptic curves over $\mathbb{Q}$.\\
\begin{theorem}[Iwaniec and Kowalski~\cite{iwaniec2004analytic}]
Let $E/\mathbb{Q}$ be any elliptic curve of conductor M. There exists a primitive cusp form
\begin{align}
f(z) = \sum_{1}^{\infty}{\lambda (n) n^{\frac{1}{2}}e(nz)} \in S_2(\Gamma_0(N))
\end{align}
such that $\lambda(n) = \frac{a_n}{n^{\frac{1}{2}}}$ giving
\begin{align}
L(E, s+ \frac{1}{2}) = L_E(s) = \sum_{1}^{\infty}{\frac{\lambda(n)}{n^{-s}}}.
\end{align}
\end{theorem}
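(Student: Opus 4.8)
The plan is to recognize this statement as the \emph{modularity theorem} for elliptic curves over $\mathbb{Q}$, recorded in the normalization used in the rest of the paper, so that the deep input is already supplied by the references cited above and what remains is a translation. First I would invoke modularity in its full strength: by Wiles \cite{wiles1995modula} and Taylor--Wiles \cite{taylor1995rin} for curves of square-free conductor, extended by Breuil, Conrad, Diamond and Taylor \cite{breuil2001modularit} to every elliptic curve $E/\mathbb{Q}$, there is a cuspidal newform $g$ whose $L$-function coincides with the Hasse--Weil $L$-function $L(E,s)$. The weight of $g$ is forced to be $2$ because the $\ell$-adic Tate module of $E$ is a two-dimensional $\ell$-adic representation with the appropriate Hodge--Tate weights, and $g$ has trivial nebentypus because the determinant of that representation is the cyclotomic character.

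Next I would pin down the level and the Hecke eigenvalues. The conductor of the automorphic representation attached to $g$ equals the conductor $M$ of $E$ (Carayol's theorem matching levels with conductors of the associated Galois representations), so $g \in S_2(\Gamma_0(M))$; the local description of $g$ at each prime $p \mid M$ via Atkin--Lehner theory matches the reduction type of $E$ at $p$ and therefore forces the $p$-th coefficient of $g$ to equal $a_p$ also in the bad-reduction cases ($0$, $1$ or $-1$ according to additive, split multiplicative, or non-split multiplicative reduction). For $p \nmid M$, the Eichler--Shimura congruence relation \cite{eichler1954quaternar,shimura1994introductio} --- comparing the Hecke correspondence $T_p$ on $X_0(M)$ reduced mod $p$ with the Frobenius correspondence --- gives the $p$-th coefficient of $g$ equal to $p + 1 - \#E(\mathbb{F}_p) = a_p$. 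Multiplicativity of both coefficient sequences then yields $g(z) = \sum_{n\ge 1} a_n\, e(nz)$.

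Finally the two displayed identities are bookkeeping in the chosen normalization. Setting $\lambda(n) = a_n n^{-1/2}$ gives $f(z) = g(z) = \sum_{n\ge 1}\lambda(n) n^{1/2} e(nz)$, and $L_E(s) := \sum_{n\ge 1}\lambda(n) n^{-s} = \sum_{n\ge 1} a_n n^{-s-1/2} = L(E, s+\tfrac12)$, so the critical line of $L(E,s)$ at $\Re(s)=1$ becomes $\Re(s)=\tfrac12$ for $L_E$; the Fricke involution functional equation for $g$, combined with $\Lambda(E,s) = (\sqrt{M}/2\pi)^s\,\Gamma(s)\,L(E,s)$, reproduces $\Lambda(E,s) = \omega_E\,\Lambda(E, 2-s)$ with $\omega_E$ determined by the Atkin--Lehner eigenvalue of $g$. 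The only genuine obstacle is the modularity theorem itself, which is not reproved here; the remaining steps are the standard newform/elliptic-curve dictionary, presented for instance in Iwaniec and Kowalski \cite{iwaniec2004analytic} and Silverman \cite{silverman2009arithmeti}.
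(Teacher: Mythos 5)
Your proposal is correct and aligns with how the paper treats this statement: the paper does not prove it but cites it as a known consequence of modularity, and the immediately preceding discussion of Wiles, Taylor--Wiles, and Breuil--Conrad--Diamond--Taylor together with Eichler--Shimura is exactly the chain of results you invoke, with the rest being the standard newform/elliptic-curve dictionary and the renormalization $\lambda(n)=a_n n^{-1/2}$, $L_E(s)=L(E,s+\tfrac{1}{2})$. (Note in passing that the level in the displayed cusp form should read $\Gamma_0(M)$, the conductor of $E$, as your argument correctly assumes.)
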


From this it is clear that the Hasse-Weil $L$-function, $L(E,s)$ meets all our conditions for an $L$-function in Definition~\ref{LFcond} except the normalization of the symmetry line to $\Re(s)=\frac{1}{2}$.\\

$L_E(s) = L(E,s+\frac{1}{2})$ gives a normalized $L$-function where the functional equation relates $s$ to $1-s$, giving
\begin{align}
L_E(s) &= \sum_{n}{\frac{\lambda (n)}{n^s}}\\
&=  \prod_{p | \Delta}{{\left(1- \frac{\lambda (p)}{p^{s}}\right)}^{-1}} \prod_{p\nmid \Delta}{{\left(1- \frac{\lambda (p)}{p^s} + \frac{1}{p^{2s}}\right)}^{-1}}
\end{align}
with functional equation
\begin{align}
L_E(s) = \omega_E {\left(\frac{2 \pi}{\sqrt{M}}\right)}^{2s-1} \frac{\Gamma\left(\frac{3}{2}-s\right)}{\Gamma\left(s+ \frac{1}{2}\right)}L_E\left(1-s\right),
\end{align}
where $\omega_E$ is $+1$ or $-1$ resulting in an odd or even functional equation for $L_E$ and
\begin{align}
\xi_E(s) =  \left(\frac{2 \pi}{\sqrt{M}}\right)^{-s} \Gamma\left(\frac{3}{2}-s\right)L_E\left(s\right) = \omega(E) \xi_E(1-s)
\end{align}
is entire. \\

Clearly $L_{E}(s)$ meets all the conditions in Definition~\ref{LFcond} for our definition of an $L$-function. We have demonstrated that each elliptic curve gives rise to a well-defined $L$-function.

 \subsection{Dirichlet $L$-functions}\label{sub:dir}

We start by defining Dirichlet characters.
\begin{definition}\cite{HBPrimenumber}\label{def:dirchar}
Let $q \in \mathbf{N}$. A Dirichlet character $\chi$ to modulus $q$ is a function $\chi: \mathbf{Z} \rightarrow \mathbf{C}$ such that
\begin{enumerate}
  \item $\chi\left(mn\right) = \chi\left(m\right)\chi\left(n\right)$ for all $m,n \in \mathbf{Z}$;
  \item $\chi\left(q\right)$ has period $q$;
  \item $\chi\left(n\right)=0$ whenever $\left(n,q\right) \neq 1$; and
  \item $\chi\left(1\right)=1$.
\end{enumerate}
\end{definition}
A character is called \emph{primitive} if it cannot be induced by any character of smaller modulus. For any Dirichlet character of modulus d, $\chi_d\left(n\right)$, we can define a function \begin{align}
L\left(s, \chi_d\right) = \sum_{n=1}^{\infty}{\frac{\chi_d\left(n\right)}{n^s}}.
\end{align}
$L\left(s, \chi_d\right)$ has a Euler product
\begin{align}
L\left(s, \chi_d\right) = \prod_{p}\left(1- \frac{\chi_d(p)}{p^s}\right)^{-1}
\end{align}
and if $\chi_d$ is primitive $L\left(s, \chi_d\right)$ has an analytic continuation to the whole complex plane and has a function equation~\cite{HBPrimenumber}. Setting \begin{align}
\xi\left(s, \chi_d\right) = \left(\frac{d}{\pi}\right)^{\frac{s+a\left(\chi_d\right)}{2}}\Gamma\left(\frac{s+a\left(\chi_d\right)}{2}\right)L\left(s, \chi_d\right)
\end{align}
where
\begin{align}
a\left(\chi_d\right) = \begin{cases}
0 & \chi_d \left(-1\right) =1\\
1 & \chi_d \left(-1\right) =-1.
\end{cases}
\end{align}
Then the functional equation can be expressed as
 \begin{align}
 \xi\left(1-s, \overline{\chi_d}\right) = \frac{i^{a\left(\chi_d\right)}d^{\frac{1}{2}}}{\sum_{t=1}^{d}{\chi_d\left(t\right)e^{\frac{2 \pi i t}{d}}}}\xi\left(s, \chi_d\right).
 \end{align}
Specifically note when $\chi_d$ is real and $d>0$, the functional equation is
\begin{align}
L\left(1-s, \chi_d\right) = \left(\frac{d}{\pi}\right)^{s-\frac{1}{2}}\frac{\Gamma\left(\frac{s}{2}\right)}{\Gamma\left(\frac{1-s}{2}\right)}L\left(s, \chi_d\right).
\end{align}

Thus if we assume $\chi_d$ is a real, primitive Dirichlet character $L(s, \chi_d)$ meets all our requirements of an $L$-function in Definition~\ref{LFcond}.

\subsection{Grand Riemann Hypothesis (GRH)}

We have shown that each of these $L$-functions associated to an elliptic curve or Dirichlet character has a functional equation with a line of symmetry, similar to the Riemann zeta function and its critical line.
\begin{conj}[Grand Riemann Hypothesis \cite{iwaniec2004analytic}]\label{conj: GRH}

All non-trivial zeroes of $L$-functions $L(s)$ in the critical strip $0< \Re(s) < 1$ lie on the critical line $\Re(s) = \frac{1}{2}$.
\end{conj}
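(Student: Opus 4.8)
The statement above is the Grand Riemann Hypothesis, and it is \emph{not} a theorem: no proof is known, and the paper records it as a hypothesis to be assumed in the number-theoretic sections rather than as something established here. So there is no proof plan in the literal sense; what follows is a description of the kinds of approaches one would pursue and why each falls short of the full statement.

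The most classical route is the analytic one used for the Riemann zeta function itself: combine the Euler product (which gives non-vanishing on $\Re(s)=1$ and, through $\log L$, a zero-free region of the shape $\Re(s) > 1 - c/\log(|{\rm Im}(s)|+2)$) with the functional equation $\xi_L(s)=\epsilon\,\overline{\xi_L(1-s)}$ and the Hadamard product for the completed $L$-function, and then try to push the zero-free region all the way to the line $\Re(s)=\tfrac12$. One would first establish the analytic continuation and functional equation for the families at hand (done above for elliptic-curve and Dirichlet $L$-functions), derive the explicit formula linking zeros to sums over primes, and then attempt to control the real parts of the zeros. The obstruction is exactly the one that has blocked progress for over a century: the known zero-free regions come from quantitative positivity of $\Re(-L'/L)$ near $\Re(s)=1$, and there is no mechanism that propagates such positivity into the interior of the critical strip; the functional equation only forces the zeros to be symmetric about $\Re(s)=\tfrac12$, not to lie on it.

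A second route is the moment and random-matrix circle of ideas that motivates the present paper. If one could show unconditionally that the $n$-level densities of the zeros of a family agree with the eigenvalue statistics of the classical compact groups for test functions of \emph{arbitrarily large} support, this would constrain the zeros severely --- for instance full-support two-level density results for a family rule out a positive proportion of the zeros lying off the line. But every rigorous $n$-level density theorem currently available holds only for test functions with restricted support, and removing the support restriction is itself as hard as GRH for the family in question. The strongest partial results remain of this type: a positive proportion of zeros on the critical line (via mollified moments, as proved for $\zeta$ by Levinson and refined by Conrey), density theorems bounding the number of possible exceptional zeros, and the classical zero-free regions near $\Re(s)=1$.

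In short, the ``hard part'' is the whole statement: there is no known argument, conditional on weaker standard conjectures or unconditional, that forces every non-trivial zero onto $\Re(s)=\tfrac12$, and the present paper deliberately treats the Grand Riemann Hypothesis as an input rather than an output.
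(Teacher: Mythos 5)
You are right that this is a conjecture, not a theorem: the paper states the Grand Riemann Hypothesis (citing Iwaniec and Kowalski) purely as a working assumption for the number-theoretic sections and offers no proof, so there is nothing to compare your discussion against. Your assessment matches the paper's treatment exactly.
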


In this work, we will assume the Grand Riemann Hypothesis holds and that $L$-functions arising from elliptic curves or Dirichlet characters have all of their nontrivial zeros on their lines of symmetry i.e. the zeros have the form $\frac{1}{2}+i\omega$ with $\omega$ real.\cite{bombieri2000problems}\\

\subsection{Twisting $L$-functions into Families}\label{twistintro}
\subsubsection{Families of $L$-functions of Elliptic Curves}
Starting with a primitive $L$-function generated by an elliptic curve, we can create a family by twisting with real quadratic Dirichlet characters.  These characters can be described as follows.  We first define the Legendre symbol for prime $p$, and all integers $d$
\begin{align}
\left(\frac{d}{p}\right)_L = \begin{cases}
+1 & \text{if $p\nmid d$ and $x^2 \equiv d$ mod $p$ solvable},\\
0 & p | d,\\
-1 & \text{if $p\nmid d$ and $x^2 \equiv d$ mod  $p$ not solvable}.
\end{cases}
\end{align}
We then extend this to the Jacobi Symbol for positive odd integers $m$,
\begin{align}
\left(\frac{d}{m}\right)_J = \left(\frac{d}{p_1}\right)_L^{k_1} \cdots \left(\frac{d}{p_n}\right)_L^{k_n}
\end{align}
where $m= p_1^{k_1} \cdots p_n^{k_n}$.\\

The Kronecker symbol, $\left(\frac{d}{n}\right)_K$,  is an extension of the Jacobi symbol for all integers $d,n$ such that
\begin{align}
\left(\frac{ab}{cd}\right)_K = \bigg(\frac{a}{c}\bigg)_K \bigg(\frac{a}{d}\bigg)_K \left(\frac{b}{c}\right)_K \left(\frac{b}{d}\right)_K
\end{align}
such that $\left(\frac{d}{m}\right)_K = \left(\frac{d}{m}\right)_J$ for positive odd $m$,
\begin{align}
\left(\frac{d}{-1}\right)_K = \begin{cases}
1 & d>0,\\
-1 & d<0,
\end{cases}
\end{align}
 and
 \begin{align}
\left(\frac{d}{2}\right)_K = \begin{cases}
1 &  d \text{ odd}, d\equiv \pm 1 \text{ mod } 8,\\
0&   d \text{ even},\\
-1 &  d \text{ odd}, d\equiv \pm 3 \text{ mod } 8.
\end{cases}
\end{align}
The Kronecker symbol is a real quadratic Dirichlet character. From now on we will use $\chi_d(n) = \left(\frac{d}{n}\right)_K$.

We construct our family of $L$-functions by twisting with these Dirichlet characters. We start with a fixed primitive $L$-function $L_{E}(s)$ generated from an elliptic curves $E$ as described in Theorem~\ref{Ellipticderivation}.
\begin{align}
L_E(s) &= \sum_{n}{\frac{\lambda (n)}{n^s}}.
\end{align}

Using the quadratic Dirichlet characters we define a family parameterised by $d$.
\begin{align}
L_{E}(s, \chi_d) &= \sum_{n}{\frac{\lambda (n)\chi_d(n)}{n^s}}\\
&= \prod_{p}{\left(1- \frac{\lambda(p)\chi_d(p)}{p^s} + \frac{\Psi_M(p)\chi_d(p)^2}{p^{2s}}\right)^{-1}} \label{LFtwistfunct}
\end{align}
where
\begin{align}
\Psi_M(p)= \begin{cases}
1 & \text{if } p \nmid M,\\
0 & \text {otherwise}.
\end{cases}
\end{align}

Each $L$-function $L_E(s, \chi_{d})$ is actually the $L$-function associated with another elliptic curve $E_{d}$, the twist of $E$ by $d$~\cite{conrey2007autocorrelatio}.
Therefore it has a functional equation
\begin{align}
L_{E}(s, \chi_d) &= \chi_d(-M) \omega_E {\left(\frac{2 \pi}{\sqrt{M}|d|}\right)}^{2s-1} \frac{\Gamma\left(\frac{3}{2}-s\right)}{\Gamma\left(s+ \frac{1}{2}\right)}L_{E}\left(1-s, \chi_d\right)\\
&= \chi_d(-M) \omega_E \psi(s, \chi_d) L_{E}\left(1-s, \chi_d\right).
\end{align}
and all the other attributes we expect of $L$-functions. Of particular interest is that the Grand Riemann Hypothesis in Conjecture~\ref{conj: GRH} also applies to these twisted $L$-functions, so we are assuming that all the nontrivial zeros are on the $Re (s) = \frac{1}{2}$ line.\\

We will restrict our family of $L$-functions with the constraint $\chi_d(-M) \omega_E =1$, where $M$ is the conductor and $\omega_E$ the sign of the functional equation of the initial elliptic curve $L_E(s)$. This then gives us a family of $L$-functions with even functional equations:

\begin{align}\label{eq:E+def}
E^{+} = \{L_E(s, \chi_d): \chi_d\left(M\right)\omega_E = +1; d\;{\rm a\;fundamental\;discriminant},\;d > 0\}.
\end{align}

It is expected that the distribution of the zeros of any one of these $L$-functions  infinitely high on the critical line is like that of the eigenvalues of random unitary matrices.
But rather than fixing the $L$-function and looking at the distribution up the critical line, we can fix a height on the line and average through the family of $E^{+}$ with even functional equations.

This way we can look at the distribution of the zeros on the critical line closest to the real axis such as the distribution of the $i$th zero
\begin{align}
J^{i}_{(X,F)}[a,b]= \frac{\left|\left\{f \in F_X; \frac{t_i^f log c_f}{K_F} \in [a,b]\right\}\right|}{|F_X|}
\end{align}
where $F$ is the family of $L$-functions with conductor $c_f$, and $K_F$ is a constant depending on the symmetry of the family. In the example above the conductor is the parameter $d$. $F_X$ are the $L$-functions with $c_f<X$ and $t_i^f$ is the $i$th zero of $L$-function $f$ above the real axis.\\

Katz and Sarnak conjectured that the zero statistics of twisted elliptic curve $L$-functions with even functional equation would behave like the eigenvalues of $SO(2N)$ matrices and that zeros of families of $L$-functions with odd functional equation should behave like eigenvalues of $SO(2N+1)$~\cite{katz1999zz}. For example, for the family $F=E^+$ we expect
\begin{align}
\lim_{X \rightarrow \infty} J^{i}_{(X,F)}[a,b] = \lim_{N \rightarrow \infty } meas \left\{A \in SO(2N): \frac{N \theta_i(A)}{\pi}\in [a,b]\right\}
\end{align}
where $meas \left\{A \in SO(2N): \frac{N \theta_i(A)}{\pi}\in [a,b]\right\}$ is the distribution of the $i$th eigenvalue of a matrix $A$ varying over $SO(2N)$.

\subsubsection{Families of Dirichlet $L$-functions} \label{sub:dir2}

Recall that we defined $L\left(s, \chi_d\right)$ for some real, primitive Dirichlet character of modulus d, $\chi_d\left(n\right)$ as
\begin{align}
L\left(s, \chi_d\right) = \sum_{n=1}^{\infty}{\frac{\chi_d\left(n\right)}{n^s}}.
\end{align}
We can then define a family of $L$-functions $D^{+} = \left\{L(s, \chi_d) : d \;{\rm a\;fundamental\;discriminant},\;d>0\right\}$, where $d$ (the conductor of the $L$-function) is the ordering parameter of the family.

Katz and Sarnak \cite{katz1999zz} conjecture that the zero statistics of $D^{+} $ behave like the eigenangles of $USp(2N)$ matrices, citing evidence from Ozluk and Snyder \cite{ozluk1999distribution} and Rubinstein \cite{rubinstein1998evidence} to support this conjecture.

\section{Zero Statistics of Elliptic Curve $L$-functions}\label{sec:zerostatselliptic}

In this section we will consider a family of quadratic twists (with even functional equation) of an elliptic curve $L$-function. Their  zeros are conjectured to behave similarly to the eigenangle statistics of matrices from $SO(2N)$ with Haar measure. We will apply the method of \cite{conrey2008ce} to conjecture the $n$-level density of the zeros of these $L$-functions.  We will see that the steps mirror the calculations for orthogonal matrices in Section~\ref{sec:ortho}.  The main result to be derived in this section (stated formally in Theorem \ref{thm:LFmain}, and conditional on the Generalised Riemann Hypothesis for $L$-functions in this family and the Ratios Conjecture discussed later in this section) is the following form of the $n$-level density for zeros of elliptic curve $L$-functions in a family. For simplicity at this stage we denote the family by $\mathcal{F}$, a member of the family is labelled $\mathcal{E}$ and $\gamma_{i,\mathcal{E}}$ is the $i$th zero of  that $L$-function above the real axis.  We find the $n$-level density is:

\begin{align}\label{eq:generaln-level}
\begin{split}
&\sum_{\mathcal{E}\in \mathcal{F}}\sum_{\substack{j_1, \cdots, j_n =1 \\ j_i \neq j_k \forall i,k}}^\infty{ f(\gamma_{j_1,\mathcal{E}}, \cdots, \gamma_{j_n,\mathcal{E}}) }\\
& \qquad = \frac{1}{(2 \pi )^n}\sum_{\substack{K \cup L \cup M\\= \{1, \cdots, n\}}}{(-1)^{|M|}\left(\int_{0}^{\infty}\right)^n J_{E}^{*}(-iz_K \cup i z_L, -iz_M)}\\
& \qquad \qquad \times f(z_1, \cdots, z_n)dz_1 \cdots dz_n + O\left(X^{\frac{1}{2}+\epsilon}\right),
\end{split}
\end{align}
where $J_{E}^{*}(-iz_K \cup i z_L, -iz_M)$  is defined in (\ref{eq:JstarE2}) for the family of quadratic twists that we are interested in and the sum in the right hand side involving $K$, $L$ and $M$ is over disjoint subsets of $\{1,\ldots,n\}$.   We note how the structure of this $n$-level density mirrors that of the equivalent statistic for eigenvalues of matrices from $SO(2N)$ given in Theorem \ref{thm:orthoNcorr1}.

\subsection{Ratios of Elliptic Curve $L$-functions} \label{sec:ratios of twists}
Let $A= \{\alpha_1, \cdots, \alpha_K\}$ and $B= \{\gamma_1, \cdots, \gamma_Q\}$ be sets of complex numbers and $\epsilon \in \{1,-1\}^K$. In order to derive the $n$-level density of the zeros near the critical point $s = \frac{1}{2}$, we consider ratios of twisted $L$-functions with even functional equation averaged over the family $E^+$ defined at (\ref{eq:E+def}).  All sums of the form $0<d\leq X$ in this section include only fundamental discriminants.
\begin{align}
\begin{split}
&R_E\left(\alpha_1, \cdots, \alpha_K, \gamma_1, \cdots, \gamma_Q \right)\\
& \qquad = \sum_{\substack{0<d \leq X \\ \chi_d\left(-M\right)\omega_E = +1}}{\frac{L_E\left(\frac{1}{2}+ \alpha_1, \chi_d\right)\cdots L_E\left(\frac{1}{2}+ \alpha_K, \chi_d\right)}{L_E\left(\frac{1}{2}+ \gamma_1, \chi_d\right)\cdots L_E\left(\frac{1}{2}+ \gamma_Q, \chi_d\right)}}. \label{Lratioequation}
\end{split}
\end{align}
We use the expression given for this quantity in \cite{conrey2007autocorrelatio}; details of the derivation  when $K=Q=1$ can also be found in \cite{huynh2009lowe}.

This gives us the following conjecture, from ``Autocorrelations of ratios of $L$-functions"\cite{conrey2007autocorrelatio}, which we have written in set notation as in the random matrix calculations.
\begin{conjecture}\label{thm:LFRat2}{\rm (Ratios of $L$-functions of Elliptic Curves)}
Let $A$, $B$ be sets of complex numbers such that
\begin{align}
\begin{split}
-\frac{1}{4} < \Re\left(\alpha\right) < \frac{1}{4} &\qquad \forall \alpha \in A\\
\frac{1}{\log X} \ll \Re\left(\gamma\right)< \frac{1}{4} &\qquad \forall \gamma \in B\\
Im\left(\alpha\right), Im\left(\gamma\right)\ll X^{1 - \epsilon }& \qquad \forall \alpha \in A, \gamma \in B
\end{split}
\end{align}
Then
\begin{align}
\begin{split}\label{eq:REAB}
&R_E(A,B):= \sum_{\substack{0<d \leq X \\ \chi_d\left(-M\right)\omega_E = +1}}{\frac{\prod_{\alpha \in A}{L_E\left(\frac{1}{2} +\alpha, \chi_d\right)}}{\prod_{\gamma \in B}{L_E\left(\frac{1}{2}+\gamma, \chi_d\right)}}}\\
& \qquad = \sum_{\substack{0<d \leq X \\ \chi_d\left(-M\right)\omega_E = +1}}{\sum_{D \subseteq A}{\left(\frac{\sqrt{M}|d|}{2 \pi}\right)}^{-2 \sum_{\delta \in D}{\delta}}\prod_{\delta \in D}\frac{\Gamma\left(1- \delta\right)}{\Gamma\left(1+ \delta\right)}}\\
& \qquad \qquad \times Y_E\left(A,B,D\right) A_E\left(A,B,D\right) + O\left(X^{1/2+ \epsilon}\right)
\end{split}
\end{align}
where the sum over $d$ is over fundamental discriminants, $\zeta(s)$ is the classical Riemann zeta function and
\begin{align}
Z_{\zeta}\left(A,B\right) &= \prod_{\alpha \in A, \beta
\in B}{\zeta\left(1+\alpha+\beta \right)}\\
Y_{\zeta}\left(A\right)&=
\prod_{\alpha \in A}{\zeta\left(1+2\alpha\right)}\\
U_{-}\left(A\right) &= \prod_{\alpha \in A}\left(1-\frac{\lambda\left(p\right)}{p^{1/2+\alpha}}+\frac{1}{p^{1+2\alpha}}\right)\\
U_{+}\left(A\right) &= \prod_{\alpha \in A}\left(1+\frac{\lambda\left(p\right)}{p^{1/2+\alpha}}+\frac{1}{p^{1+2\alpha}}\right)\\
V\left(A\right) &= \prod_{\alpha \in A}\left(1-\frac{\lambda\left(p\right)}{p^{1/2+\alpha}}\right)
\end{align}
For $D \subseteq A$ we define $D^{-} = \{-\delta; \delta \in D\}$ and we can write
\begin{align}
Y_E\left(A,B,D\right) &= \sqrt{\frac{Z_{\zeta}\left(\left(A \backslash D\right)\cup D^{-}, \left(A \backslash D\right)\cup D^{-}\right) Z_{\zeta}\left(B,B\right) Y_{\zeta} \left(B\right)}{Z_{\zeta}\left(\left(A \backslash D\right)\cup D^{-},B\right)^2 Y_{\zeta}\left(A \backslash D\right)Y_{\zeta}\left( D^{-}\right)}}\\ \label{eq:LF1}
A_{E,1}\left(A,B,D\right)&= \frac{1}{Y_E\left(A,B,D\right)}\\
\begin{split}
A_{E,2}\left(A,B,D\right) &= \prod_{p \nmid M}\frac{1}{1+\frac{1}{p}}\left(\frac{U_{-}\left(B\right)}{2U_{-}\left(\left(A \backslash D\right)\cup D^{-}\right)}\right.\\
& \qquad \left.+\frac{U_{+}\left(B\right)}{2U_{+}\left(\left(A \backslash D\right)\cup D^{-}\right)}+\frac{1}{p}\right)
\end{split}\\
A_{E,3}\left(A,B,D\right) &= \prod_{p | M}{\frac{V\left(B\right)}{V\left(\left(A \backslash D\right) \cup D^{-}\right)}}\\
A_E\left(A,B,D\right) &= A_{E,1}\left(A,B,D\right)A_{E,2}\left(A,B,D\right)A_{E,3}\left(A,B,D\right).\label{eq:LF2}
\end{align}
\end{conjecture}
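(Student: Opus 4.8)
\emph{Remark on what a ``proof'' means here.} The statement is a Ratios Conjecture for the family $E^+$, imported from \cite{conrey2007autocorrelatio}; what can be offered is a derivation via the Conrey--Farmer--Zirnbauer recipe recalled in Section~\ref{sec:background} rather than a theorem. As with every such conjecture, the steps of extending truncated Dirichlet sums to the full range, discarding approximate-functional-equation remainders, and replacing summands by their unconditional family averages are heuristic, and the displayed error $O(X^{1/2+\epsilon})$ is itself conjectural (with sharper information available in special cases, cf.\ Section~\ref{sec:error}). With that understood, here is the plan.

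\textbf{Step 1: apply the recipe to the ratio.} For each $\alpha\in A$ I would replace $L_E(\tfrac12+\alpha,\chi_d)$ in the numerator of (\ref{Lratioequation}) by the two terms of its approximate functional equation, writing $L_E(s,\chi_d)=\sum_n \lambda(n)\chi_d(n)n^{-s}+X_E(s,\chi_d)\sum_n \lambda(n)\chi_d(n)n^{-(1-s)}+\text{remainder}$ with $X_E(s,\chi_d)=\chi_d(-M)\omega_E\,\psi(s,\chi_d)$, and replace each $L_E(\tfrac12+\gamma,\chi_d)$ in the denominator by its Dirichlet series $\sum_m \mu_E(m)\chi_d(m)m^{-(1/2+\gamma)}$ (where $1/L_E$ generates the $\mu_E$). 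Multiplying out the $K$ approximate functional equations produces $2^{|A|}$ terms indexed by the subset $D\subseteq A$ of numerator factors for which the dual term was selected. On the constrained family $\chi_d(-M)\omega_E=1$ the product of swap-factors collapses to $\prod_{\delta\in D}\psi(\tfrac12+\delta,\chi_d)=\bigl(\tfrac{\sqrt M|d|}{2\pi}\bigr)^{-2\sum_{\delta\in D}\delta}\prod_{\delta\in D}\tfrac{\Gamma(1-\delta)}{\Gamma(1+\delta)}$, which is exactly the $d$-dependent prefactor in (\ref{eq:REAB}). Meanwhile the dual term for $\delta\in D$ carries the Dirichlet exponent $\tfrac12-\delta$, i.e.\ it acts as the substitution $\delta\mapsto-\delta$; this is the structural origin of the set $D^-$ that appears throughout $Y_E$, $U_\pm$ and $V$.

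\textbf{Step 2: average over the family and extend the sums.} Inside each $D$-term the character dependence reduces to a single factor $\chi_d(n_1\cdots n_{K+Q})$, and the recipe replaces the summand by its average over fundamental discriminants $0<d\le X$ with the sign constraint. By orthogonality of real quadratic characters this average is negligible unless the combined argument is a perfect square, in which case it tends to $\prod_{p\mid n_1\cdots n_{K+Q}}(1+\tfrac1p)^{-1}$; the constraint $\chi_d(-M)\omega_E=1$ effectively halves the family and couples the averaging to the prime $M$. Extending the surviving diagonal sum to all $n_i\ge1$ makes it multiplicative, so it factors as an Euler product; one then reorganises this product into the ``universal'' ratio of Riemann zeta values $Y_E(A,B,D)$ (whose shape deliberately mirrors the ratio of $z$-functions in the $SO(2N)$ calculation of Section~\ref{sec:ortho}, hence also the structural parallel claimed around (\ref{eq:generaln-level})) times an everywhere-convergent arithmetic factor $A_E=A_{E,1}A_{E,2}A_{E,3}$, where $A_{E,1}=1/Y_E$ simply strips off the zeta factors just introduced, $A_{E,2}$ records the good primes $p\nmid M$ (the average over the two AFE orientations producing the $U_+$/$U_-$ symmetrisation), and $A_{E,3}$ the degenerate degree-one Euler factors at $p\mid M$. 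Summing over $D\subseteq A$ and then over $d$ assembles the right-hand side of (\ref{eq:REAB}).

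\textbf{Main obstacle.} The hard part is exactly the part that is \emph{not} a theorem: bounding the error from discarding the AFE remainders, from completing the truncated $n$-sums, and from replacing summands by unconditional averages, together with the ``miracle'' that the many apparently-omitted cross terms cancel. Following \cite{conrey2005integra,conrey2007autocorrelatio} one argues heuristically that each such contribution is $O(X^{1/2+\epsilon})$; a genuinely rigorous treatment is available only in the case $K=Q=1$, carried out in \cite{huynh2009lowe}, and it is on that basis — together with GRH for the family — that (\ref{eq:REAB}) is assumed for general $K,Q$ in what follows.
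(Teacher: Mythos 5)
Your proposal is correct in both substance and framing: the statement is a conjecture which the paper itself does not prove but imports from \cite{conrey2007autocorrelatio} (with the $K=Q=1$ derivation in \cite{huynh2009lowe}), and your sketch follows exactly the recipe the paper outlines in Section~\ref{sec:background} --- approximate functional equation in the numerator indexed by $D\subseteq A$, M\"obius-type series in the denominator, collapse of the swap factors to $\prod_{\delta\in D}\psi(\tfrac12+\delta,\chi_d)$ under the sign constraint, character-orthogonality picking out squares, extension of the sums, and reorganisation into $Y_E\cdot A_E$. You also correctly identify the error term as the genuinely conjectural ingredient, consistent with the paper's own discussion in Section~\ref{sec:error}.
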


\subsection{Discussion of the Error Term}\label{sec:error}
Little is known in general about the error term in the Ratios Conjectures, although we list below cases where some specific instances of the conjecture are better understood. It is clear that considerable error might arise in several steps of the ratios ``recipe" described in Section \ref{sec:background}, which may be of a size similar to that of the main term. The conjecture is that these errors will cancel each other out and there are many situations where there is evidence for an error of the form $O\left(X^{1/2+ \epsilon}\right)$ as stated in \cite{conrey2007autocorrelatio} and reproduced in Conjecture \ref{thm:LFRat2}.  Note that in (\ref{eq:REAB}) the average over the family (the sum over $d$) is not divided by the number of terms in the sum as was the case at (\ref{eq:ratioconjformulation}).  Also, for the family of quadratic twists of an elliptic curve $L$-function, the log conductor $c(f)$ defined in Section \ref{sec:background} is  (asymptotically for large $d$) $2\log d$ for the $L$-function $L_E(\tfrac{1}{2}+\alpha,\chi_d)$, so the general form of the error term given at (\ref{eq:ratioconjformulation}) would here correspond to $O(X^{1-\delta})$ for some $\delta>0$. 

In several instances the error term in the Ratios Conjecture has been tested by assuming the Ratios Conjecture and deducing from it the 1-level density: a result like (\ref{eq:generaln-level}) with $n=1$.  This is then compared with a calculation of the 1-level density using rigourous methods.  These rigourous methods require a restriction on the support of the Fourier transform of the test function $f$.

Miller rigourously calculated the 1-level density for the symplectic family of quadratic Dirichlet characters arising from even fundamental discriminants $d \le X$. He found agreement with the 1-level density computed using the Ratios Conjecture up to an error term of $O(X^{1/2+\epsilon})$ for test functions supported in $(-1, 1)$\cite{kn:mil07}.

Miller and Montague tested the accuracy of the Ratios Conjecture with an orthogonal family. They considered the 1-level density of families of cuspidal newforms of constant sign and confirmed the accuracy of the conjecture up to an error equivalent to $O(X^{1/2+\epsilon})$ when the test function has support in $(-1, 1)$. When the test function has support in $(-2,2)$ they can show a power savings error \cite{kn:milmon}.

Goes, Jackson and Miller et al.~showed similar results  for the unitary family of all Dirichlet $L$-functions with prime conductor. They found agreement to the Ratios Conjecture predictions with a square-root error if the support of the Fourier transform of the test function is in $(-1,1)$ and a power saving error for support up to $(-2,2)$ \cite{kn:Goesetal}. Similarly Fiorilli and Miller \cite{kn:fiomil15} calculated the 1-level density for a family of Dirichlet $L$-functions and by explicitly computing lower order terms showed that the error term of type $O(X^{1/2+\epsilon})$ was the best possible.

Huynh, Miller and Morrison tested the Ratios Conjecture prediction for the 1-level density of a family of quadratic twists of a fixed elliptic curve with prime conductor. They found agreement up to an error term of size $X^{\frac{ 1+\sigma}{2}}$ for test functions supported in $(-\sigma, \sigma)$\cite{kn:huymilmor}.

In the case of moments (ratios with no denominator), for the family of real, quadratic Dirichlet $L$-functions, a sequence of works \cite{kn:sou00,kn:dgh03,kn:zhang05,kn:young13} indicate that in this case the third moment has a term of size $O(X^{3/4+\varepsilon})$ in place of the $O(X^{1/2+\varepsilon})$ in (\ref{eq:REAB}).  The size of this term was also extensively investigated numerically by Alderson and Rubinstein \cite{kn:aldrub12}.  Very recently Florea \cite{kn:florea15} has found explicitly a term which in this notation would be equivalent to  $O(X^{1/3+\varepsilon})$ in the first moment of quadratic Dirichlet $L$-functions in the function field setting. 

Thus we have settings where there is support for the size of the error term being the square root of the size of the main term, as stated in the original Moment and Ratios Conjectures, such as (\ref{eq:REAB}),  and there are instances where the square root error term seems too optimistic.  As we still lack a complete understanding of these terms, we will continue in this paper with the original statement of the Ratios Conjectures  with a $O\left(X^{\frac{1}{2}+\epsilon}\right)$ error term, but bearing in mind that there may be instances where this breaks down.

\subsection{Differentiating Ratios of Elliptic Curve $L$-functions}
Using Conjecture~\ref{thm:LFRat2}, we want to find the logarithmic derivative

\begin{theorem} \label{LFJconj}
Assume Conjecture~\ref{thm:LFRat2}. Let $A$ be a set of complex numbers such that
\begin{align}
\begin{split} \label{eq:restrictionsE}
\frac{1}{\log X} \ll \Re\left(\alpha\right)< \frac{1}{4} &\qquad \forall \alpha \in A\\
Im\left(\alpha\right) \ll X^{1 - \epsilon }& \qquad \forall \alpha \in A
\end{split}
\end{align}
Then $J_E\left(A\right)=J_E^{*}\left(A\right)+ O\left(X^{1/2+\epsilon}\right)$, where
\begin{align}
J_E\left(A\right)= \sum_{\substack{0<d \leq X \\ \chi_d\left(-M\right)\omega_E = +1}}{\prod_{\alpha \in A}\frac{L^{'}_E\left(\frac{1}{2}+ \alpha, \chi_d\right)}{L_E\left(\frac{1}{2}+ \alpha, \chi_d\right)}}
\end{align}
and
\begin{align}
\begin{split}
J^{*}_E\left(A\right) =& \sum_{\substack{0<d \leq X \\ \chi_d\left(-M\right)\omega_E = +1}}{\sum_{D \subseteq A}{{\left(\frac{\sqrt{M}|d|}{2 \pi}\right)}^{- \sum\limits_{\delta \in D}{2\delta}}\prod_{\delta \in D}{\frac{\Gamma\left(1- \delta\right)}{\Gamma\left(1+ \delta\right)}}}}\\
& \times \sqrt{\frac{Z_{\zeta}\left(D^{-},D^{-}\right)Z_{\zeta}\left(D,D\right)Y_{\zeta}\left(D\right)}{Z_{\zeta}^{\dag}\left(D^{-},D\right)^{2}Y_{\zeta}\left(D^{-}\right)}}\\
&\times \left(-1\right)^{|D|}A_E\left(D,D,D\right)\\
&\times \sum_{\substack{A \backslash D=W_1 \cup \cdots \cup W_R\\}}{\prod_{r=1}^{R}{\widetilde{H_D}\left(W_r\right)}}\\
\end{split}
\end{align}
where
\begin{align}
\widetilde{H_D}\left(W_r\right) &= H_D\left(W_r\right)+ A_{D,1}\left(W_r\right)+ A_{D,2}\left(W_r\right)+ A_{D,3}\left(W_r\right)\label{Hwiggle}\\
H_D\left(W_r\right) &= \begin{cases}
\sum_{\delta \in D}\left(\frac{\zeta^{'}}{\zeta}\left(1+ a- \delta \right)-\frac{\zeta^{'}}{\zeta}\left(1+ a +\delta \right)\right)- \frac{\zeta^{'}}{\zeta}\left(1+ 2a\right) & W_r = \{a\}\\
\left(\frac{\zeta^{'}}{\zeta}\right)'\left(1+ a_1 + a_2\right) & W_r= \{a_1, a_2\}\\
1 & W_r=\emptyset\\
0&|W_r|\geq 3
\end{cases}\\
A_{D,i}\left(W_r\right)&= \left.\prod_{w \in W_r}{\frac{\partial}{\partial w}}\log A_{E,i}\left(A,B,D\right)\right|_{B=A} \label{def:AD}
\end{align}
and $A_E, Z_{\zeta}, Y_{\zeta}$ etc defined as in Conjecture \ref{thm:LFRat2}. $A_E\left(A,B,D\right)$ is an analytic function. The dagger adds a
restriction that a factor $\zeta(1+x)$ is omitted if its argument is zero. The sum over $d$ is over fundamental discriminants.
\end{theorem}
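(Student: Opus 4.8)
The plan is to produce $J_E^*$ by differentiating the Ratios Conjecture, following the unitary case in \cite{conrey2007applications} and the random matrix case of Theorem~\ref{thm:ortho2}. The starting point is the identity
\begin{align*}
J_E(A) = \left.\prod_{\alpha\in A}\frac{\partial}{\partial\alpha}\,R_E(A;B)\right|_{B=A},
\end{align*}
which holds because $\partial/\partial\alpha_j$ applied to the ratio $\prod_{\alpha\in A}L_E(\tfrac12+\alpha,\chi_d)/\prod_{\gamma\in B}L_E(\tfrac12+\gamma,\chi_d)$ touches only the $j$th numerator factor. Granting that Conjecture~\ref{thm:LFRat2} survives, with the same error $O(X^{1/2+\epsilon})$, the application of $\prod_{\alpha\in A}\partial/\partial\alpha$ (the usual convention in the Ratios recipe), it suffices to apply that operator to the right-hand side of \eqref{eq:REAB} and then set $B=A$. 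Since the sums over $d$ and over subsets $D\subseteq A$ are finite and the summands are meromorphic, differentiation may be interchanged with these sums (the singular locus of each summand has codimension at least one).

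Fix $d$ and $D\subseteq A$, and split $\prod_{\alpha\in A}\partial/\partial\alpha=\big(\prod_{\delta\in D}\partial/\partial\delta\big)\big(\prod_{a\in A\setminus D}\partial/\partial a\big)$. The substantive step is the inner product $\prod_{a\in A\setminus D}\partial/\partial a$, which I would handle by logarithmic differentiation, exactly as in the unitary case. Write the $D$-summand of \eqref{eq:REAB} as $\mathcal{C}_D(d;B)\exp\Phi_D$, where $\mathcal{C}_D$ gathers every factor independent of the variables $a\in A\setminus D$ — the conductor and gamma prefactor $\big(\tfrac{\sqrt M\,|d|}{2\pi}\big)^{-2\sum_{\delta\in D}\delta}\prod_{\delta\in D}\tfrac{\Gamma(1-\delta)}{\Gamma(1+\delta)}$, together with the pieces of $Y_E$ and of the $A_{E,i}$ built only from $D^-$ and from $B$ — while $\Phi_D$ is the logarithm of the remaining factors, namely those parts of $Z_\zeta$, $Y_\zeta$ and of the Euler products $A_{E,i}$ that involve variables in $A\setminus D$. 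The exponential ("moments from cumulants") formula then gives
\begin{align*}
\prod_{a\in A\setminus D}\frac{\partial}{\partial a}\,\exp\Phi_D \;=\; \exp\Phi_D\sum_{A\setminus D=W_1\sqcup\cdots\sqcup W_R}\ \prod_{r=1}^{R}\Big(\prod_{a\in W_r}\frac{\partial}{\partial a}\Phi_D\Big),
\end{align*}
a sum over set partitions of $A\setminus D$. Setting $B=A$, one identifies $\prod_{a\in W}(\partial/\partial a)\Phi_D$ as $\widetilde{H_D}(W)$: the $\zeta$-factors contribute $H_D(W)$, which vanishes for $|W|\geq3$ since each $\zeta$-factor in $\Phi_D$ involves at most two of the $A\setminus D$-variables, while $\log A_{E,1}$, $\log A_{E,2}$, $\log A_{E,3}$ contribute $A_{D,1}(W),A_{D,2}(W),A_{D,3}(W)$. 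Simultaneously, setting $B=A$ collapses $\mathcal{C}_D(d;A)\exp\Phi_D|_{B=A}$: the $B$-dependent pieces and $Y_\zeta(A\setminus D)$ telescope against one another, leaving the prefactor, the sign $(-1)^{|D|}$, the $D$-only radical $\sqrt{Z_\zeta(D^-,D^-)Z_\zeta(D,D)Y_\zeta(D)/\big(Z_\zeta^{\dag}(D^-,D)^2\,Y_\zeta(D^-)\big)}$, and $A_E(D,D,D)$.

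There remain the differentiations $\prod_{\delta\in D}\partial/\partial\delta$; these are the "relatively straightforward" ones. Each $\partial/\partial\delta$ acts on the conductor and gamma prefactor — contributing $-2\log\tfrac{\sqrt M\,|d|}{2\pi}-\psi(1-\delta)-\psi(1+\delta)$ through the logarithmic derivative of the functional-equation factor — and on the $D^-$-dependent $\zeta$- and Euler-factors; the point to verify is that these digamma and logarithm terms cancel against the matching diagonal $\zeta'/\zeta$-contributions, so that what survives after re-summing over $D$ and over $d$ is precisely $J_E^*(A)$, with the stated $O(X^{1/2+\epsilon})$ inherited from Conjecture~\ref{thm:LFRat2}.

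The main obstacle is the bookkeeping triggered by putting $B=A$ and re-summing. Three things need care: (i) the coincident shifts create $\zeta(1)$-type poles in $Z_\zeta((A\setminus D)\cup D^-,B)$, and one must check they cancel so that the dagger-regularised radical in $J_E^*$ is the genuine finite value; (ii) the Euler products $A_{E,2},A_{E,3}$ converge only after the $\zeta$-factors carried by $Y_E$ (equivalently $A_{E,1}=1/Y_E$) are separated out, so that the decomposition $\widetilde{H_D}=H_D+A_{D,1}+A_{D,2}+A_{D,3}$ correctly reassembles the finite logarithmic-derivative data; and (iii) the cancellation of the digamma and logarithm terms in the $\delta\in D$ differentiations must be followed through the square-root factors. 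None of this is deep, but it is exactly where the cancellation "miracles" of the Ratios recipe are used, and it is why the proof of Theorem~\ref{thm:ortho2} — the same computation stripped of the arithmetic factors — was deferred to this one.
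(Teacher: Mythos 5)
Your overall strategy is the paper's: start from $J_E(A)=\prod_{\alpha\in A}\partial_\alpha R_E(A;B)|_{B=A}$, do the $A\setminus D$ differentiations by logarithmic differentiation and the set-partition (``moments from cumulants'') formula to produce $\sum\prod\widetilde{H_D}(W_r)$, and use the prime-by-prime regrouping (the paper's Lemma~\ref{lem:diffA}) to get $A_E(A,B,D)|_{B=A}=A_E(D,D,D)$. That part is sound.

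However, your treatment of the differentiations $\prod_{\delta\in D}\partial/\partial\delta$ is wrong, and the verification you propose there would fail. You claim each $\partial/\partial\delta$ acts on the conductor--gamma prefactor, producing $-2\log\frac{\sqrt{M}|d|}{2\pi}-\frac{\Gamma'}{\Gamma}(1-\delta)-\frac{\Gamma'}{\Gamma}(1+\delta)$, and that these digamma and logarithm terms must ``cancel against the matching diagonal $\zeta'/\zeta$-contributions.'' No such cancellation occurs, and no such terms appear in $J_E^*(A)$. The actual mechanism is vanishing, not cancellation: the factor $Y_E(A,B,D)$ contains $1/Z_\zeta((A\setminus D)\cup D^-,B)$, which at $B=A$ carries one factor $1/\zeta(1+\beta_\delta-\delta)|_{\beta_\delta=\delta}=1/\zeta(1)=0$ for each $\delta\in D$, so the $D$-summand vanishes to order $|D|$ on the diagonal. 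In the Leibniz expansion of $\prod_{\delta\in D}\partial/\partial\delta$, the only term that survives setting $B=A$ is the one in which each $\partial/\partial\delta$ hits its own vanishing factor $1/\zeta(1+\beta_\delta-\delta)$; that derivative equals $-1$ on the diagonal, which is precisely where the $(-1)^{|D|}$ and the dagger in $Z_\zeta^{\dag}(D^-,D)$ come from. Every term in which some $\partial/\partial\delta$ instead lands on the prefactor (your digamma/log contributions) or on any other factor still carries an undifferentiated $1/\zeta(1)=0$ and contributes nothing. So the ``relatively straightforward'' step is straightforward for a different reason than you give, and your proposal as written leaves the origin of $(-1)^{|D|}$ and of $Z^\dag$ unexplained while setting up a cancellation check that cannot succeed. (Your narrative also suggests setting $B=A$ before performing the $\delta\in D$ derivatives, which would give identically zero; the substitution must be made only after all $|A|$ differentiations.)
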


The proof of this theorem involves exactly the same arguments one would use to prove Theorem~\ref{thm:ortho2}, although in Theorem~\ref{LFJconj} there is the added complication of the arithmetic terms.

A necessary step in this proof is to evaluate $\left.A_E\left(A,B,D\right)\right|_{B=A}$.
\begin{lemma}\label{lem:diffA}
Let $A,B$ be sets and $D$, a subset of $A$. Then $\left.A_E\left(A,B,D\right)\right|_{B=A} = A_E\left(D,D,D\right)$. This is an analytic function.
\end{lemma}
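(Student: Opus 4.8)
The plan is to prove this as a combinatorial identity forced by the way the factors in Conjecture~\ref{thm:LFRat2} are assembled, exploiting that every ingredient of $A_E$ is multiplicative over disjoint unions of its index sets. Throughout I would write $E=A\backslash D$, so that $A=D\sqcup E$ and, after setting $B=A$, the set $(A\backslash D)\cup D^{-}$ becomes $E\cup D^{-}$. The facts used repeatedly are that $Y_\zeta$, $U_{\pm}$ and $V$ all split as $F(X_1\sqcup X_2)=F(X_1)F(X_2)$ with $F(\emptyset)=1$, and that $Z_\zeta$ is bilinear, $Z_\zeta(X_1\sqcup X_2,Y)=Z_\zeta(X_1,Y)Z_\zeta(X_2,Y)$, and symmetric, $Z_\zeta(X,Y)=Z_\zeta(Y,X)$.

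First I would dispose of $A_{E,2}$ and $A_{E,3}$. Setting $B=A$ collapses the internal ratios: $U_{\pm}(A)/U_{\pm}(E\cup D^{-})=U_{\pm}(D)U_{\pm}(E)/(U_{\pm}(E)U_{\pm}(D^{-}))=U_{\pm}(D)/U_{\pm}(D^{-})$, and likewise $V(A)/V(E\cup D^{-})=V(D)/V(D^{-})$. For the triple $(D,D,D)$ one has $A\backslash D=\emptyset$ with $U_{\pm}(\emptyset)=V(\emptyset)=1$, so the $p$-th factor of $A_{E,2}(A,A,D)$ (resp. $A_{E,3}(A,A,D)$) agrees termwise with that of $A_{E,2}(D,D,D)$ (resp. $A_{E,3}(D,D,D)$); hence $A_{E,i}(A,A,D)=A_{E,i}(D,D,D)$ for $i=2,3$.

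The substantive step is the factor $A_{E,1}=1/Y_E$, where I would show $Y_E(A,A,D)^2=Y_E(D,D,D)^2$. Expanding the numerator $Z_\zeta(E\cup D^{-},E\cup D^{-})\,Z_\zeta(A,A)\,Y_\zeta(A)$ and the denominator $Z_\zeta(E\cup D^{-},A)^2\,Y_\zeta(E)\,Y_\zeta(D^{-})$ by bilinearity, the factors involving $E$ — namely $Z_\zeta(E,E)^2$, $Z_\zeta(D,E)^2$ (paired with $Z_\zeta(E,D)^2$ via symmetry), $Z_\zeta(E,D^{-})^2$ (paired with $Z_\zeta(D^{-},E)^2$) and $Y_\zeta(E)$ — all cancel, leaving $Z_\zeta(D^{-},D^{-})Z_\zeta(D,D)Y_\zeta(D)/(Z_\zeta(D^{-},D)^2 Y_\zeta(D^{-}))$, which is precisely $Y_E(D,D,D)^2$ read off from the definition with $A\backslash D=\emptyset$ and $Y_\zeta(\emptyset)=1$. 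Choosing the same branch of the square root on both sides (both are built from the same meromorphic data) gives $A_{E,1}(A,A,D)=A_{E,1}(D,D,D)$, and multiplying the three equalities yields $\left.A_E(A,B,D)\right|_{B=A}=A_E(D,D,D)$.

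For the analyticity assertion there is essentially nothing further to do: $A_E(D,D,D)$ is the restriction of $A_E(A,B,D)$, which is recorded as analytic in the region of interest (see Theorem~\ref{LFJconj}); equivalently one checks directly that $A_{E,2}(D,D,D)$ and $A_{E,3}(D,D,D)$ are absolutely convergent Euler products and that the zeta-quotient $A_{E,1}(D,D,D)$ is meromorphic, so the product is analytic wherever Conjecture~\ref{thm:LFRat2} is applied. The only step requiring care is the cancellation in the $Y_E$ computation of the previous paragraph — that is the main obstacle, although it amounts to routine bookkeeping once the multiplicativity of $Z_\zeta$ and the symmetry $Z_\zeta(X,Y)=Z_\zeta(Y,X)$ are used systematically.
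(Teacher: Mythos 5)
Your treatment of $A_{E,2}$ and $A_{E,3}$, and the cancellation you perform inside $Y_E$ using the multiplicativity of $Y_\zeta$ and the bilinearity and symmetry of $Z_\zeta$, is exactly the combinatorial content of the paper's proof, and that bookkeeping is correct. The genuine gap is in the $A_{E,1}$ step, and it is precisely the issue the paper's proof is constructed to avoid. Once you set $B=A$, the factor $Z_\zeta\bigl((A\backslash D)\cup D^{-},B\bigr)^{2}$ acquires, for every $\delta\in D$ paired against $-\delta\in D^{-}$, a factor $\zeta(1)^{2}$; the same $\zeta(1)^{2|D|}$ sits inside the $Z_\zeta(D^{-},D)^{2}$ that survives on the right-hand side of your computation. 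So the identity $Y_E(A,A,D)^{2}=Y_E(D,D,D)^{2}$ that you establish is an equality between two expressions each of which is literally divergent: indeed the paper uses elsewhere that $Y_E(A,A,D)=0$ whenever $D\neq\emptyset$, so $A_{E,1}(A,A,D)=1/Y_E(A,A,D)$ is not defined by the global formula at all, and your closing assertion that ``the zeta-quotient $A_{E,1}(D,D,D)$ is meromorphic'' is false as stated --- it is a product containing fixed $\zeta(1)$ factors, not a function one can restrict or continue.

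The repair is the one the paper carries out: replace each global zeta function by its Euler factor, setting $z_p(s)=(1-p^{-s})^{-1}$, $Z_p(A,B)=\prod_{\alpha\in A,\beta\in B}z_p(1+\alpha+\beta)$ and $Y_p(A)=\prod_{\alpha\in A}z_p(1+2\alpha)$, and write $A_{E,1}=\prod_p A_{E,1,p}$. Each $z_p$ is finite and nonzero at argument $1$, so your cancellation goes through verbatim for each fixed prime, giving $A_{E,1,p}(A,A,D)=A_{E,1,p}(D,D,D)$; the convergence of the full Euler product $A_E=A_{E,1}A_{E,2}A_{E,3}$ (which is what makes $A_E$ analytic, as both you and the paper invoke) then turns the factor-by-factor identity into the claimed equality of functions. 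With that modification your argument coincides with the paper's.
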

\begin{proof}
$A_E\left(A,B,D\right)$ is analytic as we can tell from it's
construction in Section~\ref{sec:ratios of twists}. In particular, it must be analytic when $A=B$. However it is not immediately clear that $A_{E,1}\left(A,B,D\right)$ is convergent
when A=B due to the $\zeta\left(1\right)$ terms coming from
$Z\left(A/D \cup D^{-},B\right)$. In order to avoid problems of
convergence, we will rewrite $A_{E,1}\left(A,B,D\right)$  as a product
over primes and consider the expansion of each prime separately. Recall
\begin{align}
A_{E,1}\left(A,B,D\right) = \sqrt{\frac{Z\left((A/D) \cup
D^{-},B\right)^2 Y\left(A/D\right)Y\left(D^{-}\right)}{Z\left((A/D)\cup
D^{-}, (A/D) \cup D^{-}\right)Z \left(B,B,\right)Y\left(B\right)}}
\end{align}
where
\begin{align}
\zeta\left(s\right)&=\prod_{p}{\left(1-\frac{1}{p^s}\right)^{-1}}\\
Z\left(A,B\right) &= \prod_{\alpha \in A, \beta
\in B}{\zeta\left(1+\alpha+\beta \right)}\\
Y\left(A\right)&=
\prod_{\alpha \in A}{\zeta\left(1+2\alpha\right)}.
\end{align}
Now we will define
\begin{align}
z_p\left(s\right)&=\left(1-\frac{1}{p^s}\right)^{-1}\\
Z_p\left(A,B\right)&=\prod_{\substack{\alpha \in A \\ \beta \in B}}{z_p
\left(1+\alpha+\beta \right)}\\
Y_p \left(A\right)&= \prod_{\alpha \in A}{z_p \left(1+2\alpha\right)}
\end{align}
noting that $z_p\left(x\right)$ does not have a pole at 1 and that
$\prod_{p}{Z_p\left(A,B\right)} = Z\left(A,B\right)$ and
$\prod_{p}{Y_p\left(A\right)} = Y\left(A\right)$.
This allows us to rewrite $A_{E,1}$ to get
\begin{align}
A_{E,1,p}\left(A,B,D\right) &= \sqrt{\frac{Z_p\left((A/D)\cup
D^{-},B\right)^2 Y_p\left(A/D\right)Y_p\left(D^{-}\right)}{Z_p\left((A/D)\cup
D^{-}, (A/D) \cup D^{-}\right)Z_p \left(B,B,\right)Y_p\left(B\right)}}\\
A_{E,1}\left(A,B,D\right) &= \prod_{p}{A_{E,1,p}\left(A,B,D\right) }.
\end{align}
Then if we consider the value of $A_{E,1,p}\left(A,B,D\right)$ when $A=B$
\begin{align}
\left.A_{E,1,p}\left(A,B,D\right)\right|_{A=B}&= \sqrt{\frac{Z_p\left(D^{-},D\right)^2Y_p\left(D^{-}\right)}{Z_p
\left(D^{-},D^{-}\right)Z_p\left(D,D\right)Y_P\left(D\right)}}\\
&= A_{E,1,p}\left(D,D,D\right).
\end{align}
So
\begin{align}
\left. A_{E,1}\left(A,B,D\right)\right|_{A=B}= \prod_{p}{A_{E,1,p}\left(D,D,D\right)}.
\end{align}

We will leave this as a product over primes and instead consider the second part of $A_E\left(A,B,D\right)$,
\begin{align}
\begin{split}
&\left.A_{E,2}\left(A,B,D\right)\right|_{B=A} \\
&\qquad = \left.\prod_{p \nmid M}\frac{1}{1+\frac{1}{p}}\left(\frac{U_{-}\left(B\right)}{2U_{-}\left(\left(A/D\right)\cup D^{-}\right)}+\frac{U_{+}\left(B\right)}{2U_{+}\left(\left(A/D\right)\cup D^{-}\right)}+\frac{1}{p}\right)\right|_{B=A}
\end{split}\\
& \qquad= \prod_{p \nmid M}\frac{1}{1+\frac{1}{p}}\left(\frac{U_{-}\left( D\right) }{2U_{-}\left(D^{-}\right)}+\frac{U_{+}\left(D\right)}{2U_{+}\left( D^{-}\right)}+\frac{1}{p}\right)\\
&\qquad = A_{E,2}\left(D,D,D\right)
\end{align}
where
\begin{align}
U_{-}\left(A\right) = \prod_{\alpha \in A}\left(1-\frac{\lambda\left(p\right)}{p^{1/2+\alpha}}+\frac{1}{p^{1+2\alpha}}\right)\\
U_{+}\left(A\right) = \prod_{\alpha \in A}\left(1+\frac{\lambda\left(p\right)}{p^{1/2+\alpha}}+\frac{1}{p^{1+2\alpha}}\right).
\end{align}
and finally
\begin{align}
\left.A_{E,3}\left(A,B,D\right)\right|_{B=A} &= \left.\prod_{p | M}{\frac{V\left(B\right)}{V\left(\left(A/D\right) \cup D^{-}\right)}}\right|_{B=A}\\
&=\prod_{p | M}{\frac{V\left(D\right)}{V\left(D^{-}\right)}}\\
&=A_{E,3}\left(D,D,D\right).
\end{align}
where $V\left(A\right) = \prod_{\alpha \in A}\left(1-\frac{\lambda\left(p\right)}{p^{1/2+\alpha}}\right)$.
Putting these together, and remembering that $A_E\left(A,B,D\right)$ is an analytic function, we get
\begin{align}
\left.A_E\left(A,B,D\right)\right|_{B=A}&= \prod_{p}{A_{E,1,p}\left(D,D,D\right)}A_{E,2}\left(D,D,D\right)A_{E,3}\left(D,D,D\right)\\
&= A_E\left(D,D,D\right).
\end{align}
\end{proof}
 With this lemma, we can then approach the proof of Theorem~\ref{LFJconj}.  The following steps are extremely similar to those necessary for the random matrix proof of Theorem~\ref{thm:ortho2}.
 \begin{proof}
 Using Conjecture~\ref{thm:LFRat2}, we want to find the logarithmic derivative
\begin{align}
J_E\left(A\right)&= \sum_{\substack{0<d \leq X \\ \chi_d\left(-M\right)\omega_E = +1}}{\prod_{\alpha \in A}\frac{L^{'}_E\left(\frac{1}{2}+ \alpha, \chi_d\right)}{L_E\left(\frac{1}{2}+ \alpha, \chi_d\right)}}\\
&= \left. \prod_{\alpha \in A}{\frac{\partial}{ \partial \alpha}}R_E\left(A, B\right)\right|_{B=A}
\end{align}
First we notice that if we substitute A in for B in $Y_E\left(A,B,D\right)$, we get
\begin{align}
Y_E\left(A,A,D\right)&=\sqrt{\frac{Z\left(D^{-}, D^{-}\right)Z\left(D,D\right)Y\left(D\right)}{Z\left(D^{-},D\right)^{2}Y\left(D^{-}\right)}}.
\end{align}
The $Z\left(D^{-},D\right)^{2}$ term in the denominator contains $\zeta\left(1\right)$, which is a pole. So $Y_E\left(A,A,D\right)$ is zero unless $D$ is the empty set as in the random matrices case. This means when we differentiate $R_E\left(A,B\right)$ by each $\alpha \in D$, all other terms are zero except the one where $Z\left(D^{-},D\right)$ term is differentiated by every $\alpha \in D$. Let $\beta_{\alpha}$ represent the element of $B$ that will be replaced by $\alpha$ when $B$ is substituted for $A$. Expanding $\zeta$ and $\zeta{'}$ about 1, it's clear to see that
\begin{align}
\left.\frac{\partial}{\partial \alpha}\left(\frac{1}{\zeta\left(1+ \beta_{\alpha} - \alpha\right)}\right)\right|_{\beta_{\alpha}=\alpha}= -1.
\end{align}
Using this, we can see that
\begin{align}
\left.\prod_{\alpha \in D}\frac{\partial}{\partial \alpha}\left(\frac{1}{Z\left(D^{-}, B_D\right)}\right)\right|_{B_D=D}= \frac{\left(-1\right)^{|D|}}{Z^{\dag}\left(D^{-},D\right)}
\end{align}
where $B_D$ represents the part of $B$ that will be substituted for $D$, and $Z^{\dag}$ means that we only include those terms of $Z$ that are not equal to $\zeta\left(1\right)$. All that remains now is to differentiate the remainder by each $\alpha \in A/D$.
\begin{align}
\begin{split}
J_E\left(A\right)&= \sum_{\substack{0<d \leq X \\ \chi_d\left(-M\right)\omega_E = 1}}{\sum_{D \subseteq A}{{\left(\frac{\sqrt{M}|d|}{2 \pi}\right)}^{- \sum\limits_{\delta \in D}{2\delta}}\prod_{\delta \in D}{\frac{\Gamma\left(1- \delta\right)}{\Gamma\left(1+ \delta\right)}}}}\\
    & \qquad \times (-1)^{|D|} \sqrt{\frac{Z\left(D^{-},D^{-}\right)Z\left(D,D\right)Y\left(D\right)}{Z^{\dag}\left(D^{-},D\right)^{2}Y\left(D^{-}\right)}}\\
    & \qquad \times \left(\prod_{\alpha \in A/D}{\frac{\partial}{\partial \alpha}}\exp(H^{A,B}_D)\right. \\
        &\qquad \qquad  \left.\times \sqrt{\frac{Y\left(B_D\right)Z\left(B_{A/D},B_{D}\right)^{2}Z\left(B_{A/D},B_{A/D}\right)}{Z\left(D^{-}, B_{A/D}\right)^2 }}\right)\Bigg|_{B=A}\\
    &\qquad + O\left(X^{1/2+ \epsilon}\right)
\end{split}
\end{align}
where
\begin{eqnarray}
H^{A,B}_D&=& \log \left(\sqrt{\frac{Z\left(A/D, D^{-}\right)^2 Z\left(A/D, A/D\right)}{Z\left(A/D,B_{A/D}\right)^{2}Z\left(A/D,B_D\right)^{2}Y\left(A/D \right)}}\right. \\
&& \qquad \times \left. \vphantom{\sqrt{\frac{Z}{Z}}}A_E\left(A,B,D\right)\right)\nonumber\\
&&= \sum_{\substack{\alpha \in A/D\\ \beta \in D}}{\log \zeta\left(1+ \alpha -\beta\right)}+\frac{1}{2}\sum_{\substack{\alpha \in A/D\\ \beta \in A/D}}{\log \zeta\left(1+ \alpha + \beta \right)}\nonumber\\
&&\qquad - \sum_{\substack{\alpha \in A/D\\ \beta \in B}}{\log \zeta\left(1+ \alpha +\beta\right)} - \frac{1}{2}\sum_{\alpha \in A/D}{\log \zeta\left(1+ 2\alpha \right)}\\
&&\qquad + \log A_{E,1}\left(A,B,D\right) + \log A_{E,2}\left(A,B,D\right)\nonumber\\
&&\qquad + \log A_{E,3}\left(A,B,D\right) \nonumber
\end{eqnarray}

We now note that if  $H$ is a differentiable function of $w\in
W$, then
\begin{eqnarray}\label{eq:logdiff}
\left(\prod_{w\in W}\frac{d}{dw}\right)e^H=  e^{H }
\sum_{W=W_1\cup \dots\cup W_R}H(W_1)\dots H(W_R)
\end{eqnarray}
where
\begin{eqnarray}
H(W)=\left(\prod_{w\in W} \frac{d}{dw}\right)H.
\end{eqnarray}
The sum is over all set partitions of     $W$ into disjoint sets
$W_j$.

We note that when we substitute $D$ for $B_{D}$ and $A/D$ for $B_{A/D}$, then
\begin{align}
\frac{Z\left(A/D, D^{-}\right)^2 Z\left(A/D,A/D\right)Z\left(B_{A/D},B_{D}\right)^{2}Z\left(B_{A/D},B_{A/D}\right)Y\left(B_D\right)}{Z\left(A/D,B_{A/D}\right)^{2}Z\left(A/D,B_D\right)^{2}, Z\left(D^{-}, B_{A/D}\right)^2 Y\left(A/D \right)}=1
\end{align}
and apply Lemma~\ref{lem:diffA} to see that
\begin{eqnarray}
&&\left(\prod_{\alpha \in A/D}{\frac{\partial}{\partial \alpha}}\exp(H^{A,B}_D)\right. \\
 &       &\qquad \qquad  \left.\times \sqrt{\frac{Y\left(B_D\right)Z\left(B_{A/D},B_{D}\right)^{2}Z\left(B_{A/D},B_{A/D}\right)}{Z\left(D^{-}, B_{A/D}\right)^2 }}\right)\Bigg|_{B=A}\\
 &&=A_E(D,D,D)\sum_{A/D=W_1\cup\cdots\cup W_R}{\prod_{r=1}^{R}{\widetilde{H_D}\left(W_r\right)}},
 \end{eqnarray}
 where $\widetilde{H_D}\left(W_r\right)$ is defined in Theorem~\ref{LFJconj} and plays the role of $H(W)$ in (\ref{eq:logdiff}).
This will allow us to simplify our solution after the differentiation.
\end{proof}

\subsection{Residue Theorem for Elliptic Curve $L$-functions} \label{sec:Eresidue}

Assuming the Riemann Hypothesis, the only possible poles of $J_E^{*}(A)$ are when $\alpha^{*} = - \beta^{*}$ for some $\alpha^{*}, \beta^{*} \in A$, or when $\alpha^{*} = 0$. We need to know what the residues are at these poles.
\begin{theorem}\label{LFresiduef}
Let A be a finite set of complex numbers, let $\alpha^{*},\beta^{*} \in A$ and $A^{'}=A \backslash \{\alpha^{*},\beta^{*}\}$ and let $J_E^{*}(A)$ be as defined in Theorem~\ref{LFJconj}. Then
\begin{align}
\Res_{\alpha^{*} =-\beta^{*}} \left(J_E^{*}(A)\right) &= J_E^{*}(A^{'} \cup \{\beta^{*}\}) + J_E^{*}(A^{'} \cup \{-\beta^{*}\})-\frac{\psi^{'}}{\psi}\left(\frac{1}{2}+\beta^{*},\chi_d\right) J_E^{*}(A^{'})
\end{align}
where $\psi\left(s,\chi_d\right)=\chi_d(-M) \omega_E \left(\frac{2 \pi}{\sqrt{M}|d|}\right)^{2s-1} \frac{\Gamma(\frac{3}{2}-s)}{\Gamma(s+ \frac{1}{2})}$ comes from the functional equation for $L_E(s,\chi_d)$  and
\begin{align}
\Res_{\alpha^{*} =0} \left(J_E^{*}(A)\right) &= 0.
\end{align}
\end{theorem}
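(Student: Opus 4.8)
The plan is to recognise $J_E^*(A)$, as written out in Theorem~\ref{LFJconj}, as being --- once we work term by term in the outer sum over fundamental discriminants $d$ --- exactly of the shape to which the abstract residue Lemmas~\ref{lem:P} and \ref{lem:R} apply. Fix $d$ with $\chi_d(-M)\omega_E=+1$ and write the corresponding summand as $\sum_{D\subseteq A}Q(D)\sum_{A\setminus D=\cup_r W_r}\prod_r H(D,W_r)$, with
\[
Q(D)=\Big(\tfrac{\sqrt M|d|}{2\pi}\Big)^{-2\sum_{\delta\in D}\delta}\prod_{\delta\in D}\tfrac{\Gamma(1-\delta)}{\Gamma(1+\delta)}\,\sqrt{\tfrac{Z_\zeta(D^-,D^-)Z_\zeta(D,D)Y_\zeta(D)}{Z_\zeta^\dagger(D^-,D)^2Y_\zeta(D^-)}}\,(-1)^{|D|}A_E(D,D,D)
\]
and $H(D,W)=\widetilde{H_D}(W)$. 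Assuming GRH (Conjecture~\ref{conj: GRH}) and using that $A_E(A,B,D)|_{B=A}=A_E(D,D,D)$ is analytic (Lemma~\ref{lem:diffA}), the only factors that can be singular are the $\zeta(1+x)$ coming from $Z_\zeta$ and $Y_\zeta$ at $x=0$, so the poles of $J_E^*(A)$ occur only at $\alpha^*=-\beta^*$ and at $\alpha^*=0$. I would then verify the properties $\mathbf{P_f}$ and $\mathbf{R}$ of Definitions~\ref{def:PropP} and \ref{def:PropR} for $Q$ and $H$, exactly as for $SO(2N)$ in Section~\ref{sub:residue}, and quote Lemmas~\ref{lem:P} and \ref{lem:R} for each $d$-summand separately, summing over $d$ by linearity of the residue; the only genuinely new inputs are the conductor and $\Gamma$-factors and the arithmetic factors $A_{E,i}$.

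For the pole at $\alpha^*=-\beta^*$ the relevant single-variable function is $f(x)=-\tfrac{\psi'}{\psi}(\tfrac12+x,\chi_d)$. The key point is that $\big(\tfrac{\sqrt M|d|}{2\pi}\big)^{-2\sum_{\delta\in D}\delta}\prod_{\delta\in D}\tfrac{\Gamma(1-\delta)}{\Gamma(1+\delta)}=\prod_{\delta\in D}\psi(\tfrac12+\delta,\chi_d)$, so when $\alpha^*,\beta^*\in D$ this prefactor contributes $\psi(\tfrac12+\alpha^*,\chi_d)\psi(\tfrac12+\beta^*,\chi_d)$ to $Q_1(D)/Q(D')$; invoking the functional equation identity $\psi(\tfrac12+s,\chi_d)\psi(\tfrac12-s,\chi_d)=1$ (which also forces $\tfrac{\psi'}{\psi}(\tfrac12-\beta^*,\chi_d)=\tfrac{\psi'}{\psi}(\tfrac12+\beta^*,\chi_d)$), a Taylor expansion about $\alpha^*=-\beta^*$ turns this into $1+(\alpha^*+\beta^*)\tfrac{\psi'}{\psi}(\tfrac12+\beta^*,\chi_d)+O(|\alpha^*+\beta^*|^2)=1-(\alpha^*+\beta^*)f(\beta^*)+O(|\alpha^*+\beta^*|^2)$, which is exactly the $f(\beta^*)$-part of $\mathbf{P4}$. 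Everything else in $\mathbf{P1}$--$\mathbf{P4}$ is verified as in the orthogonal case: the double pole of $Q(D)$ when $\alpha^*,\beta^*\in D$ arises from the two copies of $\zeta(1+\alpha^*+\beta^*)$ inside $Z_\zeta(D,D)$ against the two copies of $\zeta(1-\alpha^*-\beta^*)$ inside $Z_\zeta(D^-,D^-)$ (the sign matching via $(-1)^{|D|}$ and the factorwise choice of square-root branch), the simple pole of $\widetilde{H_D}(\{\alpha^*,\beta^*\})=(\tfrac{\zeta'}{\zeta})'(1+\alpha^*+\beta^*)+O(1)$ supplies the $\tfrac1{(\alpha^*+\beta^*)^2}$ when $\alpha^*,\beta^*\notin D$, and simplicity of the pole follows from the cancellation in the proof of Lemma~\ref{lem:P}. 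The extra terms $A_{D,i}(W)$ in $\widetilde{H_D}$ are regular at $\alpha^*=-\beta^*$ (the $A_{E,i}$ being analytic and nonvanishing there), so they enter $\mathbf{P1}$--$\mathbf{P3}$ only as $O(1)$ contributions, while the $A_E(D,D,D)$ prefactor, being analytic, contributes to $Q_1(D)$ precisely the remaining terms $\sum_i\big(A_{D',i}(\{\alpha^*\})|_{\alpha^*=-\beta^*}+A_{D',i}(\{\beta^*\})\big)$ inside the bracket of $\mathbf{P4}$. Applying Lemma~\ref{lem:P} to the $d$-summand and summing over $d$ gives
\[
\Res_{\alpha^*=-\beta^*}\big(J_E^*(A)\big)=J_E^*(A'\cup\{\beta^*\})+J_E^*(A'\cup\{-\beta^*\})-\tfrac{\psi'}{\psi}(\tfrac12+\beta^*,\chi_d)\,J_E^*(A'),
\]
where, as in the statement, the coefficient $\tfrac{\psi'}{\psi}(\tfrac12+\beta^*,\chi_d)$ is understood as sitting inside the implicit sum over $d$ defining $J_E^*(A')$. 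For the pole at $\alpha^*=0$ I would instead check $\mathbf{R1}$ and $\mathbf{R2}$: $\mathbf{R1}$ is immediate since $Q(D)$ depends only on $D$ and $\widetilde{H_D}(\{\alpha^*\})$ is singular at $\alpha^*=0$ only through $-\tfrac{\zeta'}{\zeta}(1+2\alpha^*)\sim\tfrac1{2\alpha^*}$, all other pieces (including the analytic $A_{D,i}$) being bounded; for $\mathbf{R2}$, when $\alpha^*\in D$ the $\delta=\alpha^*=0$ term of the sum defining $H_D(\{w\})$ vanishes identically and $A_E(A,B,D)|_{B=A}$ is unchanged (up to a factor not involving the variables being differentiated) on deleting $0$ from $D$, so $H(D,W)|_{\alpha^*=0}=H(D',W)$, while the only singular factors of $Q(D)$ at $\alpha^*=0$ are the $\zeta(1+2\alpha^*)$-type ones from $Z_\zeta(D,D),Z_\zeta(D^-,D^-),Y_\zeta(D),Y_\zeta(D^-)$, which combine exactly as in the orthogonal computation to $Q(D)=\tfrac{-1}{2\alpha^*}Q(D')+O(1)$ (the sign now coming from $(-1)^{|D|}=-(-1)^{|D'|}$). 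Lemma~\ref{lem:R}, applied to each $d$-summand and summed over $d$, then gives $\Res_{\alpha^*=0}(J_E^*(A))=0$.

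The hard part will be the bookkeeping in $\mathbf{P4}$ for the arithmetic data: one must check that the logarithmic derivatives $A_{D,i}(W)=\prod_{w\in W}\tfrac{\partial}{\partial w}\log A_{E,i}(A,B,D)|_{B=A}$ and the prefactor $A_E(D,D,D)$ obey, on passing from $D$ to $D'=D\setminus\{\alpha^*,\beta^*\}$, the same recursions as the analytic part of $H_D$, namely $A_{D,i}(W)=A_{D',i}(W)-(\alpha^*+\beta^*)\big(A_{D',i}(W\cup\{\alpha^*\})|_{\alpha^*=-\beta^*}+A_{D',i}(W\cup\{\beta^*\})\big)+O(|\alpha^*+\beta^*|^2)$, and likewise for the ratio $A_E(D,D,D)/A_E(D',D',D')$. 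This is a routine but lengthy consequence of the analyticity of $A_E(A,B,D)|_{B=A}$ (Lemma~\ref{lem:diffA}) together with the exponential-of-partition-sum identity (\ref{eq:logdiff}) already used in the proof of Theorem~\ref{LFJconj}; it is the same calculation that underlies the orthogonal Theorems~\ref{thm:ortho2} and \ref{thm:residuelemma}, so I would refer to that rather than reproduce it in full.
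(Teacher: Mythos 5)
Your proposal is correct and follows essentially the same route as the paper: verify properties $\mathbf{P1}$--$\mathbf{P4}$ and $\mathbf{R1}$--$\mathbf{R2}$ for $\widetilde{Q}$ and $\widetilde{H_D}$ with $f(\beta^*)=-\tfrac{\psi'}{\psi}(\tfrac12+\beta^*,\chi_d)$, using the Taylor expansions of $H_D$, $A_{D,i}$ and $A_E(D,D,D)$ about $\alpha^*=-\beta^*$ (which the paper likewise states and defers to the thesis), and then invoke Lemmas~\ref{lem:P} and \ref{lem:R}. Your identification of the conductor and Gamma prefactor as $\prod_{\delta\in D}\psi(\tfrac12+\delta,\chi_d)$, so that $\psi(\tfrac12+s,\chi_d)\psi(\tfrac12-s,\chi_d)=1$ produces the $f(\beta^*)$ term in $\mathbf{P4}$, is a slightly cleaner packaging of the explicit expansion the paper carries out, but it is the same argument.
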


\begin{proof}

Let $D\subseteq A$,  define $P_D$ and $\widetilde{Q}$ as
\begin{align}
J_E^{*}(A) &= \sum_{D \subseteq A}{P_D(A \backslash D)}\\
&=\sum_{D \subseteq A}{\widetilde{Q(D)} \sum_{A \backslash D= W_1\cup\cdots\cup W_R}}{\prod_{r=1}^R{\left(\widetilde{H_D}(W_r)\right)}},
\end{align}
where
\begin{align}
 \begin{split} \label{def:Qtilde}
\widetilde{Q(D)}&= \sum_{\substack{0<d \leq X \\ \chi_d\left(-M\right)\omega_E = +1}}{{{\left(\frac{\sqrt{M}|d|}{2 \pi}\right)}^{- \sum\limits_{\delta \in D}{2\delta}}\prod_{\delta \in D}{\frac{\Gamma\left(1- \delta\right)}{\Gamma\left(1+ \delta\right)}}}}\\
& \qquad \times \sqrt{\frac{Z_{\zeta}\left(D^{-},D^{-}\right)Z_{\zeta}\left(D,D\right)Y_{\zeta}\left(D\right)}{Z_{\zeta}^{\dag}\left(D^{-},D\right)^{2}Y_{\zeta}\left(D^{-}\right)}}\\
& \qquad\times \left(-1\right)^{|D|}A_E\left(D,D,D\right).\\
\end{split}
\end{align}
$\widetilde{H_D}$ is defined in Theorem~\ref{LFJconj} and $A_E, Z_\zeta, Y_\zeta$, etc, are defined as in Conjecture \ref{thm:LFRat2}.

Using $\widetilde{Q(D)}$ in place of $Q(D)$  and $\widetilde{H_D(W)}$ in place of $H(D,W)$, conditions {\bf P1} to {\bf P4} and {\bf R1}  and {\bf R2} (from Section \ref{sub:residue}) hold with
\begin{equation}
f(\beta^*)=-\frac{\psi^{'}}{\psi}\left(\frac{1}{2}+\beta^{*}, \chi_d\right)=2\log\Big(\frac {\sqrt{M}|d|}{2\pi} \Big) +\frac{\Gamma'}{\Gamma}(1-\beta^*)+\frac{\Gamma'}{\Gamma}(1+\beta^*).\label{eq:LFf}
\end{equation}
To show this we need to find expansions for  $A_D\left(W_r\right), H_D\left(W_r\right)$ and $A_E\left(D\right)$ around $\alpha^{*}=\beta^{*}$. These lemmas will be given without proof, but complete proofs can be found in Amy Mason's Thesis \cite{Mason2013}.
\subsubsection{Taylor Expansions for $A_{D,i}\left(W_r\right), H_D\left(W_r\right)$ and $A_E\left(D,D,D\right)$}
Recall that
\begin{align}A_{D,i}\left(W_r\right)= \left.\prod_{w \in W_r}{\frac{\partial}{\partial w}}\log A_{E,i}\left(A,B,D\right)\right|_{B=A} \mbox{ for } i=1,2,3.
\end{align}
 Then we find
\begin{lemma}\label{ADresidue}
Let $\alpha^{*}, \beta^{*} \in D$ and $D^{'} = D/\{\alpha^{*},\beta^{*}\}$. For $i=1,2,3$
\begin{align}
\begin{split}
&\left.\frac{\partial}{\partial \alpha^{*}}{A_{D,i}\left(W_r\right)}\right|_{\alpha^{*}=-\beta^{*}}\\
& \qquad = \left(-A_{D,i}\left(W_r + \{\beta^{*}\}\right) -\left. A_{D,i}\left(W_r + \{\alpha^{*}\}\right)\right)\right|_{\alpha^{*}=-\beta^{*}}
\end{split}
\end{align}
and
\begin{align}
\left.A_{D,i}\left(W_r\right)\right|_{\alpha^{*}=-\beta^{*}} = A_{D^{'},i}\left(W_r\right)
\end{align}
\end{lemma}

The second lemma we need for the Residue Theorem is to prepare for expanding $H_D^{A,B}\left(W_r\right)$.
\begin{lemma}\label{HDresidue}
Let $\alpha^{*}, \beta^{*} \in D$ and $D^{'} = D/\{\alpha^{*},\beta^{*}\}$. For $i=1,2,3$
\begin{align}
\begin{split}
\left.\frac{\partial}{\partial \alpha^{*}}H_D\left(W_r\right)\right|_{\alpha^{*}=-\beta^{*}} &= \left(-H_{D}\left(W_r + \{\beta^{*}\}\right)\right. \\
& \qquad -\left.\left. H_{D}\left(W_r + \{\alpha^{*}\}\right)\right)\right|_{\alpha^{*}=-\beta^{*}}
\end{split}
\end{align}
and
\begin{align}
\left.H_{D}\left(W_r\right)\right|_{\alpha^{*}=-\beta^{*}} = H_{D^{'}}\left(W_r\right).
\end{align}
\end{lemma}

Most importantly, these lemmas show that when $\alpha^{*}, \beta^{*} \in D$ there is a Taylor expansion of $\widetilde{H_D}\left(W_r\right)$ at $\alpha^{*} = -\beta^{*}$ where $\widetilde{H_D}\left(W_r\right) = H_D\left(W_r\right) + A_{D,1}\left(W_r\right) + A_{D,2}\left(W_r\right) + A_{D,3}\left(W_r\right)$
\begin{lemma}\label{HDtilderesidue}
Let $\widetilde{H_D}\left(W_r\right)$ etc. as defined in Equations~\ref{Hwiggle}-~\ref{def:AD}. Then
\begin{align}
\begin{split}
\widetilde{H_D}\left(W_r\right) &=  \widetilde{H_{D^{'}}}\left(W_r\right)\\
    & \qquad \left. + \left(\alpha^{*}+\beta^{*}\right)\left(-\widetilde{H_{D^{'}}}\left(W_r+\{\beta^{*}\}\right)-\widetilde{H_{D^{'}}}\left(W_r+\{\alpha^{*}\}\right)\right)\right|_{\alpha^{*} = -\beta^{*}}\\
    & \qquad +O\left(|\alpha^{*}+\beta^{*}|^2\right)
\end{split}
\end{align}
\end{lemma}

Finally, we need the expansion of $A_E\left(D,D,D\right)$ about $\alpha^{*}=-\beta^{*}$ when $\alpha^{*},\beta^{*} \in D$.
\begin{lemma} \label{AEresidue}
Let $A_E\left(D,D,D\right)$ be as defined in Equations~\ref{eq:LF1} -~\ref{eq:LF2} and $\widetilde{H_D}\left(W_r\right)$ etc. as defined in Equations~\ref{Hwiggle}-~\ref{def:AD}.
\begin{align}
\begin{split}
A_E\left(D,D,D\right) &= A_E\left(D^{'}, D^{'}, D^{'}\right)\\
    & \qquad \times \left[1\vphantom{\frac{Z}{Z}}\right.-\left(\alpha^{*}+\beta^{*}\right)\left(\widetilde{H_D}\left(\{\beta^{*}\}\right)- H_D\left(\{\beta\}\right)\right. \\
        & \qquad \qquad \left. +\widetilde{H_D}\left(\{-\beta^{*}\}\right) - H_D\left(\{-\beta\}\right)\right)\\
        & \qquad \qquad \left.\vphantom{\frac{Z}{Z}}+ O\left(|\alpha^{*}+\beta^{*}|^2\right)\right]
\end{split}
\end{align}
\end{lemma}

With these lemmas in place, we are ready to show that {\bf P1} to {\bf P4} from Definition~\ref{def:PropP} hold for $\widetilde{H_D}$, $\widetilde{Q}$ and $f$ as defined above in Equation~\ref{eq:LFf}.

{\bf P1}: Suppose $\alpha^{*},\beta^{*} \in A/D$, then $\widetilde{Q(D)}$, is independent of $\alpha^{*}$ and $\beta^{*}$.

As $\alpha^{*} \rightarrow \beta^{*}$ the only pole in $\widetilde{H_D}(W_r)$ comes when $W_r = \{\alpha^{*},\beta^{*}\}$, so when\\
\begin{align}
H_D(\{\alpha^{*},\beta^{*}\}) = { \left(\frac{\zeta^{'}}{\zeta} \right) }^{'}(1+\alpha^{*}+\beta^{*}).
\end{align}
Now $\left(\frac{\zeta^{'}}{\zeta} \right)^{'}(1+x) = \frac{1}{x^2}+ O\left(1\right)$ and $A_D\left(W_r\right)$ is analytic for all $W_r$.
So
\begin{align}
\widetilde{H_D}\left(W_r\right) =  \begin{cases}
\frac{1}{\left(\alpha^{*} + \beta^{*}\right)^2} + O(1) & W = \{\alpha^{*},\beta^{*}\}\\
O(1) & otherwise.
\end{cases}
\end{align}
Hence the requirements of {\bf P1} hold.

{\bf P2}: Now we consider the case when $\alpha^{*} \in D, \beta^{*} \in A/D$. Then $\widetilde{Q\left(D\right)}$ is independent of $\beta^{*}$ and regular as $\alpha^{*} \rightarrow \beta^{*}$. The only pole in $\widetilde{H_D}\left(W_r\right)$ comes when $W_r=\{\beta^{*}\}$. Then
\begin{align}
H_D\left(\{\beta^{*}\}\right) = \sum_{\delta \in D}{\frac{\zeta^{'}}{\zeta}\left(1+\beta^{*} - \delta\right)-\frac{\zeta^{'}}{\zeta}\left(1+\beta^{*} + \delta\right)- \frac{\zeta^{'}}{\zeta}\left(2\beta^{*}\right)}.
\end{align}
When $\delta = \alpha^{*}$, $-\frac{\zeta^{'}}{\zeta}(\beta^{*} + \delta)$ is a simple pole with a residue of $1$. $A_D\left(W_r\right)$ has no poles so
\begin{align}
\widetilde{H_D}\left(W_r\right)= \begin{cases}
\frac{1}{\alpha^{*} + \beta^{*}} +O(1) & W_r= \{\beta^{*}\}\\
O(1)& otherwise.
\end{cases}
\end{align}
This satisfies the conditions of {\bf P2}.

{\bf P3}: Now we consider the case where $\alpha^{*} \in A/D, \beta^{*} \in D$. This is similar to the previous case,  $\widetilde{Q\left(D\right)}$ is regular as $\alpha^{*} = -\beta^{*}$ and the only pole in $\widetilde{H_D(W_r)}$ is when $W_r=\{\alpha^{*}\}$.
\begin{align}
\widetilde{H_D}(W_r)=  \begin{cases}
\frac{1}{\alpha^{*} + \beta^{*}} +O(1) & W_r= \{\alpha^{*}\}\\
O(1)& otherwise.
\end{cases}
\end{align}
This shows that $\widetilde{Q\left(D\right)}$ and $\widetilde{H_D}\left(W_r\right)$ satisfy {\bf P3}.

{\bf P4}: We need to consider when $\alpha^{*}, \beta^{*} \in D$. Recall that $D^{'} = D/\{\alpha^{*},\beta^{*}\}$. We already have expansions for $H_D\left(W_r\right), A_D(W_r)$ and $A_E(D,D,D)=:A_E(D)$ in Lemmas~\ref{ADresidue}-~\ref{AEresidue}. We want to find an expansion for $\widetilde{Q\left(D\right)}$ around $\alpha^{*}=\beta^{*}$.
\begin{align}
\begin{split}
Q\left(D\right)&:= \left(-1\right)^{|D|}\left(\frac{\sqrt{M}|d|}{2\pi}\right)^{-2\sum_{\delta \in D}{\delta}} \prod_{\delta \in D}{\frac{\Gamma\left(1- \delta\right)}{\Gamma\left(1+ \delta\right)}}\\
& \qquad \times \sqrt{\frac{Z_\zeta\left(D^{-},D^{-}\right)Z_\zeta\left(D,D\right)Y_\zeta\left(D\right)}{Z_\zeta^{\dag}\left(D^{-},D\right)^2 Y_\zeta\left(D^{-}\right)}}
\end{split}\\
\begin{split}
&= Q\left(D^{'}\right)\left(\frac{\sqrt{M}|d|}{2\pi}\right)^{-2\left(\alpha^{*}+\beta^{*}\right)}\frac{\Gamma\left(1- \alpha^{*}\right)\Gamma\left(1- \beta^{*}\right)}{\Gamma\left(1+ \alpha^{*}\right)\Gamma\left(1+ \beta^{*}\right)}\\
    & \qquad \times \frac{\zeta\left(1+ 2\alpha^{*}\right)\zeta\left(1+2 \beta^{*}\right)\zeta\left(1+ \alpha^{*} + \beta^{*}\right)\zeta\left(1- \alpha^{*}- \beta^{*}\right)}{\zeta\left(1+ \alpha^{*}- \beta^{*}\right)\zeta\left(1-\alpha^{*}+ \beta^{*}\right)}\\
    &\qquad \times \prod_{\delta \in D} \frac{\zeta\left(1- \alpha^{*}-\delta\right)\zeta\left(1+ \alpha^{*}+ \delta\right)\zeta\left(1-\beta^{*}- \delta\right)\zeta\left(1+\beta^{*}+\delta\right)}{\zeta\left(1-\alpha^{*}+\delta\right)\zeta\left(1+\alpha^{*}-\delta\right)\zeta\left(1+ \beta^{*}-\delta\right)\zeta\left(1-\beta^{*}+\delta\right)}
\end{split}\\
\begin{split}
&=Q\left(D^{'}\right)\left(\frac{-1}{\left(\alpha^{*}+\beta^{*}\right)^2}- \frac{1}{12} + O\left(|\alpha^{*}+\beta^{*}|\right)\right)\\
& \qquad \times \left(1- \left(\alpha^{*}+\beta^{*}\right)\left[H_{D^{'}}\left(\{\beta^{*}\}\right)+H_{D^{'}}\left(\{-\beta^{*}\}\right)\vphantom{\sum_d}\right.\right. \\
& \qquad \qquad +2\log\left(\frac{\sqrt{M}|d|}{2\pi}\right)
\left. + \frac{\Gamma^{'}}{\Gamma}\left(1+\beta^{*}\right)+\frac{\Gamma^{'}}{\Gamma}\left(1-\beta^{*}\right)\right] \\
& \qquad \qquad \left.\vphantom{\sum_d}+ O\left(|\alpha^{*}+\beta^{*}|^2\right)\right).
\end{split}
\end{align}

Combining this with the results for $H_D\left(W_r\right), A_D(W_r)$ and $A_E(D,D,D)=A_E(D)$.

\begin{align}
\widetilde{Q(D)}&= Q(D)A_E\left(D\right)\\
\begin{split}
&= Q\left(D^{'}\right)\left(\frac{-1}{\left(\alpha^{*}+\beta^{*}\right)^2}- \frac{1}{12} + O\left(|\alpha^{*}+\beta^{*}|\right)\right)\\
& \qquad \times \left(\left[ 1- \left(\alpha^{*}+\beta^{*}\right)\left[\vphantom{\frac{\psi^{'}}{\psi}} H_{D^{'}}\left(\{\beta^{*}\}\right)+H_{D^{'}}\left(\{-\beta^{*}\}\right) \right. \right. \right.\\
& \qquad \qquad \left.\left. \left.-\frac{\psi^{'}}{\psi}\left(\frac{1}{2}+\beta^*,\chi_d\right)\right]\right] + O\left(|\alpha^{*}+\beta^{*}|^2\right)\right)\\
&\qquad \times A_E(D^{'})\left(1+ \left(\alpha^{*}+\beta^{*}\right)\left[-\widetilde{H_{D^{'}}}\left(\{\beta^{*}\}\right) + H_{D^{'}}\left(\{\beta^{*}\}\right)\right.\right.\\
& \left.\left.\qquad \qquad -\widetilde{H_{D^{'}}}\left(\{-\beta^{*}\}\right)+H_{D^{'}}\left(\{-\beta^{*}\}\right) \right]+O\left(|\alpha^{*}+\beta^{*}|^2\right)\right)
\end{split}\\
\begin{split}
&= \frac{-1}{\left(\alpha^{*}+\beta^{*}\right)^2}\widetilde{Q(D^{'})}\\
& \qquad - \frac{\widetilde{Q(D^{'})}}{\left(\alpha^{*}+\beta^{*}\right)}\left(-\widetilde{ H_{D^{'}}}( \{\beta^{*}\})-\widetilde{H_{D^{'}}}(\{-\beta^{*}\}) - f\left(\beta^{*}\right)\right)\\
& \qquad +O(1)
\end{split}
\end{align}

Combining this with Lemma~\ref{HDtilderesidue}, the conditions of {\bf P4} are met. Hence by Lemma~\ref{lem:P}, we have proved the first part of the Residue Theorem~\ref{LFresiduef}.

For the second part, we need to check that $\widetilde{Q(D)}$ and $\widetilde{H_D}(W_r)$ meet the conditions {\bf R1} and {\bf R2} from Definition~\ref{def:PropR}.

{\bf R1}: Suppose $\alpha^{*} \in A/D$,then $\widetilde{Q(D)}$ is independent of $\alpha^{*}$. The only pole comes from $W_r = \{\alpha^{*}\}$, where
\begin{align}
H_{D}(\{\alpha^{*}\}) = \sum_{d \in D}{\frac{\zeta^{'}}{\zeta}(1+\alpha^{*}-d)-\frac{\zeta^{'}}{\zeta}(1+\alpha^{*}+d)-\frac{\zeta^{'}}{\zeta}(1+2\alpha^{*})}.
\end{align}
The residue as $x \rightarrow 0$ of $\frac{\zeta^{'}}{\zeta}\left(1+x\right)$ is 1.
So
\begin{align}
\widetilde{H_D}(W_r)&=\begin{cases}
\frac{-1}{2 \alpha^{*}} + O(1) & W= \{\alpha^{*}\}\\
O(1)& otherwise
\end{cases}
\end{align}
Thus the condition {\bf R1} is met.

{\bf R2}: Now we consider the case where $\alpha^{*} \in D$. Then $H_D(A/D)$, $A_E\left(D\right)$ and $A_D\left(W_r\right)$ are regular at $\alpha^{*} =0$,and $\widetilde{H_D}(A/D)|_{\alpha^{*}=0}= \widetilde{H_{D^{'}}}(A/D)$.
\begin{align}
\begin{split}
\widetilde{Q(D)} &= \widetilde{Q(D^{'})}\left(\frac{2 \pi}{\sqrt{M}|d|}\right)^{-2\alpha^{*}}\frac{\Gamma\left(1- \alpha^{*}\right)}{\Gamma\left(1+ \alpha^{*}\right)}\\
& \qquad \times \zeta(1+2\alpha^{*})\prod_{d \in D}{\frac{\zeta(1+\alpha^{*}+d)\zeta(1-\alpha^{*}-d)}{\zeta(1+\alpha^{*}-d)\zeta(1+d-\alpha^{*})}}.
\end{split}
\end{align}
The only pole comes from the $\zeta(1+ 2\alpha^{*})$, which has a residue of $\frac{1}{2}$. This gives
\begin{align}
\widetilde{Q(D)} &= \frac{-1}{2 \alpha^{*}}\widetilde{Q(D^{'})}+O(1)
\end{align}
as required. Hence we can apply Lemma~\ref{lem:R} and conclude the proof for the second half of the Residue Lemma~\ref{LFresiduef}.

\end{proof}

\subsection{$n$-level Density of Zeros of Families of $L$-functions of Elliptic Curves}\label{sub:LFncorr}

In this section we will express the $n$-level density of zeros of families of $L$-functions of elliptic curves as contour integrals on the complex plane. We will define the $n$-level density function, $S_n^{E}(f)$, for the zeros for a family of twisted $L$-functions with even functional equation.
\begin{align}
S_n^{E}(f) = \sum_{\substack{0 < d \leq X \\ \chi_d(-M)\omega_E = +1}}{\sum_{t_1, \cdots, t_n >0}{f(\gamma_{t_1,d}, \cdots, \gamma_{t_n,d})}}
\end{align}
where $\gamma_{i,d}$ is the height of the $i^{th}$ zero of $L_E(s,\chi_d)$ on the critical line, above the real axis. Note that the sum over the zeros in $S_n^E(f)$ is not restricted to a sum over distinct indices at this stage.

\begin{theorem}\label{thm:LF1}
Let $C_{-}$ denote the path from $ - \delta - \infty i$ up to $- \delta + \infty i$ and let $C_{+}$
denote the path from $  \delta - \infty i$ up to $ \delta + \infty i$, where $\delta$ is a small positive number. Let $f$ be a
holomorphic function of $n$ variables such that
\begin{align}
f\left(x_{j_1}, \cdots, x_{j_n}\right) = f\left(\pm x_{j_1}, \cdots, \pm x_{j_n}\right).
\end{align}
Then
\begin{align}\label{LFnostar}
\begin{split}
&2^n \left(2 \pi i \right)^n S_n^E(f)= \\
    & \qquad \sum_{K \cup L \cup M = \left\{1, \cdots,n\right\}}  \int_{C_+^K} {\int_{C_{-}^{L \cup M}}{\left(-1\right)^{|M|}J_E\left(z_K \cup - z_L, z_M\right)}}\\
        & \qquad \qquad \times f\left(iz_1, \cdots,iz_n\right)dz_1 \cdots d z_n
\end{split}
\end{align}
where $z_K = \left\{z_k: k \in K\right\}$, $-z_L = \left\{-z_l: l \in L\right\}$,
$\int_{C_+^K} {\int_{C_-^{L \cup M}}}$ means we are integrating all the variables in $z_K$ along the
$C_+$ path and all others along the $C_-$ path and \\
\begin{align} \label{def:LFJ}
J_E\left(A,B\right) = \sum_{\substack{0 < d \leq X \\ \chi_d(-M)\omega_E = +1}}{\prod_{\alpha \in A}{\frac{L_E^{'}}{L_E}(\frac{1}{2}+\alpha, \chi_d)}}\prod_{\beta \in B}{\frac{\psi^{'}}{\psi}(\frac{1}{2}+\beta, \chi_d)}
\end{align}
where $\psi\left(s, \chi_d\right)=\chi_d(-M) \omega_E \left(\frac{2 \pi}{\sqrt{M}|d|}\right)^{2s-1} \frac{\Gamma(\frac{3}{2}-s)}{\Gamma(s+ \frac{1}{2})}$. Here $K,L,M$ are finite sets of integers and $A,B$ are finite sets of complex numbers. The sum over $d$ is over fundamental discriminants.
\end{theorem}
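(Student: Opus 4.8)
The plan is to mirror exactly the argument already carried out for $SO(2N)$ matrices in Theorem~\ref{thm:RMortho1}, replacing the characteristic polynomial $\Lambda_X(e^z)$ by the completed $L$-function and the finite group average by the sum over fundamental discriminants $0<d\le X$ with $\chi_d(-M)\omega_E=+1$. First I would fix a discriminant $d$ and set $g(z)=\xi_E(\tfrac12+z,\chi_d)$, the entire completed $L$-function, whose zeros on the critical line are precisely at $z=i\gamma_{t,d}$ (using GRH for this family). By Cauchy's theorem, for a single variable,
\begin{align}
\sum_{t>0} f(\gamma_{t,d}) &= \frac{1}{2\pi i}\int_{\mathcal C} \frac{\xi_E'}{\xi_E}\Big(\tfrac12+z,\chi_d\Big) f\Big(\frac{z}{i}\Big)\,dz,
\end{align}
where $\mathcal C$ is a positively oriented contour enclosing a segment of the imaginary axis (here running to $\pm i\infty$ rather than $\pm i\pi$, since the $L$-function is not periodic — so convergence of $f$ at infinity is what replaces periodicity in killing the contributions away from the imaginary axis). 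Taking the $n$-fold product and summing over $d$ gives $2^n(2\pi i)^n S_n^E(f)$ on the left, after the symmetrization $f(\theta)=f(\pm\theta)$ introduces the factor $2^n$ exactly as in the orthogonal case.

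The second step is to split $\tfrac{\xi_E'}{\xi_E}=\tfrac{L_E'}{L_E}+\tfrac{\Gamma_L'}{\Gamma_L}$-type terms: writing $\xi_E(s,\chi_d)=\big(\tfrac{2\pi}{\sqrt M|d|}\big)^{-s}\Gamma(\tfrac32-s)L_E(s,\chi_d)$ and recalling $\psi(s,\chi_d)$ from the functional equation, logarithmic differentiation of $\xi_E$ produces $\tfrac{L_E'}{L_E}$ plus a piece that is (up to sign) $\tfrac{\psi'}{\psi}$. Then I would use the functional equation $L_E(s,\chi_d)=\psi(s,\chi_d)L_E(1-s,\chi_d)$ to trade, on the $C_-$ contour, a factor $\tfrac{L_E'}{L_E}(\tfrac12+z)$ for $\tfrac{\psi'}{\psi}(\tfrac12+z)+\tfrac{L_E'}{L_E}(\tfrac12-z)$ — this is the exact analogue of replacing $-e^{-\alpha}\tfrac{\Lambda_X'}{\Lambda_X}(e^{-\alpha})$ by $-2N+e^{\alpha}\tfrac{\Lambda_X'}{\Lambda_X}(e^{\alpha})$, with $-\tfrac{\psi'}{\psi}$ playing the role of $-2N$ and accounting for the $(-1)^{|M|}$ and the $z_M$ slot in $J_E(z_K\cup -z_L,z_M)$. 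Expanding the $n$-fold product of $\big(\int_{C_+}-\int_{C_-}\big)$ and sorting each variable into $K$ (stays on $C_+$ as $\tfrac{L_E'}{L_E}(\tfrac12+z)$), $L$ (moved to $C_-$, picks up $\tfrac{L_E'}{L_E}(\tfrac12-z)$), or $M$ (the $\tfrac{\psi'}{\psi}$ factor) gives the stated sum over disjoint $K\cup L\cup M$, after recognizing the result as $J_E(\cdot,\cdot)$ from \eqref{def:LFJ}, with the change of variables $z\mapsto -z$ on the $C_-$ variables producing the $z_K\cup -z_L$ argument.

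The main obstacle — and the only genuine difference from the random-matrix proof — is analytic: justifying the contour manipulation when the contours extend to $\pm i\infty$. In the $SO(2N)$ case the contour is compact (a rectangle of height $2\pi$) and periodicity of $f$ cancels the horizontal segments; here one must instead impose enough decay on $f$ (and use standard bounds on $\tfrac{L_E'}{L_E}$ and $\tfrac{\Gamma}{\Gamma'}$ in vertical strips, i.e.\ polynomial growth away from zeros together with the zero-free region / GRH) to ensure the integrals converge, the contour shift across the strip $-\delta<\Re z<\delta$ is legitimate, and the ``horizontal at infinity'' contributions vanish. I would state the required hypothesis on $f$ as part of the theorem (holomorphic with sufficient decay, plus the evenness $f(\pm x_{j_1},\dots,\pm x_{j_n})=f(x_{j_1},\dots,x_{j_n})$ already assumed) and remark that the rest is formally identical to the proof of Theorem~\ref{thm:RMortho1}, so I would not reproduce the bookkeeping of the $K,L,M$ decomposition in detail.
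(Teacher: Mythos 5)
Your proposal is correct and is essentially the paper's own argument: the paper proves this theorem in one line by citing Cauchy's Residue Theorem and the proof of Theorem~\ref{thm:RMortho1}, and your sketch fills in exactly that adaptation (Cauchy's theorem on a strip about the critical line, the functional equation replacing $\tfrac{L_E'}{L_E}(\tfrac12+z)$ by $\tfrac{\psi'}{\psi}(\tfrac12+z)-\tfrac{L_E'}{L_E}(\tfrac12-z)$ on $C_-$, and the $K,L,M$ bookkeeping), while correctly identifying the only genuinely new issue, namely decay of $f$ to handle the non-compact contours, which the paper also only assumes. Your detour through $\xi_E$ rather than working directly with $\tfrac{L_E'}{L_E}$ is harmless, since the extra $\Gamma$ and $\log$ terms are holomorphic in the strip and contribute nothing to the closed-contour integrals.
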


This follows immediately from Cauchy's Residue Theorem, in a similar manner to the proof of Theorem \ref{thm:RMortho1}.

Using Theorem~\ref{LFJconj} and Theorem~\ref{LFresiduef}, we can replace our $J_E(A,B)$ with $J^{*}_E(A,B)+O(X^{1/2+\epsilon})$, where we define
\begin{align}
\begin{split}
J^{*}_E\left(A,B\right) =& \sum_{\substack{0<d \leq X \\ \chi_d\left(-M\right)\omega_E = +1}}{\prod_{\beta \in B}{\frac{\psi^{'}}{\psi}(\frac{1}{2}+\beta, \chi_d)}\sum_{D \subseteq A}{{\left(\frac{\sqrt{M}|d|}{2 \pi}\right)}^{- \sum\limits_{\delta \in D}{2\delta}}}}\\
& \times \prod_{\delta \in D}{\frac{\Gamma\left(1- \delta\right)}{\Gamma\left(1+ \delta\right)}} \sqrt{\frac{Z_{\zeta}\left(D^{-},D^{-}\right)Z_{\zeta}\left(D,D\right)Y_{\zeta}\left(D\right)}{Z_{\zeta}^{\dag}\left(D^{-},D\right)^{2}Y_{\zeta}\left(D^{-}\right)}}\\
&\times \left(-1\right)^{|D|}A_E\left(D,D,D\right)\\
&\times \sum_{\substack{A \backslash D=\\W_1 \cup \cdots \cup W_R}}{\prod_{r=1}^{R}{\widetilde{H_D}\left(W_r\right)}}\\
\end{split}\label{eq:JstarE2}
\end{align}
where $\widetilde{H_D}$ etc are as described in Theorem~\ref{LFJconj}. Our previous definition of $J^{*}_E(A)$ is the equivalent of $J^{*}_E(A, \emptyset)$ in our new notation. All our theorems using the original definition have trivial extensions to encompass this new notation.

Note, however,  that this additional factor means that the residues found here may be more complicated than those in the random matrices case.  In particular we need to consider whether the $\frac{\psi^{'}}{\psi}\left(\frac{1}{2}+\beta^*, \chi_d\right)$ factors add additional poles, or affect the residues at the current ones when contemplating the multiple integrals in what follows.  Recall that
\begin{align}
\frac{\psi^{'}}{\psi}\left(\frac{1}{2}+\beta^*,\chi_d\right) = 2\log\left(\frac{2 \pi}{\sqrt{M}|d|}\right) - \frac{\Gamma^{'}}{\Gamma}\left(1- \beta^{*}\right) - \frac{\Gamma^{'}}{\Gamma}\left(1+ \beta^{*}\right)
\end{align}
Clearly the constant term is not a problem, so we only need to focus on the digamma terms. These have poles only at $0,-1,-2$ etc and are meromorphic elsewhere\cite{abramowitz1964handbook} so these are simple to deal with as long as $|\Re\left(\beta^{*}\right)| < \frac{1}{2}$. We are already assuming this in Equation~\eqref{eq:restrictionsE} so it turns out the additional factor is harmless.

We now have that
\begin{align}
\begin{split}
&\left(2\pi i\right)^n 2^n S_n^E\left(f\right)\\
    & \qquad = \sum_{\substack{K \cup L \cup M =\\ \{1, \cdots, n\}}}{\left(-1\right)^{|M|}\int_{C^{K}_+}\int_{{C}^L_{-}}\sum_{\substack{0 < d \leq X\\ \chi_d(-M) \omega_E = 1}}J^{*}_{E}\left(z_K \cup -z_L, z_M\right)} \\
        & \qquad \qquad f\left(iz_1, \cdots, iz_n\right) d z_1 \cdots d z_n + O\left(X^{1/2+\epsilon}\right)\label{eq:LFinttomove}
\end{split}
\end{align}
Note that we have pulled the error term outside of the integral as the size of the error is unaffected by integration with respect to $z_i$.

Here we have assumed that $f$ decays fast enough that the integrals are negligible outside the range given for  the imaginary parts of the parameters in the Ratios Conjecture, Conjecture \ref{thm:LFRat2}.  See the one-level density section of \cite{conrey2007applications} for more details, but since it is not at all clear over what range the Ratios Conjectures hold, we do not pursue this further here.

In line with the calculations with orthogonal matrices in Section~\ref{sub:ncorr}, we will move all the contours one by one onto the imaginary axis. We recall from Section~\ref{sec:Eresidue} that if $\alpha, \beta \in A$
\begin{align}
\begin{split}
&\Res_{\alpha \rightarrow - \beta}J^{*}_{E}(A,B) \\
& \qquad = J^{*}_{E}(A^{'}\cup\{\beta\},B) +J^{*}_{E}(A^{'}\cup\{-\beta\},B) -J^{*}_{E}(A^{'},B \cup \{\beta\})
\end{split}
\end{align}
where $A^{'} = A\backslash \{\alpha, \beta\}$.

We define, for given $n$, $0 \leq R \leq n$, \begin{align} \label{def:LFsum notation}
\sideset{}{^{n,R}}\sum = \sum^{\infty}_{\substack{j_1, \cdots, j_n =1 \\ j_i \neq j_k \forall i,k>R}}.
\end{align}
For fixed sets $K,L,M$ such that the disjoint union $K \cup L \cup M = \{1, \cdots, n\}$ let $I^{n,R}_{f,K,L,M}$ be the integral in Equation~\eqref{eq:LFinttomove}, excluding the error term, with $N-R$ of the integrals shifted onto the imaginary axis. All the integrals on the imaginary axis are principal value integrals.
\begin{align}
\begin{split}
I^{n,R}_{f,K,L,M}=& \int^{i\infty}_{-i\infty}{ \cdots \int^{i\infty}_{-i\infty}{ \int_{C^{K \cap \{1, \cdots, R\}}_+}{\int_{C^{(L  \cup M) \cap \{1, \cdots, R\}}_-}{J_{E}^{*}(z_K \cup -z_L,z_M)}}}}\\
& \qquad \qquad \times f(iz_1, \cdots, iz_n) dz_1 \cdots dz_R d z_{R+1} \cdots dz_n.
\end{split}
\end{align}

We can express Equation~\eqref{eq:LFinttomove} in the new notation.
\begin{align}
\begin{split}
(2\pi i)^n 2^n &\sum_{\substack{0 < d\leq X\\ \chi_d(-M) \omega_E = +1}}{\sideset{}{^{n,n}}\sum{f(\gamma_{j_1,d}, \cdots, \gamma_{j_n,d}) }}\\
 & = \sum_{K \cup L \cup M = \{1, \cdots, n\}}{(-1)^{|M|}I^{n,n}_{f,K,L,M}+ O\left(X^{\frac{1}{2}+\epsilon}\right)}
 \end{split}
\end{align}
We will now prove
\begin{theorem} \label{thm:LFmain}
Assume Conjecture~\ref{thm:LFRat2}. With the notation defined above, $0\leq R \leq n$, and $d$ summing over fundamental discriminants,
\begin{align}
\begin{split}
(2 \pi i)^n 2^n &\sum_{\substack{0 < d\leq X\\ \chi_d(-M) \omega_E = +1}}{\sideset{}{^{n,R}}\sum{f(\gamma_{j_1,d}, \cdots, \gamma_{j_n,d}) }} \\
&= \sum_{K \cup L \cup M = \{1, \cdots, n\}}{(-1)^{|M|}I^{n,R}_{f,K,L,M}}+ O\left(X^{\frac{1}{2}+\epsilon}\right). \label{LFstatement}
\end{split}
\end{align}
\end{theorem}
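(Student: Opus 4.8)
The plan is to reproduce, almost verbatim, the proof of Theorem~\ref{thm:orthomain}: a double induction, the outer one on $n$ and, for each fixed $n=p$, an inner downward induction on $R$ whose base case $R=p$ is the identity displayed immediately before the statement (which is Theorem~\ref{thm:LF1} with $J_E$ replaced by $J_E^{*}+O(X^{1/2+\epsilon})$ via Theorem~\ref{LFJconj}). The only structural difference from the random matrix case is that the residue of $J_E^{*}$ at $\alpha\to-\beta$ recorded in Theorem~\ref{LFresiduef}, namely $J_E^{*}(A'\cup\{\beta\},B)+J_E^{*}(A'\cup\{-\beta\},B)-J_E^{*}(A',B\cup\{\beta\})$, carries a \emph{minus} sign on its last term rather than the $+2N$ of the orthogonal case; this is exactly what turns the $(2N)^{|M|}$ weight of Theorem~\ref{thm:orthomain} into the $(-1)^{|M|}$ of \eqref{LFstatement}, with the $M$-set now recording which variables have been absorbed into the $\psi'/\psi$-slot of $J_E^{*}$.

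For the base case $n=1$ only $R=0$ is new. By Theorem~\ref{LFresiduef}, $J_E^{*}(z_1,\emptyset)$ has its sole singularity at $z_1=0$ with vanishing residue, while $J_E^{*}(\emptyset,z_1)=\sum_d\frac{\psi'}{\psi}(\tfrac12+z_1,\chi_d)$ is holomorphic for $|\Re(z_1)|<\tfrac12$ by the remark following \eqref{eq:JstarE2}. Hence sliding the single contour of \eqref{eq:LFinttomove} onto the imaginary axis crosses no pole, and \eqref{LFstatement} for $(n,R)=(1,0)$ follows from the $R=1$ case by the substitution $\theta_1=iz_1$, exactly as in the orthogonal identity \eqref{eq:n1R0}; the $O(X^{1/2+\epsilon})$ term is simply carried along, being independent of the integration variable.

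For the inductive step I would assume \eqref{LFstatement} for $n=p-1$ and all $0\le R\le p-1$ and for $n=p$ and all $R\ge S$, and deduce it for $n=p$, $R=S-1$. Sliding the $z_S$-contour in $I^{p,S}_{f,K,L,M}$ onto the imaginary axis gives, by Sokhotski--Plemelj \cite{Sokhot1873,plemelj1908erganzungssatz}, the principal-value integral $I^{p,S-1}_{f,K,L,M}$ plus $i\pi$ times the residues at $z_S=\pm z_t$ for those $t>S$ already on the axis whose indices make $J_E^{*}(z_K\cup-z_L,z_M)$ singular --- which, as in the orthogonal case, occurs precisely when $t\notin M$. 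Each such residue is the three-term expression of Theorem~\ref{LFresiduef} with $\beta=\pm z_t$, the four configurations with $S,t\notin M$ contribute identically by the symmetry of $f$, and these terms feature $f$ evaluated with $z_S$ set equal to $z_t$, i.e.\ the repeated-variable function $g_{t,S}$ of the orthogonal proof. Using the set-partition identity that lets one promote the index $t$ freely into $K$, $L$ or $M$, the residue contribution reassembles into $4\pi i\sum_{t=S+1}^{p}\sum_{K\cup L\cup M=\{1,\ldots,p-1\}}(-1)^{|M|}I^{p-1,S-1}_{g_{t,S},K,L,M}$; the outer inductive hypothesis rewrites this as $4\pi i\sum_{t=S+1}^{p}(2\pi i)^{p-1}2^{p-1}\sum_d\sideset{}{^{p-1,S-1}}\sum g_{t,S}(\ldots)+O(X^{1/2+\epsilon})$; and comparison with the elementary identity $\sideset{}{^{n,R}}\sum f=\sideset{}{^{n,R-1}}\sum f+\sum_{t=R+1}^{n}\sideset{}{^{n-1,R-1}}\sum g_{t,R}$ applied to the left side of \eqref{LFstatement} yields the $R=S-1$ case. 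The two inductions then close.

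The hard part will be the sign and combinatorial accounting in the inductive step: one has to verify that the new term $-J_E^{*}(A',B\cup\{\beta\})$ of the residue formula feeds the ``promote $t$ into $M$'' mechanism with the correct overall weight $(-1)^{|M|}$, which uses that the digamma combination \eqref{eq:LFf} makes $\frac{\psi'}{\psi}(\tfrac12+\beta,\chi_d)$ an \emph{even} function of $\beta$, so that the $z_t$ versus $-z_t$ ambiguity produced by the residue is harmless; and one must confirm that the $\psi'/\psi$ factors create no poles along or near the contours, which is guaranteed by the restrictions on $\Re(\beta)$ in \eqref{eq:restrictionsE}. Propagating the $O(X^{1/2+\epsilon})$ error is routine, since it is additive, summed over $d$ rather than divided by the size of the family, and multiplied only by finitely many bounded combinatorial factors at each step. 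Finally, convergence of the integrals out to $\pm i\infty$ is handled, as in \cite{conrey2007applications}, by assuming $f$ decays fast enough that the portions of the integrals lying outside the range of validity of Conjecture~\ref{thm:LFRat2} are negligible.
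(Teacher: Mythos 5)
Your proposal is correct and follows essentially the same route as the paper's own argument: the paper's proof of Theorem~\ref{thm:LFmain} is explicitly a transcription of the double induction of Theorem~\ref{thm:orthomain}, with the base case $(n,R)=(1,0)$ checked directly, the Sokhotski--Plemelj contour shift producing the three-term residue of Theorem~\ref{LFresiduef} (whose $-J_E^{*}(A',B\cup\{\beta^*\})$ term is precisely what replaces the $(2N)^{|M|}$ weight by $(-1)^{|M|}$), the factor of $4$ from the four placements of $S,t\notin M$, the repeated-variable function $g_{t,S}$, the set-promotion identity, and the additive $O(X^{1/2+\epsilon})$ error carried through. No gaps; the points you flag as needing care (evenness of $\psi'/\psi$, absence of digamma poles for $|\Re(\beta)|<\tfrac12$, decay of $f$) are exactly the ones the paper addresses.
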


\begin{proof}

This proof follows identical steps to that of  Theorem \ref{thm:orthomain}, so we will refrain from repeating all the explanation and instead just reference the relevant point in that proof of the $SO(2N)$ case.
\\

The first step, in analogy to the steps leading up to (\ref{eq:n1R0}),  is to confirm the case $n=1$, $R=0$:
\begin{align}
 &\left(2\pi i\right)2 \sum_{\substack{0 < d\leq X\\ \chi_d(-M) \omega_E = +1}}{\sideset{}{^{1,0}}\sum{f\left(\gamma_{j_1,d}\right) }}\\
\begin{split}
& \qquad = \int_{C^+}{J_{E}^{*}(z, \emptyset)f(-iz) d z}+ \int_{C^-}{J_{E}^{*}(-z, \emptyset)f(-iz) d z}\\
& \qquad \qquad - \int_{C^-}{J_{E}^{*}(\emptyset,z)f(-iz) d z}+ O\left(X^{\frac{1}{2}+\epsilon}\right).
\end{split}\\
&=\sum_{K \cup L \cup M = \{1\}}{(-1)^{|M|}I^{1,0}_{f,K,L,M}}+ O\left(X^{\frac{1}{2}+\epsilon}\right).
\end{align}

We now consider the inductive step. Assume Equation~\eqref{LFstatement} is true for $n=p-1$ and $0 \leq R \leq p-1$ and consider the case when $n=p$, $0 \leq R \leq p$. We will proceed by induction on $R$. We have already proved that Equation~\eqref{LFstatement} holds if $R=p$ so we take that as the base case. Assume that Equation~\eqref{LFstatement} holds if $n=p$, and $R \geq S$ so that
\begin{align}
\begin{split}
&(2 \pi i)^p 2^p  \sum_{\substack{0 < d\leq X\\ \chi_d(-M) \omega_E = +1}}{\sideset{}{^{p,S}}\sum{f(\gamma_{j_1,d}, \cdots, \gamma_{j_p,d}) }} \\
& \qquad = \sum_{K \cup L \cup M = \{1, \cdots, p\}}{(-1)^{|M|}I^{p,S}_{f,K,L,M}}+ O\left(X^{\frac{1}{2}+\epsilon}\right)\\
& \qquad =\sum_{K \cup L \cup M = \{1, \cdots, p\}}{(-1)^{|M|}}\\
 &\qquad \times \int^{i\infty}_{-i\infty}{ \cdots \int^{i\infty}_{-i\infty}{ \int_{C^{K \cap \{1, \cdots, S\}}_+}{\int_{C^{(L \cup M) \cap \{1, \cdots, S\}}_-}{J_E^{*}(z_K \cup -z_L, z_M)}}}}\\
 & \qquad \qquad \times f(iz_1, \cdots, iz_p) dz_1 \cdots dz_S d z_{S+1} \cdots dz_p+ O\left(X^{\frac{1}{2}+\epsilon}\right). \label{LFinduct}
 \end{split}
\end{align}

The residue structure is identical to the case in Theorem \ref{thm:orthomain}.  Thus the only new information we need for this case is that for fixed sets $K,L,M$ such that the disjoint union $K \cup L \cup M = \{1, \cdots, p\}$, if $S \in K$, then the residue of $J_E^{*}(z_K \cup -z_L,z_M)f(iz_1, \cdots, iz_p)$  at $z_S = \pm z_t$ is
\begin{align}
\begin{split}
& i \pi \left(-J_E^{*}\left(z_{K^{'}}\cup -z_{L^{'}}, z_{M \cup \{t\}}\right)+J_E^{*}\left(z_{K^{'}}\cup -z_{L^{'} \cup \{t\}}, z_M\right)\right.\\
& \qquad \qquad \left.+J^{*}\left(z_{K^{'}\cup \{t\}}\cup -z_{L^{'} }, z_M\right)\right)\\
& \qquad \times f(iz_1, \cdots,iz_{S-1}, iz_t, iz_{S+1}, \cdots, iz_p)
\end{split}
\end{align}
where $K^{'} = K \cap (A - \{S,t\}), L^{'} = L \cap (A - \{S,t\})$.

Thus, in analogy to equation (\ref{eq:repeatf}),  we find that the main term of (\ref{LFinduct}) is equivalent to
\begin{align} \label{eq:LFrepeatf}
\begin{split}
&\sum_{K \cup L \cup M = \{1, \cdots, p\}}{(-1)^{|M|}}I^{p,S-1}_{f,K,L,M}\\
& \qquad + 4 \pi i\sum_{t=S+1}^{p}{\sum_{\substack{K^{'}\cup L^{'}\cup M=\\ \{1, \cdots, p\}-\{S,t\}}}}{(-1)^{|M|}}\\
& \qquad \qquad \int^{i\infty}_{-i\infty}{ \cdots \int^{i\infty}_{-i\infty}{ \int_{C^{K^{'} \cap \{1, \cdots, S-1\}}_+}{\int_{C^{(L^{'} \cup M) \cap \{1, \cdots, S-1\}}_-}{ \left( -J_E^{*}(z_{K^{'}}\cup -z_{L^{'}}, z_{M \cup \{t\}})\right.}}}}\\
& \qquad \qquad \qquad \qquad \left.+J_E^{*}(z_{K^{'}}\cup -z_{L^{'} \cup \{t\}},z_M)+J_E^{*}(z_{K^{'}\cup \{t\}}\cup -z_{L^{'} },z_M)\right)\\
& \qquad \qquad \qquad \times f(iz_1, \cdots,z_{S-1}, z_t, z_{S+1}, \cdots iz_p) dz_1 \cdots dz_{S-1} d z_{S+1} \cdots dz_p.
\end{split}
\end{align}
All of the unions over sets are disjoint unions.

Defining $g$ as at (\ref{eq:g}),  we can rewrite Equation~\eqref{eq:LFrepeatf}
\begin{align}
\begin{split}
&\sum_{\substack{K \cup L \cup M =\\ \{1, \cdots, p\}}}(-1)^{|M|}I^{p,S}_{f,K,L,M}\\
& \qquad = \sum_{\substack{K \cup L \cup M =\\ \{1, \cdots, p\}}}{(-1)^{|M|}}I^{p,S-1}_{f,K,L,M}\\
& \qquad \qquad + 4 \pi i\sum_{t=S+1}^{p}{\sum_{\substack{K \cup L \cup M =\\ \{1, \cdots, p-1\}}}}{(-1)^{|M|}I^{p-1,S-1}_{g_{t,S},K,L,M}}.
\end{split}
\end{align}
By our inductive hypothesis on $n$, Equation~\eqref{LFinduct} holds for $n=p-1$ and $R=S-1$.
\begin{align}
\begin{split}
&\sum_{\substack{K \cup L \cup M =\\ \{1, \cdots, p\}}}(-1)^{|M|}I^{p,S}_{f,K,L,M}\\
& \qquad = \sum_{\substack{K \cup L \cup M =\\ \{1, \cdots, p\}}}{(-1)^{|M|}}I^{p,S-1}_{f,K,L,M}\\
& \qquad \qquad + 4 \pi i\sum_{t=S+1}^{p}{(2 \pi i)^{p-1}2^{p-1} \sum_{\substack{0<d \leq X\\ \chi_d(-M)\omega_E =1 }}{\sideset{}{^{p-1,S-1}}\sum {g_{t,S}\left(\gamma_{j_1,d}, \cdots,\gamma_{j_{p-1},d} \right)}}}. \label{LFfour1}
\end{split}
\end{align}

The equivalent of (\ref{four2}) results in the left hand side of Equation~\eqref{LFinduct} being written as
\begin{align}
\begin{split}
& \left(2 \pi i\right)^p 2^p \sum_{\substack{0 < d\leq X\\ \chi_d(-M) \omega_E = +1}}{\sideset{}{^{p,S}}\sum{f\left(\gamma_{j_1,d}, \cdots, \gamma_{j_p,d}\right)}}\\
& \qquad = (2 \pi i)^p 2^p   \sum_{\substack{0 < d\leq X\\ \chi_d(-M) \omega_E = 1}}{\sideset{}{^{p,S-1}}\sum{f(\gamma_{j_1,d}, \cdots, \gamma_{j_p,d}) }} \\
& \qquad \qquad +  (2 \pi i)^p 2^p  \sum\limits_{\substack{0 < d\leq X\\ \chi_d(-M) \omega_E = +1}}{\sum\limits_{t=S+1}^{p}{\sideset{}{^{p-1,S-1}}\sum{g_{t,S}(\gamma_{j_1,d}, \cdots, \gamma_{j_{p-1},d})}}}. \label{LFfour2}
\end{split}
\end{align}
Equating Equations ~\eqref{LFfour1} and \eqref{LFfour2}, we see that
\begin{align}
\begin{split}
&\sum_{K \cup L \cup M = \{1, \cdots, p\}}{(-1)^{|M|}}I^{p,S-1}_{f,K,L,M}+ O\left(X^{\frac{1}{2}+\epsilon}\right)\\
& \qquad = (2 \pi i)^p 2^p   \sum_{\substack{0 < d\leq X\\ \chi_d(-M) \omega_E = +1}}{\sideset{}{^{p,S-1}}\sum{f(\gamma_{j_1,d}, \cdots, \gamma_{j_p,d})}}.
\end{split}
\end{align}

Thus we have completed the inductive step in $R$ by showing that Equation~\eqref{LFstatement} holds for $n=p, R=S-1$ if it holds for $n=p, R=S$. As we have already shown Equation~\eqref{LFstatement} holds for $n=p, R=p$, we have now shown that it holds for all $R$ when $n=p$. \\

This completes the inductive step in $n$, as we have shown that Equation~\eqref{LFstatement} holds for all $0 \leq R \leq n$ when $n=p$ if it holds for all $0 \leq R \leq n$ and $n=p-1$. We already proved that it holds for all $0 \leq R \leq 1$ when $n=1$, so by induction Equation~\eqref{LFstatement} is true for all $n$ and $0 \leq R \leq n$.
\end{proof}

We can state our final result for the $n$-level density of zeros of families of twisted elliptic curve $L$-functions.
\begin{theorem}[$n$-level Density of Zeros of Families of Twisted Elliptic Curve $L$-functions] \label{thm:LFNcorr}
Assume Conjecture~\ref{thm:LFRat2} and the Riemann Hypothesis. Then
\begin{align}
\begin{split}
&\sum_{\substack{0 < d\leq X\\ \chi_d(-M) \omega_E = +1}}{\sideset{}{^{n,0}}\sum{ f(\gamma_{j_1,d}, \cdots, \gamma_{j_n,d}) }}\\
& \qquad = \frac{1}{(2 \pi )^n}\sum_{\substack{K \cup L \cup M\\= \{1, \cdots, n\}}}{(-1)^{|M|}\left(\int_{0}^{\infty}\right)^n J_{E}^{*}(-iz_K \cup i z_L, -iz_M)}\\
& \qquad \qquad \times f(z_1, \cdots, z_n)dz_1 \cdots dz_n + O\left(X^{\frac{1}{2}+\epsilon}\right),
\end{split}
\end{align}
where $J_{E}^{*}(-iz_K \cup i z_L, -iz_M)$  is defined in (\ref{eq:JstarE2}) and the sum notation is defined at (\ref{def:sum notation}) and implies the sum is over zeros with distinct indices.  The sum over $d$ is over fundamental discriminants.
\end{theorem}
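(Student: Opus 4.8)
The proof will deduce Theorem~\ref{thm:LFNcorr} from Theorem~\ref{thm:LFmain} by exactly the argument used to pass from Theorem~\ref{thm:orthomain} to Theorem~\ref{thm:orthoNcorr2}, carrying the error term $O(X^{1/2+\epsilon})$ through every step and working, as throughout this section, under Conjecture~\ref{thm:LFRat2} and the Riemann Hypothesis. First I would set $R=0$ in Theorem~\ref{thm:LFmain}, so that every contour in $I^{n,0}_{f,K,L,M}$ is the imaginary axis (interpreted as a principal-value integral) and
\[
(2\pi i)^n 2^n\!\!\sum_{\substack{0<d\le X\\ \chi_d(-M)\omega_E=+1}}\!\!\sideset{}{^{n,0}}\sum f(\gamma_{j_1,d},\dots,\gamma_{j_n,d})=\!\!\sum_{K\cup L\cup M=\{1,\dots,n\}}\!\!(-1)^{|M|}I^{n,0}_{f,K,L,M}+O\!\left(X^{1/2+\epsilon}\right).
\]
Then I would change variables by $z_j\mapsto -iz_j$, which sends each $\int_{-i\infty}^{i\infty}$ to an integral over $\mathbb R$, turns $J_E^{*}(z_K\cup -z_L,z_M)$ into $J_E^{*}(-iz_K\cup iz_L,-iz_M)$ and $f(iz_1,\dots,iz_n)$ into $f(z_1,\dots,z_n)$, and supplies exactly the factor $i^n$ that cancels the $i^n$ inside $(2\pi i)^n$. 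Since integration does not change its order of magnitude, the error term remains $O(X^{1/2+\epsilon})$.

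Next I would fold the integrals onto $[0,\infty)^n$. Because $f(z_1,\dots,z_n)=f(\pm z_1,\dots,\pm z_n)$ for every sign pattern, and because the summed kernel $\sum_{K\cup L\cup M}(-1)^{|M|}J_E^{*}(-iz_K\cup iz_L,-iz_M)$ is invariant under flipping any single $z_j$ — flipping $z_j$ interchanges the symmetric contributions with $j\in K$ and with $j\in L$, while the factor $\frac{\psi'}{\psi}(\tfrac12+\beta)=2\log\frac{2\pi}{\sqrt M|d|}-\frac{\Gamma'}{\Gamma}(1-\beta)-\frac{\Gamma'}{\Gamma}(1+\beta)$ attached to $j\in M$ is even in $\beta$ — each integral over $\mathbb R$ equals twice the integral over $[0,\infty)$. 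The $2^n$ thus produced cancels the $2^n$ on the left, and dividing by $(2\pi)^n$ yields the claimed identity.

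The only step that is more than bookkeeping is checking that the integrand has no poles on $[0,\infty)^n$, so that the principal-value integrals are genuine integrals. By the two-argument version of Theorem~\ref{LFresiduef} (with $B=z_M$ carried along as a spectator product $\prod_{\beta}\frac{\psi'}{\psi}(\tfrac12+\beta)$), the only candidate singularities along $[0,\infty)^n$ are at $z_a=z_b$ with $a,b$ in opposite sets among $K,L$, and at $z_a=0$; the digamma poles of the $\psi'/\psi$ factors sit at nonzero integer values of $\beta$ and so never reach the line $\Re(\beta)=0$, as already noted in the text preceding Theorem~\ref{thm:LFmain}. At $z_1=z_2$ one shows, using
\[
\Res_{\alpha^*\to-\beta^*}J_E^{*}(A,B)=J_E^{*}(A'\cup\{\beta^*\},B)+J_E^{*}(A'\cup\{-\beta^*\},B)-J_E^{*}(A',B\cup\{\beta^*\})
\]
together with the set-partition identity used in the proof of Theorem~\ref{thm:orthomain}, that the residue coming from the terms with $1\in K$ cancels that from the terms with $1\in L$ via $\Res_{x=y}F(x,y)=-\Res_{x=y}F(-x,-y)$, exactly as at the end of the proof of Theorem~\ref{thm:orthoNcorr2}; the new, extra term $-J_E^{*}(A',B\cup\{\beta^*\})$ enters the same cancellation precisely because $\psi'/\psi$ is even. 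Likewise the residue at $z_a=0$ vanishes since $\Res_{\alpha^*\to0}J_E^{*}=0$. Hence the singular set of the integrand, if nonempty, has complex codimension at least two, and so is empty by Corollary~7.3.2 of \cite{krantz1982function}. I expect this pole-cancellation check — verifying that the functional-equation factor $\psi'/\psi$ does not disturb the residue structure inherited from the orthogonal case — to be the main obstacle, though it is routine given the expansions already recorded in Lemmas~\ref{ADresidue}--\ref{AEresidue}; everything else is a line-by-line transcription of the $SO(2N)$ argument with $(2N)^{|M|}$ replaced by $(-1)^{|M|}$ and an $O(X^{1/2+\epsilon})$ carried throughout.
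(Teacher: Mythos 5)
Your proposal is correct and follows essentially the same route as the paper: specialise Theorem~\ref{thm:LFmain} to $R=0$, substitute $z_j\mapsto -iz_j$, fold onto $[0,\infty)^n$ using the evenness of $f$ together with the sign-flip symmetry of the summed kernel (including the evenness of $\frac{\psi'}{\psi}(\tfrac12+\beta,\chi_d)$ for the $M$-indices), and verify the absence of poles on the integration path via the residue cancellation $\Res_{x=y}F(x,y)=-\Res_{x=y}F(-x,-y)$ and the codimension argument from \cite{krantz1982function}. Nothing essential is missing.
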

\begin{proof}
This follows easily from the previous theorem. We have proven that
\begin{align}
\begin{split}
&(2 \pi i)^n 2^n \sum_{\substack{0 < d\leq X\\ \chi_d(-M) \omega_E = +1}}{\sideset{}{^{n,0}}\sum{f(\gamma_{j_1,d}, \cdots, \gamma_{j_n,d})}} \\
&\qquad= \sum_{K \cup L \cup M = \{1, \cdots, n\}}{(-1)^{|M|}I^{n,0}_{f,K,L,M}}+ O\left(X^{\frac{1}{2}+\epsilon}\right)
\end{split}\\
\begin{split}
&\qquad = \sum_{K \cup L \cup M = \{1, \cdots, n\}}{(-1)^{|M|}}\\
& \qquad \qquad \times \left(\int_{-i \infty}^{i \infty}\right)^n J_E^{*}(z_K \cup -z_L, z_M)f(iz_1, \cdots, iz_n)d z_1 \cdots d z_n+ O\left(X^{\frac{1}{2}+\epsilon}\right)
\end{split}
\end{align}
where the integral here is a principal value integral.
\begin{align}
\begin{split}
&(2 \pi i)^n 2^n \sum_{\substack{0 < d\leq X\\ \chi_d(-M) \omega_E = +1}}{\sideset{}{^{n,0}}\sum{f(\gamma_{j_1,d}, \cdots, \gamma_{j_n,d})}} \\
& \qquad= i^n\sum_{\substack{K \cup L \cup M \\= \{1, \cdots, n\}}}(-1)^{|M|}\left(\int^{\infty}_{-\infty}\right)^n J_{E}^{*}(-iz_K \cup iz_L, -iz_M)f(z_1, \cdots, z_n)d z_1 \cdots d z_n+ O\left(X^{\frac{1}{2}+\epsilon}\right)
\end{split}\\
&\qquad =i^n \left(\int^{\infty}_{-\infty}\right)^n \sum_{\substack{K \cup L \cup M \\= \{1, \cdots, n\}}}(-1)^{|M|} J_{E}^{*}(-iz_K \cup iz_L, -iz_M)f(z_1, \cdots, z_n)d z_1 \cdots d z_n+ O\left(X^{\frac{1}{2}+\epsilon}\right).
\end{align}
Now
\begin{align}
\begin{split}
&\sum_{K \cup L \cup M = \{1, \cdots, n\}}(-1)^{|M|} J_{E}^{*}(-iz_K \cup iz_L, -iz_M) \\
& \qquad = \sum_{K \cup L \cup M = \{1, \cdots, n\}}(-1)^{|M|} J_{E}^{*}(iz_K \cup -iz_L, iz_M)
\end{split}
\end{align}
 due to $\frac{\psi^{'}}{\psi}(\frac{1}{2}+\alpha,\chi_d)=\frac{\psi^{'}}{\psi}(\frac{1}{2}-\alpha,\chi_d)$.  In addition, $f\left(\theta_{j_1}, \cdots, \theta_{j_n}\right) = f\left(\pm \theta_{j_1}, \cdots, \pm
\theta_{j_n}\right)$. Thus each integral from $- \infty$ to $\infty$ is double the integral from $0$ to $\infty$.
\begin{align}
\begin{split}
&(2 \pi )^n 2^n \sum\limits_{\substack{0 < d\leq X\\ \chi_d(-M) \omega_E = +1}}{\sideset{}{^{n,0}}\sum{f(\gamma_{j_1,d}, \cdots, \gamma_{j_n,d}) }} \\
& \qquad = 2^n \left(\int^{\infty}_{0}\right)^n \sum_{K \cup L \cup M = \{1, \cdots, n\}}(-1)^{|M|} J_E^{*}(-iz_K \cup iz_L, -i z_M)\\
& \qquad \qquad \times f(z_1, \cdots, z_n) d z_1 \cdots d z_n+ O\left(X^{\frac{1}{2}+\epsilon}\right).
\end{split}
\end{align}

All that is now required is to show that there are no poles on the path of integration. As $f$ is holomorphic, we just need to check that
\begin{align}
\sum_{K \cup L \cup M = \{1, \cdots, n\}}(-1)^{|M|} J_E^{*}(-iz_K \cup iz_L, -iz_M)
\end{align}
has no poles at $z_1 =z_2$ (this argument applies equally well to any other pair of $z$'s by symmetry). We do not need to check for a pole at $z_1 =-z_2$ as $z_i \geq 0$, for $ 0 \leq i \leq n$ on the path of integration. A given $J_{E}^{*}(-iz_K \cup iz_L, -iz_M)$ has a pole at $z_1 =z_2$  if $1 \in K, 2 \in L$ or $1 \in L, 2 \in K$. So
\begin{align}
\begin{split}
 \Res_{z_1 = z_2}&\left(\sum_{K \cup L \cup M = \{1, \cdots, n\}}(-1)^{|M|} J_{E}^{*}(-iz_K \cup iz_L, -iz_M)\right)\\
& = \sum_{K \cup L \cup M = \{3, \cdots, n\}}(-1)^{|M|} \Res_{z_1 = z_2}\left(J_{E}^{*}(-iz_{K \cup \{z_1\}} \cup iz_{L \cup \{z_2\}},-iz_M) \right.\\
& \qquad \qquad \qquad \left. + J_{E}^{*}(-iz_{K \cup \{z_2\}} \cup iz_{L \cup \{z_1\}}, -i z_M)\right)
\end{split}\\
&=0
\end{align}
as $Res_{x=y}f(x,y)=-Res_{x=y} f(-x,-y)$. The same argument holds for all poles at $z_i= \pm z_j$ for all $0 \leq i,j \leq n$. Therefore there are no poles on the path of integration, for the same reason as in Section \ref{sub:ncorr}
\end{proof}

\section{Zero Statistics of Quadratic Dirichlet $L$-functions} \label{sec:dir}

Let $\chi_d$ be a real primitive Dirichlet character. For any Dirichlet character of modulus $d$, $\chi_d\left(n\right)$, we can define an $L$-function \begin{align}
L\left(s, \chi_d\right) = \sum_{n=1}^{\infty}{\frac{\chi_d\left(n\right)}{n^s}}.
\end{align}
$L\left(s, \chi_d\right)$ has a Euler product
\begin{align}
L\left(s, \chi_d\right) = \prod_{p}\left(1- \frac{\chi_d(p)}{p^s}\right)^{-1}.
\end{align}
If $\chi_d$ is primitive $L\left(s, \chi_d\right)$ has an analytic continuation to the whole complex plane and has a functional equation~\cite{HBPrimenumber}.

 When $\chi_d$ is real and $d>0$, the functional equation is
\begin{align}
L(s,\chi_d)=\psi_{DL}(s, \chi_d)L(1-s,\chi_d)=\left(\frac{\pi}{d}\right)^{s-1/2} \frac{\Gamma(\tfrac{1-s}{2})} {\Gamma(\tfrac{s}{2})} L(1-s,\chi_d).
\end{align}

We can further define a family of $L$-functions $D^{+} = \left\{L(s, \chi_d) : d>0  {\rm \; a \; fundamental \;discriminant}\right\}$. All sums over $0<d\leq X$ in this section include only fundamental discriminants.  We expect that the $n$-level density of the zeros of the $L$-functions in $D^+$, averaged over the family will correspond to that of the eigenangles of $USp(2N)$.

Our plan is therefore to follow the recipe of Conrey, Farmer and Zirnbauer \cite{conrey2007autocorrelatio} to express ratios of Dirichlet $L$-functions in appropriate terms and then mimic the calculations from Section~\ref{sub:symp} to determine the $n$-level density.

\subsection{Ratios of Quadratic Dirichlet $L$-functions}
The result from ``Autocorrelation of ratios of $L$-functions"~\cite{conrey2007autocorrelatio}, can be rewritten into set notation.

\begin{conj}[Ratios of Quadratic Dirichlet $L$-functions] \label{conj:DLratios}
Let $A= \{\alpha_1, \cdots, \alpha_K\}$ and $ B= \{\gamma_1, \cdots, \gamma_Q\}$ such that
\begin{align}
\begin{split}
-\frac{1}{4} < \Re\left(\alpha_i\right) < \frac{1}{4} \qquad &1 \leq i \leq K\\
\frac{1}{\log X} \ll \Re\left(\gamma_j\right)< \frac{1}{4} \qquad &1 \leq j \leq Q\\
\forall \epsilon >0,\; Im\left(\alpha_i\right), Im\left(\gamma_j\right)\ll X^{1 - \epsilon }  \qquad &  1 \leq i \leq K, 1 \leq j \leq Q
\end{split}
\end{align}
then
\begin{align}
\begin{split}
&\sum_{0 < d \leq X}{\frac{\prod_{k=1}^{K}{L\left(\frac{1}{2}+ \alpha_k, \chi_d\right)}}{\prod_{q=1}^{Q}{L\left(\frac{1}{2}+ \gamma_q, \chi_d\right)}}}\\
& \qquad = \sum_{0 < d \leq X}\sum_{F \subset A}{\left(\frac{d}{\pi}\right)^{- \sum_{f \in F}{f}}\prod_{f \in F}{\frac{\Gamma \left(\frac{1}{4}- \frac{f}{2}\right)}{\Gamma \left(\frac{1}{4}+ \frac{f}{2}\right)}}}\\
& \qquad \qquad \times Y_{DL}\left(A,B,F\right)A_{DL}\left(A,B,F\right) +  O\left(X^{\frac{1}{2}+ \epsilon}\right)
\end{split}
\end{align}
where $d$ is summed over fundamental discriminants  and $Y_{DL}$ and $ A_{DL}$ are defined for $ D \subset A$ as follows
\begin{align}
Y_{DL}\left(A,B,D\right) &= \sqrt{\frac{Z_{\zeta}\left(\left(A \backslash D\right) \cup D^{-},\left(A \backslash D\right) \cup D^{-} \right)Y_{\zeta}\left(\left(A \backslash D\right) \cup D^{-}\right)Z_{\zeta}\left(B,B\right)}{Z_{\zeta}\left(\left(A \backslash D\right) \cup D^{-},B \right)^2 Y_{\zeta}\left(B\right)}} \label{eq:DL1def1}\\
\begin{split}
A_{DL}\left(A,B, D\right) &=Y_{DL}\left(A,B,D\right)^{-1} \prod_{p}{\frac{1}{1+ \frac{1}{p}}\left(\frac{1}{2}\frac{V_{-}\left(B\right)}{V_{-}\left(\left(A \backslash D\right) \cup D^{-}\right)}+ \right. }\\
& \qquad {\left.\frac{1}{2}\frac{V_{+}\left(B\right)}{V_{+}\left(\left(A \backslash D\right) \cup D^{-}\right)} + \frac{1}{p}\right)} \label{eq:DL1def2}
\end{split}
\end{align}
 and
 \begin{align}
V_{+}(A)&= \prod_{\alpha \in A}{\left(1+\frac{1}{p^{\frac{1}{2}+ \alpha}}\right)}\label{def:V+}\\
V_{-}(A)&= \prod_{\alpha \in A}{\left(1-\frac{1}{p^{\frac{1}{2}+ \alpha}}\right)} \label{def:V-}\\
D^{-}&= \{- \delta: \delta \in D\}\\
Z_{\zeta}(A,B)&= \prod_{\substack{a \in A\\b \in B}}{\zeta\left(1+ a +b\right)},\\
Y_{\zeta}(A) &= \prod_{a \in A}{\zeta\left(1+2a\right)}.
\end{align}
\end{conj}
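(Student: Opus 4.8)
The final assertion is a \emph{Ratios Conjecture}, so it is not proved outright; the plan is to \emph{derive} it heuristically by running the Conrey--Farmer--Zirnbauer recipe of Section~\ref{sec:background} for the family $D^{+}$ of quadratic Dirichlet $L$-functions, and then to check that the output, rewritten in set notation, matches the expression of \cite{conrey2007autocorrelatio}. As with every Ratios Conjecture, the ``proof'' consists of following the recipe while discarding, at several stages, terms that are expected (and in several cases proved, see Section~\ref{sec:error}) to cancel down to the stated error $O(X^{1/2+\epsilon})$.

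\textbf{Applying the recipe.} First I would replace each $L(\tfrac12+\alpha_k,\chi_d)$ in the numerator by the two terms of its approximate functional equation, dropping the remainder:
\begin{align*}
L(\tfrac12+\alpha,\chi_d)\;=\;\sum_{n}\frac{\chi_d(n)}{n^{1/2+\alpha}}\;+\;X_d(\tfrac12+\alpha)\sum_{n}\frac{\chi_d(n)}{n^{1/2-\alpha}}\;+\;(\text{remainder}),
\end{align*}
where $X_d(\tfrac12+\alpha)$ is the factor coming from the functional equation recalled just above; when $s=\tfrac12+\alpha$ it supplies exactly the power of $d/\pi$ and the ratio $\Gamma(\tfrac14-\tfrac{\alpha}{2})/\Gamma(\tfrac14+\tfrac{\alpha}{2})$ that appear in the statement. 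Each $1/L(\tfrac12+\gamma_q,\chi_d)$ in the denominator is replaced by $\sum_{m}\mu(m)\chi_d(m)\,m^{-1/2-\gamma_q}$. Multiplying out the $K$ approximate functional equations produces $2^{K}$ terms, indexed by the subset $F\subseteq A=\{\alpha_1,\dots,\alpha_K\}$ of shifts for which the ``swapped'' piece is chosen; that term carries the factor $\left(\frac{d}{\pi}\right)^{-\sum_{f\in F}f}\prod_{f\in F}\frac{\Gamma(\frac14-\frac{f}{2})}{\Gamma(\frac14+\frac{f}{2})}$, and in the remaining multiple Dirichlet sum the shift $\alpha_f$ is replaced by $-\alpha_f$ for $f\in F$, so the sum effectively runs over $(A\backslash F)\cup F^{-}$.

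\textbf{Averaging over $d$ and assembling the Euler product.} Next I would perform the average over fundamental discriminants $0<d\le X$. The key input is the standard evaluation of averages of quadratic characters: $\sum_{0<d\le X}\chi_d(n)$ is asymptotically proportional to $X$ when $n$ is a perfect square (with a constant that factors over primes) and is of lower order otherwise. Restricting to these ``diagonal'' square terms and summing the resulting multiplicative series prime by prime produces exactly the Euler product with local factors $\frac{1}{1+1/p}\big(\tfrac12\,V_{-}(B)/V_{-}((A\backslash F)\cup F^{-})+\tfrac12\,V_{+}(B)/V_{+}((A\backslash F)\cup F^{-})+\tfrac1p\big)$, while the polar parts $\zeta(1+\cdot)$ extracted from the numerator/denominator shifts assemble into the $Z_{\zeta}$ and $Y_{\zeta}$ quotient $Y_{DL}(A,B,F)$. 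Writing the whole diagonal contribution as $Y_{DL}(A,B,F)\cdot A_{DL}(A,B,F)$ with $A_{DL}:=Y_{DL}^{-1}\times(\text{Euler product})$, extending the originally truncated sums to $1,\dots,\infty$, and summing over $F\subseteq A$ and $0<d\le X$ gives the displayed formula; the last step is a bookkeeping check that this agrees with \cite{conrey2007autocorrelatio} after the notational substitution $D\leftrightarrow F$, $B=\{\gamma_q\}$.

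\textbf{Main obstacle.} The essential difficulty is the one common to all Ratios Conjectures: controlling the pieces that were thrown away---the remainder in the approximate functional equation, the off-diagonal ($n$ not a square) contribution to $\sum_d\chi_d(n)$, and the tails created by extending the truncated sums---and showing their combined contribution is only $O(X^{1/2+\epsilon})$. This cancellation is precisely the ``miracle'' of the recipe and is not established in general, which is why the formula is stated as a conjecture (compare the discussion in Section~\ref{sec:error}). A secondary, purely technical, obstacle is the careful computation of the local factors at $p=2$ and at primes dividing the conductors, where the definition of fundamental discriminant imposes congruence conditions on $d$ that must be reconciled with the Euler factors $V_{\pm}$.
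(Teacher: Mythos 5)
Your proposal is correct and matches the paper's treatment: the statement is a Ratios Conjecture, which the paper does not prove but simply imports from Conrey--Farmer--Zirnbauer \cite{conrey2007autocorrelatio} and rewrites in set notation, and the heuristic derivation you sketch (approximate functional equation in the numerator, M\"obius expansion in the denominator, the $2^K$ terms indexed by $F\subseteq A$ carrying the $\left(\frac{d}{\pi}\right)^{-\sum_{f\in F}f}\prod_{f\in F}\Gamma(\tfrac14-\tfrac f2)/\Gamma(\tfrac14+\tfrac f2)$ factors, the diagonal square-term average over $d$ producing the Euler product, and the extraction of the polar $\zeta(1+\cdot)$ factors into $Y_{DL}$) is exactly the recipe of Section~\ref{sec:background} on which that conjecture rests. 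You also correctly identify why it remains a conjecture rather than a theorem, namely the uncontrolled discarded terms discussed in Section~\ref{sec:error}.
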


\subsection{Differentiating Ratios of Quadratic Dirichlet $L$-functions}
Following the calculation in Section~\ref{sub:Dirratio}, we now wish to calculate
\begin{align} \label{def:JDL}
J_{DL}\left(A\right) &= \sum_{0 < d \leq X}{\prod_{\alpha \in A}{\frac{L^{'}}{L}\left(\frac{1}{2}+ \alpha, \chi_d\right)}}
 = \left.\prod_{\alpha \in A}{\frac{\partial}{\partial \alpha}{R_{DL}\left(A,B\right)}}\right|_{B=A}.
\end{align}
where
\begin{align}
R_{DL}\left(A,B\right) = \sum_{0< d \leq X}{\frac{\prod_{k}{L\left(\frac{1}{2}+ \alpha_k, \chi_d\right)}}{\prod_{q}{L\left(\frac{1}{2}+ \gamma_q, \chi_d\right)}}}.
\end{align}
\begin{theorem}\label{thm:DL2}
Assume Conjecture~\ref{conj:DLratios}. Let $A$ be a finite set of complex numbers where
\begin{align}
\begin{split}
\frac{1}{\log X}\ll \Re\left(\alpha\right)< \frac{1}{4} &\qquad \forall \alpha \in A\\
\forall \epsilon >0, Im\left(\alpha\right), \ll X^{1 - \epsilon }& \qquad \forall \alpha \in A.
\end{split}
\end{align}
Then we have $J_{DL}(A) = J_{DL}^{*}(A)+ O\left(X^{\frac{1}{2}+ \epsilon}\right)$ where
\begin{align}
J_{DL}\left(A\right) &= \sum_{0 < d \leq X}{\prod_{\alpha \in A}{\frac{L^{'}}{L}\left(\frac{1}{2}+ \alpha, \chi_d\right)}}\\
\begin{split}
J_{DL}^{*}\left(A\right) &= \sum_{0 < d \leq X}{\sum_{D \subset A}{\left(\frac{d}{\pi}\right)^{- \sum_{\delta \in D}{\delta}}}}\prod_{\delta \in D}{\frac{\Gamma\left(\frac{1}{4} - \frac{\delta}{2}\right)}{\Gamma\left(\frac{1}{4}+\frac{\delta}{2}\right)}}\\
&\qquad \times \sqrt{\frac{Z_{\zeta}\left(D^{-}, D^{-}\right)Z_{\zeta}\left(D,D\right)Y_{\zeta}\left(D^{-}\right)}{Z_{\zeta}^{\dagger}\left(D^{-},D\right)^2Y_{\zeta}\left(D\right)}}\\
& \qquad \times \left(-1\right)^{|D|}A_{DL}\left(D,D,D\right)\\
& \qquad \times \sum_{\substack{A\backslash D =W_1 \cup \cdots  \cup W_R\\|W_i|\leq 2}}\prod_{r=1}^{R}{\widehat{H_{D}}\left(W_r\right)}
\end{split}
\end{align}
where
\begin{align}
\widehat{H_{D}}\left(W_r\right)
&= H_{D}\left(W_r\right) + A_{D}\left(W_r\right)
\end{align}
and
\begin{align}
 H_{D}\left(W_r\right) &= \begin{cases}
 \begin{aligned}
\sum_{\beta \in D}\left(\frac{\zeta^{'}}{\zeta}\left(1+ \gamma - \beta\right)- \frac{\zeta^{'}}{\zeta}\left(1+ \gamma + \beta\right)\right)
+ \frac{\zeta^{'}}{\zeta}\left(1+ 2 \gamma\right)
\end{aligned} & W_r =\{\gamma\}\\
\left(\frac{\zeta^{'}}{\zeta}\right)^{'}\left(1+ \gamma_1 + \gamma_2\right)& W_r = \{\gamma_1, \gamma_2\}\\
1&W_r=\emptyset
\end{cases}\\
A_{D}\left(W_r\right)& = \left.\prod_{\alpha \in W_r}\frac{\partial}{\partial \alpha}A_{DL}\left(A,B,D\right)\right|_{B=A}. \label{eq:DLAD}
\end{align}
$Z_{\zeta}, Y_{\zeta}, A_{DL}(A,B,D)$ are defined in Conjecture~\ref{conj:DLratios} and the sum over $0<d\leq X$ includes only fundamental discriminants.
\end{theorem}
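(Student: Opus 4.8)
The plan is to follow the argument used to prove Theorem~\ref{LFJconj} in the elliptic curve case, which itself runs parallel to the purely random-matrix Theorem~\ref{thm:symp2}; the only new features here are the shape of the arithmetic factor $A_{DL}(A,B,D)$ and the different gamma factor and placement of the $Y_\zeta$ terms, both dictated by the symplectic functional equation (in particular the sign on the $\frac{\zeta'}{\zeta}(1+2\gamma)$ term in $H_D$), and none of these affects the structure of the proof. Beginning from the identity $J_{DL}(A)=\left.\prod_{\alpha\in A}\frac{\partial}{\partial\alpha}R_{DL}(A,B)\right|_{B=A}$ of (\ref{def:JDL}), I would substitute the expansion of $R_{DL}(A,B)$ furnished by Conjecture~\ref{conj:DLratios}, a finite sum over $D\subseteq A$. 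Differentiation commutes with this sum, and since the $O(X^{1/2+\epsilon})$ term in the Ratios Conjecture is analytic in the shift parameters on the relevant region, a Cauchy-estimate argument shows its derivatives are still $O(X^{1/2+\epsilon})$; hence one may differentiate term by term and carry the error through unchanged.

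The derivatives then split into two families, those with respect to $\alpha\in D$ and those with respect to $\alpha\in A\backslash D$. For $\alpha\in D$: as in the elliptic case, $Y_{DL}(A,A,D)$ carries a factor $Z_\zeta((A\backslash D)\cup D^-,B)$ in the denominator whose diagonal entries specialise to $\zeta(1)$ when $B=A$, so each $D$-term vanishes at $B=A$ unless exactly these poles are differentiated away; the only surviving contribution is the one in which, for every $\alpha\in D$, the corresponding factor $1/\zeta(1+\beta_\alpha-\alpha)$ is differentiated (with $\beta_\alpha$ the copy of $\alpha$ in $B$), and since $\left.\frac{\partial}{\partial\alpha}\bigl(1/\zeta(1+\beta_\alpha-\alpha)\bigr)\right|_{\beta_\alpha=\alpha}=-1$ this yields the $(-1)^{|D|}$, replaces the offending product by the restricted product $Z_\zeta^{\dagger}$, and produces the $(d/\pi)^{-\sum_{\delta\in D}\delta}\prod_{\delta\in D}\Gamma(\tfrac14-\tfrac\delta2)/\Gamma(\tfrac14+\tfrac\delta2)$ and square-root prefactors displayed in the statement. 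For $\alpha\in A\backslash D$: write the leftover as $\exp(H_D^{A,B})$ times a square-root factor, where $H_D^{A,B}$ is the logarithm of the remaining $\zeta$-products together with $\log A_{DL}(A,B,D)$, and apply the set-partition identity (\ref{eq:logdiff}). The $\log\zeta$ pieces differentiate to the $H_D(W_r)$ of the statement --- a single derivative giving the $\frac{\zeta'}{\zeta}$-sum plus $\frac{\zeta'}{\zeta}(1+2\gamma)$, a double derivative giving $\bigl(\frac{\zeta'}{\zeta}\bigr)'(1+\gamma_1+\gamma_2)$, and three or more derivatives annihilating the term since each $\log\zeta$ factor depends on at most two of the variables --- while the $\log A_{DL}$ pieces differentiate to the $A_D(W_r)$ of (\ref{eq:DLAD}); their sum is $\widehat{H_D}(W_r)$.

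The one point requiring genuine care, and the place I expect the main technical work to lie, is the evaluation of $\left.A_{DL}(A,B,D)\right|_{B=A}$: the factor $Y_{DL}^{-1}$ inside $A_{DL}$ makes the naive limit involve $\zeta(1)$'s, so, exactly as in Lemma~\ref{lem:diffA} for the elliptic curve case, one must first rewrite $A_{DL}(A,B,D)=\prod_p A_{DL,p}(A,B,D)$ as an absolutely convergent Euler product whose local factors are each regular at $B=A$ (the $V_+$, $V_-$ data and the local $z_p$-pieces combining so that no $z_p(1)$ survives), pass to the limit prime by prime, and conclude $\left.A_{DL}(A,B,D)\right|_{B=A}=A_{DL}(D,D,D)$, an analytic function; the same device shows the square-root factor accompanying $\exp(H_D^{A,B})$ collapses to $1$ at $B=A$. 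Assembling these ingredients and reinstating the $O(X^{1/2+\epsilon})$ error gives the stated formula for $J_{DL}^{*}(A)$, and the remaining steps are the routine Taylor-expansion computations identical in spirit to those omitted after Theorem~\ref{LFJconj}.
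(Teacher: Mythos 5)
Your proposal is correct and follows essentially the same route as the paper, which establishes this theorem by direct appeal to the argument of Theorem~\ref{LFJconj} (including the Euler-product device of Lemma~\ref{lem:diffA} for evaluating $\left.A_{DL}(A,B,D)\right|_{B=A}=A_{DL}(D,D,D)$), deferring full details to the first author's thesis. You have correctly isolated the only substantive differences from the elliptic-curve case --- the gamma factor, the placement of the $Y_{\zeta}$ factors and hence the sign of $\frac{\zeta'}{\zeta}(1+2\gamma)$, and the form of the arithmetic factor --- none of which changes the structure of the argument.
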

Note that the conditions on $A$ come from the conditions on Conjecture~\ref{conj:DLratios}.

This calculation follows exactly the same lines as similar calculations and arguments used in the proof of Theorem~\ref{LFJconj}. For full details, see the thesis of Amy Mason \cite{Mason2013}.

\subsection{Residue Theorem for Quadratic Dirichlet $L$-functions}
In this section, we will locate the poles of $J_{DL}^{*}(A)$ and calculate the residue
of these poles.

\begin{theorem} \label{thm:DLres}
Let A be a finite set of complex numbers, let $\alpha^{*},\beta^{*} \in A$ and $A^{'}=A \backslash\{a,b\}$ and let $J_{DL}^{*}(A)$ be as defined in Conjecture~\ref{thm:DL2}. Then
\begin{align}
\begin{split}
\Res_{\alpha^{*} =-\beta^{*}} \left(J_{DL}^{*}(A)\right) &= J_{DL}^{*}(A^{'} \cup \{\beta^{*}\}) + J_{DL}^{*}(A^{'} \cup \{-\beta^{*}\})\\
& \qquad -\frac{\psi_{DL}^{'}}{\psi_{DL}}\left(\frac{1}{2} +\beta^{*},\chi_d\right) J_{DL}^{*}(A^{'})
\end{split}
\end{align}
where $\psi_{DL}\left(s,\chi_d\right)= \left(\frac{\pi}{d}\right)^{s-\frac{1}{2}} \frac{\Gamma(\frac{1-s}{2})}{\Gamma(\frac{s}{2})}$ comes from the functional equation.
\begin{align}
\Res_{\alpha^{*} =0} \left(J_{DL}^{*}(A)\right) &= 0.
\end{align}
\end{theorem}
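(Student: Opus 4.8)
The plan is to run the argument exactly as in the proof of Theorem~\ref{LFresiduef}, of which this is the symplectic, quadratic-Dirichlet analogue: everything reduces to the abstract Residue Lemmas~\ref{lem:P} and~\ref{lem:R}, so the work is to cast $J_{DL}^{*}(A)$ into the form those lemmas require and then verify the hypotheses \textbf{P1}--\textbf{P4} and \textbf{R1}--\textbf{R2}. Using Theorem~\ref{thm:DL2}, write $J_{DL}^{*}(A)=\sum_{D\subseteq A}P_D(A\backslash D)$ with
\[
P_D(A\backslash D)=\widehat{Q(D)}\sum_{\substack{A\backslash D=W_1\cup\cdots\cup W_R\\ |W_r|\le 2}}\prod_{r=1}^{R}\widehat{H_D}(W_r),
\]
\[
\widehat{Q(D)}=\sum_{0<d\le X}\left(\frac{d}{\pi}\right)^{-\sum_{\delta\in D}\delta}\left(\prod_{\delta\in D}\frac{\Gamma\left(\frac14-\frac{\delta}{2}\right)}{\Gamma\left(\frac14+\frac{\delta}{2}\right)}\right)\sqrt{\frac{Z_{\zeta}(D^{-},D^{-})\,Z_{\zeta}(D,D)\,Y_{\zeta}(D^{-})}{Z_{\zeta}^{\dagger}(D^{-},D)\,Y_{\zeta}(D)}}\,(-1)^{|D|}A_{DL}(D,D,D),
\]
so that $\widehat{Q(D)}$ and $\widehat{H_D}(W_r)=H_D(W_r)+A_D(W_r)$ play the roles of $Q(D)$ and $H(D,W)$ in Definitions~\ref{def:PropP}--\ref{def:PropR}. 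The single-variable function needed there is $f(\beta^{*})=-\frac{\psi_{DL}'}{\psi_{DL}}\left(\tfrac12+\beta^{*},\chi_d\right)$; this is precisely the combined logarithmic-derivative contribution of the conductor factor $(d/\pi)^{-\sum_{\delta\in D}\delta}$ and of the archimedean factor $\prod_{\delta\in D}\Gamma\left(\tfrac14-\tfrac{\delta}{2}\right)/\Gamma\left(\tfrac14+\tfrac{\delta}{2}\right)$, and logarithmic differentiation of $\psi_{DL}(s,\chi_d)=(\pi/d)^{s-1/2}\,\Gamma(s/2)/\Gamma((1-s)/2)$ at $s=\tfrac12+\beta^{*}$ shows it equals $\log(d/\pi)-\tfrac12\tfrac{\Gamma'}{\Gamma}\left(\tfrac14+\tfrac{\beta^{*}}{2}\right)-\tfrac12\tfrac{\Gamma'}{\Gamma}\left(\tfrac14-\tfrac{\beta^{*}}{2}\right)$.

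Next I would assemble the Taylor expansions about $\alpha^{*}=-\beta^{*}$ of the ingredients of $P_D$: for the pieces $H_D(W_r)$ and for the derivatives $A_D(W_r)=\prod_{w\in W_r}\frac{\partial}{\partial w}A_{DL}(A,B,D)|_{B=A}$, the exact analogues of Lemmas~\ref{ADresidue} and~\ref{HDresidue}; for $A_{DL}(D,D,D)$, the analogue of Lemma~\ref{AEresidue}; and hence for $\widehat{H_D}(W_r)$ the analogue of Lemma~\ref{HDtilderesidue} and for $\widehat{Q(D)}$ the analogue of the $\widetilde{Q(D)}$-expansion in the proof of Theorem~\ref{LFresiduef}. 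A prerequisite, handled as in Lemma~\ref{lem:diffA}, is that $A_{DL}(A,B,D)|_{B=A}=A_{DL}(D,D,D)$ is analytic in the remaining variables; since $A_{DL}$ carries $Y_{DL}(A,B,D)^{-1}$ and the Euler products $V_{\pm}$, this is shown prime by prime, rewriting $A_{DL}=\prod_p A_{DL,p}$ so the apparent $\zeta(1)$'s never materialise. With these in hand, \textbf{P1}--\textbf{P4} follow just as in the proof of Theorem~\ref{LFresiduef}: in \textbf{P1} the only singular contribution is $\widehat{H_D}(\{\alpha^{*},\beta^{*}\})$, with $\left(\frac{\zeta'}{\zeta}\right)'(1+\alpha^{*}+\beta^{*})=(\alpha^{*}+\beta^{*})^{-2}+O(1)$; in \textbf{P2} and \textbf{P3} the $\delta=\alpha^{*}$ (respectively $\delta=\beta^{*}$) summand of the one-element $H_D$ supplies the simple pole $(\alpha^{*}+\beta^{*})^{-1}$; and in \textbf{P4} the $Z_{\zeta}$-square-root in $\widehat{Q(D)}$ supplies the $-(\alpha^{*}+\beta^{*})^{-2}$ pole together with the $\widehat{H_{D'}}(\{\pm\beta^{*}\})$ correction terms, while the conductor and gamma prefactors supply $f(\beta^{*})$. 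Lemma~\ref{lem:P} then yields the first residue identity. For the pole at $\alpha^{*}=0$ one checks \textbf{R1} and \textbf{R2}: the sole pole of $\widehat{H_D}(\{\alpha^{*}\})$ comes from $\frac{\zeta'}{\zeta}(1+2\alpha^{*})$ and the sole pole of $\widehat{Q(D)}$ from the $\zeta(1+2\alpha^{*})$ hidden in the $Z_{\zeta}/Y_{\zeta}$ quotient, the matching $\pm(2\alpha^{*})^{-1}$ leading terms emerging once the parity factor $(-1)^{|D|}$ is accounted for; Lemma~\ref{lem:R} then gives $\Res_{\alpha^{*}=0}(J_{DL}^{*}(A))=0$.

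The step I expect to cause the most trouble is not any single computation but the faithful tracking of the sign and structural differences between this symplectic setting and the orthogonal one of Theorem~\ref{LFresiduef}: here the quotient inside $\widehat{Q(D)}$ carries $Y_{\zeta}(D^{-})/Y_{\zeta}(D)$ in place of $Y_{\zeta}(D)/Y_{\zeta}(D^{-})$, and the one-element case of $H_D$ carries $+\frac{\zeta'}{\zeta}(1+2\gamma)$ in place of $-\frac{\zeta'}{\zeta}(1+2\gamma)$. One has to verify that these changes propagate through all the expansions so that the leading coefficients in \textbf{R1}--\textbf{R2} still cancel (an overall sign is immaterial to the conclusion of Lemma~\ref{lem:R}), and that no derivative of the arithmetic factor $A_{DL}$ introduces a pole that would spoil \textbf{P1}--\textbf{P4}. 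Once this bookkeeping is complete the two residue formulae follow; the full computation is carried out in \cite{Mason2013}.
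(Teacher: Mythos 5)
Your proposal follows essentially the same route as the paper: the paper's proof likewise writes $J_{DL}^{*}(A)=\sum_{D\subseteq A}P_D(A\backslash D)$ with $\widehat{Q(D)}$ and $\widehat{H_D}$ in the roles of $Q$ and $H$, verifies \textbf{P1}--\textbf{P4} and \textbf{R1}--\textbf{R2} with $f(\beta^{*})=-\tfrac{\psi_{DL}'}{\psi_{DL}}\left(\tfrac12+\beta^{*},\chi_d\right)$, and invokes Lemmas~\ref{lem:P} and~\ref{lem:R}, deferring the detailed expansions to \cite{Mason2013}. Your additional remarks on the sign and structural differences from the orthogonal case are accurate and, if anything, more explicit than the paper's own (very terse) proof.
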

\begin{proof}
Let $D\subseteq A$,  define $P_D$ and $Q$ as
\begin{align}
J_{DL}^{*}(A) &= \sum_{D \subseteq A}{P_D(A \backslash D)}\\
    &=\sum_{D \subseteq A}{\widehat{Q(D)} \sum_{\substack{A \backslash D= \bigcup_r{W_r}\\|W_r|\leq 2 }}}{\prod_{r}{\left(\widehat{H_D}(W_r)\right)}}
\end{align}
where
\begin{align}
 \begin{split}
\widehat{Q(D)}&=  \sum_{0 < d \leq X}{\sum_{D \subset A}{\left(\frac{d}{\pi}\right)^{- \sum_{\delta \in D}{\delta}}}}\prod_{\delta \in D}{\frac{\Gamma\left(\frac{1}{4} - \frac{\delta}{2}\right)}{\Gamma\left(\frac{1}{4}+\frac{\delta}{2}\right)}}\\
    &\qquad \times \sqrt{\frac{Z_{\zeta}\left(D^{-}, D^{-}\right)Z_{\zeta}\left(D,D\right)Y_{\zeta}\left(D^{-}\right)}{Z_{\zeta}^{\dagger}\left(D^{-},D\right)^2Y_{\zeta}\left(D\right)}}\\
    & \qquad \times \left(-1\right)^{|D|}A_{DL}\left(D,D,D\right)
\end{split}
\end{align}
and
$\widehat{H_D}$ is defined in Theorem~\ref{thm:DL2} and $A_E, Z_\zeta, Y_\zeta$, etc, are defined as in Conjecture \ref{conj:DLratios}.

Replacing $Q(D)$ with $\widehat{Q(D)}$ and $H(D,W)$ with $\widehat{H_D}(W)$ in Section \ref{sub:residue}, conditions {\bf P1} to {\bf P4} and {\bf R1}  and {\bf R2} hold with
\begin{equation}
f(\beta^*)=-\frac{\psi_{DL}^{'}}{\psi_{DL}}\left(\frac{1}{2}+\beta^{*},\chi_d\right).
\end{equation}  This proof is similar to that in the elliptic curve $L$-function case and we will not reproduce them here. Full details can be found in Amy Mason's thesis\cite{Mason2013}.
\end{proof}

\subsection{$n$-level Density of Zeros of Families of Quadratic Dirichlet $L$-functions}\label{sub:dirncorr}

 We will define the $n$-level density function, $S^{DL}_n(f)$ for the zeros for the family of $L$-functions $D^{+} = \left\{L(s, \chi_d) : d>0; d \;{\rm a\;fundamental\;discriminant}\right\}$.
\begin{align}
S_n^{DL}(f) = \sum_{\substack{0 < d \leq X }}{\sum_{t_1, \cdots, t_n >0}{f(\gamma_{t_1,d}, \cdots, \gamma_{t_n,d})}}
\end{align}
where $\gamma_{i,d}$ is the $i^{th}$ zero of $L\left(s,\chi_d\right)$ on the critical line, above the real axis. Note that the sum over the zeros in $S_n^{DL}(f)$ is not restricted to a sum over distinct indices at this stage.

\begin{theorem}\label{thm:DLF1}
Let $C_{-}$ denote the path from $ - \delta - \infty i$ up to $- \delta + \infty i$ and let $C_{+}$
denote the path from $  \delta - \infty i$ up to $ \delta + \infty i$, where $\delta$ is a small positive number. Let $f$ be a
holomorphic function of $n$ variables such that
\begin{align}
f\left(\theta_{j_1}, \cdots, \theta_{j_n}\right) = f\left(\pm \theta_{j_1}, \cdots, \pm
\theta_{j_n}\right).
\end{align}

Then
\begin{align}\label{DLFnostar}
\begin{split}
2^n \left(2 \pi i \right)^n& S_n^{DL}(f)= \\
&\sum_{\substack{K \cup L \cup M =\\ \left\{1, \cdots,
n\right\}}}  \int_{C_+^K} {\int_{C_{-}^{L \cup M}}{\left(-1\right)^{|M|}J^{DL}\left(z_K \cup - z_L, z_M\right)}}\\
&\qquad \times f\left(iz_1, \cdots,
iz_n\right)dz_1 \cdots d z_n
\end{split}
\end{align}
where $z_K = \left\{z_k: k \in K\right\}$, $-z_L = \left\{-z_l: l \in L\right\}$ and
$\int_{C_+^K} {\int_{C_-^{L \cup M}}}$ means we are integrating all the variables in $z_K$ along the
$C_+$ path and all others along the $C_-$ path and define
\begin{align} \label{def:DLJ}
J_{DL}\left(A,B\right) &= \sum_{0 < d \leq X}{\prod_{\alpha \in A}{\frac{L^{'}}{L}\left(\frac{1}{2}+ \alpha, \chi_d\right)}}\prod_{\beta \in B}{\frac{\psi_{DL}^{'}}{\psi_{DL}}(\frac{1}{2}+\beta, \chi_d)}
\end{align}
where $\psi_{DL}\left(s,\chi_d\right)= \left(\frac{\pi}{d}\right)^{s-\frac{1}{2}} \frac{\Gamma(\frac{1-s}{2})}{\Gamma(\frac{s}{2})}$, with this definition being the obvious extension of $J_{DL}(A)$ defined in Equation~\eqref{def:JDL}. Here $K,L,M$ are finite, disjoint sets of integers and $A,B$ are finite sets of complex numbers. The sum over $d$ is over fundamental discriminants.
\end{theorem}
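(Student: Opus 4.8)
The plan is to follow exactly the strategy used in the orthogonal case (Theorem~\ref{thm:RMortho1} and Theorem~\ref{thm:LF1}), since Theorem~\ref{thm:DLF1} is the symplectic-$L$-function analogue of those statements and, as the paper repeatedly stresses, the symplectic calculations mirror the orthogonal ones step for step. First I would start from the $n$-level density $S_n^{DL}(f)$ written as a sum over the heights of the zeros $\gamma_{t,d}$, and for each member $L(s,\chi_d)$ of the family apply Cauchy's residue theorem to replace each sum over zero heights by a contour integral of the logarithmic derivative. Concretely, writing $g(z)=L(\tfrac12+z,\chi_d)$ (or, after the change of variable $\theta=z/i$, working with a rectangular/strip contour around the relevant portion of the imaginary axis), each zero $\tfrac12+i\gamma$ becomes a simple pole of $\tfrac{L'}{L}(\tfrac12+z,\chi_d)$ with residue $1$, so that
\begin{align*}
\sum_{t_1,\ldots,t_n>0} f(\gamma_{t_1,d},\ldots,\gamma_{t_n,d})
= \frac{1}{(2\pi i)^n}\int_{\mathcal C}\cdots\int_{\mathcal C}
\prod_{j=1}^n \frac{L'}{L}\Big(\tfrac12+z_j,\chi_d\Big)\,
f\Big(\tfrac{z_1}{i},\ldots,\tfrac{z_n}{i}\Big)\,dz_1\cdots dz_n,
\end{align*}
where $\mathcal C$ is a positively oriented contour (with vertical sides $C_+$ and $C_-$) enclosing the relevant stretch of the imaginary axis. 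Summing over $0<d\le X$ then produces $J_{DL}(\{z_1,\ldots,z_n\})$ inside the integral, using the definition~\eqref{def:JDL} with empty set $B$.

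The second step is to decompose the contour $\mathcal C^n = (\int_{C_+}-\int_{C_-})^n$ into a sum over $2^n$ terms indexed by which variables lie on $C_+$ and which on $C_-$, and then on each $C_-$-variable to apply the functional equation for $L(s,\chi_d)$. Differentiating the functional equation $L(\tfrac12+z,\chi_d)=\psi_{DL}(\tfrac12+z,\chi_d)\,L(\tfrac12-z,\chi_d)$ logarithmically gives
$\tfrac{L'}{L}(\tfrac12+z,\chi_d) = \tfrac{\psi_{DL}'}{\psi_{DL}}(\tfrac12+z,\chi_d) - \tfrac{L'}{L}(\tfrac12-z,\chi_d)$,
so each $C_-$-variable splits into two terms, one carrying the arithmetic/gamma factor $\tfrac{\psi_{DL}'}{\psi_{DL}}$ (this is the source of the set $M$ and of the $(-1)^{|M|}$ and the third argument of $J_{DL}(A,B)$) and one reflecting $z\mapsto -z$. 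Collecting the three possibilities for each index — on $C_+$ (set $K$), on $C_-$ with the reflected log-derivative (set $L$), on $C_-$ with the $\psi_{DL}'/\psi_{DL}$ factor (set $M$) — gives precisely the sum over disjoint $K\cup L\cup M=\{1,\ldots,n\}$ appearing in~\eqref{DLFnostar}, with $J_{DL}(z_K\cup -z_L, z_M)$ assembled from the definition~\eqref{def:DLJ}. Finally, the horizontal segments of $\mathcal C$ cancel in pairs; in the $L$-function case $f$ is not $2\pi$-periodic but instead is assumed to decay fast enough (as in Theorem~\ref{thm:LF1} and the discussion following~\eqref{eq:LFinttomove}) that the horizontal portions of the truncated contours contribute negligibly, which lets us extend the vertical paths to $C_\pm$ running from $\delta\mp\infty i$ to $\delta\pm\infty i$.

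The only genuinely new ingredient relative to the $SO(2N)$ integral theorem is bookkeeping: the symplectic functional equation factor $\psi_{DL}(s,\chi_d)=(\pi/d)^{s-1/2}\Gamma(s/2)/\Gamma((1-s)/2)$ plays the role that $(e^z)^{2N}$ played for orthogonal characteristic polynomials, and the $(-1)^{|M|}$ here plays the role of $(2N)^{|M|}$ there — so in fact no residue computation is needed at this stage (all the poles of $J_{DL}$ and its relatives were handled in Theorem~\ref{thm:DLres}, and are only invoked later, when the contours are pushed onto the imaginary axis). I expect the main obstacle to be purely one of justifying convergence: one must check that the interchange of the sum over $d$ with the contour integral is legitimate and that the tails of the vertical contours, together with the horizontal pieces, really are negligible for the admissible test functions $f$ — exactly the point flagged in the remark after~\eqref{eq:LFinttomove}, where the authors decline to pursue the precise range of validity of the Ratios Conjecture. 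Everything else is the verbatim analogue of the proof of Theorem~\ref{thm:RMortho1}, specialised to $USp$-type symmetry, so I would write ``This follows immediately from Cauchy's Residue Theorem, in a similar manner to the proof of Theorem~\ref{thm:RMortho1} and Theorem~\ref{thm:LF1}'' and supply only the functional-equation substitution as the one line that differs.
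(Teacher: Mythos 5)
Your proposal is correct and follows essentially the same route as the paper, which itself disposes of Theorem~\ref{thm:DLF1} by remarking that it "trivially follows the same arguments as the proof of Theorem~\ref{thm:LF1}," i.e.\ Cauchy's residue theorem applied to $\tfrac{L'}{L}(\tfrac12+z,\chi_d)$, the decomposition $(\int_{C_+}-\int_{C_-})^n$ into $2^n$ terms, and the logarithmic derivative of the functional equation splitting each $C_-$ variable into the $L$- and $M$-type contributions. Your identification of $\psi_{DL}'/\psi_{DL}$ as the analogue of the $-2N$ term (hence $(-1)^{|M|}$ replacing $(2N)^{|M|}$), and your flagging of the decay of $f$ in place of periodicity, match the paper's intent exactly.
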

The proof of this trivially follows the same arguments as the proof of Theorem~\ref{thm:LF1}. Note that again we have an identity giving $J_{DL}(A,B) = J_{DL}^{*}(A,B)+ O\left(X^{\frac{1}{2}+ \epsilon}\right)$ where $A$ is a finite set of complex numbers where $ \frac{1}{log X} \ll \Re(\alpha) < \frac{1}{4} $ and $Im(\alpha) \ll X^{1- \epsilon}$ for $\alpha \in A$ via Theorem~\ref{thm:DL2}. Again, all preceding results translate immediately to $J_{DL}(A,B)$ from the original $J_{DL}(A)$.  Here
\begin{align}
\begin{split}\label{eq:JDLstar2}
J_{DL}^{*}\left(A,B\right) &= \sum_{0 < d \leq X}{\prod_{\beta \in B}{\frac{\psi_{DL}^{'}}{\psi_{DL}}\left(\frac{1}{2}+\beta, \chi_d\right)}\sum_{D \subset A}{\left(\frac{d}{\pi}\right)^{- \sum_{\delta \in D}{\delta}}}}\\
&\qquad \times \prod_{\delta \in D}{\frac{\Gamma\left(\frac{1}{4} - \frac{\delta}{2}\right)}{\Gamma\left(\frac{1}{4}+\frac{\delta}{2}\right)}} \sqrt{\frac{Z_{\zeta}\left(D^{-}, D^{-}\right)Z_{\zeta}\left(D,D\right)Y_{\zeta}\left(D^{-}\right)}{Z_{\zeta}^{\dagger}\left(D^{-},D\right)^2Y_{\zeta}\left(D\right)}}\\
& \qquad \times \left(-1\right)^{|D|}A_{DL}\left(D,D,D\right)\\
& \qquad \times \sum_{\substack{A/D =\\ W_1 \cup \cdots  \cup W_R}}\prod_{r=1}^{R}{\widehat{H_{D}}\left(W_r\right)}.
\end{split}
\end{align}
with all terms as defined in Theorem~\ref{thm:DL2}.

The only possible poles are, as in previous cases, when $\alpha, \beta \in A$ and $\alpha = 0$ or $\alpha = - \beta$ with residues
\begin{align}
\Res_{\alpha^{*} =0} \left(J_{DL}^{*}(A,B)\right) &= 0,\\
\begin{split}
\Res_{\alpha^{*} =-\beta^{*}} \left(J_{DL}^{*}(A,B)\right) &= J_{DL}^{*}\left(A^{'} \cup \{\beta^{*}\},B\right) \\
& \qquad + J_{DL}^{*}\left(A^{'} \cup \{-\beta^{*}\},B\right)\\
& \qquad - J_{DL}^{*}\left(A^{'}, B \cup  \{\beta^{*}\}\right)
\end{split}
\end{align}
where $\psi_{DL}\left(s,\chi_d\right)= \left(\frac{\pi}{d}\right)^{s-\frac{1}{2}} \frac{\Gamma(\frac{s}{2})}{\Gamma(\frac{1-s}{2})}$ comes from the functional equation. Reusing our earlier notation defined in Equation~\eqref{def:LFsum notation}
\begin{align}\label{eq:sumnotation2}
\sideset{}{^{n,R}}\sum = \sum^{\infty}_{\substack{j_1, \cdots, j_n =1 \\ j_i \neq j_k \forall i,k>R}}.
\end{align}

For fixed sets $K,L,M$ such that $K \cup L \cup M = \{1, \cdots, n\}$ let $I^{n,R}_{f,K,L,M}$ be the integral in Theorem~\ref{thm:DLF1} with $N-R$ of the integrals shifted onto the imaginary axis. All the integrals on the imaginary axis are principal value integrals.
\begin{align}
\begin{split}
I^{n,R}_{f,K,L,M}=& \int^{i\infty}_{-i\infty}{ \cdots \int^{i\infty}_{-i\infty}{ \int_{C^{K \cap \{1, \cdots, R\}}_+}{\int_{C^{(L \cup M) \cap \{1, \cdots, R\}}_-}{J_{DL}^{*}(z_K \cup -z_L, z_M)}}}}\\
& \qquad \qquad \times f(iz_1, \cdots, iz_n) dz_1 \cdots dz_R d z_{R+1} \cdots dz_n.
\end{split}
\end{align}
Then it can be proved, using the same ideas as the proof of Theorem~\ref{thm:LFmain} that
\begin{theorem} \label{thm:DLFmain}
Assume Conjecture~\ref{conj:DLratios}. With the notation defined above, $0\leq R \leq n$, and $d$ summed over fundamental discriminants,
\begin{align}
\begin{split}
(2 \pi i)^n 2^n &\sum_{\substack{0 < d\leq X}}{\sideset{}{^{n,R}}\sum{f(\gamma_{j_1,d}, \cdots, \gamma_{j_n,d}) }} \\
&= \sum_{K \cup L \cup M = \{1, \cdots, n\}}{(-1)^{|M|}I^{n,R}_{f,K,L,M}} + O\left(X^{1/2+\epsilon}\right). \label{DLFstatement}
\end{split}
\end{align}
\end{theorem}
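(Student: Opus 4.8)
The plan is to mirror the proof of Theorem~\ref{thm:LFmain} essentially line by line, since that argument uses only three ingredients, each of which has an exact counterpart here: (i) the contour-integral representation of the $n$-level density $S_n^{DL}(f)$, which is Theorem~\ref{thm:DLF1}; (ii) the identity $J_{DL}(A,B)=J_{DL}^{*}(A,B)+O(X^{1/2+\epsilon})$ valid on the parameter ranges inherited from Conjecture~\ref{conj:DLratios}, recorded after~\eqref{eq:JDLstar2}; and (iii) the residue structure of $J_{DL}^{*}$. For the last point, Theorem~\ref{thm:DLres}, extended in the obvious way to the two-set quantity $J_{DL}^{*}(A,B)$, gives (with $A'=A\backslash\{\alpha^{*},\beta^{*}\}$)
\begin{equation*}
\Res_{\alpha^{*}\rightarrow-\beta^{*}}J_{DL}^{*}(A,B)=J_{DL}^{*}(A'\cup\{\beta^{*}\},B)+J_{DL}^{*}(A'\cup\{-\beta^{*}\},B)-J_{DL}^{*}(A',B\cup\{\beta^{*}\}),
\end{equation*}
and the pole at $\alpha^{*}=0$ has zero residue. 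The extra digamma prefactors $\frac{\psi_{DL}^{'}}{\psi_{DL}}(\tfrac12+\beta^{*},\chi_d)$ contribute no new poles on the contours since $|\Re(\beta^{*})|<\tfrac12$. This residue formula is \emph{identical in shape} to the one used for the $SO(2N)$-type families in Theorem~\ref{thm:LFmain}, so the entire combinatorial bookkeeping carries over verbatim.

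Concretely, I would run a double induction: an outer induction on $n$ and, for fixed $n=p$, an inner downward induction on $R$ from $R=p$ to $R=0$. The base case $n=1$, $R=0$ is checked directly by shifting the single $C_{\pm}$ contour in Theorem~\ref{thm:DLF1} onto the imaginary axis; as $J_{DL}^{*}$ has no pole on that axis except possibly at the origin, where the residue vanishes, the computation reproduces $\sum_{K\cup L\cup M=\{1\}}(-1)^{|M|}I^{1,0}_{f,K,L,M}$ up to $O(X^{1/2+\epsilon})$, exactly as in~\eqref{eq:n1R0} and its elliptic-curve analogue. For fixed $n=p$ the case $R=p$ is just the statement that none of the integrals has yet been shifted, i.e. Theorem~\ref{thm:DLF1} combined with $J_{DL}=J_{DL}^{*}+O(X^{1/2+\epsilon})$.

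For the inner inductive step I would move the $z_S$-contour (with $S$ the largest index still off the axis) onto the imaginary axis, picking up \emph{half} the residues at $z_S=\pm z_t$ for every $t>S$ with $t\notin M$, by the Sokhotski--Plemelj theorem. Using the residue formula above, and noting that the term $-J_{DL}^{*}(\,\cdot\,,B\cup\{\beta\})$ is precisely what converts the running sign $(-1)^{|M|}$ into $(-1)^{|M\cup\{t\}|}$, one obtains, in analogy with~\eqref{eq:LFrepeatf}, the sum $\sum(-1)^{|M|}I^{p,S-1}_{f,K,L,M}$ together with a $4\pi i$ multiple of integrals in which $f$ has a repeated variable $z_S=z_t$. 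Relabelling the $p-1$ surviving variables and introducing $g_{t,S}$ as at~\eqref{eq:g}, then applying the set-partition identity for sums over disjoint $K\cup L\cup M$ used in the proof of Theorem~\ref{thm:orthomain}, these extra terms are recognised as $I^{p-1,S-1}_{g_{t,S},K,L,M}$-type integrals; invoking the outer inductive hypothesis at $n=p-1$, $R=S-1$ rewrites them as $n$-level density sums for the $g_{t,S}$. Matching against the decomposition $\sum^{p,S}=\sum^{p,S-1}+\sum_{t>S}\sum^{p-1,S-1}g_{t,S}$ of the left-hand side then yields~\eqref{DLFstatement} for $(n,R)=(p,S-1)$, completing both inductions. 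Throughout, the $O(X^{1/2+\epsilon})$ error is stable: it is uniform in the parameters and is unaffected by integration against the rapidly decaying test function, so it may be pulled outside all contour integrals as in~\eqref{eq:LFinttomove}.

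The step I expect to require the most care is the residue bookkeeping: verifying that, after the half-residue at $z_S=\pm z_t$ is taken, the three terms $-J_{DL}^{*}(\cdots,z_{M\cup\{t\}})$, $J_{DL}^{*}(\cdots\cup-z_{L'\cup\{t\}},z_M)$ and $J_{DL}^{*}(z_{K'\cup\{t\}}\cup\cdots,z_M)$ reassemble, together with the $(-1)^{|M|}$ weights and the four placements of $S$ and $t$ among $K$ and $L$, into exactly the claimed sum over disjoint triples $K'\cup L'\cup M=\{1,\dots,p\}\backslash\{S,t\}$ with overall factor $4\pi i$, and confirming that the $\frac{\psi_{DL}^{'}}{\psi_{DL}}$ prefactors introduce no poles along the integration contours. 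Once these points are settled, the argument is a direct transcription of the proof of Theorem~\ref{thm:LFmain}, with $J_{E}^{*}$ replaced by $J_{DL}^{*}$ and the family $E^{+}$ replaced by $D^{+}$.
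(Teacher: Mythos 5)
Your proposal is correct and follows exactly the route the paper intends: the paper gives no separate proof of Theorem~\ref{thm:DLFmain}, stating only that it follows ``using the same ideas as the proof of Theorem~\ref{thm:LFmain}'', and your write-up is precisely that transcription — the contour representation of Theorem~\ref{thm:DLF1}, the replacement $J_{DL}=J_{DL}^{*}+O(X^{1/2+\epsilon})$, the two-set residue formula recorded after~(\ref{eq:JDLstar2}), and the double induction with Sokhotski--Plemelj half-residues. Nothing further is needed.
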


This immediately gives us the result for the $n$-level density of the zeros of $D^{+} = \left\{L(s, \chi_d) : d>0\right\}$ averaged over the family.
We can state our final result for the $n$-level density of zeros of families Dirichlet $L$-functions with real quadratic characters.
\begin{theorem} ($n$-level Density of Zeros of Families of Quadratic Dirichlet $L$-functions)  \label{thm:DLNcorr}
Assume Conjecture~\ref{conj:DLratios}. Then
\begin{align}
\begin{split}
&\sum_{\substack{0 < d\leq X}}{\sideset{}{^{n,0}}\sum{f(\gamma_{j_1,d}, \cdots, \gamma_{j_n,d}) }}\\
&= \frac{1}{(2 \pi )^n}\sum_{K \cup L \cup M= \{1 \cdots, n\}}{(-1)^{|M|}\left(\int_{0}^{\infty}\right)^n J_{DL}^{*}(-iz_K \cup i z_L, -iz_M)}\\
& \qquad \qquad \times f(z_1, \cdots, z_n)dz_1 \cdots dz_n + O\left(X^{\frac{1}{2}+\epsilon}\right)
\end{split}
\end{align}
with $J_{DL}^{*}(A,B)$ as defined in (\ref{eq:JDLstar2}) and the sum notation defined at (\ref{eq:sumnotation2}) indicating a sum over zeros with distinct indices.  The sum in $d$ is over fundamental discriminants.
\end{theorem}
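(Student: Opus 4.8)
The plan is to deduce Theorem~\ref{thm:DLNcorr} from Theorem~\ref{thm:DLFmain} specialised to $R=0$ by precisely the change-of-contour and symmetrisation argument already carried out in the elliptic curve case when passing to Theorem~\ref{thm:LFNcorr}. The two derivations are formally identical once one observes that the extra gamma-factor $\frac{\psi_{DL}'}{\psi_{DL}}(\tfrac12+\beta,\chi_d)$ plays exactly the role that $\frac{\psi'}{\psi}(\tfrac12+\beta,\chi_d)$ did there; since Theorem~\ref{thm:DLFmain} is itself proved by mimicking Theorem~\ref{thm:orthomain}, and the residue structure of $J_{DL}^*(A,B)$ is the same as in the orthogonal/symplectic random matrix setting, essentially no new ideas are needed.

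First I would take the $R=0$ instance of Theorem~\ref{thm:DLFmain}, in which every contour in $I^{n,0}_{f,K,L,M}$ is a principal-value integral along the imaginary axis. Performing the change of variables $\theta_j = i z_j$ in each variable converts each $\int_{-i\infty}^{i\infty}$ into $\int_{-\infty}^{\infty}$, replaces $J_{DL}^*(z_K\cup -z_L, z_M)$ by $J_{DL}^*(-i z_K\cup i z_L, -i z_M)$, and produces an overall factor $i^n$ which combines with the $(2\pi i)^n$ on the left to leave $(2\pi)^n 2^n$ there. Next I would symmetrise: because the sum $\sum_{K\cup L\cup M=\{1,\dots,n\}}(-1)^{|M|}(\cdots)$ is invariant under interchanging $K$ and $L$, and because $\frac{\psi_{DL}'}{\psi_{DL}}(\tfrac12+\beta,\chi_d) = \log(\pi/d) + \tfrac12\frac{\Gamma'}{\Gamma}(\tfrac14+\tfrac\beta2) + \tfrac12\frac{\Gamma'}{\Gamma}(\tfrac14-\tfrac\beta2)$ is even in $\beta$, the integrand $\sum_{K,L,M}(-1)^{|M|}J_{DL}^*(-iz_K\cup iz_L,-iz_M)$ is unchanged under $z_j\mapsto -z_j$. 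Combined with the hypothesis $f(\pm x_{j_1},\dots,\pm x_{j_n}) = f(x_{j_1},\dots,x_{j_n})$, this lets me replace each $\int_{-\infty}^{\infty}$ by twice $\int_0^\infty$, contributing a further $2^n$ that cancels the $2^n$ already present; dividing by $(2\pi)^n$ then gives the stated identity with the $O(X^{1/2+\epsilon})$ error carried through unchanged (it is unaffected by the integrations in $z_j$), exactly as in the proof of Theorem~\ref{thm:LFNcorr}.

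The one point requiring care — which I would flag as the main obstacle, though it is still routine — is verifying that $\sum_{K\cup L\cup M=\{1,\dots,n\}}(-1)^{|M|}J_{DL}^*(-iz_K\cup iz_L,-iz_M)$ has no poles on the region of integration $(0,\infty)^n$, so that the principal-value integrals are genuine integrals. By the Residue Theorem (Theorem~\ref{thm:DLres}, in the $J_{DL}^*(A,B)$ notation) a single $J_{DL}^*$ has poles only at $z_i=\pm z_j$ and $z_i=0$; since $z_i\ge 0$ on the contour only $z_i = z_j$ is relevant, and there the residues of the two terms with $\{i,j\}$ split oppositely between $K$ and $L$ cancel because $\Res_{x=y}g(x,y) = -\Res_{x=y}g(-x,-y)$. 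The digamma contributions from the $\frac{\psi_{DL}'}{\psi_{DL}}$ factors have poles only at $\beta\in\{\pm\tfrac12,\pm\tfrac52,\dots\}$, which lie outside the strip $|\Re\beta|<\tfrac12$ enforced by the hypotheses of Conjecture~\ref{conj:DLratios}, so they introduce nothing new. Hence any would-be singular set has complex codimension at least two and is removable (Corollary 7.3.2 of \cite{krantz1982function}), just as in Section~\ref{sub:ncorr}, which completes the argument.
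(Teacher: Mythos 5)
Your proposal is correct and follows essentially the same route as the paper: Theorem~\ref{thm:DLNcorr} is obtained from the $R=0$ case of Theorem~\ref{thm:DLFmain} by the change of variables $\theta_j=iz_j$, the symmetrisation using the evenness of $f$ and of $\frac{\psi_{DL}'}{\psi_{DL}}(\tfrac12+\beta,\chi_d)$ in $\beta$, and the residue-cancellation argument from Section~\ref{sub:LFncorr} showing there are no poles on the path of integration. Your explicit verification of the evenness of the gamma factor and of the location of the digamma poles is a correct filling-in of details the paper leaves implicit.
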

Using the same argument as in Section~\ref{sub:LFncorr}, it is clear there are no poles on the paths on integration.

\section{$n$-level Densities with Restricted Support}\label{sec:restricted}

Currently it is extremely difficult to verify that number theoretic results on $n$-level densities agree with theorems from random matrix theory. These calculations always involve a test function (for example the function $f$ in (\ref{eq:generaln-level})) which is sampled at positions of the zeros, and theorems can only be proven if the Fourier transform of this test function has its support restricted to a fixed interval around the origin.  Rudnick and Sarnak~\cite{rudnick1996zp} and Rubinstein~\cite{rubinstein2001low} both use ad hoc methods to compare their calculations with support restricted to $(-1,1)$, for $L$-functions with unitary and symplectic symmetry respectively. Gao~\cite{gao2008n} extended Rubinstein's calculation to $(-2,2)$, but was only able to verify his result for $n \leq 3$. Levinson and Miller~\cite{levinson2012n} extended this to $n \leq 7$ and Entin et al.\cite{kn:entrodrud} verified it for all $n$, by-passing the complicated combinatorics that made previous calculations so difficult and using a function field analogue.  Still it remains clear that there is no consistent method of verifying these connections and that in every case the task is non-trivial.

Conrey and Snaith \cite{conrey2012restricted} used their random matrix calculations from the earlier paper  \cite{conrey2008ce} to allow a more accessible $n$-level density comparison between  random matrix theory and the Riemann zeros. They demonstrate that the way they write the random matrix theory $n$-point correlation formulae in \cite{conrey2008ce} allows immediate identification of
which terms remain when restrictions are placed on the support of the Fourier transform of the test function and so the simplified expression can be used to compare with a number theory calculation where the support is restricted.

We give a sketch of the way in which the same ideas can be applied to the $n$-level density of eigenangles of orthogonal (see Section \ref{sub:restrictedortho}) or symplectic matrices (in Section \ref{sub:restrictedsymp}) when similar restrictions are placed on the support of the test function. These sections give expressions for $n$-level densities of eigenvalues of orthogonal and symplectic matrices when the Fourier transform of the test function has any range of support, reducing the number of terms under consideration when matching with number theoretic results. These results  should allow for easier and more straightforward verification of whether number theoretic results limit to random matrix expressions in the orthogonal and symplectic cases.  Sections \ref{sub:restrictedortho} and \ref{sub:restrictedsymp} describe the principle behind the calculations, while the details would depend on the particular family of $L$-functions and the set-up of the Fourier transform and any smoothing or weighting functions used;  this is the subject of future work.

\subsection{Orthogonal} \label{sub:restrictedortho}

Recall from Section~\ref{sub:ncorr}, that Lemma~\ref{thm:RMorthoJstar} tells us that for $n \leq N$ and $f$ a $2\pi$-periodic,
holomorphic function of $n$ variables such that
\begin{align}
f\left(x_1, \cdots, x_n\right) = f\left(\pm x_1, \cdots, \pm
x_n\right)
\end{align}
then
\begin{align}
\begin{split}
2^n &\int_{SO\left(2N\right)} {\sum_{j_1, \cdots, j_n =1}^N {f\left(\theta_{j_1}, \cdots,
\theta_{j_n}\right) dX} } \\
&= \frac{1}{\left(2 \pi i \right)^n} \sum_{K \cup L \cup M = \left\{1, \cdots,
n\right\}}{ (2N) ^{\left|M\right|}} \\
& \qquad \times \int_{C_+^K} {\int_{C_-^{L+M}}{J^{*}\left(z_K \cup - z_L\right)f\left(iz_1, \cdots,
iz_n\right)dz_1 \cdots d z_n}}.
\end{split}
\end{align}
with $J^{*}(A)$ etc as described in Section~\ref{sub:ncorr}.

For consistency we rewrite this in terms of the notation used by Conrey and Snaith~\cite{conrey2012restricted}. Firstly we redefine our idea of eigenangles so that we have an infinite set of points. For $X$ a $2N \times 2N$ orthogonal matrix with eigenvalues $e^{\pm i\theta_n}$, $n=1,\ldots, N$, we will consider the statistics of the set of points
\begin{align}
\cdots \theta_{-R} \leq \theta_{_R+1} \leq \cdots \leq \theta_0 \leq \theta_1 \leq \cdots \leq \theta_R \leq \theta_{R+1} \leq \cdots
\end{align}
where
\begin{align}
\theta_{r+2kN}= \theta_{r}+ 2\pi k.
\end{align}
Take $F(x_1, \cdots, x_n)$ a holomorphic function of $n$ variables such that
\begin{align}
F\left(x_1, \cdots, x_n\right) = F\left(\pm x_1, \cdots, \pm
x_n\right).
\end{align}Let $\delta >0$ and $\int_{(\delta)}$ indicate integration from $\delta -i \infty$ to $\delta + i \infty$ and assume $F(x_1, \cdots, x_n)$ decays rapidly in each variable in horizontal strips. Then a simple extension of Lemma~\ref{thm:RMorthoJstar} gives
\begin{align}
\begin{split}
 &\int_{SO\left(2N\right)} {\sum_{j_1, \cdots, j_n =-\infty}^\infty {F\left(\theta_{j_1}, \cdots,
\theta_{j_n}\right) dX} } \\
&= \frac{1}{\left(2 \pi i \right)^n} \sum_{K \cup L \cup M = \left\{1, \cdots,
n\right\}}{ (2N) ^{\left|M\right|}} \\
& \qquad \times \int_{\left(\delta\right)^{|K|}} {\int_{(-\delta)^{|L|}}{\int_{\left(0\right)^{|M|}}}{J^{*}\left(z_K \cup - z_L\right)F\left(iz_1, \cdots,
iz_n\right)dz_1 \cdots d z_n}}. \label{eq:Oworking}
\end{split}
\end{align}
where
\begin{align}
\begin{split}\label{def:OJstar}
J^*(A) &= \sum_{D\subseteq A}{e^{-2N \sum\limits_{d \in D}{d}}(-1)^{|D|} \sqrt{\frac{Z(D,D)Z(D^-,D^-)Y(D)}{Y(D^-)Z^{\dag}(D^{-},D)^2}}}\\
& \qquad \times \sum_{\substack{A/D = W_1 \cup \cdots \cup W_R \\ |W_r| \leq 2}}{\prod_{r=1}^{R}{H_D(W_r)}}
\end{split}
\end{align}
where $H_D(W_r), Y, Z$ are defined in Equations~\ref{def:HD}-~\ref{def:Z2}.

We apply this with $F(x_1, \cdots,x_n) = f\left(\frac{Nx_1}{ 2\pi},\cdots,\frac{Nx_n}{2\pi}\right)$
and rewrite $f$ in terms of its Fourier transform, $\Phi_f$, so that we can investigate the effect of limiting the support of $\Phi_f$.
\begin{align}
\begin{split}
&f\left(\frac{iNz_1}{ 2\pi},\cdots,\frac{iNz_n}{2\pi}\right) \\
& \qquad =\int_{\mathbb{R}^n}{\Phi_f\left(\xi_1, \cdots, \xi_n\right)} e\left(-\frac{iNz_1\xi_1}{ 2\pi}- \cdots - \frac{iNz_n\xi_n}{ 2\pi}\right) d \xi_1 \cdots d \xi_n \label{eq:fourier}
\end{split}
\end{align}
where
\begin{align}
e\left(-\frac{iNz_1\xi_1}{ 2\pi}- \cdots - \frac{iNz_n\xi_n}{ 2\pi}\right) = e^{Nz_1\xi_1+ \cdots + Nz_n\xi_n}.
\end{align}
Note that $|\Re(z_i)|\leq \delta$ for all $i$ and suppose that $\Phi_f\left(\xi_1, \cdots, \xi_n\right) = 0$ if $|\xi_1|+ \cdots +|\xi_n| > 2q-\epsilon$ for  some $\epsilon>0$.  That is,  $f$ has restricted support in the sense that arises in $n$-level density calculations in a number theoretical context. Therefore
\begin{align}
\left| e^{Nz_1\xi_1+ \cdots + Nz_n\xi_n}\right| \leq e^{N \delta \left(2 q -\epsilon\right)}. \label{Ofactor1}
\end{align}

Now consider sets $D$ where $|D|\geq q$. Then, as $|\Re(d)|= \delta, \;\forall d \in D$, the exponential term in $J^{*}(A)$ above is bounded by
\begin{align}
\left|e^{-2N \sum_{d \in D}{d}}\right| \leq e^{-2N\delta q} \label{Ofactor2}.
\end{align}
Clearly then the product of these two factors in Equations~\ref{Ofactor1} and \ref{Ofactor2} is also bounded
\begin{align}\label{eq:deltatoinfinity}
\left|e^{Nz_1\xi_1+ \cdots + Nz_n\xi_n}e^{-2N \sum_{d \in D}{d}}\right| &\leq e^{-N \delta \epsilon}\\
& \rightarrow 0 \text{ as } \delta \rightarrow \infty.\nonumber
\end{align}
It is significant that (\ref{eq:deltatoinfinity}) tends to zero as $\delta$ goes to infinity because we know from Section \ref{sub:residue} that we can shift the $(\delta)$ contours far to the right and the $(-\delta)$ contours far to the left without encountering any poles of the integrand $J^*(z_K\cup -z_L)\;F(iz_1, \ldots, iz_n)$ of (\ref{eq:Oworking}).  Thus if $F$ decays fast enough as any $z$ variable strays far from the real axis then (\ref{eq:Oworking}) doesn't change as the contours are shifted to infinity. It can be shown as in \cite{conrey2012restricted} that all factors in the integrand other than those considered in  (\ref{eq:deltatoinfinity}) can be bounded and in this way we see that any term with $|D|\geq q$ in (\ref{def:OJstar}) contributes nothing to the integral.
With this in mind we define
\begin{align}
\begin{split}\label{def:OJq}
J_q^*(A) &= \sum_{\substack{D\subseteq A\\ |D| < q}}{e^{-2N \sum\limits_{\delta \in D}{\delta}}(-1)^{|D|} \sqrt{\frac{Z(D,D)Z(D^-,D^-)Y(D)}{Y(D^-)Z^{\dag}(D^{-},D)^2}}}\\
& \qquad \times \sum_{\substack{A/D = W_1 \cup \cdots \cup W_R \\ |W_r| \leq 2}}{\prod_{r=1}^{R}{H_D(W_r)}}.
\end{split}
\end{align}

When the support of $f$ is limited to $2q$, we can rewrite the result in Lemma~\ref{thm:RMorthoJstar} to give
\begin{theorem} \label{thm:Jq}
Let $\delta >0$ and $\int_{(\delta)}$ indicate integration from $\delta -i \infty$ to $\delta + i \infty$. Take $F(x_1, \cdots, x_n)$ a holomorphic function of $n$ variables such that
\begin{align}
F\left(x_1, \cdots, x_n\right) = F\left(\pm x_1, \cdots, \pm
x_n\right).
 \end{align}
 and assume $F(x_1, \cdots, x_n)= f\left(\frac{Nx_1}{ 2\pi},\cdots,\frac{Nx_n}{2\pi}\right)$ decays rapidly in each variable in horizontal strips. With $\Phi_f$ defined as in Equation~\ref{eq:fourier}, let $\Phi_f\left(\xi_1, \cdots, \xi_n\right) = 0$ if $|\xi_1|+ \cdots +|\xi_n| > 2q-\epsilon$ for some $\epsilon>0$. Then
\begin{align}
\begin{split}
 &\int_{SO\left(2N\right)} {\sum_{j_1, \cdots, j_n =-\infty}^\infty {F\left(\theta_{j_1}, \cdots,
\theta_{j_n}\right) dX} } \\
&= \frac{1}{\left(2 \pi i \right)^n} \sum_{K \cup L \cup M = \left\{1, \cdots,
n\right\}}{ (2N) ^{\left|M\right|}} \\
& \qquad \times \int_{\left(\delta\right)^{K}} {\int_{(-\delta)^{L}}{\int_{\left(0\right)^{M}}}{J_q^{*}\left(z_K \cup - z_L\right)F\left(iz_1, \cdots,
iz_n\right)dz_1 \cdots d z_n}}.
\end{split}
\end{align}
where $H_D(W_r), Y, Z$ are defined in Equations~\ref{def:HD}-~\ref{def:Z2} and $J^{*}_q(A)$ defined in Equation~\ref{def:OJq} above.
\end{theorem}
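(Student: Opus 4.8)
The plan is to start from the unrestricted contour-integral expression (\ref{eq:Oworking}) --- the version of Lemma~\ref{thm:RMorthoJstar} for the periodically extended (infinite) set of eigenangles, with the $z_k$-contours on the line $(\delta)$, the $z_l$-contours on $(-\delta)$ and the $z_m$-contours on $(0)$ --- and to show that, once $f$ is written through its Fourier transform, every summand of $J^*$ with $|D|\ge q$ integrates to zero, so that $J^*$ may be replaced by the truncation $J_q^*$ of (\ref{def:OJq}).

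First I would record that (\ref{eq:Oworking}) holds for every $\delta>0$ with the same value of the integral. Indeed, each summand $J^*(z_K\cup -z_L)$ has, by the residue lemma (Theorem~\ref{thm:residuelemma}), poles only where $\alpha^*+\beta^*=0$ or $\alpha^*=0$ for elements $\alpha^*,\beta^*$ of its argument; but every element of $z_K\cup -z_L$ has real part exactly $\delta>0$, so no such coincidence occurs, and each summand is holomorphic on $\{\Re z_k>0\ (k\in K),\ \Re z_l<0\ (l\in L)\}$. Since $F$ decays in horizontal strips, Cauchy's theorem lets us slide each $(\delta)$-contour to the right and each $(-\delta)$-contour to the left with no change in value; thus the integral of each summand is independent of $\delta$.

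Next, insert the Fourier representation (\ref{eq:fourier}) of $f$, interchange the $\xi$- and $z$-integrations, and split the $D$-sum in $J^*$ into $|D|<q$ and $|D|\ge q$. In a term with $|D|\ge q$, the elements $d\in D$ lie in $z_K\cup -z_L$ and hence have $\Re d=\delta$, so $|e^{-2N\sum_{d\in D}d}|\le e^{-2Nq\delta}$ as in (\ref{Ofactor2}); combined with $|e^{N(z_1\xi_1+\cdots+z_n\xi_n)}|\le e^{N\delta(2q-\epsilon)}$, which uses $|\Re z_i|\le\delta$ together with the support hypothesis $|\xi_1|+\cdots+|\xi_n|<2q-\epsilon$, this gives the factor $e^{-N\delta\epsilon}$ of (\ref{eq:deltatoinfinity}). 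The remaining factors in the integrand --- $\Phi_f$, the quotient $\sqrt{Z(D,D)Z(D^-,D^-)Y(D)/(Y(D^-)Z^\dagger(D^-,D)^2)}$, and the $H_D(W_r)$ --- are then bounded by a $\delta$-independent, integrable function, exactly as in~\cite{conrey2012restricted}; hence the $|D|\ge q$ part of (\ref{eq:Oworking}) is $O(e^{-N\delta\epsilon})$. Being simultaneously independent of $\delta$ (by the previous paragraph, applied to this sub-sum) and $o(1)$ as $\delta\to\infty$, it must vanish identically, so only the $|D|<q$ terms survive and (\ref{eq:Oworking}) becomes the asserted identity with $J_q^*$ in place of $J^*$.

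The main obstacle is the uniform bound on the "remaining factors" as $\delta\to\infty$: one needs the growth of $z(x)=1/(1-e^{-x})$, $z'/z(x)$ and $(z'/z)'(x)$ as $\Re x\to\pm\infty$ (they tend to $1$, $0$ and $0$ respectively), together with control of the quotient inside $J^*$ when two of its arguments share the real part $\delta$, so that, after extracting the exponentials already isolated in (\ref{eq:deltatoinfinity}), what is left is dominated by a fixed integrable function of the $\xi$'s and of the heights of the $z$'s. This bookkeeping is precisely the one carried out in~\cite{conrey2012restricted}, and it transfers once the orthogonal $J^*$ is written in the form (\ref{def:OJstar}); beyond this, the argument is just the combinatorics of splitting $J^*=J_q^*+(\text{tail})$ and observing that the tail integrates to $0$.
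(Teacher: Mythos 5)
Your proposal is correct and follows essentially the same route as the paper: starting from the infinite-sum version of Lemma~\ref{thm:RMorthoJstar} in (\ref{eq:Oworking}), inserting the Fourier representation (\ref{eq:fourier}), combining the bounds (\ref{Ofactor1}) and (\ref{Ofactor2}) into (\ref{eq:deltatoinfinity}), and using the absence of poles of $J^{*}(z_K\cup -z_L)$ in the relevant half-planes (Theorem~\ref{thm:residuelemma}) to push the contours to infinity so that every summand with $|D|\ge q$ vanishes. Your explicit framing of the last step as ``$\delta$-independent plus $o(1)$ as $\delta\to\infty$, hence identically zero'' is just a cleaner phrasing of the paper's contour-shifting argument, and your deferral of the uniform bounds on the non-exponential factors to \cite{conrey2012restricted} matches what the paper itself does.
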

\subsubsection{Special Case $q=1$}

We particularly want to consider the case $q=1$ as the terms simplify dramatically, and should allow for for immediate identification with the terms survive the restrictions in the number theory case when the support is in $(-2,2)$.

Consider Theorem~\ref{thm:Jq} in the case that $q=1$. Then in the sum within $J^{*}_q(A)$ the only valid term is that which arises from $D= \emptyset$.
\begin{align}
\begin{split}
J_1^*(A) &= \sum_{\substack{D\subseteq A\\ |D| < 1}}{e^{-2N \sum\limits_{\delta \in D}{\delta}}(-1)^{|D|} \sqrt{\frac{Z(D,D)Z(D^-,D^-)Y(D)}{Y(D^-)Z^{\dag}(D^{-},D)^2}}}\\
& \qquad \times \sum_{\substack{A/D = W_1\cup \cdots \cup W_R \\ |W_r| \leq 2}}{\prod_{r=1}^{R}{H_D(W_r)}}
\end{split}\\
&= \sum_{\substack{A = W_1\cup \cdots \cup W_R \\ |W_r| \leq 2}}{\prod_{r=1}^{R}{H_{\emptyset}(W_r)}}
\end{align}
where
\begin{align}
H_{\emptyset}\left(W_r\right) =
\begin{cases}
-\frac{z^{'}}{z}\left(2 \alpha\right) & W_r = \{\alpha\}\\
\left(\frac{z^{'}}{z}\right)^{'}\left(\alpha + \beta\right) & W_r=\{\alpha, \beta\}.
\end{cases}
\end{align}

\subsection{Symplectic} \label{sub:restrictedsymp}

We can calculate a similar result for the eigenangles of symplectic matrices.  For $X$ an $2N \times 2N$ symplectic matrix, we extend its eigenvalues as before
\begin{align}
\cdots \theta_{-R} \leq \theta_{_R+1} \leq \cdots \leq \theta_0 \leq \theta_1 \leq \cdots \leq \theta_R \leq \theta_{R+1} \leq \cdots
\end{align}
where
\begin{align}
\theta_{r+2kN}= \theta_{r}+ 2\pi k.
\end{align}
Take $F(x_1, \cdots, x_n)$ a holomorphic function of $n$ variables such that
\begin{align}
F\left(x_1, \cdots, x_n\right) = F\left(\pm x_1, \cdots, \pm
x_n\right).
\end{align}
Let $\delta >0$ and $\int_{(\delta)}$ indicate integration from $\delta -i \infty$ to $\delta + i \infty$ and assume $F(x_1, \cdots, x_n)$ decays rapidly in each variable in horizontal strips. Using Lemma~\ref{thm:RMsympJstar} and the notation of Conrey and Snaith \cite{conrey2012restricted} we can see that
\begin{align}
\begin{split}
 &\int_{USp\left(2N\right)} {\sum_{j_1, \cdots, j_n =-\infty}^\infty {F\left(\theta_{j_1}, \cdots,
\theta_{j_n}\right) dX} } \\
&= \frac{1}{\left(2 \pi i \right)^n} \sum_{K \cup L \cup M = \left\{1, \cdots,
n\right\}}{ (2N) ^{\left|M\right|}} \\
& \qquad \times \int_{\left(\delta\right)^{|K|}} {\int_{(-\delta)^{|L|}}{\int_{\left(0\right)^{|M|}}}{J_{USp(2N)}^{*}\left(z_K \cup - z_L\right)F\left(iz_1, \cdots,
iz_n\right)dz_1 \cdots d z_n}}. \label{eq:Uworking}
\end{split}
\end{align}
where
\begin{align}
\begin{split}\label{def:UJstar}
J_{USp(2N)}^*(A) &= \sum_{D\subseteq A}{e^{-2N \sum\limits_{\delta \in D}{\delta}}(-1)^{|D|} \sqrt{\frac{Z(D,D)Z(D^-,D^-)Y(D^-)}{Y(D)Z^{\dag}(D^{-},D)^2}}}\\
& \qquad \times \sum_{\substack{A/D = W_1 \cup \cdots \cup W_R \\ |W_r| \leq 2}}{\prod_{r=1}^{R}{H_D(W_r)}}
\end{split}
\end{align}
and
\begin{align}
H_D(W_r)=
\begin{cases}
\begin{aligned}
&\sum_{d \in D}{\left(\frac{z^{'}}{z}\left( \alpha-d\right)-\frac{z^{'}}{z}\left(\alpha+d\right)\right)}\\
& \qquad + \frac{z^{'}}{z}\left(2 \alpha\right)
\end{aligned} & W_r = \{\alpha\}\\
\begin{aligned}
\left(\frac{z^{'}}{z}\right)^{'}\left(\alpha + \beta\right)\end{aligned} & W_r = \{\alpha, \beta\}\\
1& W_r=\emptyset
\end{cases}
\end{align}

We apply this with $F(x_1, \cdots,x_n) = f\left(\frac{Nx_1}{ 2\pi},\cdots,\frac{Nx_n}{2\pi}\right) $
and as in the orthogonal case we write $f$ in terms of its   Fourier transform.

Suppose that $\Phi_f\left(\xi_1, \cdots, \xi_n\right) = 0$ if $|\xi_1|+ \cdots |\xi_n| > 2q-\epsilon$ for some $\epsilon>0$. Hence
\begin{align}
e\left(-\frac{iNz_1\xi_1}{ 2\pi}- \cdots - \frac{iNz_n\xi_n}{ 2\pi}\right) \leq e^{N \delta \left(2 q -\epsilon\right)}. \label{USpfactor1}
\end{align}
and considering sets $D$ where $|D|\geq q$ in a similar manner to the orthogonal case above,  we can see the exponential term in $J_{USp(2N)}^{*}(A)$ is bounded by
\begin{align}
\left|e^{-2N \sum_{d \in D}{d}}\right| \leq e^{-2N\delta q} \label{USpfactor2}.
\end{align}
Hence the product of these two factors in Equations~\ref{USpfactor1} and \ref{USpfactor2} is bounded by
\begin{align}
\left|e\left(-\frac{iNz_1\xi_1}{ 2\pi}- \cdots - \frac{iNz_n\xi_n}{ 2\pi}\right)e^{-2N \sum_{d \in D}{d}}\right| &\leq e^{-N \delta \epsilon}\\
& \rightarrow 0 \text{ as } \delta \rightarrow \infty.
\end{align}
Define
\begin{align}
\begin{split}\label{def:UJq}
J_{USp(2N),q}^*(A) &= \sum_{\substack{D\subseteq A\\ |D| < q}}{e^{-2N \sum\limits_{\delta \in D}{\delta}}(-1)^{|D|} \sqrt{\frac{Z(D,D)Z(D^-,D^-)Y(D^-)}{Y(D)Z^{\dag}(D^{-},D)^2}}}\\
& \qquad \times \sum_{\substack{A/D = W_1 \cup \cdots \cup W_R \\ |W_r| \leq 2}}{\prod_{r=1}^{R}{H_D(W_r)}}
\end{split}
\end{align}

Thus we can deduce
\begin{theorem} \label{thm:USP1}
Let $\delta >0$ and $\int_{(\delta)}$ indicate integration from $\delta -i \infty$ to $\delta + i \infty$ Take $F(x_1, \cdots, x_n)$ a holomorphic function of $n$ variables such that
\begin{align}
F\left(x_1, \cdots, x_n\right) = F\left(\pm x_1, \cdots, \pm
x_n\right).
 \end{align}
 and assume $F(x_1, \cdots, x_n)= f\left(\frac{Nx_1}{ 2\pi},\cdots,\frac{Nx_n}{2\pi}\right)$ decays rapidly in each variable in horizontal strips. With $\Phi_f$ defined as in Equation~\ref{eq:fourier}, let $\Phi_f\left(\xi_1, \cdots, \xi_n\right) = 0$ if $|\xi_1|+ \cdots +|\xi_n| > 2q-\epsilon$ for some $\epsilon>0$. Then
\begin{align}
\begin{split}
&\int_{USp\left(2N\right)} {\sum_{j_1, \cdots, j_n =-\infty}^\infty {F\left(\theta_{j_1}, \cdots,
\theta_{j_n}\right) dX} } \\
&= \frac{1}{\left(2 \pi i \right)^n} \sum_{K \cup L \cup M = \left\{1, \cdots,
n\right\}}{ (2N) ^{\left|M\right|}} \\
& \qquad \times \int_{\left(\delta\right)^{K}} {\int_{(-\delta)^{L}}{\int_{\left(0\right)^{M}}}{J_{USp(2N),q}^{*}\left(z_K \cup - z_L\right)F\left(iz_1, \cdots,
iz_n\right)dz_1 \cdots d z_n}}. \label{eq:USpworking}
\end{split}
\end{align}
where $J^{*}_{USp(2N), q}(A)$,$H_D(W_r), Y, Z$ are defined as above.
\end{theorem}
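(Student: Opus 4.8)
The plan is to run exactly the argument used for the orthogonal case in Theorem~\ref{thm:Jq}, with every ingredient replaced by its symplectic analogue. First I would start from Lemma~\ref{thm:RMsympJstar}, pass to the bi-infinite eigenangle sequence $\theta_{r+2kN}=\theta_r+2\pi k$ as in the preamble to (\ref{eq:Uworking}), and record that the routine extension of Lemma~\ref{thm:RMsympJstar} (replacing the finite vertical segments by the full lines $(\delta)$, $(-\delta)$, $(0)$, legitimate because $F$ decays rapidly in horizontal strips) gives identity (\ref{eq:Uworking}) with $J^*_{USp(2N)}$ as in (\ref{def:UJstar}). Then I would insert the Fourier representation (\ref{eq:fourier}) of $f$, so the test function contributes the factor $e^{N(z_1\xi_1+\cdots+z_n\xi_n)}$; since $|\Re(z_i)|\le\delta$ and $\Phi_f$ is supported in $|\xi_1|+\cdots+|\xi_n|\le 2q-\epsilon$, this is bounded as in (\ref{USpfactor1}).

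Next I would isolate the terms of the $D$-sum in $J^*_{USp(2N)}(z_K\cup -z_L)$ with $|D|\ge q$. For these, since $|\Re(d)|=\delta$ for every $d\in D$, the prefactor $e^{-2N\sum_{d\in D}d}$ obeys (\ref{USpfactor2}), and the product of (\ref{USpfactor1}) and (\ref{USpfactor2}) is bounded by $e^{-N\delta\epsilon}$, which $\to 0$ as $\delta\to\infty$. The key point I would invoke is that the $(\delta)$-contours may be pushed arbitrarily far to the right and the $(-\delta)$-contours arbitrarily far to the left without the value of the integral in (\ref{eq:Uworking}) changing: by the Residue Theorem for symplectic matrices (Theorem~\ref{thm:sympRes}), the only poles of $J^*_{USp(2N)}(z_K\cup -z_L)$ in the relevant region occur at coincidences $z_i=\pm z_j$ and at $z_i=0$, none of which lies off the imaginary axis, so no pole is crossed as $\delta$ grows. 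Hence the integral is independent of $\delta$ while the $|D|\ge q$ part of the integrand tends to zero; therefore those terms contribute nothing, and $J^*_{USp(2N)}$ may be replaced throughout by $J^*_{USp(2N),q}$ of (\ref{def:UJq}), which is precisely (\ref{eq:USpworking}).

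The main obstacle — as already flagged in \cite{conrey2012restricted} and in the orthogonal discussion — is verifying that \emph{all} the remaining factors of the integrand stay bounded (or decay) as the $z$-variables run off to infinity along the shifted contours: the products of $z(x)=1/(1-e^{-x})$, $z'/z$ and $(z'/z)'$ assembled in $H_D(W_r)$, the $Z$, $Y$ factors, and the oscillatory part of $F$. Here one uses that $z(x)\to 1$ as $\Re(x)\to+\infty$ and $z(x)\to 0$ as $\Re(x)\to-\infty$, that $z'/z$ and $(z'/z)'$ decay exponentially in $|\Re(x)|$, and that $F$ was assumed to decay rapidly in horizontal strips; the only delicate point is the bookkeeping of which variables sit on $(\delta)$, $(-\delta)$ or $(0)$ and how the arguments of the $z$-factors combine them, and this is handled verbatim as in the orthogonal case. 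The special case $q=1$ then collapses the $D$-sum to $D=\emptyset$, exactly paralleling Section~\ref{sub:restrictedortho}.
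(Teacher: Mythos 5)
Your proposal follows essentially the same route as the paper: extend to the bi-infinite eigenangle sequence, pass from Lemma~\ref{thm:RMsympJstar} to the infinite vertical lines, insert the Fourier representation, bound the $|D|\ge q$ terms via the product of the exponential factors, and shift $\delta\to\infty$ without crossing poles (all elements of $z_K\cup -z_L$ keep real part $+\delta$, so the conditions $\alpha^*=-\beta^*$ and $\alpha^*=0$ are never met), with the boundedness of the remaining factors handled as in \cite{conrey2012restricted}. This matches the paper's own (sketched) argument, including the $q=1$ specialisation.
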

\subsubsection{Special Case $q=1$}

Again, we particularly want to consider when $q=1$. Then in the sum within $J^{*}_q(A)$ in Theorem~\ref{thm:USP1} the only valid term is that which arises from $D= \emptyset$.
\begin{align}
\begin{split}
J_1^*(A) &= \sum_{\substack{D\subseteq A\\ |D| < 1}}{e^{-2N \sum\limits_{\delta \in D}{\delta}}(-1)^{|D|} \sqrt{\frac{Z(D,D)Z(D^-,D^-)Y(D^-)}{Y(D)Z^{\dag}(D^{-},D)^2}}}\\
& \qquad \times \sum_{\substack{A/D = W_1 \cup \cdots \cup W_R \\ |W_r| \leq 2}}{\prod_{r=1}^{R}{H_D(W_r)}}
\end{split}\\
&= \sum_{\substack{A = W_1\cup \cdots \cup W_R \\ |W_r| \leq 2}}{\prod_{r=1}^{R}{H_{\emptyset}(W_r)}}
\end{align}
where
\begin{align}
H_{\emptyset}\left(W_r\right) =
\begin{cases}
\frac{z^{'}}{z}\left(2 \alpha\right) & W_r = \{\alpha\}\\
\left(\frac{z^{'}}{z}\right)^{'}\left(\alpha + \beta\right) &W_r= \{\alpha, \beta\}.
\end{cases}
\end{align}

\section{Example Calculations}\label{app:12}

The purpose of this section is to explicitly calculate the $1$ and $2$-level density functions of zeroes of families of $L$-functions of elliptic curves to illuminate the $n$-level density calculations in Section~\ref{sub:LFncorr}. We will compare our solution with a previous result when $n=1$ \cite{huynh2009lowe} and to the limiting behaviour of random matrices when $n=2$\cite{conrey2005note} to demonstrate that our results match up with current knowledge. The sums of the form $0<d\leq X$ in this section are over fundamental discriminants.

\subsection{Calculation of $S^E_1$}

Let $C_{-}$ denote the path from $ - \delta - \infty$ up to $- \delta + \infty$ and let $C_{+}$
denote the path from $  \delta - \infty$ up to $ \delta + \infty$.  Let $f$ be a holomorphic function of one variable such that
\begin{align}
f\left(\theta\right) = f\left(- \theta\right). \label{testf1}
\end{align}
Then
\begin{align}
S^E_1(f) = \sum_{\substack{0 < d \leq X\\ \chi_{d}(-M)\omega_E = 1}}{ \sum_{j>0}f(\gamma_{j,d})}\label{def:s1}
\end{align}
where $\gamma_{j,d}$ is the height of the $j$th zero of the $L$-function $L_{E}(s, \chi_d)$. Then following the calculations in Sections~\ref{sub:LFncorr},
\begin{align}
2(2\pi i) S_1^E(f) &= \left(\int_{C_{+}}-\int_{C_{-}}\right)\sum_{\substack{0 < d \leq X\\ \chi_{d}(-M)\omega_E = +1}} \frac{L_E^{'}}{L_E}(\frac{1}{2} + \alpha, \chi_d)f(-i\alpha) d \alpha\\
&= \sum_{\substack{K \cup L \cup M\\ = \{1\}}}{\int_{C_{+}^{K}}{\int_{C_{-}^{L\cup M}}}{\left(-1\right)^{|M|}J_{E}(z_K \cup -z_L, z_M) f(iz_1)d z_1}}
\\
\begin{split}
&= \sum_{\substack{K \cup L \cup M\\ = \{1\}}}{\int_{C_{+}^{K}}{\int_{C_{-}^{L\cup M}}\left(-1\right)^{|M|}J^{*}_{E}(z_K \cup -z_L, z_M)  f(iz_1)d z_1}}\\
& \qquad  +O\left(X^{\frac{1}{2}+ \epsilon}\right)
\end{split}
\end{align} The final step follows from Theorem~\ref{LFJconj}. We can move the integration onto the imaginary axis as there are no poles when we only have a single variable. Hence
\begin{align}
2(2\pi) S_1^E(f) &= \int_{-\infty}^{\infty}{f(z)\left[\vphantom{\frac{1}{3}}J^{*}_E\left(iz, \emptyset\right)+J^{*}_E\left(- iz, \emptyset\right)\right.}\\
 & {\left.\qquad -J^{*}_E\left(\emptyset,iz\right)\vphantom{\frac{1}{3}}\right] dz}+O\left(X^{\frac{1}{2}+ \epsilon}\right),
\end{align}
recalling that the definition of $J^{*}_E\left(A,B\right)$ is
\begin{align}
\begin{split}
J^{*}_E\left(A,B\right) =& \sum_{\substack{0<d \leq X \\ \chi_d\left(-M\right)\omega_E = +1}}{\prod_{\beta \in B}{\frac{\psi^{'}}{\psi}(\frac{1}{2}+\beta, \chi_d)\sum_{D \subseteq A}{{\left(\frac{\sqrt{M}|d|}{2 \pi}\right)}^{- \sum\limits_{\delta \in D}{2\delta}}}}}\\
& \qquad \times \prod_{\delta \in D}{\frac{\Gamma\left(1- \delta\right)}{\Gamma\left(1+ \delta\right)}} \sqrt{\frac{Z_\zeta \left(D^{-},D^{-}\right)Z_\zeta \left(D,D\right)Y_\zeta \left(D\right)}{Z_\zeta ^{\dag}\left(D^{-},D\right)^{2}Y_\zeta \left(D^{-}\right)}}\\
& \qquad \times \left(-1\right)^{|D|}A_E\left(D,D,D\right)\sum_{\substack{A/D=\\W_1 \cup \cdots \cup W_R}}{\prod_{r=1}^{R}{\widetilde{H_D}\left(W_E\right)}}\\
\end{split}
\end{align}
where
\begin{align}
\widetilde{H_D}\left(W_r\right) &= H_D\left(W_r\right)+ A_{D,1}\left(W_r\right)+ A_{D,2}\left(W_r\right)+ A_{D,3}\left(W_r\right)\\
H_D\left(W_r\right) &= \begin{cases}
\sum_{\delta \in D}\left(\frac{\zeta^{'}}{\zeta}\left(1+ a- \delta \right)-\frac{\zeta^{'}}{\zeta}\left(1+ a +\delta \right)\right)- \frac{\zeta^{'}}{\zeta}\left(1+ 2a\right) & W_r = \{a\}\\
\frac{\zeta^{'}}{\zeta}\left(1+ a_1 + a_2\right) & W_r= \{a_1, a_2\}\\
1 & W_r=\emptyset\\
0 & |W_r|\geq 3
\end{cases}\\
A_{D,i}\left(W_r\right)&= \left.\prod_{w \in W_r}{\frac{\partial}{\partial w}}\log A_{E,i}\left(A,B,D\right)\right|_{B=A}
\end{align}
and $A_E, Z_\zeta, Y_\zeta$ etc defined as in Equations~\ref{eq:LF1} -~\ref{eq:LF2}.

\subsubsection{Checking $S^E_1$ against Known Results}

We want to compare this with the result from \cite{huynh2009lowe} where the authors use the ratios conjecture to calculate the one-level density for a family of elliptic curves.  They show agreement numerically between their formula and the empirical one-level density of the zeros over a long range, in regimes where random matrix behaviour dominates and as the arithmetic terms take over in importance.
\begin{theorem}[Huynh, Keating and Snaith~\cite{huynh2009lowe}] \label{thm:DKKS}
The $1$-level density family of even quadratic
twists of the $L$-function of an elliptic curve with
prime conductor M is
\begin{align}
\begin{split} \label{eq:DK1}
\widetilde{S_1(f)} &= \frac{1}{2\pi}\int_{- \infty}^{\infty}{f(t)\sum_{\substack{0 <d \leq X\\ \chi_d\left(-M\right) \omega_E=1}}\left(2\log \left(\frac{\sqrt{M}|d|}{2\pi}\right)+ \frac{\Gamma^{'}}{\Gamma}\left(1+ it\right)+ \frac{\Gamma^{'}}{\Gamma}\left(1- it\right)\right.}\\
& \qquad + 2\left[- \frac{\zeta^{'}}{\zeta}\left(1+2it\right) + \frac{L^{'}_E}{L_E}\left(sym^2,1+2it\right) + B^{1}_E\left(it,it\right)\right.\\
& \qquad \left.\left. - \left(\frac{\sqrt{M}|d|}{2\pi}\right)^{-2it}\frac{\Gamma(1-it)\zeta(1+2it)L_E(sym^2,1-2it)}{\Gamma(1+it)L_E(sym^2,1)}B_E(-it,it) \right]\right)\\
& \qquad + O\left(X^{\frac{1}{2}+\epsilon}\right)
\end{split}
\end{align}
where
\begin{align}
\widetilde{S_1(f)}= \sum_{\substack{0 < d \leq X\\ \chi_{d}(-M)\omega_E = +1}}\sum_{ j=-\infty}^\infty f(\gamma_{j,d})
\end{align}
 for $f$, an even test function as described in (\ref{testf1}). $\omega_E$ is the sign from the functional equation of $L_E(s)$ as described in Section~\ref{Ellipticderivation}, $L_E(sym^2, s)$ is the symmetric square $L$-function associated to $L_E(s)$ and $B_E$ and $B_E^{1}$ are arithmetic factors. The construction  of the last three functions are described in \cite{huynh2009lowe} (Note that $B_E$, and $B_E^1$ are called $A_E$ and $A_E^1$ in Huynh et al's paper, but we have relabelled them to avoid confusion with our analytical terms.)
\end{theorem}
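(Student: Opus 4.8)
\emph{Proof proposal (verification of Theorem~\ref{thm:DKKS}).}
The plan is to take the formula for $S_1^E(f)$ already derived above, namely $2(2\pi) S_1^E(f)=\int_{-\infty}^{\infty} f(z)\big[J^*_E(iz,\emptyset)+J^*_E(-iz,\emptyset)-J^*_E(\emptyset,iz)\big]\,dz+O(X^{1/2+\epsilon})$, evaluate each of the three $J_E^*$-terms explicitly (all the sums over subsets $D$ and over set partitions of $A\setminus D$ are trivially short because $|A|\le 1$), symmetrise using that $f$ is even and that the zeros occur in pairs $\pm\gamma_{j,d}$, and then recognise the resulting Euler products as the arithmetic factors $B_E$, $B_E^1$ and the symmetric square $L$-function of \cite{huynh2009lowe}. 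The passage from $S_1^E$ (sum over $j>0$) to $\widetilde{S_1}$ (sum over all $j$) contributes the factor $2$ that turns the normalisation $2(2\pi)$ into the $\tfrac1{2\pi}$ appearing in (\ref{eq:DK1}), and the change of variables $z\mapsto -z$ in the $J_E^*(-iz,\emptyset)$ integral, together with $f$ even, is what produces the overall factor $2$ in front of the bracket in (\ref{eq:DK1}).

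First I would dispose of the two easy pieces. Since $A=\emptyset$ forces $D=\emptyset$ in (\ref{eq:JstarE2}), $J_E^*(\emptyset,\{iz\})=\sum_{0<d\le X,\ \chi_d(-M)\omega_E=1}\frac{\psi'}{\psi}(\tfrac12+iz,\chi_d)$, and substituting the explicit shape of $\psi'/\psi$ recorded in (\ref{eq:LFf}) shows that $-J_E^*(\emptyset,\{iz\})$ contributes exactly the block $2\log\!\big(\tfrac{\sqrt M|d|}{2\pi}\big)+\frac{\Gamma'}{\Gamma}(1-iz)+\frac{\Gamma'}{\Gamma}(1+iz)$ (and this block is even in $z$, hence not doubled). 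For $J_E^*(\{iz\},\emptyset)$ there are exactly two subsets $D$. The $D=\emptyset$ term has prefactor $A_E(\emptyset,\emptyset,\emptyset)=1$ (each of $A_{E,1},A_{E,2},A_{E,3}$ telescopes to $1$) and the single partition $W_1=\{iz\}$, producing $\widetilde{H_\emptyset}(\{iz\})=-\frac{\zeta'}{\zeta}(1+2iz)+\sum_{i=1}^3 A_{\emptyset,i}(\{iz\})$. The $D=\{iz\}$ term carries $\big(\tfrac{\sqrt M|d|}{2\pi}\big)^{-2iz}\frac{\Gamma(1-iz)}{\Gamma(1+iz)}$, the sign $(-1)^{|D|}=-1$, the factor $A_E(\{iz\},\{iz\},\{iz\})$, the empty-partition factor $1$, and the square root of $\zeta$-factors, which collapses to $\zeta(1+2iz)$ once one uses $Z_\zeta(\{iz\},\{iz\})=\zeta(1+2iz)$, $Y_\zeta(\{iz\})=\zeta(1+2iz)$ and that the dagger deletes the $\zeta(1)$ coming from $Z_\zeta(D^-,D)$. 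Matching the resulting expression (and its $z\mapsto -z$ companion) against (\ref{eq:DK1}) line by line leaves only the identification of arithmetic factors to be checked.

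That identification is the substantive step and the main obstacle. I would expand each $A_{E,i}(A,B,D)$ as a product over primes, separating $p\mid M$ from $p\nmid M$, and use the standard relation between $\lambda(p)$, the Satake parameters $\alpha_p,\beta_p$ with $\alpha_p\beta_p=1$, $\alpha_p+\beta_p=\lambda(p)$, and the local Euler factors of $\zeta$ and of $L_E(\mathrm{sym}^2,s)$ (whose local factor is $(1-\alpha_p^2 p^{-s})^{-1}(1-p^{-s})^{-1}(1-\beta_p^2 p^{-s})^{-1}$ at good primes). The point is that $A_{\emptyset,1}(\{iz\})+A_{\emptyset,2}(\{iz\})+A_{\emptyset,3}(\{iz\})$ reorganises, after pulling out $\frac{L_E'}{L_E}(\mathrm{sym}^2,1+2iz)$, into a Dirichlet series over primes that converges in a neighbourhood of $z=0$, and this residual convergent product is precisely $B_E^1(iz,iz)$ of \cite{huynh2009lowe}; likewise, after cancelling the $\zeta$-square-root factors against the $Z_\zeta,Y_\zeta$ inside $A_{E,1}(\{iz\},\{iz\},\{iz\})$, the product $A_E(\{iz\},\{iz\},\{iz\})$ rearranges into $\frac{\zeta(1+2iz)\,L_E(\mathrm{sym}^2,1-2iz)}{L_E(\mathrm{sym}^2,1)}\,B_E(-iz,iz)$. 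The delicate bookkeeping here is (i) checking that the $\zeta$ and $L_E(\mathrm{sym}^2)$ pieces factor out cleanly from the $U_\pm$ and $V$ products so that what remains is absolutely convergent, and (ii) matching the bad-prime factors $p\mid M$ coming from $A_{E,3}$ with the corresponding factors in \cite{huynh2009lowe} (bearing in mind their relabelling $A_E\leftrightarrow B_E$, $A_E^1\leftrightarrow B_E^1$). Once this is done, collecting terms gives exactly (\ref{eq:DK1}), with the error terms $O(X^{1/2+\epsilon})$ agreeing on both sides.
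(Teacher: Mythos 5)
Your proposal is correct and follows essentially the same route as the paper's own consistency check in Section~\ref{app:12}: specialise to $n=1$ so that $2(2\pi)S_1^E(f)=\int_{-\infty}^{\infty}f(z)\bigl[J_E^{*}(iz,\emptyset)+J_E^{*}(-iz,\emptyset)-J_E^{*}(\emptyset,iz)\bigr]\,dz+O(X^{1/2+\epsilon})$, use $\widetilde{S_1(f)}=2S_1^E(f)$, split $J_E^{*}(\pm iz,\emptyset)$ into its $D=\emptyset$ and $D=\{\pm iz\}$ pieces (with the square-root of $\zeta$-factors collapsing to $\zeta(1+2iz)$ exactly as you compute), and match the three blocks of (\ref{eq:DK1}). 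The one substantive identification, namely $\sum_{j}A_{\emptyset,j}(\{iz\})=\tfrac{L_E'}{L_E}(sym^2,1+2iz)+B_E^{1}(iz,iz)$ together with $A_E(\{iz\},\{iz\},\{iz\})=\tfrac{L_E(sym^2,1-2iz)}{L_E(sym^2,1)}B_E(-iz,iz)$, is asserted in the paper by inspection of the two constructions, whereas you sketch an explicit Euler-product verification; that is the only (harmless, indeed slightly more detailed) difference.
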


We notice that $\widetilde{S_1(f)}$ differs from our definition of $S^E_1(f)$ in (\ref{def:s1}), as $\widetilde{S_1(f)}$ is a sum over all zeros of the $L$-functions while $S^E_1(f)$ is  only over the positive zeros. As we are restricting to even $L$-functions ( i.e $\chi_{d}(-M)\omega_E = +1$), we expect $\widetilde{S_1(f)}= 2S^E_1(f)$.

This result matches perfectly with $S^E_1(f)$. The term containing $2\log \left(\frac{\sqrt{M}|d|}{2\pi}\right)+ \frac{\Gamma^{'}}{\Gamma}\left(1+ it\right)+ \frac{\Gamma^{'}}{\Gamma}\left(1- it\right)$ corresponds exactly to $J^{*}_E\left(\emptyset,it\right)$. If we consider the parts of $S^E_1(f)$ corresponding to $J^{*}_E\left(it, \emptyset\right)+ J^{*}_E\left(-it, \emptyset\right)$ when $D=\emptyset$ we get
\begin{align}
&\int_{-\infty}^{\infty}f(z)\sum_{\substack{0<d \leq X \\ \chi_d(-M)\omega_E=1}}A_E\left(\emptyset, \emptyset, \emptyset\right)\left[\widetilde{H_{\emptyset}}(\{iz\}) + \widetilde{H_{\emptyset}}(\{-iz\})\right]dz\\
& \qquad = \int_{-\infty}^{\infty}f(z)\sum_{\substack{0<d \leq X \\ \chi_d(-M)\omega_E=1}} - \frac{\zeta^{'}}{\zeta}\left(1+2iz\right)- \frac{\zeta^{'}}{\zeta}\left(1-2iz\right) \\
& \qquad \qquad + \sum_{j=1}^3A_{D,j}(\{iz\})+A_{D,j}(\{-iz\})dz\\
& \qquad = \int_{-\infty}^{\infty}f(z)\sum_{\substack{0<d \leq X \\ \chi_d(-M)\omega_E=1}} 2\left[- \frac{\zeta^{'}}{\zeta}\left(1+2iz\right)+ \sum_{j=1}^3A_{D,j}(\{iz\})\right]dz.
\end{align}
This matches up with the term
\begin{align}
2\left[- \frac{\zeta^{'}}{\zeta}\left(1+2iz\right) + \frac{L_E^{'}}{L_E}\left(sym^2, 1+2iz\right) + B^{1}_E(iz,iz)\right]
\end{align}
in (\ref{eq:DK1}). In our definition of the Ratios Conjecture in Section~\ref{sec:ratios of twists}, we defined the analytical term $A_E$ to contain the $\frac{L_E\left(sym^2, 1+2\alpha\right)}{L_{E}\left(sym^21 + \alpha + \gamma\right)}$ term that Huynh et al. separated out from their analytic term $B_E$. Checking the constructions of the Ratios Conjectures in their paper against ours, it is clear that $\sum_{j}A_{D,j}(\{iz\}) = \frac{L_E^{'}}{L_E}\left(sym^2, 1+2iz\right) + B^{1}_E(iz,iz)$.

All that remains is to check that the remaining terms in (\ref{eq:DK1}) match up to the terms in $J^{*}_E\left(iz, \emptyset\right)+ J^{*}_E\left(-iz, \emptyset\right)$ where $D \neq\emptyset$. This is clear once we see from the construction of the arithmetic terms that $\frac{L_E(sym^2,1-2iz)}{L_E(sym^2,1)}B_E(-iz,iz) =A_E(\{iz\},\{iz\},\{iz\})$. So
\begin{align}
\begin{split}
& \int_{-\infty}^{\infty}f(z)\sum_{\substack{0<d \leq X \\ \chi_d(-M)\omega_E=1}}{\left[-\left(\frac{\sqrt{M}|d|}{2 \pi}\right)^{-2iz}\frac{\Gamma(1-iz)}{\Gamma(1+iz)}\zeta(1+2iz) A_E(\{iz\},\{iz\},\{iz\})\right.}\\
& \qquad \qquad \left. -\left(\frac{\sqrt{M}|d|}{2 \pi}\right)^{2iz}\frac{\Gamma(1+iz)}{\Gamma(1-iz)}\zeta(1-2iz)A_E(\{-iz\},\{-iz\},\{-iz\}) \right]dz
\end{split}\\
\begin{split}
& \qquad= \int_{-\infty}^{\infty}f(t)\sum_{\substack{0<d \leq X \\ \chi_d(-M)\omega_E=1}}{2\left[ -\left(\frac{\sqrt{M}|d|}{2\pi}\right)^{-2iz}\frac{\Gamma(1-iz)}{\Gamma(1+iz)}\zeta(1+2iz)\right.}\\
& \qquad \qquad \qquad \times \left. \frac{L_E(sym^2,1-2iz)}{L_E(sym^2,1)}B_E(-iz,iz) \vphantom{\left(\frac{\sqrt{M}|d|}{2\pi}\right)^{-2iz}}\right] dz.
\end{split}
\end{align}
Hence it is clear that our result in Theorem~\ref{thm:LFNcorr} for $n=1$  agrees with the previous result in Theorem~\ref{thm:DKKS}.

\subsection{Calculation of $S^E_2$} \label{sub:2corr}

As an further example, we demonstrate the calculation of the $2$-level density of zeroes of families of quadratic twist elliptic curve $L$-functions with even functional equation: a family with orthogonal symmetry.
Let $C_{-}$ denote the path from $ - \delta - \infty$ up to $- \delta + \infty$ and let $C_{+}$
denote the path from $  \delta - \infty$ up to $ \delta + \infty$. Let $f$ be a holomorphic function of $2$ variables, such that
\begin{align}
f\left(\theta_{1}, \theta_{2}\right) = f\left(\pm \theta_{1},\pm \theta_{2}\right).
\end{align}
Then
\begin{align}
S_2^E(f) = \sum_{\substack{0 < d \leq X\\ \chi_{d}(-M)\omega_E = +1}}{ \sum_{j_1,j_2 > 0}f(\gamma_{j_1,d},\gamma_{j_2,d})}
\end{align}
where $\gamma_{j,d}$ is the $j$th zero of the $L$-function $L_{E}(s,\chi_d)$. We know from Cauchy's Residue Theorem that
\begin{align}
2^2(2\pi i)^{2} S^E_2(f) = \left(\int_{C_{+}}-\int_{C_{-}}\right)^{2}\sum_{\substack{0 < d \leq X\\ \chi_{d}(-M)\omega_E = +1}} \frac{L_E^{'}}{L_E}(\frac{1}{2} + \alpha, \chi_d)\frac{L_E^{'}}{L_E}(\frac{1}{2} + \beta, \chi_d)f(-i\alpha, -i\beta) d \alpha d \beta.
\end{align}

We note that the functional equation of $L_{E}(s, \chi_d)$ can be arranged to give $\frac{L_E^{'}}{L_E}(1-s, \chi_d) = \frac{\psi^{'}}{\psi}(s, \chi_d)-\frac{L_E^{'}}{L_E}(s, \chi_d)$.
\begin{align}
\begin{split}
&(2\pi i)^{2} S^E_2(f) \\
    &\qquad = \left(\int_{C_+}\right)^{2}\sum_{\substack{0 < d \leq X\\ \chi_{d}(-M)\omega_E = +1}} \frac{L_E^{'}}{L_E}\left(\frac{1}{2} + \alpha, \chi_d\right)\frac{L_E^{'}}{L_E}\left(\frac{1}{2} + \beta, \chi_d\right)\\
            & \qquad \qquad \qquad \times f\left(-i\alpha, -i\beta\right) d \alpha d \beta\\
        & \qquad \qquad - \int_{C_+}\int_{C_{-}}\sum_{\substack{0 < d \leq X\\ \chi_{d}(-M)\omega_E = +1}} \left(\frac{\psi^{'}}{\psi}-\frac{L_E^{'}}{L_E}\right)\left(\frac{1}{2}-\alpha,\chi_d\right)\frac{L_E^{'}}{L_E}\left(\frac{1}{2} + \beta, \chi_d\right)\\
            & \qquad \qquad \qquad \times f\left(-i\alpha, -i\beta\right) d \alpha d \beta\\
        &\qquad \qquad - \int_{C_-}\int_{C_{+}}\sum_{\substack{0 < d \leq X\\ \chi_{d}(-M)\omega_E = +1}} \frac{L_E^{'}}{L_E}\left(\frac{1}{2} + \alpha, \chi_d\right)\left(\frac{\psi^{'}}{\psi}-\frac{L_E^{'}}{L_E}\right)\left(\frac{1}{2}-\beta,\chi_d\right)\\
            & \qquad \qquad \qquad \times f\left(-i\alpha, -i\beta\right) d \alpha d \beta\\
        &\qquad \qquad + \int_{C_-}\int_{C_{-}}\sum_{\substack{0 < d \leq X\\ \chi_{d}(-M)\omega_E = +1}} \left(\frac{\psi^{'}}{\psi}-\frac{L_E^{'}}{L_E}\right)\left(\frac{1}{2}-\alpha,\chi_d\right)\\
            & \qquad \qquad \qquad \times \left(\frac{\psi^{'}}{\psi}-\frac{L_E^{'}}{L_E}\right)\left(\frac{1}{2}-\beta,\chi_d\right)f\left(-i\alpha, -i\beta\right) d \alpha d \beta.
\end{split}
\end{align}
By the definition of $J_{E}(A,B)$ in (\ref{def:LFJ}), we can write this as
\begin{align}
\begin{split}
&2^2(2\pi i)^{2} S^E_2(f) \\
    & \qquad = \left(\int_{C_+}\right)^{2}\left(J_{E}\left(\{\alpha, \beta\}, \emptyset \right)\right)f(-i\alpha, -i\beta) d \alpha d \beta\\
        &\qquad \qquad- \int_{C_+}\int_{C_{-}}\left(J_{E}\left(\{\beta,\}, \{-\alpha\}\right)-J_E(\{\beta, -\alpha\},\emptyset)\right)f(-i\alpha, -i\beta) d \alpha d \beta\\
        &\qquad \qquad - \int_{C_-}\int_{C_{+}}\left(J_E\left(\{\alpha,\}, \{-\beta\}\right)-J_E(\{\alpha, -\beta\},\emptyset)\right)f(-i\alpha, -i\beta) d \alpha d \beta\\
        &\qquad \qquad + \left(\int_{C_-}\right)^{2}\left[\vphantom{\frac{1}{3}}J_E\left(\emptyset, \{-\alpha,-\beta\}\right)-J_E\left(\{-\alpha\},\{-\beta\}\right)\right.\\
            &\left.\qquad \qquad \qquad-J_E\left(\{-\beta\},\{-\alpha\}\right)+J_E\left(\{-\alpha, -\beta\},\emptyset \right)\vphantom{\frac{1}{3}}\right]f\left(-i\alpha, -i\beta\right) d \alpha d \beta.
\end{split}
\end{align}

Using Conjecture~\ref{LFJconj}, we can replace the $J_E(A,B)$ with $J_E^{*}(A,B)$ in all of the integrals above.  In this section we regularly make use of the property that $J^*_E(A,B)=J^*_E(A,B^{-})$: because of the definition of $\tfrac{\psi'}{\psi}(1/2+\beta,\chi_d)$ in (\ref{eq:LFf}) the sign of the elements of $B$ makes no difference.
\begin{align}
\sum_{K \cup L \cup M = \{1,2\}}{(-1)^{|M|}\int_{C^K_{+}}{\int_{C^{L\cup M}_{-}}{J_E^{*}\left(z_K \cup -z_L, z_M \right)f(-iz_1, -iz_2) d z_1 d z_2}}}.
\end{align}

The next step is to move the contours onto the imaginary axis. We recall that $J_E^{*}(\{\alpha, \beta\})$ has multiple poles as proved in Theorem \ref{LFresiduef}. There is a pole at $\alpha = -\beta$ with residue $J_E^{*}(\{\beta\}, \emptyset)+J_E^{*}(\{-\beta\}, \emptyset)- J_E^{*}(\emptyset,\{\beta\})$ and poles at $\alpha=0$ and $\beta=0$ with residue $0$. We then move these onto the real axis and combine similar terms, noting that $f(x,y)=f(\pm x, \pm y)$
\begin{align}
\begin{split}
&2^2(2\pi i)^{2} S^E_2(f) \\
    & \qquad =i^{2}\left(\int_{-\infty}^{\infty}\right)^{2} \sum_{\substack{K \cup L \cup M\\ = \{1,2\}}}{(-1)^{|M|}J_E^{*}\left(-iz_K \cup iz_L, -iz_M \right)f(z_1,z_2) d z_1 d z_2}\\
        &\qquad \qquad + 4 \pi i^{2} \int_{-\infty}^{+\infty}\left(J_E^{*}(\{i\beta\}, \emptyset)+J_E^{*}(\{-i\beta\}, \emptyset)-J_E^{*}\left(\emptyset, \{i\beta\}\right)\right)f\left(\beta, \beta\right) d \beta+O(X^{1/2+\epsilon}),
\end{split}\\
\begin{split}
&2^2 (2 \pi)^2 S^E_2(f) \\
    &\qquad =\left(\int_{-\infty}^{\infty}\right)^{2} \sum_{\substack{K \cup L \cup M\\ = \{1,2\}}}{(-1)^{|M|}J_E^{*}\left(-iz_K \cup iz_L, -iz_M \right)f(z_1,z_2) d z_1 d z_2}\\
        &\qquad  \qquad + 2^{2} (2\pi)^{2} S^E_{1}(f)+O(X^{1/2+\epsilon}).
\end{split}
\end{align}

Given our definition of $S^E_2(f)$ at the beginning of this section, it is clear
\begin{align}
S^E_2(f) &= \sum_{\substack{0 < d \leq X\\ \chi_{d}(-M)\omega_E = +1}}{ \sum_{j_1,j_2> 0}f(\gamma_{j_1,d},\gamma_{j_2,d})}\\
&=\sum_{\substack{0 < d \leq X\\ \chi_{d}(-M)\omega_E = +1}}{\sideset{}{^{*}}\sum_{j_1,j_2>0}f(\gamma_{j_1,d},\gamma_{j_2,d})} + S^E_1(f)
\end{align}
where $\sum^{*}$ means terms with repeated variables are excluded. Define
\begin{align}
S^{*}_2(f) &= \sum_{\substack{0 < d \leq X\\ \chi_{d}(-M)\omega_E = +1}}{\sideset{}{^{*}} \sum_{j_1,j_2>0}f(\gamma_{j_1,d},\gamma_{j_2,d})}.
\end{align}
This allows us to conclude that
\begin{align}\label{eq:S*}
S^{*}_2(f) & = \frac{1}{2^2(2\pi)^2}\left(\int_{-\infty}^{\infty}\right)^{2} \sum_{\substack{K \cup L \cup M \\= \{1,2\}}}{(-1)^{|M|}J_E^{*}\left(-iz_K \cup iz_L, -iz_M \right)f(z_1,z_2) d z_1 d z_2}+O(X^{1/2+\epsilon})
\end{align}
as shown in Theorem~\ref{thm:LFNcorr}.

    \subsubsection{Comparing $S^E_2$ to known limiting behavior}

Finally, we want to check that the limit of $S^{*}_2(f)$ as $X \rightarrow \infty$ is as expected i.e. matches that of the orthogonal random matrices from \cite{conrey2005note}. With
\begin{equation}
L=\log\left(\frac{\sqrt{M}X}{2\pi}\right),
\end{equation}
consider
\begin{align}
\lim_{X \rightarrow \infty}\frac{1}{X^{*}} \sum_{\substack{0 < d \leq X\\ \chi_{d}(-M)\omega_E = +1}} \sideset{}{^{*}} \sum_{j_1,j_2>0}{f\left(\frac{\gamma_{j_1,d}L}{ \pi}, \frac{\gamma_{j_2,d}L}{ \pi}\right)} ,\label{eq:rescale}
\end{align}
where $\gamma_{j,d}$ is the imaginary part of the $j$th zero of $L_{E}(s, \chi_d)$ on the critical line and
\begin{align}
X^{*} =  \sum_{\substack{0 < d \leq X\\ \chi_{d}(-M)\omega_E = +1}}{1}.
\end{align}
From (\ref{eq:S*}) we have
\begin{align}\label{eq:example}
\begin{split}
 &\sum_{\substack{0 < d \leq X\\ \chi_{d}(-M)\omega_E = +1}} \sideset{}{^{*}} \sum_{j_1,j_2>0}{f\left(\frac{\gamma_{j_1,d}L}{ \pi}, \frac{\gamma_{j_2,d}L}{ \pi}\right)}\\
 & \qquad = \frac{1}{16\pi^2} \left(\int_{-\infty}^{\infty}\right)^2 \sum_{K \cup L \cup M = \{1,2\}}{(-1)^{|M|}J_E^{*}\left(-i z_K \cup iz_L, -i z_M\right)}\\
 & \qquad \qquad \times {f\left(z_1\frac{L}{ \pi},z_2\frac{L}{ \pi}\right) d z_1 dz_2}+O(X^{1/2+\epsilon})\\
 &\qquad= \frac{1}{16L^2}\left(\int_{-\infty}^{\infty}\right)^2 \sum_{K \cup L \cup M = \{1,2\}}{(-1)^{|M|}J_E^{*}\left(-\frac{i \pi z_K}{L} \cup\frac{ i\pi z_L}{L}, -\frac{i\pi z_M}{L}\right)}\\
 & \qquad \qquad \times {f\left(z_1,z_2\right) d z_1 dz_2}+O(X^{1/2+\epsilon}) .
 \end{split}
\end{align}
We evaluate the resulting sum making the following two large $X$ approximations.  Firstly that
\begin{align}
\frac{1}{X} \sum_{0<d \leq X}f(d) \sim \frac{1}{X^{*}}\sum_{\substack{0 < d \leq X\\ \chi_{d}(-M)\omega_E = +1}}{f(d)}.
\end{align}
Secondly, by the Euler-Maclaurin formula~\cite{jeffrey2008handbook}
\begin{align}
\frac{1}{X} \sum_{0<d \leq X}f(d) \sim \frac{1}{X}\int_{0}^{X}{f(t)} dt.
\end{align}

After the change of variables we will have a $1/L^2$ outside the integral so we are only interested in the terms of order $L^2$ and higher, as the other terms will become negligible as $X \rightarrow \infty$.
\begin{align}
\begin{split}
&J_E^{*}\left(\emptyset, \left\{\frac{-\pi i a}{L}, \frac{- \pi i b}{L}\right\}\right) \\
&\qquad =  \sum_{\substack{0 < d \leq X\\ \chi_{d}(-M)\omega_E = +1}}{\frac{\psi^{'}}{\psi}\left(\frac{1}{2}- \frac{ \pi i a}{L}, \chi_d\right)\frac{\psi^{'}}{\psi}\left(\frac{1}{2}- \frac{\pi i b}{L}, \chi_d\right)}
\end{split}
\end{align}
where $\psi\left(s, \chi_d\right)= \chi_d(-M)\omega_E \left(\frac{2 \pi}{\sqrt{M}|d|}\right)^{2s-1}\frac{\Gamma\left(\frac{3}{2}-s\right)}{\Gamma\left(s+\frac{1}{2}\right)}$ is from the functional equation of $L_{E}(s, \chi_d)$ and so we recall
\begin{align}
\begin{split}
&
\frac{\psi^{'}}{\psi}\left(\frac{1}{2}-s, \chi_d\right) = -2\log\left(\frac{\sqrt{M}|d|}{2 \pi}\right) - \frac{\Gamma^{'}}{\Gamma}\left(1+ s\right) - \frac{\Gamma^{'}}{\Gamma}\left(1-s\right).
\end{split}
\end{align}

We have
\begin{align}
\begin{split}
&\frac{1}{X^{*}}J_E^{*}\left(\emptyset, \left\{\frac{- \pi i a}{L}, \frac{- \pi i b}{L}\right\}\right) \\
 & \qquad \sim \frac{1}{X} \int_{0}^{X} \left(-2\log\left(\frac{\sqrt{M}t}{2\pi}\right)- \frac{\Gamma^{'}}{\Gamma}\left(1+ a\right) - \frac{\Gamma^{'}}{\Gamma}\left(1-a\right)\right)\\
&\qquad \qquad \times \left(-2\log\left(\frac{\sqrt{M}t}{2\pi}\right) - \frac{\Gamma^{'}}{\Gamma}\left(1+ b\right) - \frac{\Gamma^{'}}{\Gamma}\left(1-b\right)\right) dt\\
&\qquad\sim 4L^2.
\end{split}
\end{align}
Next we note that for large $X$
\begin{equation}
\zeta\left(1+\frac{2\pi ia}{L}\right) \sim \frac{L}{2\pi i a}
\end{equation}
and
\begin{equation}
\frac{\zeta'}{\zeta}\left(1+\frac{2\pi ia}{L}\right) \sim -\frac{L}{2\pi i a},
\end{equation}
and so consider terms of the form
\vspace{-5pt}
\begin{align}
\begin{split}
& \frac{1}{X^*}J_E^{*}\left(\left\{\frac{ \pi i a}{L}\right\},\left\{ \frac{-\pi i b}{L}\right\}\right)  \\
    & \qquad =\frac{1}{X^*}\sum_{\substack{0 < d \leq X \\ \chi_{d}(-M)\omega_E = 1}}{\left(-\left(\frac{\sqrt{M}|d|}{2 \pi}\right)^{\frac{-2\pi i a}{L}}\frac{\Gamma(1-\frac{ \pi ia}{L})}{\Gamma(1+\frac{ \pi ia}{L})}\zeta\left(1+\frac{2\pi i a}{L}\right)A_E\left(\left\{\frac{ \pi i a}{L}\right\}\right)\right.}\\
        & {\left.\qquad \qquad - \frac{\zeta^{'}}{\zeta}\left(1+ \frac{2 \pi i a}{L}\right)+A_{\emptyset}\left(\left\{\frac{ \pi i a}{L}\right\}\right)\vphantom{\left(\frac{\sqrt{M}|d|}{2 \pi}\right)^{\frac{4\pi i a}{\log X}}}\right)\frac{\psi^{'}}{\psi}\left(\frac{1}{2}- \frac{ \pi i b}{L}, \chi_d\right)}
 \end{split}\\
& \qquad \sim \left( \frac{1}{X}\int_0^X \left(\frac{\sqrt{M}t}{2\pi} \right)^{\frac{-2\pi i a}{L}}2\log\left(\frac{\sqrt{M}t}{2\pi}\right)\frac{L} {2\pi i a}  dt -\frac{1}{X}\int_0^X 2\log\left(\frac{\sqrt{M}t}{2\pi}\right)\frac{L}{2\pi i a}dt\right)\\
&\qquad \sim \frac{L^2}{\pi i a} \left(\frac{e^{-2\pi i a}} {1-\frac{2\pi i a}{L}} -1\right)\sim\frac{L^2}{\pi i a} \left(e^{-2\pi i a} -1\right) ,
\end{align}
where we recall that $A_E(D)=A_E(D,D,D)$ and we have introduced $A_D(W_r)=A_{D,1}(W_r)+A_{D,2}(W_r)+A_{D,3}(W_r)$ (See Theorem \ref{LFJconj} for definitions).  We also use the fact that $A_E(D)=1$ if all the elements of $D$ have value zero.

So
\begin{equation}
 \frac{1}{X^*}J_E^{*}\left(\left\{\frac{ \pi i a}{L}\right\},\left\{ \frac{-\pi i b}{L}\right\}\right)+ \frac{1}{X^*}J_E^{*}\left(\left\{\frac{- \pi i a}{L}\right\},\left\{ \frac{-\pi i b}{L}\right\}\right)=-4L^2 \frac{\sin(2\pi a)} {2\pi a}
 \end{equation}

Proceeding similarly with the other $J_E^*$ terms, and with the help of computer algebra, we can show that
\begin{align}
\begin{split}
&\lim_{X \rightarrow \infty}\frac{1}{X^{*}}S^{*}_2(f) \\
& \qquad = \frac{1}{4}\left(\int_{-\infty}^{\infty}\right)^{2} f(\theta_1,\theta_2)\det_{2 \times 2}K_{SO,even}\left(\theta_i,\theta_j\right) d \theta_1 d \theta_2
\end{split}
\end{align}
where
\begin{align}
K_{SO,even}(x,y) = \frac{\sin(\pi(y-x))}{\pi(y-x)} + \frac{\sin(\pi(y+x))}{\pi(y+x)}.
\end{align}

This is exactly the form of the kernel for correlations of eigenvalues of  random matrices from $SO(2N)$ \cite{conrey2005note} and hence we have confirmed our result agrees with the conjectured limiting behavior of the family of zeroes\cite{katz1999zz}.


\end{document}